\newtheorem{Thm}{Theorem}[section]
\newtheorem{Prop}[Thm]{Proposition}
\newtheorem{Cor}[Thm]{Corollary}
\newtheorem{Lem}[Thm]{Lemma}
\theoremstyle{definition}
\newtheorem{Def}[Thm]{Definition}
\newtheorem{Exa}[Thm]{Example}
\numberwithin{equation}{section}
\newenvironment{red}
{\relax\color{red}}
{\hspace*{.5ex}\relax}
\newcommand{\ber}{\begin{red}}
\newcommand{\er}{\end{red}}
\newcommand{\Z}{\mathbb{Z}}
\newcommand{\Q}{\mathbb{Q}}
\newcommand{\g}{\mathfrak{g}}
\newcommand{\Span}{{\rm Span}}
\newcommand{\wt}{{\rm wt}}
\newcommand{\ind}{{\rm Ind}}
\newcommand{\res}{{\rm Res}}
\newcommand{\soc}{{\rm soc}}
\newcommand{\hd}{{\rm hd}}
\newcommand{\Hom}{{\rm Hom}}
\newcommand{\HOM}{{\rm HOM}}
\newcommand{\fMod}{ \mathrm{fmod}}
\newcommand{\ch}{{\rm ch}}
\newcommand{\qch}{{\rm ch}_{q}}
\newcommand{\qdim}{ \dim_q }
\newcommand{\infl}{ {\rm infl} }
\newcommand{\pr}{ {\rm pr} }
\newcommand{\Ire}{ I^{\rm re} }
\newcommand{\Iim}{ I^{\rm im} }
\newcommand{\seq}{ {\rm Seq} }
\newcommand{\seqd}{ {\rm Seqd} }
\newcommand{\F} { \bR }  
\newcommand{\pol} { { Pol}}
\newcommand{\sym} { { Sym}}
\newcommand{\ie} { 1}
\newcommand{\A} {\mathbb{A}}
\newcommand{\ep} { \varepsilon}
\newcommand{\ph} { \varphi}
\newcommand{\ke} { \tilde{e}}
\newcommand{\kf} { \tilde{f}}
\newcommand{\Bklr}[1] { \mathfrak{B}(#1)}
\newcommand{\sg} { S}
\newcommand{\End} { {\rm End}}
\newcommand{\id} { {\rm id}}
\newcommand{\bse} {e}
\newcommand{\bsf} {f}
\newcommand{\bR} {\mathbb{F}} 
\newcommand{\genOne}[3] 
{
\fontsize{7}{7}\selectfont
\xy
(0,6)*{}; (0,-6)*{} **\dir{-};
(4,0)*{\cdots};
(8,6)*{}; (8,-6)*{} **\dir{-};
(12,0)*{\cdots};
(16,6)*{}; (16,-6)*{} **\dir{-};
(0,-8)*{#1}; (8,-8)*{#2}; (16,-8)*{#3};
\endxy
\fontsize{10}{10}\selectfont
}
\newcommand{\genX}[3] 
{
\fontsize{7}{7}\selectfont
\xy
(0,6)*{}; (0,-6)*{} **\dir{-};
(4,0)*{\cdots};
(8,6)*{}; (8,-6)*{} **\dir{-};
(8.1,0)*{\bullet}; (12,0)*{\cdots};
(16,6)*{}; (16,-6)*{} **\dir{-};
(0,-8)*{#1}; (8,-8)*{#2}; (16,-8)*{#3};
\endxy
\fontsize{10}{10}\selectfont
}
\newcommand{\genTau}[4] 
{
\fontsize{7}{7}\selectfont
\xy
(0,6)*{}; (0,-6)*{} **\dir{-};
(4,0)*{\cdots};
(5,6)*{}; (11,-6)*{} **\dir{-};
(11,6)*{}; (5,-6)*{} **\dir{-};
(12,0)*{\cdots};
(16,6)*{}; (16,-6)*{} **\dir{-};
(0,-8)*{#1}; (5,-8)*{#2}; (11,-8)*{#3}; (16,-8)*{#4};
\endxy
\fontsize{10}{10}\selectfont
}
\newcommand{\twoStrands}[2] 
{\fontsize{7}{7}\selectfont
\xy
(0,6)*{}; (0,-6)*{} **\dir{-};
(6,6)*{}; (6,-6)*{} **\dir{-};
(0,-8)*{#1}; (6,-8)*{#2};
\endxy
\fontsize{10}{10}\selectfont
}
\newcommand{\twoDotStrandsL}[3] 
{ \fontsize{7}{7}\selectfont
\xy
(0,6)*{}; (0,-6)*{} **\dir{-};
(6,6)*{}; (6,-6)*{} **\dir{-};
(0,0)*{\bullet}; (-4,0)*{#1}; (0,-8)*{#2}; (6,-8)*{#3};
\endxy
\fontsize{10}{10}\selectfont
}
\newcommand{\twoDotStrandsR}[3] 
{\fontsize{7}{7}\selectfont
\xy
(0,6)*{}; (0,-6)*{} **\dir{-};
(6,6)*{}; (6,-6)*{} **\dir{-};
(6,0)*{\bullet}; (9.5,0)*{#1}; (0,-8)*{#2}; (6,-8)*{#3};
\endxy
\fontsize{10}{10}\selectfont}
\newcommand{\smalltwoDotStrandsL}[2] 
{ \fontsize{7}{7}\selectfont
\xy
(0,6)*{}; (0,-6)*{} **\dir{-};
(5,6)*{}; (5,-6)*{} **\dir{-};
(0,0)*{\bullet}; (0,-8)*{#1}; (5,-8)*{#2};
\endxy
\fontsize{10}{10}\selectfont
}
\newcommand{\smalltwoDotStrandsR}[2] 
{\fontsize{7}{7}\selectfont
\xy
(0,6)*{}; (0,-6)*{} **\dir{-};
(5,6)*{}; (5,-6)*{} **\dir{-};
(5,0)*{\bullet}; (0,-8)*{#1}; (5,-8)*{#2};
\endxy
\fontsize{10}{10}\selectfont}
\newcommand{\dCross}[2] 
{\fontsize{7}{7}\selectfont
\xy
(0,9)*{}="T1"; (5,9)*{}="T2";
(0,-9)*{}="B1"; (5,-9)*{}="B2";
"T1"; "B1" **\crv{(10, 0)};
"T2"; "B2" **\crv{(-5,0)};
(0,-11)*{#1}; (5,-11)*{#2};
\endxy
\fontsize{10}{10}\selectfont}
\newcommand{\CrossUL}[3] 
{\fontsize{7}{7}\selectfont
\xy
(0,6)*{}; (9,-6)*{} **\dir{-} \POS?(.25)="x";
(9,6)*{}; (0,-6)*{} **\dir{-};
"x"*{\bullet}; "x"+(-2,2)*{#1}; (0,-8)*{#2}; (9,-8)*{#3}; \endxy
\fontsize{10}{10}\selectfont}
\newcommand{\CrossUR}[3] 
{\fontsize{7}{7}\selectfont
\xy
(0,6)*{}; (9,-6)*{} **\dir{-};
(9,6)*{}; (0,-6)*{} **\dir{-} \POS?(.25)="x";
"x"*{\bullet}; "x"+(2,2)*{#1}; (0,-8)*{#2}; (9,-8)*{#3}; \endxy
\fontsize{10}{10}\selectfont}
\newcommand{\CrossDR}[3] 
{\fontsize{7}{7}\selectfont
\xy
(0,6)*{}; (9,-6)*{} **\dir{-} \POS?(.75)="x";
(9,6)*{}; (0,-6)*{} **\dir{-};
"x"*{\bullet}; "x"+(2,2)*{#1}; (0,-8)*{#2}; (9,-8)*{#3}; \endxy
\fontsize{10}{10}\selectfont}
\newcommand{\CrossDL}[3] 
{\fontsize{7}{7}\selectfont
\xy
(0,6)*{}; (9,-6)*{} **\dir{-};
(9,6)*{}; (0,-6)*{} **\dir{-} \POS?(.75)="x";
"x"*{\bullet}; "x"+(-2,2)*{#1}; (0,-8)*{#2}; (9,-8)*{#3}; \endxy
\fontsize{10}{10}\selectfont}
\newcommand{\BraidL}[3]  
{\fontsize{7}{7}\selectfont
\xy
(0,9)*{}="T1"; (6,9)*{}="T2";  (12,9)*{}="T3";
(0,-9)*{}="B1"; (6,-9)*{}="B2"; (12,-9)*{}="B3";
"T1"; "B3" **\dir{-}; "T3"; "B1" **\dir{-};
"T2"; "B2" **\crv{(-7,0)};
(0,-11)*{#1}; (6,-11)*{#2};(12,-11)*{#3};
\endxy
\fontsize{10}{10}\selectfont}
\newcommand{\BraidR}[3]  
{\fontsize{7}{7}\selectfont
\xy
(0,9)*{}="T1"; (6,9)*{}="T2";  (12,9)*{}="T3";
(0,-9)*{}="B1"; (6,-9)*{}="B2"; (12,-9)*{}="B3";
"T1"; "B3" **\dir{-}; "T3"; "B1" **\dir{-};
"T2"; "B2" **\crv{(17,0)};
(0,-11)*{#1}; (6,-11)*{#2};(12,-11)*{#3};
\endxy
\fontsize{10}{10}\selectfont}
\newcommand{\threeDotStrands}[5] 
{ \fontsize{7}{7}\selectfont
\xy
(0,5)*{}; (0,-5)*{} **\dir{-};
(5,5)*{}; (5,-5)*{} **\dir{-};
(10,5)*{}; (10,-5)*{} **\dir{-};
(0,0)*{\bullet};(10,0)*{\bullet}; (-2,0)*{#1};(19,0)*{#2}; (0,-7)*{#3}; (5,-7)*{#4};(10,-7)*{#5};
\endxy
\fontsize{10}{10}\selectfont
}
\newcommand{\CrossL}[3]   
{\fontsize{7}{7}\selectfont
\xy
(0,5)*{}; (5,-5)*{} **\dir{-};
(5,5)*{}; (0,-5)*{} **\dir{-};
(8,5)*{}; (8,-5)*{} **\dir{-};
(0,-7)*{#1}; (5,-7)*{#2}; (8,-7)*{#3}; \endxy
\fontsize{10}{10}\selectfont}
\newcommand{\CrossR}[3]   
{\fontsize{7}{7}\selectfont
\xy
(-3,5)*{}; (-3,-5)*{} **\dir{-};
(0,5)*{}; (5,-5)*{} **\dir{-};
(5,5)*{}; (0,-5)*{} **\dir{-};
(-3,-7)*{#1}; (0,-7)*{#2}; (5,-7)*{#3};\endxy
\fontsize{10}{10}\selectfont}
\newcommand{\smallthreeDotStrandsL}[3] 
{ \fontsize{7}{7}\selectfont
\xy
(0,6)*{}; (0,-6)*{} **\dir{-};
(4,6)*{}; (4,-6)*{} **\dir{-};
(8,6)*{}; (8,-6)*{} **\dir{-};
(0,0)*{\bullet}; (0,-8)*{#1}; (4,-8)*{#2}; (8,-8)*{#3};
\endxy
\fontsize{10}{10}\selectfont
}
\newcommand{\smallthreeDotStrandsM}[3] 
{\fontsize{7}{7}\selectfont
\xy
(0,6)*{}; (0,-6)*{} **\dir{-};
(4,6)*{}; (4,-6)*{} **\dir{-};
(8,6)*{}; (8,-6)*{} **\dir{-};
(4,0)*{\bullet}; (0,-8)*{#1}; (4,-8)*{#2}; (8,-8)*{#3};
\endxy
\fontsize{10}{10}\selectfont}
\newcommand{\smallthreeDotStrandsR}[3] 
{\fontsize{7}{7}\selectfont
\xy
(0,6)*{}; (0,-6)*{} **\dir{-};
(4,6)*{}; (4,-6)*{} **\dir{-};
(8,6)*{}; (8,-6)*{} **\dir{-};
(8,0)*{\bullet}; (0,-8)*{#1}; (4,-8)*{#2}; (8,-8)*{#3};
\endxy
\fontsize{10}{10}\selectfont}
\newcommand{\alphaabmi}
{
\fontsize{7}{7}\selectfont
\xy
(-13,14)*{\overbrace{ \quad \quad \quad \quad \quad \quad }^{a}};
(13,14)*{\overbrace{ \quad \quad  \quad  \quad \quad \quad }^{b}};
(-20,12)*{}; (0,0)*{} **\crv{(-19,6) & (-0,6.5)};
(-15,12)*{}; (-20,0)*{} **\crv{(-15,6) & (-22,6.5)};
(-10,12)*{}; (-15,0)*{} **\crv{(-10,6) & (-17,6.5)};
(-5,12)*{}; (-10,0)*{} **\crv{(-5,6) & (-12,6.5)};
(-20,-10)*{}; (-20,-20)*{} **\dir{-}; (-20,-22)*{i};
(-15,-10)*{}; (-15,-20)*{} **\dir{-}; (-15,-22)*{i};
(-10,-10)*{}; (-10,-20)*{} **\dir{-}; (-10,-22)*{i};
(-22,0)*{}; (-8,0)*{} **\dir{-}; (-22,0)*{}; (-22,-10)*{} **\dir{-};
(-15,-5)*{1_{i,a-1}};
(-22,-10)*{}; (-8,-10)*{} **\dir{-}; (-8,0)*{}; (-8,-10)*{} **\dir{-};
(5,12)*{}; (5,0)*{} **\dir{-}; (10,12)*{}; (10,0)*{} **\dir{-};
(15,12)*{}; (15,0)*{} **\dir{-}; (20,12)*{}; (20,0)*{} **\dir{-};
(0,12)*{}; (-5,-20) **\crv{(0,6) & (-6.5,6.5)}; (-5,-22)*{j};
(-2,0)*{}; (22,0)*{} **\dir{-}; (-2,0)*{}; (-2,-10)*{} **\dir{-};
(9,-5)*{1_{i,b+1}};
(22,0)*{}; (22,-10)*{} **\dir{-}; (-2,-10)*{}; (22,-10)*{} **\dir{-};
(0,-10)*{}; (0,-20)*{} **\dir{-}; (0,-22)*{i};
(5,-10)*{}; (5,-20)*{} **\dir{-}; (5,-22)*{i};
(10,-10)*{}; (10,-20)*{} **\dir{-}; (10,-22)*{i};
(15,-10)*{}; (15,-20)*{} **\dir{-}; (15,-22)*{i};
(20,-10)*{}; (20,-20)*{} **\dir{-}; (20,-22)*{i};
(-14.5,-25)*{\underbrace{ \quad \quad \quad \quad   }_{a-1}};
(10,-25)*{\underbrace{ \quad \quad \quad \quad \quad \quad \quad}_{b+1}};

\endxy
\fontsize{10}{10}\selectfont
}
\newcommand{\alphaABmi}
{\xy (0,0)*++{\alphaabmi};
\endxy}
\newcommand{\alphaabpl}{
\fontsize{7}{7}\selectfont
\xy
(-12,14)*{\overbrace{ \quad \quad \ }^{a}};
(10,14)*{\overbrace{ \quad \quad  \quad \quad  \quad \quad \quad  }^{b}};
(-5,12)*{}; (0,-20) **\crv{(-5,6) & (0,6.5)}; (0,-22)*{j};
(20,12)*{}; (-5,0)*{} **\crv{(19,6) & (-5,6.5)};
(0,12)*{}; (5,0)*{} **\crv{(0,6) & (7,6.5)};
(5,12)*{}; (10,0)*{} **\crv{(5,6) & (12,4)};
(10,12)*{}; (15,0)*{} **\crv{(10,6) & (17,4)};
(15,12)*{}; (20,0)*{} **\crv{(13,6) & (20.5,4)};
(-10,12)*{}; (-10,0)*{} **\dir{-}; (-15,12)*{}; (-15,0)*{} **\dir{-};
(-17,0)*{}; (-3,0)*{} **\dir{-}; (-17,0)*{}; (-17,-10)*{} **\dir{-};
(-10,-5)*{1_{i,a+1}};
(-17,-10)*{}; (-3,-10)*{} **\dir{-}; (-3,0)*{}; (-3,-10)*{} **\dir{-};
(3,0)*{}; (22,0)*{} **\dir{-}; (3,0)*{}; (3,-10)*{} **\dir{-};
(12,-5)*{1_{i,b-1}};
(22,0)*{}; (22,-10)*{} **\dir{-}; (3,-10)*{}; (22,-10)*{} **\dir{-};
(5,-10)*{}; (5,-20)*{} **\dir{-}; (5,-22)*{i};
(10,-10)*{}; (10,-20)*{} **\dir{-}; (10,-22)*{i};
(15,-10)*{}; (15,-20)*{} **\dir{-}; (15,-22)*{i};
(20,-10)*{}; (20,-20)*{} **\dir{-}; (20,-22)*{i};
(-5,-10)*{}; (-5,-20)*{} **\dir{-}; (-5,-22)*{i};
(-10,-10)*{}; (-10,-20)*{} **\dir{-}; (-10,-22)*{i};
(-15,-10)*{}; (-15,-20)*{} **\dir{-}; (-15,-22)*{i};
(-10,-25)*{\underbrace{ \quad \quad \quad \quad    }_{a+1}};
(13,-25)*{\underbrace{ \quad \quad \quad \quad \quad \quad}_{b-1}};
\endxy
\fontsize{10}{10}\selectfont
}
\newcommand{\alphaABpl}
{\xy (0,0)*++{\alphaabpl};
\endxy}
\begin{document}

\title[Categorification of Quantum Generalized Kac-Moody Algebras and Crystal Bases]
{Categorification of Quantum Generalized Kac-Moody Algebras and Crystal Bases}
\author[Seok-Jin Kang]{Seok-Jin Kang $^{1,2}$}
\thanks{$^1$ This work was supported by KRF Grant \# 2007-341-C00001.}
\thanks{$^2$ This work was supported by NRF Grant \# 2010-0010753.}
\address{Department of Mathematical Sciences and Research Institute of Mathematics,
Seoul National University, 599 Gwanak-ro, 
Gwanak-gu, Seoul 151-747, Korea} \email{sjkang@math.snu.ac.kr}
\author[Se-jin Oh]{Se-jin Oh $^{3,4}$ }
\thanks{$^3$ This work was supported by NRF Grant \# 2010-0019516.}
\thanks{$^4$ This work was supported by BK21 Mathematical Sciences Division.}
\address{Department of Mathematical Sciences, Seoul National University,
599 Gwanak-ro, 
Gwanak-gu, Seoul 151-747, Korea} \email{sj092@snu.ac.kr}
\author[Euiyong Park]{Euiyong Park $^{1,2}$}

\address{School of Mathematics, Korea Institute for Advanced Study,
87 Hoegiro, Dongdaemun-gu, Seoul 130-722, Korea}
\email{eypark@kias.re.kr}

\subjclass[2000]{05E10, 16G99, 81R10} \keywords{categorification,
Khovanov-Lauda-Rouquier algebras, crystal bases, quantum generalized
Kac-Moody algebras}

\begin{abstract}
We construct and investigate the structure of the
Khovanov-Lauda-Rouquier algebras $R$ and their cyclotomic quotients
$R^\lambda$ which give a categorification of quantum generalized
Kac-Moody algebras. Let $U_\A(\g)$ be the integral form of the
quantum generalized Kac-Moody algebra associated with a
Borcherds-Cartan matrix $A=(a_{ij})_{i,j \in I}$ and let $K_0(R)$ be
the Grothendieck group of finitely generated projective graded
$R$-modules. We prove that there exists an injective algebra
homomorphism $\Phi: U_\A^-(\g) \rightarrow K_0(R)$ and that $\Phi$
is an isomorphism if $a_{ii}\ne 0$ for all $i\in I$. Let $B(\infty)$
and $B(\lambda)$ be the crystals of $U_q^-(\g)$ and $V(\lambda)$,
respectively, where $V(\lambda)$ is the irreducible highest weight
$U_q(\g)$-module. We denote by $\Bklr{\infty}$ and $\Bklr{\lambda}$
the isomorphism classes of irreducible graded modules over $R$ and
$R^\lambda$, respectively. If $a_{ii}\ne 0$ for all $i\in I$, we
define the $U_q(\g)$-crystal structures on $\Bklr{\infty}$ and
$\Bklr{\lambda}$, and show that there exist crystal isomorphisms
$\Bklr{\infty} \simeq B(\infty)$ and $\Bklr{\lambda} \simeq
B(\lambda)$. One of the key ingredients of our approach is the
perfect basis theory for generalized Kac-Moody algebras.
\end{abstract}

\maketitle


\section*{Introduction}

In \cite{KL09, KL11} and \cite{R08}, Khovanov-Lauda and Rouquier
independently introduced a new family of graded algebras $R$ which
gives a {\it categorification} of quantum groups associated with
symmetrizable Kac-Moody algebras. More precisely, let $U_q(\g)$ be
the quantum group associated with a symmetrizable Kac-Moody algebra
and let $U_{\A}(\g)$ be the integral form of $U_q(\g)$, where $\A =
\Z[q, q^{-1}]$. Then it was shown
that the Grothendieck group $K_{0}(R)$ of finitely generated graded
projective $R$-modules is isomorphic to $U_{\A}^{-}(\g)$, the
negative part of $U_{\A}(\g)$. Furthermore, for symmetric Kac-Moody
algebras, Varagnolo and Vasserot proved that the isomorphism classes
of principal indecomposable $R$-modules correspond to Lusztig's {\it
canonical basis} (or Kashiwara's {\it lower global basis}) under
this isomorphism \cite{VV09}. The algebra $R$ is called the {\it
Khovanov-Lauda-Rouquier algebra} associated with $\g$.

For each dominant integral weight $\lambda \in P^{+}$, the algebra
$R$ has a special quotient $R^{\lambda}$ which is called the {\it
cyclotomic quotient}. It was conjectured that the cyclotomic
quotient $R^{\lambda}$ gives a categorification of the irreducible
highest weight module $V(\lambda)$ \cite{KL09}. For type $A_\infty$
and $A_n^{(1)}$, this conjecture was proved in \cite{BK08,BK09}.
In \cite{KK11}, Kang and Kashiwara proved Khovanov-Lauda categorification
conjecture for {\it all} symmetrizable Kac-Moody algebras.
Webster also gave a proof of this conjecture by a completely different method \cite{Webster10}. 
In \cite{LV09}, the crystal version of this conjecture was proved.
That is, in \cite{LV09}, Lauda
and Vazirani investigated the crystal structure on the set of
isomorphism classes of irreducible graded modules over $R$ and
$R^\lambda$, and showed that these crystals are isomorphic to the
crystals $B(\infty)$ and $B(\lambda)$, respectively.

The purpose of this paper is to extend the study of
Khovanov-Lauda-Rouquier algebras to the case of {\it generalized
Kac-Moody algebras}. The generalized Kac-Moody algebras were
introduced by Borcherds in his study of Monstrous Moonshine
\cite{Bor88}, and they form an important class of algebraic
structure behind many research areas such as algebraic geometry,
number theory and string theory (see, for example, \cite{Bor92,
FRS97, GN98a, GN98b,HM96, HM98,  KangKwon00, Moore98, Nai95, Sch04,
Sch06}). In particular, the {\it Monster Lie algebra}, a special
example of generalized Kac-Moody algebras, played a crucial role in
proving the Moonshine conjecture \cite{Bor92}.
Moreover, the generalized Kac-Moody algebras draw more and more attention among mathematical
physicists due to their connection with string theory and other related topics.
The quantum
deformations of generalized Kac-Moody algebras and their integrable
highest weight modules were constructed in \cite{Kang95} and the
crystal basis theory for quantum generalized Kac-Moody algebras was
developed in \cite{JKK05, JKKS07}. In \cite{KO06}, the canonical
bases for quantum generalized Kac-Moody algebras were realized as
certain semisimple perverse sheaves, and in \cite{KKO09a,KKO09b}, a
geometric construction of crystals $B(\infty)$ and $B(\lambda)$ was
given using Lusztig's and Nakajima's quiver varieties, respectively.

In this paper, we construct and investigate the structure of
Khovanov-Lauda-Rouquier algebras $R$ and their cyclotomic quotients
$R^{\lambda}$ which give a categorification of quantum generalized
Kac-Moody algebras. Let $U_q(\g)$ be the quantum generalized
Kac-Moody algebra associated with a Borcherds-Cartan matrix
$A=(a_{ij})_{i,j\in I}$. We first define the Khovanov-Lauda-Rouquier
algebra $R$ in terms of generators and relations.
A big contrast with the case of Kac-Moody algebras is that the nil Hecke
algebras corresponding to the {\it imaginary} simple roots with norm $\le 0$
may have nonconstant twisting factors for commutation and braid relations.
In this work,
we choose any homogeneous polynomials ${\mathcal
P}_i(u,v)$ of degree $1 - \dfrac{a_{ii}}{2}$ and their variants
$\overline{\mathcal P}_{i}'$ and $\overline{\mathcal P}_{i}''$ $(i \in I)$ as these twisting
factors (see Definition \ref{def:KLR}).
When $a_{ii}=2$, we are reduced to the case of Kac-Moody algebras.
The role of these twisting factors is still mysterious.
For convenience, we also
give a diagrammatic presentation of the algebra $R$.

Next, we show that there exists an injective algebra homomorphism
$\Phi: U_{\A}^{-}(\g) \longrightarrow K_{0}(R)$, where $K_{0}(R)$ is
the Grothendieck group of finitely generated graded projective
$R$-modules (Theorem \ref{Thm:Phi is injective}). Thus $\text{Im}
\Phi$ gives a categorification of $U_q^{-}(\g)$.
To do this, we need to show that the quantum Serre relations are preserved by the map $\Phi$.
In general, $\Phi$
is not surjective
even for the case $A = (0)$. The whole Grothendieck group seems rather large and nontrivial.
However, if $a_{ii} \neq 0$ for all $i \in I$, we
can show that $\Phi$ is an isomorphism (Theorem \ref{Thm:iso of K0
and Uq}). As in the case of Kac-Moody algebras, we conjecture that,
if the Borcherds-Cartan matrix $A=(a_{ij})_{i,j \in I}$ is symmetric
and $a_{ii} \neq 0$ for all $i \in I$, then the isomorphism classes
of graded projective indecomposable $R$-modules correspond to
canonical basis elements under the isomorphism $\Phi$. We will
investigate this conjecture in a forthcoming paper following the
framework given in \cite{KO06, VV09}.

Now we focus on the crystal structures.
We would like to emphasize that
one of the key ingredients
of our approach is the {\it perfect basis theory} for generalized
Kac-Moody algebras and it can be applied to
the Kac-Moody algebras setting as well.
Our work is different from \cite{LV09} in this respect.
In \cite{BerKaz07}, Berenstein and Kazhdan
introduced the notion of perfect bases for integrable highest weight
modules $V(\lambda)$ $(\lambda \in P^{+})$ over Kac-Moody algebras.
They showed that the colored oriented graphs arising from perfect
bases are all isomorphic to the crystal $B(\lambda)$. Their work was
extended to the integrable highest weight modules over generalized
Kac-Moody algebras in \cite{KOP09}. In this work, we define the
notion of perfect bases for $U_q^{-}(\g)$ as a module over the {\it
quantum boson algebra} $B_q(\g)$. The existence of perfect basis for
$U_q^{-}(\g)$ is provided by constructing the {\it upper global
basis} (or {\it dual canonical basis}) of $U_q^{-}(\g)$. We also
show that the crystal arising from any perfect basis of $U^-_q(\g)$ is
isomorphic to the crystal $B(\infty)$ (Theorem \ref{Thm: uniqueness
of perfect graphs}).

With perfect basis theory at hand, we construct the crystal
$\Bklr{\infty}$ as follows. Let $G_0(R)$ be the Grothendieck group of
finite-dimensional graded $R$-modules and set $G_0(R)_{\Q(q)} =
\Q(q) \otimes_\A G_0(R)$. We denote by $\Bklr{\infty}$ the set of
isomorphism classes of irreducible graded $R$-modules and define the
crystal operators using induction and restriction functors.
Moreover, we show that $G_0(R)_{\Q(q)}$ has a $B_q(\g)$-module
structure and that if $a_{ii} \neq 0$ for all $i\in I$, then
$\Bklr{\infty}$ is a perfect basis of $G_0(R)_{\Q(q)}$. Therefore,
by the main theorem of perfect basis theory, we obtain a crystal
isomorphism (Theorem \ref{Thm: B(infty)}):
$$\Bklr{\infty} \simeq
B(\infty).$$

For a dominant integral weight $\lambda \in P^{+}$, we define the
{\it cyclotomic Khovanov-Lauda-Rouquier algebra} $R^\lambda$ to be
the quotient of $R$ by a certain two-sided ideal depending on
$\lambda$. Let $\Bklr{\lambda}$ denote the set of isomorphism
classes of irreducible graded $R^\lambda$-modules and define the
crystal operators using induction/restriction functors and
projection/inflation functors. It was shown in \cite{JKKS07} that
there exists a strict crystal embedding
$$ B(\lambda) \hookrightarrow
B(\infty) \otimes T_\lambda \otimes C.$$ If $a_{ii} \neq 0$ for all
$i \in I$, using the above crystal embedding, we construct a crystal
isomorphism (Theorem \ref{Thm: B(lambda)}):
$$ \Bklr{\lambda} \simeq B(\lambda).$$

In \cite{KKO11}, after this work was completed, Khovanov-Lauda cyclotomic conjecture
was proved for all symmetrizable generalize Kac-Moody algebras.

This paper is organized as follows. Section 1 contains a brief
review of quantum generalized Kac-Moody algebras and crystal bases.
In Section 2, we define the Khovanov-Lauda-Rouquier algebra $R$
associated with a Borcherds-Cartan matrix $A=(a_{ij})_{i,j \in I}$,
and investigate its algebraic structure and representation theory.
We construct a faithful polynomial representation of $R(\alpha)$ and
prove the Khovanov-Lauda-Rouquier algebra version of the quantum
Serre relations. In Section 3, we show that the algebra $R$ gives a
categorification of $U_\A^-(\g)$. We define a twisted bialgebra
structure on $K_0(R)$ using induction and restriction functors, and
show that there exists an injective algebra homomorphism $\Phi:
U_\A^-(\g) \longrightarrow K_0(R)$. In particular, we prove that
$U_\A^-(\g) \simeq K_0(R)$ when $a_{ii} \ne 0$ for all $i\in I$.
Section 4 is devoted to the theory of perfect bases. We define the
notion of perfect bases for $U_q^-(\g)$ as a $B_q(\g)$-module and
show that $U_q^{-}(\g)$ has a perfect basis by constructing the
upper global basis of $U_q^-(\g)$. The main theorem in Section 4
asserts that the crystals arising from perfect bases are all
isomorphic to $B(\infty)$. In Section 5, we study the crystal
structures on $\Bklr{\infty}$ and $\Bklr{\lambda}$. Using the theory
of perfect bases, we prove that there exists a crystal isomorphism
$\Bklr{\infty} \simeq B(\infty)$ when $a_{ii} \ne 0$ for $i\in I$.
Furthermore, we define the cyclotomic quotient $R^\lambda$ of $R$,
and investigate the basic properties of irreducible
$R^\lambda$-modules. Combining the isomorphism $\Bklr{\infty} \simeq
B(\infty)$ with the strict embedding $ B(\lambda) \hookrightarrow
B(\infty) \otimes T_\lambda \otimes C$, we obtain a crystal
isomorphism $\Bklr{\lambda} \simeq B(\lambda)$.

\vskip 3em

\section{Quantum generalized Kac-Moody algebras} \label{Sec:GKM}

Let $I$ be a countable (possibly infinite) index set. A matrix
$A=(a_{ij})_{i,j \in I}$ with $a_{ij} \in \Z$ is called an {\it even
integral Borcherds-Cartan matrix} if it satisfies (i) $a_{ii} = 2
\text{ or } a_{ii} \in 2 \Z_{\le 0}$, (ii) $a_{ij} \le 0 \text{ for
} i \neq j$, (iii) $a_{ij}=0 \text{ if and only if } a_{ji}=0$. For
$i \in I$, $i$ is said to be \emph{real} if $a_{ii}=2$ and is said
to be \emph{imaginary} otherwise. We denote by $\Ire$ the set of all
real indices and by $\Iim$ the set of all imaginary indices. In this
paper, we assume that $A$ is {\em symmetrizable}; i.e., there is a
diagonal matrix $D={\rm diag}( s_i \in \Z_{> 0} | i \in I)$ such that $DA$ is
symmetric.

A \emph{Borcherds-Cartan datum} $(A,P,\Pi,\Pi^{\vee})$ consists of
\begin{enumerate}
\item[(1)] a Borcherds-Cartan matrix $A$,
\item[(2)] a free abelian group $P$, the \emph{weight lattice},
\item[(3)] $\Pi= \{ \alpha_i \in P \mid \ i \in I \}$, the set of \emph{simple roots},
\item[(4)] $\Pi^{\vee}= \{ h_i \ | \ i \in I  \} \subset P^{\vee}:=\Hom(P,\Z)$, the set of \emph{simple coroots},
\end{enumerate}
satsifying the following properties:
\begin{enumerate}
\item[(a)] $\langle h_i,\alpha_j \rangle = a_{ij}$ for all $i,j \in I$,
\item[(b)] $\Pi$ is linearly independent,
\item[(c)] for any $i \in I$, there exists $\Lambda_i \in P$ such that
           $\langle h_j ,\Lambda_i \rangle =\delta_{ij}$ for all $j \in I$.
\end{enumerate}

Let $\mathfrak{h} = \Q \otimes_\Z P^{\vee}$. Since $A$ is symmetrizable, there is a symmetric biliear form $( \ | \ )$ on $\mathfrak{h}^*$ satisfying
$$ (\alpha_i | \alpha_j) = s_i a_{ij} \quad (i,j \in I). $$
We denote by $P^{+} := \{ \lambda \in P | \lambda(h_i) \in \Z_{\ge
0}, i \in I \}$ the set of \emph{dominant integral weights}. The
free abelian group $Q= \oplus_{i \in I} \Z \alpha_i$ is called the
\emph{root lattice}. Set $Q^{+}= \sum_{i \in I} \Z_{\ge 0}
\alpha_i$. For $\alpha = \sum k_i \alpha_i \in Q^{+}$, we denote by
$|\alpha|$ the {\it height} of $\alpha$: $|\alpha|=\sum k_i$.

Let $q$ be an indeterminate and $m,n \in \Z_{\ge 0}$. Set $c_i =
-\frac{1}{2}a_{ii}$ and $q_i = q^{s_i}$ for $i\in I$. If $i \in
\Ire$, define
\begin{equation*}
 \begin{aligned}
 \ &[n]_i =\frac{ q^n_{i} - q^{-n}_{i} }{ q_{i} - q^{-1}_{i} },
 \ &[n]_i! = \prod^{n}_{k=1} [k]_i ,
 \ &\left[\begin{matrix}m \\ n\\ \end{matrix} \right]_i=  \frac{ [m]_i! }{[m-n]_i! [n]_i! }.
 \end{aligned}
\end{equation*}
 If $a_{ii} <0$, we define
\begin{equation*}
 \begin{aligned}
 \ &\{n\}_i =\frac{ q^{c_i  n}_{i} - q^{-c_i  n}_{i} }{ q^{c_i}_{i} - q^{-c_i}_{i} } ,
 \ &\{n\}_i! = \prod^{n}_{k=1} \{k\}_i ,
 \ &\left\{ \begin{matrix}m \\ n\\ \end{matrix} \right\}_i=  \frac{ \{m\}_i! }{\{m-n\}_i! \{n\}_i! }.
\end{aligned}
\end{equation*}
If $a_{ii}=0$, we define
\begin{equation*}
 \begin{aligned}
 \ \ &\{n\}_i =n ,
 \ \ & \{n\}_i! = n!,
 \ \ &\left\{ \begin{matrix}m \\ n\\ \end{matrix} \right\}_i=\left( \begin{matrix}m \\ n\\ \end{matrix} \right).
 \end{aligned}
\end{equation*}

\begin{Def} \label{Def: GKM}
The {\em quantum generalized Kac-Moody algebra} $U_q(\g)$ associated
with a Borcherds-Cartan datum $(A,P,\Pi,\Pi^{\vee})$ is the associative
algebra over $\Q(q)$ with ${\bf 1}$ generated by $e_i,f_i$ $(i \in I)$ and
$q^{h}$ $(h \in P^{\vee})$ satisfying following relations:
\begin{enumerate}
  \item  $q^0=1, q^{h} q^{h'}=q^{h+h'} $ for $ h,h' \in P^{\vee},$
  \item  $q^{h}e_i q^{-h}= q^{\langle h, \alpha_i \rangle} e_i,
          \ q^{h}f_i q^{-h} = q^{-\langle h, \alpha_i \rangle }f_i$ for $h \in P^{\vee}, i \in I$,
  \item  $e_if_j - f_je_i =  \delta_{ij} \dfrac{K_i -K^{-1}_i}{q_i- q^{-1}_i }, \ \ \mbox{ where } K_i=q_i^{ h_i},$
  \item  $\displaystyle \sum^{1-a_{ij}}_{r=0} (-1)^r \left[\begin{matrix}1-a_{ij} \\ r\\ \end{matrix} \right]_i e^{1-a_{ij}-r}_i
         e_j e^{r}_i =0 \quad \text{ if } i\in \Ire \text{ and } i \ne j, $
  \item $\displaystyle \sum^{1-a_{ij}}_{r=0} (-1)^r \left[\begin{matrix}1-a_{ij} \\ r\\ \end{matrix} \right]_i f^{1-a_{ij}-r}_if_j
        f^{r}_i=0 \quad \text{ if } i \in \Ire \text{ and } i \ne j, $
  \item $ e_ie_j - e_je_i=0,\ f_if_j-f_jf_i =0  \ \ \mbox{ if }a_{ij}=0.$
\end{enumerate}
\end{Def}

Let $U_q^{+}(\g)$ (resp.\ $U_q^{-}(\g)$) be the subalgebra of $U_q(\g)$ generated by the elements $e_i$
(resp.\ $f_i$), and let $U^0_q(\g)$ be the subalgebra of $U_q(\g)$ generated
by $q^{h}$ $(h \in P^{\vee})$. Then we have the \emph{triangular decomposition}
$$ U_q(\g) \cong U^{-}_q(\g) \otimes U^{0}_q(\g) \otimes U^{+}_q(\g),$$
and the {\em root space decomposition}
$$U_q(\g) = \bigoplus_{\alpha \in Q} U_q(\g)_{\alpha},$$
where $U_q(\g)_{\alpha}:=\{ x \in U_q(\g) \mid q^{h}x q^{-h}=q^{\langle h, \alpha \rangle}x \text{ for any } h \in P^{\vee} \}$.
Define a $\Q$-algebra automorphism \ $\bar {}  : U^{-}_q(\g) \to U^{-}_q(\g)$ by
\begin{equation}\label{Eq:bar involution}
e_i \mapsto e_i, \ \ f_i \mapsto f_i, \ \ q^h \mapsto q^{-h}, \ \ q
\mapsto q^{-1}.
\end{equation}

Let $\A =\Z[q,q^{-1}]$. For $n \in \Z_{>0}$, set
$$ e_i^{(n)} = \begin{cases} \dfrac{e_i^{n}}{[n]_i!} \ \  & \text{if} \ i \in \Ire ,\\
                             e_i^{n} \ \  & \text{if} \ i \in \Iim, \end{cases}
\quad \quad f_i^{(n)} = \begin{cases} \dfrac{f_i^{n}}{[n]_i!} \ \  & \text{if} \ i \in \Ire, \\
                             f_i^{n} \ \  & \text{if} \ i \in \Iim, \end{cases}
$$
and denote by $U^{-}_{\A}(\g)$ (resp.\ $U^{+}_{\A}(\g)$) the $\A$-sualgebra of $U_q^{-}(\g)$ generated by $f_i^{(n)}$
(resp.\ $e_i^{(n)}$).

Define a twisted algebra structure on  $U^{-}_q(\g) \otimes U^{-}_q(\g)$ as follows:
$$ (x_1 \otimes x_2)(y_1 \otimes y_2)= q^{-( \beta_2 | \gamma_1)}(x_1 y_1 \otimes x_2 y_2 ),$$
where $x_i \in U^-_q(\g) _{\beta_i}$ and $ y_i \in U^-_q(\g) _{\gamma_i} $ ($i=1,2$).
Then there is an algebra homomorphism $\Delta_0: U^{-}_q(\g) \to U^{-}_q(\g) \otimes U^{-}_q(\g)$
satisfying
\begin{align} \label{Eq:def of Delta 0}
 \Delta_0(f_i) := f_i \otimes {\bf 1} + {\bf 1} \otimes f_i\ (i\in I).
\end{align}

Fix $i \in I$. For any $P \in U^{-}_q(\g)$, there exist unique elements $Q,R \in U^{-}_q(\g)$ such that
$$ e_i P - P e_i = \frac{K_i Q - K_i^{-1}R}{q_i -q_i^{-1}}.$$
We define the endomorphisms ${e_i'},\bse_i'': U^{-}_q(\g) \to U^{-}_q(\g) $ by
$$ {\bse_i'}(P)=R,\ \  \mathrm{e''_i}(P)=Q .$$
Consider ${\bsf_i}$ as the endomorphism of $U^{-}_q(\g)$ defined by left multiplication by $f_i$.
Then we have
\begin{equation} \label{eq: special commute}
\begin{aligned}
{\bse_i'} {\bsf_j} =  \delta_{ij} + q_i^{-a_{ij}}{\bsf_j}{\bse_i'}.
 \end{aligned}
\end{equation}

\begin{Def}
The {\it quantum boson algebra}  $B_{q}(\g)$ associated with a
Borcherds-Cartan matrix $A$ is the associative algebra over $\Q(q)$
generated by ${\bse_i'},{\bsf_i}$ $(i \in I)$ satisfying the
following relations:
\begin{enumerate}
  \item ${\bse_i'} {\bsf_j} = q_i^{-a_{ij}}{\bsf_j}{\bse_i'} + \delta_{ij}$,
  \item $\displaystyle \sum_{r=0}^{1-a_{ij}} (-1)^{r} \left[\begin{matrix}1-a_{ij} \\ r\\ \end{matrix} \right]_i
        {\bse_i'}^{1-a_{ij}-r}{\bse_j'}{\bse_i'}^{r}=0$ $\quad$ if $i \in \Ire$, $i \neq j$,
  \item $\displaystyle \sum_{r=0}^{1-a_{ij}} (-1)^{r} \left[\begin{matrix}1-a_{ij} \\ r\\ \end{matrix} \right]_i
        {\bsf_i}^{1-a_{ij}-r}{\bsf_j}{\bsf_i}^{r}=0$ $\quad$ if $i \in \Ire$, $i \neq j$,
  \item ${\bse_i'}{\bse_j'}-{\bse_j'}{\bse_i'}=0$, ${\bsf_i}{\bsf_j}-{\bsf_j}{\bsf_i}=0$ $\quad$ if $a_{ij}=0$.
\end{enumerate}
\end{Def}

The algebra $U^{-}_q(\g)$ has a $B_q(\g)$-module structure from the equation
$\eqref{eq: special commute}$ (\cite{JKK05,Kash91}).

\begin{Prop} \ \label{Prop:Highest vector 1}
\begin{enumerate}
\item If $x \in U^{-}_q(\g)$ and $\bse_i' x=0$ for all $i\in I$, then $x$ is a constant multiple of ${\bf 1}$.
\item $U^{-}_q(\g)$ is a simple $B_q(\g)$-module.
\end{enumerate}
\end{Prop}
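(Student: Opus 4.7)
The plan is to derive both parts from a single structural fact: the existence of Kashiwara's weight-graded, nondegenerate $\Q(q)$-bilinear form $(\cdot,\cdot)$ on $U_q^-(\g)$, normalized by $({\bf 1},{\bf 1})=1$, satisfying the adjunction
\[
(\bsf_i x,\,y) \;=\; (x,\,\bse_i' y) \qquad (x,y \in U_q^-(\g),\ i \in I).
\]
This form is built recursively using the twisted coproduct $\Delta_0$ of \eqref{Eq:def of Delta 0}: the twist in $\Delta_0$ is engineered exactly so that the adjunction is compatible with the twisted commutator \eqref{eq: special commute}. Elements of distinct weights are orthogonal, and the restriction to each weight space $U_q^-(\g)_{-\alpha}$ is nondegenerate. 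This is classical for Kac--Moody algebras (cf.\ \cite{Kash91}) and has been extended to the Borcherds setting in \cite{JKK05}, so my first step is to invoke it rather than re-prove it.

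Once the form is in hand, part (1) follows immediately. Assume $\bse_i' x = 0$ for all $i$. For any monomial $y = \bsf_{i_1}\bsf_{i_2}\cdots \bsf_{i_n}\cdot {\bf 1}$ of positive height $n\ge 1$, one application of the adjunction gives
\[
(y,x) \;=\; \bigl(\bsf_{i_1}(\bsf_{i_2}\cdots\bsf_{i_n}\cdot{\bf 1}),\,x\bigr) \;=\; \bigl(\bsf_{i_2}\cdots\bsf_{i_n}\cdot{\bf 1},\,\bse_{i_1}' x\bigr) \;=\; 0.
\]
Since such monomials span the positive-height part of $U_q^-(\g)$ and the form is weight-nondegenerate, every positive-weight component of $x$ must vanish, leaving $x \in U_q^-(\g)_0 = \Q(q)\cdot{\bf 1}$.

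For part (2), I would bootstrap from (1) by a weight-reduction argument. Let $M$ be a nonzero $B_q(\g)$-submodule and pick a nonzero $m\in M$. Decompose $m = \sum_\alpha m_{-\alpha}$ into weight components and let $N := \max\{\,|\alpha| : m_{-\alpha}\neq 0\,\}$. If $N=0$, then $m$ is already a nonzero scalar, so ${\bf 1}\in M$. If $N>0$, choose $\alpha_0$ with $|\alpha_0|=N$ and $m_{-\alpha_0}\ne 0$; by (1) there exists $i_1\in I$ with $\bse_{i_1}' m_{-\alpha_0}\ne 0$. Since each $\bse_i'$ is weight-homogeneous of weight $\alpha_i$, the element $\bse_{i_1}' m \in M$ has a nonzero component at height $N-1$ and no components of height $\ge N$. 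Iterating strictly decreases the top height, so after at most $N$ applications I reach a nonzero element of $M$ of weight $0$, i.e., a nonzero scalar multiple of ${\bf 1}$. Thus ${\bf 1}\in M$, and since $M$ is stable under each $\bsf_i$ and the monomials $\bsf_{i_1}\cdots\bsf_{i_n}\cdot{\bf 1}$ exhaust $U_q^-(\g)$, we conclude $M = U_q^-(\g)$.

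The main obstacle is the construction and verification of the inner product used for (1); a direct inductive proof bypassing it would need to analyze $\bse_i'$-stability on monomials modulo the twisted commutations \eqref{eq: special commute} and the relations of Definition \ref{Def: GKM}, which becomes delicate for imaginary simple roots where the $q$-Serre relations are absent and replaced merely by commutation when $a_{ij}=0$. With the inner product available, both statements reduce to straightforward weight bookkeeping.
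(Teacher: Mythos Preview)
Your proposal is correct and follows essentially the same route as the paper, which simply defers to \cite[Lemma 3.4.7, Corollary 3.4.9]{Kash91}: Kashiwara's argument for (1) is precisely the adjunction-plus-nondegeneracy computation you wrote, and (2) is the standard weight-reduction using (1). Your care in handling a possibly inhomogeneous $m\in M$ in part (2) is a nice touch; the only thing to flag is that in the paper's logical order the form $(\ ,\ )_K$ and its nondegeneracy (Lemma~\ref{Lem:nondegenerate pairing in GKM}) appear \emph{after} this proposition, so you are right to point to \cite{JKK05} as the independent source for the Borcherds case rather than to the paper itself.
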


\begin{proof}
The proof is almost the same as in \cite[Lemma 3.4.7, Corollary 3.4.9]{Kash91}.
\end{proof}

Consider the anti-automorphism $\varphi$ on $B_q(\g)$ defined by
$$ \varphi(\bse_i') = \bsf_i \ \text{ and } \ \varphi(\bsf_i) = \bse_i' .$$
We define the symmetric bilinear forms $( \ , \ )_K$ and $( \ , \
)_L$ on $U_q^{-}(\g)$ as follows (cf. \cite[Propostion
3.4.4]{Kash91}, \cite[Chapter 1]{Lus93}):
\begin{equation} \label{Eq:def of ()K and ()L}
\begin{aligned}
&  ( {\bf 1},{\bf 1} )_K=1,\ \ (b x,y)_K=(x, \varphi(b) y)_K, \\
&  ( {\bf 1},{\bf 1} )_L=1, \ \ ( f_i,f_j )_L =\delta_{ij}(1-q_i^{2})^{-1},
   \ \ (x,yz)_L=(\Delta_0(x), y \otimes z)_L
\end{aligned}
\end{equation}
for $ x,y,z \in U_q^{-}(\g)$ and $b \in B_q(\g)$.

\begin{Lem} \ \label{Lem:nondegenerate pairing in GKM}
\begin{enumerate}
\item The bilinear form $(\ ,\ )_K$ on $U_q^{-}(\g)$ is nondegenerate.
\item  For homogeneous elements $x \in U_q^{-}(\g)_{-\alpha}$ and
$y \in U_q^{-}(\g)_{-\beta}$, we have
$$ (x,y)_L = \prod_{i\in I} \dfrac{1}{(1-q_{i}^{2})^{k_i}} (x,y)_K,$$
where $\alpha=\sum_{i\in I}k_i\alpha_{i} \in Q^+$. Hence $(\ ,\ )_L$ is nondegenerate.
\item  For any $x, y \in U_q^{-}(\g)$, we have
$$(\bse'_i x ,y)_L = (1-q^2_i)(x, \bsf_i y)_L. $$
\end{enumerate}
\end{Lem}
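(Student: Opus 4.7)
The plan is to prove the three parts in the order (3), (2), (1), following the template of Kashiwara's analysis in \cite[Chapter 3]{Kash91}; since $\Delta_0$ and the boson operators $\bse_i',\bsf_i$ are defined uniformly over all $i \in I$, the arguments from the Kac--Moody setting carry over with only minor cosmetic changes.

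For (3), I extend $(\ ,\ )_L$ multiplicatively to $U_q^{-}(\g)^{\otimes 2}$ and write $\Delta_0(x) = \sum x^{(1)} \otimes x^{(2)}$ in homogeneous components. Then
\[
(x, \bsf_i y)_L = (\Delta_0(x), f_i \otimes y)_L = \sum (x^{(1)}, f_i)_L \, (x^{(2)}, y)_L .
\]
The factor $(x^{(1)}, f_i)_L$ vanishes unless $x^{(1)} \in U_q^-(\g)_{-\alpha_i}$, in which case $x^{(1)} = c f_i$ and $(x^{(1)}, f_i)_L = c(1-q_i^2)^{-1}$. The essential step is identifying the sum of these first-factor components with $\bse_i'(x)$ itself. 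This is the standard compatibility stating that the weight-$(-\alpha_i)$ part of the first tensor factor of $\Delta_0(x)$ is precisely $f_i \otimes \bse_i'(x)$; it is obtained by a direct expansion of $e_i P - P e_i$ and reading off the $K_i^{-1}$-coefficient in the defining relation of $\bse_i'$.

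For (2), I induct on $|\alpha|$, noting first that both forms vanish when $\mathrm{wt}(x) \ne \mathrm{wt}(y)$: for $(\ ,\ )_K$ by adjunction with $\bse_i'\mathbf{1} = 0$, and for $(\ ,\ )_L$ because $\Delta_0$ is weight-preserving and $(f_i, f_j)_L$ is proportional to $\delta_{ij}$. The case $|\alpha| = 0$ is immediate. For the step, every element of $U_q^-(\g)_{-\alpha}$ with $|\alpha| > 0$ is a sum of monomials $f_i y'$, so by linearity it suffices to check $y = f_i y'$ with $y' \in U_q^-(\g)_{-(\alpha-\alpha_i)}$. Part (3) gives $(x,y)_L = (1-q_i^2)^{-1}(\bse_i' x, y')_L$, while the adjoint property of $(\ ,\ )_K$ with $\varphi(\bsf_i) = \bse_i'$ gives $(x,y)_K = (\bse_i' x, y')_K$. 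Applying the induction hypothesis at weight $-(\alpha - \alpha_i)$ and multiplying by the extra $(1-q_i^2)^{-1}$ recovers the stated scaling; nondegeneracy of $(\ ,\ )_L$ then follows from (1) via this formula.

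For (1), let $N = \{x \in U_q^-(\g) \mid (x, U_q^-(\g))_K = 0\}$. The adjoint property shows $N$ is a $B_q(\g)$-submodule, and Proposition \ref{Prop:Highest vector 1}(2) asserts that $U_q^-(\g)$ is simple over $B_q(\g)$; since $(\mathbf{1},\mathbf{1})_K = 1$ forces $\mathbf{1} \notin N$, one concludes $N = 0$, and the right radical dies by the same argument via $\varphi^2 = \mathrm{id}$. The main obstacle is the identification in (3) of the weight-$(-\alpha_i)$ first-factor of $\Delta_0(x)$ with $f_i \otimes \bse_i'(x)$: this is the one place where a concrete computation with the commutation relations of $U_q(\g)$ is genuinely needed, and because $\bse_i'$ is not a derivation when $i$ is imaginary, the induction must be set up carefully against PBW-type monomials rather than via a clean Leibniz rule.
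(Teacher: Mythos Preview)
Your argument is correct but follows a different route from the paper. The paper proves the three parts in the order $(1)\to(2)\to(3)$: for (1) it simply cites \cite{JKK05}; for (2) it invokes the multiplicativity of $(\ ,\ )_K$ under $\Delta_0$ established in \cite[(2.4)]{SV01}, namely $(x,yz)_K=(\Delta_0(x),y\otimes z)_K$, and then inducts on $|\alpha|$ comparing directly with the defining multiplicativity of $(\ ,\ )_L$; (3) then drops out of (2) by a one-line weight count. You reverse the flow: you first prove (3) from the identification of the $(-\alpha_i)$-component of the first tensor factor of $\Delta_0(x)$ with $f_i\otimes\bse_i'(x)$, then deduce (2) from (3) by induction, and finally recover (1) from the simplicity of $U_q^-(\g)$ as a $B_q(\g)$-module (Proposition~\ref{Prop:Highest vector 1}). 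Both approaches ultimately rest on the same compatibility between $\Delta_0$ and the boson operators, but packaged differently: the paper outsources it to \cite{SV01}, while you unpack it by hand. Your route is more self-contained and makes the logical dependence on Proposition~\ref{Prop:Highest vector 1} explicit; the paper's is shorter given the external citation. One small remark: your worry that $\bse_i'$ fails to be a derivation \emph{specifically} for imaginary $i$ is misplaced, since the twisted Leibniz rule $\bse_i'\bsf_j=\delta_{ij}+q_i^{-a_{ij}}\bsf_j\bse_i'$ holds uniformly for all $i\in I$ and is exactly what drives the induction in either approach.
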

\begin{proof}
The assertion (1) is proved in \cite{JKK05}.

It was shown in \cite[(2.4)]{SV01} that the bilinear form $(\ , \
)_K$ satisfies
$$ (x,yz)_K=\sum_n(x^{(1)}_{n},y )_K( x^{(2)}_{n},z)_K,$$
where $\Delta_0(x)=\sum_n x^{(1)}_{n} \otimes x^{(2)}_{n}$. Then the
assertion (2) can be proved by induction on $|\alpha|$.

To prove the assertion (3), without loss of generality, we may
assume that $x \in U_q^{-}(\g)_{-\alpha}$, where $\alpha = -\sum_{i}
k_i \alpha_i \in -Q^{+}$. Then by (2) and the definition of $(\ , \
)_K$, we have
\begin{align*}
(\bse'_i x,y)_L
& = \dfrac{1}{(1-q_i^{2})^{k_i -1}}  \prod_{j \neq i } \dfrac{1}{(1-q_j^{2})^{k_j}} (\bse'_i x,y)_K \\
&=  \dfrac{1-q^2_i}{(1-q_i^{2})^{k_i }}  \prod_{j \neq i } \dfrac{1}{(1-q_j^{2})^{k_j}} (x, \bsf_i y)_K \\
&= (1-q^2_i) (x,\bsf_i y)_L,
\end{align*}
which proves the assertion (3).
\end{proof}

We now briefly review the crystal basis theory of quantum
generalized Kac-Moody algebras which was
developed in \cite{JKK05,JKKS07}. 
For any homogeneous element $u \in U^{-}_q(\g)$, $u$ can be expressed uniquely as
\begin{align} \label{eq: lowerpart i-string decomposition}
u = \sum_{l \ge 0} f_i^{(l)}u_l,
\end{align}
where ${\bse_i'} u_l=0$ for every $l \ge 0$ and $u_l=0$ for $l \gg 0$.
We call it the {\it i-string decomposition} of $u$ in $U_q^{-}(\g)$.
We define the {\it lower Kashiwara operators} $\tilde{e}_i$, $\tilde{f}_i$ $(i \in I)$
of $U_q^{-}(\g)$ by
$$ \tilde{e}_i u = \sum_{k \ge 1}f_i^{(k-1)}u_k, \ \ \tilde{f}_i u = \sum_{k \ge 0}f_i^{(k+1)}u_k. $$\
\
Let $\A_0=\{f/g \in \Q(q) \mid f,g \in \Q[q],g(0) \neq 0 \}$.
\begin{Def}
A {\it lower crystal basis} of $U^{-}_q(\g)$ is a pair $(L,B)$
satisfying the following conditions:
\begin{enumerate}
\item $L$ is a free $\A_0$-module of $U^{-}_q(\g)$ such that $U^{-}_q(\g)=\Q(q) \otimes_{\A_0} L$ and
      $L = \bigoplus_{\alpha \in Q^+} L_{-\alpha}$, where $L_{-\alpha} := L \cap
      U^{-}_q(\g)_{-\alpha}$,
\item $B$ is a $\Q$-basis of $L/ q L$ such that
$B = \bigsqcup_{\alpha \in Q^+} B_{-\alpha}$, where $B_{-\alpha} := B \cap (L_{-\alpha}/ q
L_{-\alpha})$,
\item $\tilde{e}_i B \subset B \sqcup \{0\}, \  \tilde{f}_i B \subset B $ for all $i \in
I$,
\item For $ b,b'\in B$ and $i \in I,$ $ b' = \tilde{f}_i b$ if and only if $b = \tilde{e}_i b'$.
\end{enumerate}

\end{Def}

 \begin{Prop} \cite[Theorem 7.1]{JKK05} \label{Prop: crystal bases of lowerpart }
Let $L(\infty)$ be the free $\A_0$-module of $U_q^-(\g)$
generated by $\{\tilde{f}_{i_1} \cdots \tilde{f}_{i_r} {\bf 1} \mid
r \ge 0 , i_k \in I\}$ and let
$$B(\infty) = \{\tilde{f}_{i_1} \cdots \tilde{f}_{i_r}{\bf 1}+q L(\infty)
\mid r \ge 0 , i_k \in I\}\setminus\{0\}.$$
Then the pair $(L(\infty),B(\infty))$ is a unique lower crystal basis of $U^{-}_q(\g)$.
\end{Prop}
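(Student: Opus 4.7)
The plan is to follow Kashiwara's grand loop argument, adapted to the GKM setting where the central new feature is the presence of imaginary simple roots with $a_{ii} \le 0$. The well-definedness of $\tilde{e}_i$ and $\tilde{f}_i$ via the $i$-string decomposition \eqref{eq: lowerpart i-string decomposition} is the first thing to establish: for each fixed weight space $U_q^{-}(\g)_{-\alpha}$ I would show that the map $\bigoplus_{l \ge 0} f_i^{(l)} \ker(\bse_i')_{-\alpha+l\alpha_i} \to U_q^{-}(\g)_{-\alpha}$ given by multiplication is a vector space isomorphism. For real $i$ this is the usual $U_{q_i}(\mathfrak{sl}_2)$-decomposition. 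For imaginary $i$, one uses the quantum boson relation $\bse_i' f_i^n = q_i^{-n a_{ii}} f_i^n \bse_i' + c_n f_i^{n-1}$ (where $c_n = n$ if $a_{ii}=0$ and $c_n$ is a nonzero scalar coming from $\{n\}_i$ if $a_{ii} < 0$), together with Proposition \ref{Prop:Highest vector 1} to iteratively strip off factors of $f_i$.

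With the operators in hand, I would prove the four crystal basis axioms by simultaneous induction on the height $|\alpha|$, establishing in parallel:
\begin{enumerate}
\item[(A)] $\tilde{e}_i L(\infty)_{-\alpha} \subset L(\infty)$ and $\tilde{f}_i L(\infty)_{-\alpha} \subset L(\infty)$;
\item[(B)] the vectors $\tilde{f}_{i_1} \cdots \tilde{f}_{i_r}{\bf 1}$ of weight $-\alpha$ are $\A_0$-linearly independent modulo $qL(\infty)$;
\item[(C)] for $b \in B(\infty)_{-\alpha}$, $\tilde{e}_i \tilde{f}_i b = b$, and if $\tilde{e}_i b \ne 0$ then $\tilde{f}_i \tilde{e}_i b = b$.
\end{enumerate}
The crucial tool for (B) is the bilinear form $(\ ,\ )_L$ of \eqref{Eq:def of ()K and ()L}: by Lemma \ref{Lem:nondegenerate pairing in GKM}(3) the operators $\bse_i'$ and $(1-q_i^2)\bsf_i$ are adjoint, and a short computation using the $i$-string decomposition yields
\[
\bigl(\tilde{f}_i x,\, \tilde{f}_i y\bigr)_L \equiv (x,y)_L \pmod{q\A_0},
\]
from which the orthogonality of the generators of $B(\infty)$ in $L(\infty)/qL(\infty)$, and hence linear independence, follows. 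The stability statement (A) is then reduced to checking it on weight spaces of smaller height via the $i$-string decomposition of any element $u = \sum_l f_i^{(l)} u_l$ and on factors $\tilde{f}_j u$ with $j \neq i$ by inductively commuting $\tilde{e}_i$ past $f_j$ or $f_j^{(n)}$.

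The main obstacle, as expected, will be the inductive step for $\tilde{e}_i \tilde{f}_j$ with $i \neq j$ and at least one of $i,j$ imaginary. In the Kac-Moody case Kashiwara uses the quantum Serre relation to clear out mixed monomials, but here for imaginary indices there is no such relation available, and the commutation factor $q_i^{-a_{ij}}$ in the boson algebra together with the nontrivial scalars $\{n\}_i$ must be tracked carefully. The key point is that modulo $q\A_0$ the $q$-integers $\{n\}_i$ reduce to ordinary integers (or to $n$ itself if $a_{ii}=0$), so the reductions of the crystal operators are defined over $\Q$ and the usual combinatorial identities survive. Uniqueness is then routine: any lower crystal basis $(L',B')$ must contain ${\bf 1} + qL'$ by the highest weight condition via Proposition \ref{Prop:Highest vector 1}, and closure of $B'$ under $\tilde{f}_i$ together with axiom (4) in the definition forces $B(\infty) \subset B'$; a dimension count in each weight space gives $B' = B(\infty)$ and $L' = L(\infty)$.
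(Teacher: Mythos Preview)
The paper does not give its own proof of this proposition; it is simply quoted from \cite[Theorem 7.1]{JKK05}. So there is nothing in the paper to compare your proposal against.

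That said, your outline is broadly in the spirit of what \cite{JKK05} actually does: adapt Kashiwara's grand loop to the generalized Kac-Moody setting, with the new work concentrated on handling imaginary simple roots. A few comments on your sketch as it stands. First, the inductive package in the actual grand loop is considerably larger than your (A)--(C): one simultaneously proves statements about $V(\lambda)$, about the tensor product compatibility of crystal lattices, and about the map $U_q^-(\g) \to V(\lambda)$, because several of these feed into each other. Your (B) alone, for instance, does not obviously give that $L(\infty)_{-\alpha}$ is all of $U_q^-(\g)_{-\alpha} \cap (\A_0\text{-span})$, which you need for the rank count. Second, your claim that ``modulo $q\A_0$ the $q$-integers $\{n\}_i$ reduce to ordinary integers'' is false when $a_{ii}<0$: $\{n\}_i = (q_i^{c_in}-q_i^{-c_in})/(q_i^{c_i}-q_i^{-c_i})$ reduces to $1$ modulo $q$, not to $n$. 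This actually helps rather than hurts, but you should get the computation right. Third, the step you flag as the ``main obstacle'' --- commuting $\tilde e_i$ past $\tilde f_j$ for $i \ne j$ with one of them imaginary --- is genuinely delicate in \cite{JKK05} and is not dispatched by the sentence you wrote; it requires a careful analysis of how the $i$-string decomposition interacts with $f_j$, and when $a_{ij}=0$ the commutation is exact, while when $a_{ij}<0$ one must use the tensor product rule. If you intend to write out a full proof rather than cite \cite{JKK05}, you will need to supply all of this.
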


Let $\mathcal{O}_{int}$ be the abelian category of $U_q(\g)$-modules
defined in \cite[Definition 3.1]{JKK05}. For each $\lambda \in P^+$,
let $V(\lambda)$ denote the irreducible highest weight
$U_q(\g)$-module with highest weight $\lambda$. It is generated by a
unique highest weight vector $v_{\lambda}$ with defining relations:
\begin{equation} \label{eq:hw module}
\begin{aligned}
& q^h v_{\lambda} = q^{\langle h, \lambda \rangle} v_{\lambda} \ \
\text{for all} \ h \in P^{\vee}, \\
& e_i v_{\lambda} = 0 \ \ \text{for all} \ i \in I, \\
& f_i^{\langle h_i, \lambda \rangle + 1} v_{\lambda} =0 \ \
\text{for} \ i \in \Ire, \\
& f_i v_{\lambda} = 0 \ \ \text{for} \ i\in \Iim \ \text{with} \
\langle h_i, \lambda \rangle =0.
\end{aligned}
\end{equation}
It was proved in \cite[Theorem 3.7]{JKK05} that the category
$\mathcal{O}_{int}$ is semisimple and that all the irreducible
objects have the form $V(\lambda)$ for $\lambda \in P^{+}$.

Let $M$ be a $U_q(\g)$-module in the category $\mathcal{O}_{int}$. For any $i \in I$ and $u \in M_{\mu}$,
the element $u$ can be expressed uniquely as
$$
u = \sum_{k \ge 0} f_i^{(k)}u_k,
$$
where $u_k \in M_{\mu + k \alpha_i}$ and $e_i u_k=0$. We call it the {\it i-string decomposition} of $u$.
We define
the {\it lower Kashiwara operators} $\tilde{e}_i,\tilde{f}_i \ (i \in I)$ by
$$ \tilde{e}_i u = \sum_{k \ge 1}f_i^{(k-1)}u_k, \ \ \tilde{f}_i u = \sum_{k \ge 0}f_i^{(k+1)}u_k. $$

\begin{Def}
A {\it lower crystal basis} of $U_q(\g)$-module $M$ is a pair
$(L,B)$ satisfying the following conditions:
\begin{enumerate}
\item $L$ is a free $\A_0$-module of $M$ such that $M = \Q(q) \otimes_{\A_0} L$
and $L = \bigoplus_{\lambda \in P}L_{\lambda}$, where $L_{\lambda}
:= L \cap M_\lambda$,
\item $B$ is $\Q$-basis of $L/qL$ such that $B = \bigsqcup_{\lambda \in P} B_{\lambda}$,
where $B_{\lambda} := B \cap
L_{\lambda}/qL_{\lambda}$,
\item $\tilde{e}_i B \subset B \sqcup \{ 0 \} $, \ $\tilde{f}_i B \subset B \sqcup \{ 0 \} $ for all $i \in I$,
\item For $b,b' \in B$ and $i\in I$, $b'=\tilde{f}_i b$ if and only if $b= \tilde{e}_i b'$.
\end{enumerate}
\end{Def}

\begin{Prop} \cite[Theorem 7.1]{JKK05} \label{Prop: crystal bases of integrable module }
For $\lambda \in P^+$, let $L(\lambda)$ be the free $\A_0$-module
of $V(\lambda)$ generated by $\{ \tilde{f_{i_1}} \cdots
\tilde{f_{i_r}}v_{\lambda} \mid r \ge 0 , i_k \in I\}$ and let
$$B(\lambda) = \{\tilde{f}_{i_1} \cdots \tilde{f}_{i_r}v_{\lambda}+qL(\lambda) \mid r \ge 0 , i_k \in I\}\setminus\{0\}.$$
Then the pair $(L(\lambda),B(\lambda))$ is a unique lower crystal basis of $V(\lambda)$.
\end{Prop}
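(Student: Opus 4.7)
The plan is to adapt Kashiwara's grand loop induction to the generalized Kac-Moody setting, working via the natural surjection $\pi_\lambda: U_q^-(\g) \twoheadrightarrow V(\lambda)$ given by $u \mapsto u \cdot v_\lambda$. By the defining relations of $V(\lambda)$ in \eqref{eq:hw module}, the kernel $J(\lambda)$ is the left ideal of $U_q^-(\g)$ generated by $f_i^{\langle h_i,\lambda\rangle+1}$ for $i \in \Ire$ and by $f_i$ for $i \in \Iim$ with $\langle h_i,\lambda\rangle = 0$. The first task is to show $\pi_\lambda(L(\infty)) = L(\lambda)$ and $\pi_\lambda(B(\infty)) \supseteq B(\lambda) \sqcup \{0\}$, which is immediate from the generation of $L(\infty)$ and $L(\lambda)$ by $\tilde{f}$-monomials, \emph{provided} the Kashiwara operators are compatible with $\pi_\lambda$ in the sense $\pi_\lambda(\tilde f_i u) = \tilde f_i \pi_\lambda(u)$ and $\pi_\lambda(\tilde e_i u) = \tilde e_i \pi_\lambda(u)$ whenever $\pi_\lambda(u) \notin qL(\lambda)$.

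Next, I would carry out the inductive step on $|\alpha|$ for $-\alpha$ the weight involved, simultaneously establishing for all $\mu \in P$: (a) $\tilde{e}_i L(\lambda)_\mu, \tilde{f}_i L(\lambda)_\mu \subset L(\lambda)$; (b) $B(\lambda) \setminus \{0\}$ is a $\Q$-basis of $L(\lambda)/qL(\lambda)$; (c) the bijection condition $\tilde{f}_i b = b' \Leftrightarrow \tilde{e}_i b' = b$ holds on $B(\lambda)$. The central technical input is a polarization on $V(\lambda)$: one introduces the contravariant bilinear form $(\cdot,\cdot)_\lambda$ on $V(\lambda)$ (the induced form from $(\cdot,\cdot)_K$ modulo $J(\lambda)$) and checks it is nondegenerate and that $L(\lambda) = \{v \in V(\lambda) \mid (v,L(\lambda))_\lambda \subset \A_0\}$. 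Once the form is in place, the almost-orthogonality of the $\tilde f$-monomials modulo $q$ yields both linear independence of $B(\lambda)$ and stability of $L(\lambda)$ under the Kashiwara operators. Uniqueness then follows from a standard argument: any other lower crystal basis $(L',B')$ would satisfy $L' \subseteq L(\lambda)$ and $L(\lambda) \subseteq L'$ by comparing them on each weight space via the same polarization.

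The principal obstacle is the presence of imaginary simple roots. For $i \in \Iim$ with $a_{ii} \leq 0$, the divided powers $f_i^{(n)}$ are just $f_i^n$, so the $i$-string decomposition \eqref{eq: lowerpart i-string decomposition} behaves very differently from the real case: the analog of the $\mathfrak{sl}_2$-structure on an $i$-string fails, and the commutation relation $\bse_i' \bsf_j = q_i^{-a_{ij}} \bsf_j \bse_i' + \delta_{ij}$ from \eqref{eq: special commute} must be exploited directly to control the action of $\tilde e_i$ on $\tilde f_i^k u$. One must therefore perform the inductive step separately for real and imaginary $i$; for imaginary $i$ the argument reduces to the fact that $\bsf_i$ and $\bse_i'$ are nearly inverse up to the scalar $q_i^{-a_{ij}}$-shifts dictated by Lemma \ref{Lem:nondegenerate pairing in GKM}(3), and that the relation $f_i v_\lambda = 0$ when $\langle h_i,\lambda\rangle = 0$ precisely kills those crystal elements of $B(\infty)$ whose leading $\tilde f_i$ would violate integrability. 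This is the delicate combinatorial point where the JKK05 framework departs from the classical Kashiwara treatment, and it is also why the uniqueness statement must be proved weight space by weight space rather than globally.
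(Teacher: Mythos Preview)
The paper does not give its own proof of this proposition: it is stated with the citation \cite[Theorem 7.1]{JKK05} and no argument is supplied in the text. So there is nothing in the present paper to compare your proposal against; the authors simply import the result from Jeong--Kang--Kashiwara.

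Your outline is a fair high-level sketch of the strategy actually used in \cite{JKK05}, namely an adaptation of Kashiwara's grand loop. A couple of points are stated too loosely to count as a proof. First, the compatibility $\pi_\lambda(\tilde f_i u)=\tilde f_i\pi_\lambda(u)$, $\pi_\lambda(\tilde e_i u)=\tilde e_i\pi_\lambda(u)$ is not an assumption one checks in advance and then uses; the Kashiwara operators on $U_q^-(\g)$ are defined via the $\bse_i'$-string decomposition while those on $V(\lambda)$ use the $e_i$-string decomposition, and the fact that $\pi_\lambda$ intertwines them (modulo $qL(\lambda)$) is itself one of the interlocking statements proved simultaneously inside the grand loop, not an input to it. Second, the self-duality $L(\lambda)=\{v\mid (v,L(\lambda))_\lambda\subset \A_0\}$ is likewise an output of the induction, not something available a priori. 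If you want to turn this into an actual argument you would need to set up the full list of mutually inductive statements (stability of $L(\lambda)$ and $L(\infty)$ under $\tilde e_i,\tilde f_i$; basis property of $B(\lambda)$ and $B(\infty)$; compatibility of $\pi_\lambda$; almost-orthonormality; the tensor product rule) and run the loop as in \cite{JKK05}, treating the imaginary indices with the modified divided-power conventions you mention.
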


\vskip 3em

\section{Khovanov-Lauda-Rouquier algebra $R$} \label{Sec:KLR}

In this section, we construct the Khovanov-Lauda-Rouquier algebra $R$ associated with a Borcherds-Cartan matrix $A$,
and investigate its algebraic structure and representation theory.

\subsection{The algebras $R(\alpha)$}\

Let $\F$ be a field. For $\alpha \in Q^+$ with
$|\alpha|=d$, set
\begin{align*}
\seq(\alpha) &= \{ \mathbf{i}=(i_1 \ldots i_d) \in I^d \mid \alpha_{i_1} + \cdots + \alpha_{i_d} = \alpha \},\\
\seqd(\alpha) &=\{ \mathbf{i}=(i_1^{(d_1)} \ldots i_r^{(d_r)}) \in I^d \mid d_1\alpha_{i_1} + \cdots + d_r\alpha_{i_r} = \alpha \}.
\end{align*}
Then the symmetric group $\sg_d = \langle r_i \mid i =1, \ldots d-1
\rangle$ acts naturally on $\seq(\alpha)$. For
$\mathbf{i}=(i_1\ldots i_d) \in \seq(\alpha),\ \mathbf{j}=(j_1\ldots
{j_{d'}} ) \in \seq(\beta)$, we denote by $\mathbf{i} * \mathbf{j}$
the concatenation of $\mathbf{i}$ and $\mathbf{j}$:
$$ \mathbf{i} * \mathbf{j} := (i_1\ldots i_d j_1\ldots {j_{d'}}) \in \seq(\alpha+ \beta). $$
The symmetric group  $S_d$ acts on the polynomial ring
$\F[x_1,\ldots,x_d]$ by
\begin{align*}
w \cdot f(x_1,\ldots,x_d) = f(x_{w(1)}, \ldots,x_{w(d)})
\quad\text{for $w\in S_d$ and $f(x_1,\ldots,x_d) \in \F[x_1,\ldots,x_d]$.}
\end{align*}
For $t=1,\ldots, d-1$, define the
operator $\partial_t$ on $\F[x_1,\ldots,x_d]$ by
$$ \partial_t(f) = \frac{r_t f - f}{ x_{t} - x_{t+1} } $$
for $f \in \F[x_1,\ldots,x_d]$.
We take a matrix $( \mathcal{Q}_{i,j}(u,v) )_{i,j\in I}$ in $\F[u,v]$ such that $Q_{i,j}(u,v) = Q_{j,i}(v,u)$ and $Q_{i,j}(u,v)$ has the form
$$ Q_{i,j}(u,v) = \left\{
                    \begin{array}{ll} \displaystyle
                      \sum_{p,q} t_{i,j;p,q}u^pv^q & \hbox{ if } i\ne j, \\
                      0 & \hbox{ if } i=j,
                    \end{array}
                  \right.
 $$
where the summation is taken over all $p,q\in \Z_{\ge0}$ such that $ (\alpha_i|\alpha_j)+s_ip+s_jq=0$ and $t_{i,j;p,q} \in \F$. In particular, $t_{i,j;-a_{ij},0} \in \F^{\times}$.
For each $i\in I$, choose a nonzero polynomial
$\mathcal{P}_i(u,v) \in \F[u,v]$ having the form
$$ \mathcal{P}_i(u,v) = \sum_{p,q } h_{i;p,q} u^pv^q , $$
where the summation is taken over all $p, q \in \Z_{\ge0}$ such that $2-a_{ii}-2p-2q=0$ and $h_{i;p,q} \in \F$. In particular,
$h_{i;1-\frac{a_{ii}}{2},0}, h_{i;0,1-\frac{a_{ii}}{2}} \in \F^\times$.

\begin{Def} \label{def:KLR}
Let $(A,P,\Pi,\Pi^\vee)$ be a Borcherds-Cartan datum.
For $\alpha\in Q^+$ with height $d$,
the {\em Khovanov-Lauda-Rouquier algebra $R(\alpha)$} of weight $\alpha$
 associated with the data $(A,P,\Pi,\Pi^\vee)$,
$(\mathcal{P}_i)_{i\in I}$ and $(\mathcal{Q}_{i,j})_{i,j\in I}$
is the associative graded $\F$-algebra
generated by $1_{\mathbf{i}}\ (\mathbf{i}\in \seq(\alpha))$, $x_k\
(1 \le k \le d)$, $\tau_t\ (1 \le t \le d-1)$  satisfying the
following defining relations:
\begin{equation} \label{Eq:def rel 1}
\begin{aligned}
& 1_{\mathbf{i}} 1_{\mathbf{j}} = \delta_{\mathbf{i},\mathbf{j}} 1_{\mathbf{i}},\ \sum_{\mathbf{i} \in \seq(\alpha)} 1_{\mathbf{i}}=1,\
x_k 1_{\mathbf{i}} =  1_{\mathbf{i}} x_k, \  x_k x_l = x_l x_k,\\
& \tau_t 1_{\mathbf{i}} = 1_{r_t( \mathbf{i})} \tau_t,\  \tau_t \tau_s = \tau_s \tau_t \text{ if } |t - s| > 1, \\
&  \tau_t^2 1_{\mathbf{i}} = \left\{
                                                \begin{array}{ll}
                                                  \partial_t\mathcal{P}_{i_t}(x_t,x_{t+1}) \tau_t 1_{\mathbf{i}} & \hbox{ if }  i_t = i_{t+1}, \\
                                                   \mathcal{Q}_{i_t, i_{t+1}}(x_t, x_{t+1}) 1_{\mathbf{i}} & \hbox{ if } i_t \ne i_{t+1},
                                                \end{array}
                                              \right. \\
&  (\tau_t x_k - x_{r_t(k)} \tau_t ) 1_{\mathbf{i}} = \left\{
                                                           \begin{array}{ll}
                                                             -  \mathcal{P}_{i_t }(x_t, x_{t+1}) 1_{\mathbf{i}} & \hbox{if } k=t \text{ and } i_t = i_{t+1}, \\
                                                               \mathcal{P}_{i_t }(x_t, x_{t+1}) 1_{\mathbf{i}} & \hbox{if } k = t+1 \text{ and } i_t = i_{t+1},  \\
                                                             0 & \hbox{otherwise,}
                                                           \end{array}
                                                         \right.
\end{aligned}
\end{equation}
\begin{equation}
\begin{aligned}  \label{Eq:def rel 2}
&( \tau_{t+1} \tau_{t} \tau_{t+1} - \tau_{t} \tau_{t+1} \tau_{t} )  1_{\mathbf{i}} \\
& \qquad \qquad = \left\{
                                                                                   \begin{array}{ll}
\mathcal{P}_{i_t }(x_t, x_{t+2})
\overline{\mathcal{Q}}_{i_t,i_{t+1}}(x_t, x_{t+1},
x_{t+2})1_{\mathbf{i}} & \hbox{if } i_t = i_{t+2} \ne i_{t+1}, \\
\overline{\mathcal{P}}_{i_t}'( x_{t}, x_{t+1}, x_{t+2})
\tau_{t}1_{\mathbf{i}} +
\overline{\mathcal{P}}_{i_t}''( x_{t}, x_{t+1}, x_{t+2}) \tau_{t+1}1_{\mathbf{i}} & \hbox{if } i_t = i_{t+1} = i_{t+2},\\
0 & \hbox{otherwise},
\end{array}
\right.
\end{aligned}
\end{equation}
where

\begin{equation}
\begin{aligned}
\overline{\mathcal{P}}'_i(u,v,w) &:=
\frac{\mathcal{P}_{i}(v,u)\mathcal{P}_{i}(u,w)}{(u-v)(u-w)}
+\frac{\mathcal{P}_{i}(u,w)\mathcal{P}_{i}(v,w)}{(u-w)(v-w)}
-\frac{\mathcal{P}_{i}(u,v)\mathcal{P}_{i}(v,w)}{(u-v)(v-w)}, \\
\overline{\mathcal{P}}''_i(u,v,w) &:=
- \frac{\mathcal{P}_{i}(u,v)\mathcal{P}_{i}(u,w)}{(u-v)(u-w)}
- \frac{\mathcal{P}_{i}(u,w)\mathcal{P}_{i}(w,v)}{(u-w)(v-w)}
+ \frac{\mathcal{P}_{i}(u,v)\mathcal{P}_{i}(v,w)}{(u-v)(v-w)}, \\
\overline{\mathcal{Q}}_{i,j}(u,v,w)
& := \frac{\mathcal{Q}_{i,j}(u,v) -
\mathcal{Q}_{i,j}(w,v)}{u-w}.
\end{aligned}
\end{equation}
\end{Def}

Let $R: = \bigoplus_{\alpha \in Q^{+}} R(\alpha)$.
The $\Z$-grading on
$R(\alpha)$ is given by
\begin{align} \label{Eq:degree}
\deg(1_\mathbf{i})=0, \quad \deg(x_k 1_\mathbf{i})= 2s_{i_k}, \quad  \deg(\tau_t 1_\mathbf{i})= -(\alpha_{i_{t}} | \alpha_{i_{t+1}}).
\end{align}
Note that $\overline{\mathcal{P}}_i'$, $\overline{\mathcal{P}}_i''$ and
$\overline{\mathcal{Q}}_{i,j}$ are polynomials. If $i \in
\Ire$, then $\mathcal{P}_{i}(u, v)$ is a nonzero constant, which
will be normalized to be 1 in this paper.  If $I$ is finite and
$a_{ii}=2$ for all $i\in I$, then the algebra $R$ coincides with the
Khovanov-Lauda-Rouquier algebra introduced in \cite{KL09, KL11,
R08}.

The algebra $R$ can be defined by using planar diagrams with dots
and strands. For simplicity,
we assume that $\mathcal{P}_i$ are symmetric and $t_{i,j;-a_{ij},0} = t_{i,j;0, -a_{ji}} = 1$ and
$t_{i,j;p,q} = 0$ for other $p,q$. Note that $\partial_t \mathcal{P}_{i_t}(x_t,x_{t+1}) = 0$.
We denote by $R$ the
$\F$-vector space spanned by braid-like diagrams, considered up to
planar isotropy, such that all strands are colored by $I$ and can
carry dots. The multiplication $D \cdot D'$ of two diagrams $D$ and
$D'$ is given by stacking of the diagram $D$ on the diagram $D'$ if
the color on the top of $D'$ matches with the color at the bottom of
$D$ and defined to be $0$ otherwise. It is obvious that the
following elements are generators of $R(\alpha)$ $(\alpha \in Q^{+},
\mathbf{i}= (i_1 \ldots i_d) \in \seq(\alpha))$:
\begin{align*}
1_\mathbf{i} \  := \  \genOne{i_1}{i_k}{i_d} , \quad x_k
1_\mathbf{i} \  := \  \genX{i_1}{i_k}{i_d} , \quad \tau_t
1_\mathbf{i} \  := \  \genTau{i_1}{i_t}{i_{t+1}}{i_d}.
\end{align*}
The local relations are given as follows:

\begin{align} \label{Eq:local rel 1}
 \dCross{i}{j} \quad =  \quad \begin{cases} \quad \quad \quad \quad \quad \  0 & \text{ if } i= j, \\
                    \quad  \quad \quad \quad \ \ \twoStrands{i}{j} & \text{ if } (\alpha_i|\alpha_j)=0, \\
                    \quad  \twoDotStrandsL{-a_{ij}}{i}{j} \ + \ \twoDotStrandsR{-a_{ji}}{i}{j}  & \text{ if } (\alpha_i|\alpha_j)\neq 0,
                  \end{cases}
\end{align}

\begin{equation} \label{Eq:local rel 2}
\begin{aligned}
& \CrossDR{{}}{i}{j} \ - \ \CrossUL{{}}{i}{j}\quad = \quad \begin{cases} \quad \mathcal{P}_i(x,y) \cdot \twoStrands{i}{i} & \text{ if } i = j,  \\
                                         \quad \quad \quad 0 & \text{ otherwise,  } \end{cases} \\
& \CrossUR{{}}{i}{j} \ - \ \CrossDL{{}}{i}{j}\quad = \quad \begin{cases} \quad \mathcal{P}_i(x,y) \cdot \twoStrands{i}{i} & \text{ if } i = j,  \\
                                         \quad \quad \quad 0 & \text{ otherwise,  } \end{cases} \\
&\  (  \text{ here, } x:=\smalltwoDotStrandsL{i}{i}\ \text{ and } \ y:=\smalltwoDotStrandsR{i}{i}\  )
\end{aligned}
\end{equation}

\begin{equation} \label{Eq:local rel 3}
\begin{aligned}
\quad & \BraidR{i}{j}{k} \  -\ \BraidL{i}{j}{k}\  =\ \left\{
                                          \begin{array}{ll}
                                            \mathcal{P}_i(x,z){\displaystyle \sum_{s=0}^{-a_{ij}-1}} \threeDotStrands{s}{-a_{ij}-1-s}{i}{j}{i} & \hbox{ if } i=k \ne j, a_{ij} \ne 0, \\
                                            \overline{\mathcal{P}}'_i(x,y,z) \left( \ \CrossL{i}{i}{i}\  -\  \CrossR{i}{i}{i} \ \right)  & \hbox{ if } i=j=k, \\
                                            0 & \hbox{otherwise.}
                                          \end{array}
                                        \right. \\
 &\  (  \text{ here, } x:=\smallthreeDotStrandsL{i}{j}{k} , \  \ y:=\smallthreeDotStrandsM{i}{j}{k}\ \text{ and }\  z:=\smallthreeDotStrandsR{i}{j}{k}\ )
\end{aligned}
\end{equation}

For $\mathbf{t}=(t_1 \ldots t_d) \in \Z_{\ge0}^d$ and a reduced expression $ w = r_{i_1}\cdots r_{i_t}  \in \sg_d$, set
$$  x^{\mathbf{t}} = x_{1}^{t_1}\cdots x_{d}^{t_d}   \  \text{ and } \ \tau_{w} = \tau_{i_1}\cdots\tau_{i_t}.$$
It follows from the defining relations that
$$ \{ \tau_{w} x^{\mathbf{t}} 1_\mathbf{i} \mid
\mathbf{t} \in \Z_{\ge0}^d,\ \mathbf{i} \in \seq(\alpha),
\ w: \text{reduced in } \sg_d  \} $$ is a spanning set of
$R(\alpha)$.

Consider the graded anti-involution $\psi: R(\alpha) \rightarrow
R(\alpha)$ which is the identity on generators. For a graded left
$R(\alpha)$-module $M$, let $M^\star$ be the graded right
$R(\alpha)$-module whose underlying space is $M$ with
$R(\alpha)$-action given by
$$ v \cdot r = \psi(r)v \quad \text{ for } v \in M^{\star}, \ r \in R(\alpha). $$

We will investigate the structure of $R(m \alpha_i)$ $(m \ge 0)$ in
more detail. If $a_{ii}=2$, then the defining relations for $R(m
\alpha_i)$ reduce to
\begin{align*}
& x_k x_l = x_l x_k,  \ \ \tau_t^2 = 0 , \\
& \tau_{t}\tau_{t+1}\tau_{t} = \tau_{t+1}\tau_{t}\tau_{t+1},\ \ \tau_t \tau_s = \tau_s \tau_t\  \text{ if } |t-s|>1,  \\
& \tau_{t} x_{t} = x_{t+1} \tau_{t}-1, \ \ \tau_{t} x_{t+1} = x_{t}
\tau_{t} +1, \\
& \tau_t x_k = x_k \tau_t \ \ \text{if} \ k \neq t, t+1.
\end{align*}
Hence the algebra $R(m \alpha_i)$ is isomorphic to the {\it nil
Hecke algebra} $NH_m$, which is the associative algebra generated by
$\mathbf{x}_k\ (1\le k \le m)$ and $\partial_t\ (1 \le t \le m-1)$
satisfying the following relations:
\begin{align*}
&\mathbf{x}_k \mathbf{x}_l = \mathbf{x}_l \mathbf{x}_k, \ \ \partial_t^2 = 0 , \\
& \partial_{t}\partial_{t+1}\partial_{t} = \partial_{t+1}\partial_{t}\partial_{t+1},\ \
\partial_t \partial_s = \partial_s \partial_t\  \text{ if } |t-s|>1,  \\
& \partial_t \mathbf{x}_t = \mathbf{x}_{t+1} \partial_t -1, \ \
\partial_t \mathbf{x}_{t+1}=  \mathbf{x}_{t}  \partial_t + 1, \\
& \partial_t \mathbf{x}_k = \mathbf{x}_k \partial_t \ \ \text{if} \
k \neq t, t+1.
\end{align*}
Therefore, as was shown in \cite{KL09}, the algebra $R(m \alpha_i)$
has a primitive idempotent $\tau_{w_0}x_1^{m-1} \cdots
x_{m-2}^2x_{m-1}$, where $w_{0}$ is the longest element in $\sg_m$,
and has a unique (up to isomorphism and degree shift) irreducible
module $L(i^m)$. The irreducible module $L(i^m)$ is isomorphic to
the one induced from the trivial $\F[x_1,\ldots, x_m]$-module of dimension 1 over $\bR$.

If $a_{ii}<0$, then $\mathcal{P}_i(u,v)$ is a homogeneous
polynomial with degree $1-\dfrac{a_{ii}}{2} > 1$, and
$\overline{\mathcal{P}}'_i(u,v,w)$ and $\overline{\mathcal{P}}''_i(u,v,w)$ have positive degree. By
\eqref{Eq:degree}, $R(m \alpha_i)$ has positive grading and hence
it has a unique idempotent $1_{(i\ldots i)}$. Thus there exists a unique irreducible $R(m \alpha_i)$-module $L(i^m)=\F v$
defined by
\begin{align} \label{Eq:def of L in Iim}
 1_{(i\ldots i)} \cdot v = v, \ \  x_k \cdot v = 0, \ \ \tau_t \cdot v = 0 .
\end{align}

If $a_{ii}=0$, then in general, $R(m\alpha_i)$ has many primitive idempotents, which means
that there are many irreducible $R(m\alpha_i)$-modules.
For example,
if $m=3$ and $\mathcal{P}_i(u,v) = u-v$, then $\tau_1\tau_2, \tau_2\tau_1$ and $1 -
\tau_1\tau_2  - \tau_2\tau_1$ are orthogonal primitive idempotents.
The algebra $R(m \alpha_i)$ itself, not principal
indecomposable modules, will serve as one of the projective modules
that give our categorification. The whole Grothendieck group of
the category of finitely generated projective $R(m \alpha_i)$-modules seems rather large and nontrivial.
We hope to investigate it in a later work.

We now construct a faithful polynomial representation of
$R(\alpha)$. First, we define an $R(m\alpha_i)$-module structure on
$\F[x_1, \ldots, x_m]$ by
\begin{align*}
x_k \cdot f(x_1, \ldots, x_m) &= x_k f(x_1, \ldots, x_m),   \\
\tau_t \cdot f(x_1, \ldots, x_m) &= \mathcal{P}_{i}(x_t, x_{t+1}) \partial_t ( f(x_1, \ldots, x_m))
\end{align*}
for $x_k, \tau_t \in R(m\alpha_i), f(x_1, \ldots, x_m) \in \F[x_1, \ldots, x_m]$.

\begin{Lem} \label{Lem:R(m_i) for i in Iim}
$\F[x_1, \ldots, x_m]$ is a faithful representation of $R(m\alpha_i)$.
\end{Lem}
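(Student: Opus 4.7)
The plan is to first verify that the prescribed formulas define an action of $R(m\alpha_i)$ on $\F[x_1,\ldots,x_m]$, and then deduce faithfulness by a leading-term analysis on the known spanning set $\{\tau_w x^{\mathbf{t}}\}$.

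Since all strands in $R(m\alpha_i)$ carry the single color $i$, the only non-trivial relations from Definition \ref{def:KLR} that need to be checked are (a) the quadratic relation $\tau_t^2 = \partial_t\mathcal{P}_i(x_t,x_{t+1})\,\tau_t$, (b) the mixed relations $\tau_t x_t = x_{t+1}\tau_t - \mathcal{P}_i$ and $\tau_t x_{t+1} = x_t\tau_t + \mathcal{P}_i$ (with $\tau_t x_k = x_k\tau_t$ for $k\neq t,t+1$ immediate), and (c) the braid relation $\tau_{t+1}\tau_t\tau_{t+1} - \tau_t\tau_{t+1}\tau_t = \overline{\mathcal{P}}'_i\tau_t + \overline{\mathcal{P}}''_i\tau_{t+1}$. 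Relations (a) and (b) follow from a single application of the Leibniz rule $\partial_t(gh) = \partial_t(g)\,h + (r_tg)\,\partial_t(h)$, together with $\partial_t^2 = 0$, $\partial_t(x_t) = -1$, $\partial_t(x_{t+1}) = 1$, and the fact that $\partial_t\mathcal{P}_i$ is $r_t$-invariant and therefore commutes with the multiplication operators that appear.

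The main obstacle is the braid relation (c). I would verify it by writing $\partial_t = (r_t - \mathrm{id})/(x_t - x_{t+1})$ and expanding both $\tau_{t+1}\tau_t\tau_{t+1}$ and $\tau_t\tau_{t+1}\tau_t$ as rational operators on $\F(x_t, x_{t+1}, x_{t+2})$. Collecting the coefficients according to the resulting action on $\{x_t,x_{t+1},x_{t+2}\}$ by elements of $\sg_3$, one reduces the identity to the partial-fraction decompositions embedded in the definitions of $\overline{\mathcal{P}}'_i$ and $\overline{\mathcal{P}}''_i$; since these are genuine polynomials, the equality holds in $\F[x_t,x_{t+1},x_{t+2}]$.

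For faithfulness, the defining relations already imply that $\{\tau_w x^{\mathbf{t}} : w\in\sg_m,\ \mathbf{t}\in\Z_{\ge 0}^m\}$ spans $R(m\alpha_i)$, so it suffices to show these act as linearly independent operators on $\F[x_1,\ldots,x_m]$. Iterating the Leibniz rule yields $\tau_w = A_w(x)\,\partial_w + (\text{sum of }\partial_{w'}\text{-terms with }\ell(w')<\ell(w))$, where $A_w(x)$ is a nonzero polynomial given by a product of permuted $\mathcal{P}_i$-factors (nonvanishing by the leading-coefficient assumptions $h_{i;1-a_{ii}/2,0},\,h_{i;0,1-a_{ii}/2}\in\F^\times$) and $\partial_w$ is the standard Demazure operator. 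Applying $\tau_w x^{\mathbf{t}}$ to a sufficiently deep staircase monomial $x^{\mathbf{s}}$ and ordering monomials lexicographically, the leading monomial uniquely encodes the pair $(w,\mathbf{t})$; hence a vanishing combination $\sum c_{w,\mathbf{t}}\tau_w x^{\mathbf{t}} = 0$ forces all coefficients $c_{w,\mathbf{t}}=0$, simultaneously yielding faithfulness and confirming that the spanning set is a basis.
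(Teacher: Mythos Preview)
Your verification that the formulas define an action is essentially the paper's argument: the paper also checks the braid relation by writing $\tau_t$ as $\mathsf{P}(x_t,x_{t+1})(r_t-1)$ with $\mathsf{P}(u,v)=\mathcal{P}_i(u,v)/(u-v)$ and expanding both sides as rational operators, so the two approaches coincide here.

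The faithfulness argument, however, has a genuine gap. Your claim that the lexicographic leading monomial of $\tau_w x^{\mathbf t}\cdot x^{\mathbf s}$ uniquely encodes the pair $(w,\mathbf t)$ is false, regardless of how deep the staircase $\mathbf s$ is. Already for $m=2$ and $i\in I^{\mathrm{re}}$ (so $\mathcal{P}_i=1$) one has
\[
(\tau_1 x_1^{a}x_2^{b})\cdot x_1^{p}x_2^{q}=\partial_1(x_1^{a+p}x_2^{b+q}),
\]
whose lex-leading monomial is $x_1^{a+p-1}x_2^{b+q}$, identical to the leading monomial of $(x_1^{a-1}x_2^{b})\cdot x_1^{p}x_2^{q}$. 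The collision persists for every choice of $(p,q)$, so a single test vector cannot separate these operators by leading term. For $i\in I^{\mathrm{im}}$ the extra factor $A_w(x)$ only shifts the exponent and the same phenomenon occurs.

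What does work is the triangularity you already observed, combined not with a test-vector argument but with the \emph{operator} linear independence of $\{\partial_w\,\mathbf{x}^{\mathbf t}\}$ in $\End(\F[x_1,\ldots,x_m])$ (a standard nil-Hecke fact, which the paper states and uses). Writing $\iota(\tau_w x^{\mathbf t})$ in this basis, the $\partial_w$-component is $A_w(x)\cdot w(x^{\mathbf t})$ plus contributions from strictly shorter $w'$; picking a maximal-length $w$ appearing in a relation and reading off the $\partial_w$-coefficient forces each $c_{w,\mathbf t}=0$. This is exactly the paper's argument: it isolates the top Demazure term at the level of operators rather than attempting to detect it on a single polynomial.
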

\begin{proof}

If $i \in \Ire$, our assertion was shown in \cite[Example
2.2]{KL09}. Assume that $i\in \Iim$ and let $\mathbf{x}_k$ be the
endomorphism of $\F[x_1, \ldots, x_m]$ defined by
\begin{align*}
\mathbf{x}_k ( f(x_1, \ldots, x_m)) = x_k f(x_1, \ldots, x_m)
\end{align*}
for $f(x_1, \ldots, x_m) \in \F[x_1, \ldots, x_m]$.
Note that
$$\{ \partial_{j_1} \cdots \partial_{j_k} \mathbf{x}^\mathbf{t} \mid \mathbf{t} \in \Z_{\ge0}^m,
\ r_{j_1} \cdots r_{j_k}  \text{is a reduced expression in }  \sg_m
\, (k \ge 0)   \}$$ is a linearly independent subset of
$\End(\F[x_1, \ldots, x_m])$. Let
$$\iota: R(m \alpha_i) \longrightarrow \End(\F[x_1, \ldots, x_m])$$
be the map defined by $\iota(x_k) = \mathbf{x}_k$ and $\iota(\tau_t) = \mathcal{P}_i(  \mathbf{x}_t, \mathbf{x}_{t+1}) \cdot \partial_t$.

We first show that $\iota$ is well-defined. Since
$\mathcal{P}_i(u,v)$ is a homogeneous polynomial, it is
easy to verify that the relations $\eqref{Eq:def rel 1}$ hold. To
check the relations in  \eqref{Eq:def rel 2}, for simplicity, we
assume that $m=3$ and let $x = x_1, y = x_2, z = x_3$,
$\mathcal{P}(u,v) = \mathcal{P}_i(u,v)$.

Set
$$ \mathsf{P}(u,v) = \frac{\mathcal{P}(u,v)}{u-v}. $$
By a direct computation, we have
\begin{align*}
\iota(\tau_2 \tau_1 \tau_2) & = \mathsf{P}(x,y)\mathsf{P}(y,z)\mathsf{P}(x,z)(r_2r_1r_2-r_2r_1-r_1r_2+r_1)\\
& \ - \mathsf{P}(y,z)\mathsf{P}(z,y)\mathsf{P}(x,z)(1-r_2)+\mathsf{P}(x,y)\mathsf{P}(y,z)^2(r_2-1), \\
\iota(\tau_1 \tau_2 \tau_1) & = \mathsf{P}(x,y)\mathsf{P}(y,z)\mathsf{P}(x,z)(r_1r_2r_1-r_2r_1-r_1r_2+r_2)\\
& \ - \mathsf{P}(x,y)\mathsf{P}(y,x)\mathsf{P}(x,z)(1-r_1)+\mathsf{P}(x,y)^2\mathsf{P}(y,z)(r_1-1).
\end{align*}
As $\iota(\tau_k) = \mathsf{P}(x_k,x_{k+1}) (r_k-1) $ for $k=1,2,$
\begin{align*}
\iota(\tau_2 \tau_1 \tau_2) - \iota(\tau_1 \tau_2 \tau_1) &=
(-\mathsf{P}(y,x)\mathsf{P}(x,z)+\mathsf{P}(y,z)\mathsf{P}(x,z)-\mathsf{P}(x,y)\mathsf{P}(y,z))\iota(\tau_1) \\
& \quad
+ ( \mathsf{P}(x,y)\mathsf{P}(y,z) + \mathsf{P}(z,y)\mathsf{P}(x,z)-\mathsf{P}(x,y)\mathsf{P}(x,z))\iota(\tau_2),
\end{align*}
which shows that the relation $\eqref{Eq:def rel 2}$ holds.
It remains to show that $\iota$ is injective. Take a nonzero element
$$y = \tau_{w_1}f_1 + \cdots + \tau_{w_t}f_t \quad ( 0\ne f_k \in \F[x_1, \ldots, x_m],\ w_k \ \text{is a reduced expression in } \sg_m )$$
of $R(m\alpha_i)$ such that $w_i \ne w_j \text{ if } i \ne j$ and $
\ell( {w}_1 ) \ge \ell( {w}_k ) $ for $0 \le k \le t$. Write the reduced expression of $w_1$ as $w_1 =
r_{i_1}\cdots r_{i_l}$. Then, $\iota(y)$ can
be written as
$$ \iota(y) = \partial_{i_1}\cdots \partial_{i_l} f' + \cdots \text{lower terms} \cdots $$
for some nonzero polynomial $f'$, which implies that $\iota(y)$ is nonzero. Therefore $\iota$ is injective.
\end{proof}

Now we consider the general case $R(\alpha)$ with $\alpha \in Q^+$.
Take a total order $\prec$ on $I$. Let
$$ \mathfrak{Pol}(\alpha) = \bigoplus_{\mathbf{i} \in \seq(\alpha)} \F[x_1(\mathbf{i}), \ldots, x_d(\mathbf{i})].  $$
For any polynomial $f \in \F[u_1, \ldots, u_d]$, let $f(\mathbf{i})$
be the polynomial in $\F[x_1(\mathbf{i}), \ldots, x_d(\mathbf{i})]$
obtained from $f$ by replacing $u_k$ by $x_k(\mathbf{i})$. We define
an $R(\alpha)$-module structure on $\mathfrak{Pol}(\alpha)$ as
follows: for $\mathbf{i} \in \seq(\alpha)$ and $f \in \F[u_1,
\ldots, u_d]$, we define
\begin{equation} \label{Eq:def of faithful rep}
\begin{aligned}
1_\mathbf{j} \cdot f(\mathbf{i}) &= \delta_{\mathbf{ij}} f(\mathbf{i}) \quad \ (\ \mathbf{j} \in \seq(\alpha)\ ), \\
x_k \cdot f(\mathbf{i}) &= x_k(\mathbf{i})f(\mathbf{i}), \\
\tau_t \cdot f(\mathbf{i}) &=  \left\{
                      \begin{array}{ll}
                        \mathcal{P}_{i_t}(x_t(r_t \mathbf{i}), x_{t+1}(r_t \mathbf{i})) \partial_t f(r_t \mathbf{i})  & \hbox{ if } i_t = i_{t+1}, \\
                        \mathcal{Q}_{i_{t+1}, i_{t}}(x_t(r_t\mathbf{i}), x_{t+1}(r_t\mathbf{i})) r_t f(r_t \mathbf{i}) & \hbox{ if } i_t \ne i_{t+1},\ i_t \succ i_{t+1},  \\
                        r_t f(r_t \mathbf{i}) & \hbox{ if } i_t \ne i_{t+1},\ i_t \prec i_{t+1}.
                      \end{array}
                    \right.
\end{aligned}
\end{equation}

\begin{Lem} \label{Lem:faithful repn}
$\mathfrak{Pol}(\alpha)$ is a well-defined $R(\alpha)$-module.
\end{Lem}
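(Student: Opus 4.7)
The plan is to verify directly that the operators defined in \eqref{Eq:def of faithful rep} satisfy every defining relation in \eqref{Eq:def rel 1} and \eqref{Eq:def rel 2} on an arbitrary element $f(\mathbf{i}) \in \F[x_1(\mathbf{i}), \ldots, x_d(\mathbf{i})]$. Since the action is defined separately on each summand indexed by $\mathbf{i} \in \seq(\alpha)$, and the $\tau_t$-action moves a polynomial in the $\mathbf{i}$-summand to the $r_t\mathbf{i}$-summand, the idempotent relations and the relation $\tau_t 1_{\mathbf{i}} = 1_{r_t(\mathbf{i})}\tau_t$ are built in. The relations $x_k x_l = x_l x_k$ and $x_k 1_{\mathbf{i}} = 1_{\mathbf{i}} x_k$ are immediate, and $\tau_t \tau_s = \tau_s \tau_t$ when $|t - s| > 1$ follows because the relevant operators act on disjoint pairs of variables and the coefficient polynomials only involve the corresponding $x_t(\mathbf{j}), x_{t+1}(\mathbf{j})$.

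Next I would verify the relations for $\tau_t^2 1_{\mathbf{i}}$ and $(\tau_t x_k - x_{r_t(k)} \tau_t) 1_{\mathbf{i}}$ by splitting on whether $i_t = i_{t+1}$ or not. When $i_t = i_{t+1}$, the operators on the $\mathbf{i}$-summand are precisely those defined on $\F[x_1, \ldots, x_d]$ in the proof of Lemma \ref{Lem:R(m_i) for i in Iim} (restricted to the variables $x_t, x_{t+1}$), so these relations follow from the computation carried out there, using the divided difference identity $(x_t - x_{t+1})\partial_t + 1 = r_t$ on polynomials. When $i_t \ne i_{t+1}$, a straightforward computation gives $\tau_t^2 \cdot f(\mathbf{i}) = \mathcal{Q}_{i_{t+1}, i_t}(x_t(\mathbf{i}), x_{t+1}(\mathbf{i}))\, \mathcal{Q}_{i_t, i_{t+1}}(x_t(\mathbf{i}), x_{t+1}(\mathbf{i}))\, f(\mathbf{i})$ or the analogous product with factor $1$; the symmetry $\mathcal{Q}_{i,j}(u,v) = \mathcal{Q}_{j,i}(v,u)$ then yields $\mathcal{Q}_{i_t, i_{t+1}}(x_t, x_{t+1}) f(\mathbf{i})$, matching \eqref{Eq:def rel 1}. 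The mixed relation $(\tau_t x_k - x_{r_t(k)} \tau_t) 1_{\mathbf{i}} = 0$ when $i_t \ne i_{t+1}$ is immediate since $r_t$ permutes $x_t$ and $x_{t+1}$ and commutes with $x_k$ for $k \ne t, t+1$.

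The bulk of the work is the braid-type relation \eqref{Eq:def rel 2}. I would handle it by cases on the triple $(i_t, i_{t+1}, i_{t+2})$. When all three are equal, the action factors through $R(3\alpha_{i_t})$ and the relation follows from Lemma \ref{Lem:R(m_i) for i in Iim}. When $i_t = i_{t+2} \ne i_{t+1}$, I would compute $\tau_{t+1}\tau_t\tau_{t+1}$ and $\tau_t\tau_{t+1}\tau_t$ acting on $f(\mathbf{i})$; after writing everything in terms of $r_t, r_{t+1}$ and the coefficient polynomials $\mathcal{Q}_{i_t, i_{t+1}}$ and $\mathcal{P}_{i_t}$ (the latter appearing when two consecutive strand labels coincide inside the threefold product), the difference can be simplified using the elementary identity
$$ \frac{\mathcal{Q}_{i_t, i_{t+1}}(x_t, x_{t+1}) - \mathcal{Q}_{i_t, i_{t+1}}(x_{t+2}, x_{t+1})}{x_t - x_{t+2}} = \overline{\mathcal{Q}}_{i_t, i_{t+1}}(x_t, x_{t+1}, x_{t+2}) $$
which is exactly the right-hand side prescribed in \eqref{Eq:def rel 2}, multiplied by $\mathcal{P}_{i_t}(x_t, x_{t+2})$. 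In all remaining cases (all three labels pairwise distinct, or exactly two consecutive equal with $i_t \ne i_{t+2}$), the two triple products produce identical expressions because $r_t$ and $r_{t+1}$ generate the same permutation $r_t r_{t+1} r_t = r_{t+1} r_t r_{t+1}$, and the factors involving $\mathcal{Q}$ or $\mathcal{P}$ either act on variables fixed by the intermediate swap or match on both sides, yielding zero difference.

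The main obstacle will be the bookkeeping in the mixed case $i_t = i_{t+2} \ne i_{t+1}$, where one must track how the $\mathcal{Q}$-factors accumulate under the three successive swaps and how a single $\mathcal{P}_{i_t}$-factor emerges from the step where the two $i_t$-strands cross an $i_{t+1}$-strand and then meet each other; this parallels the classical Khovanov-Lauda-Rouquier computation but with the extra subtlety that $\mathcal{P}_i$ is nonconstant when $i \in \Iim$, so the divided-difference manipulations must be carried out without using $\mathcal{P}_i = 1$.
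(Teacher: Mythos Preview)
Your approach is the same as the paper's: verify the relations \eqref{Eq:def rel 1} directly and then handle the braid relation \eqref{Eq:def rel 2} by cases on the triple $(i_t,i_{t+1},i_{t+2})$, invoking Lemma~\ref{Lem:R(m_i) for i in Iim} when all three coincide. Two points to tighten. First, your $\tau_t^2$ computation when $i_t\neq i_{t+1}$ is misstated: by the definition in \eqref{Eq:def of faithful rep} exactly one of the two applications of $\tau_t$ carries a $\mathcal{Q}$-factor (the one going from the $\succ$-side to the $\prec$-side) and the other is a bare $r_t$, so the result is a single $\mathcal{Q}_{i_t,i_{t+1}}(x_t,x_{t+1})$ after one use of the symmetry $\mathcal{Q}_{j,i}(v,u)=\mathcal{Q}_{i,j}(u,v)$; a product of two $\mathcal{Q}$'s never occurs. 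Second, your ``remaining cases'' paragraph is too quick for the case $i_t=i_{t+1}\neq i_{t+2}$ (and its mirror): here one of the three $\tau$'s acts as $\mathcal{P}_{i_t}\partial$ rather than as a permutation, so the equality $\tau_{t+1}\tau_t\tau_{t+1}=\tau_t\tau_{t+1}\tau_t$ does not follow from $r_tr_{t+1}r_t=r_{t+1}r_tr_{t+1}$ alone and requires the explicit computation the paper carries out (showing both sides equal $\mathcal{Q}_{ij}(x,y)\mathcal{Q}_{ij}(x,z)\mathcal{P}_i(y,z)\,\partial_2(r_1r_2 f)$ on a monomial). With those two corrections your outline becomes the paper's proof.
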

\begin{proof}
We verify the defining relations of $R(\alpha)$. The relations $\eqref{Eq:def rel
1}$ can be verified in a straightforward manner. In the proof of
Lemma \ref{Lem:R(m_i) for i in Iim}, we already proved our assertion
when $i_t = i_{t+1} = i_{t+2}$. Thus it suffices to consider the
following three cases in $\eqref{Eq:def rel 2}$: (i) $i_t=i_{t+2}
\ne i_{t+1}$, \ (ii) $i_t, i_{t+1},i_{t+2}$ are distinct, \ (iii)
$i_t = i_{t+1} $ and $i_t \ne i_{t+2}$. For simplicity, let $d = 3$,
$\mathbf{i} = (i,j,k)$ and $f(u,v,w)= u^a v^b w^c $.
Set $x = x_1(\mathbf{i})$, $y = x_2(\mathbf{i})$ and $z = x_3(\mathbf{i})$.

Case (i): Let $\mathbf{i} = (i,j,i)$ with $i \ne j$. Without loss of
generality, we may assume $i \prec j$.
Then, by a direct
computation, we have
\begin{align*}
\tau_1 \tau_2 \tau_1 (x^a y^b z^c) \ &=\ \mathcal{P}_i(x, z) \mathcal{Q}_{ij}(x,y)  \frac{ x^c y^b z^a - x^a y^b z^c }{x-z}, \\
\tau_2 \tau_1 \tau_2 (x^a y^b z^c) \ &=\ \mathcal{P}_i(x, z) \frac{ \mathcal{Q}_{ij}(x,y)x^c y^b z^a - \mathcal{Q}_{ij}(z,y) x^a y^b z^c }{x-z},
\end{align*}
which yield
$$ (\tau_2 \tau_1 \tau_2 - \tau_1 \tau_2 \tau_1) (x^a y^b z^c) = \mathcal{P}_i(x, z) \frac{\mathcal{Q}_{ij}(x,y) - \mathcal{Q}_{ij}(z,y) }{x-z} x^ay^bz^c.  $$

Case (ii): Let $\mathbf{i} = (i,j,k)$ such that  $i, j,k$ are
distinct. Since the other cases are similar, we will only prove our
assertion when $i \succ j \succ k$. Then we have
\begin{align*}
\tau_1 \tau_2 \tau_1 (x^a y^b z^c) \ &=\ \mathcal{Q}_{ij}(y,z)\mathcal{Q}_{jk}(x,y) \mathcal{Q}_{ik}(x,z) x^cy^bz^a, \\
\tau_2 \tau_1 \tau_2 (x^a y^b z^c) \ &=\ \mathcal{Q}_{ij}(y,z)\mathcal{Q}_{jk}(x,y) \mathcal{Q}_{ik}(x,z) x^cy^bz^a,
\end{align*}
which implies that $ (\tau_2 \tau_1 \tau_2 - \tau_1 \tau_2 \tau_1) (x^a y^b z^c) = 0 $.

Case (iii): Similarly as above, we consider $\mathbf{i} = (i,i,j)$
with $i \succ j $ only.  Then
\begin{align*}
\tau_1 \tau_2 \tau_1 (x^a y^b z^c) \ &=\ \mathcal{Q}_{ij}(x,y) \mathcal{Q}_{ij}(x,z) \mathcal{P}_i(y,z) \frac{x^cy^bz^a - x^cy^az^b }{y-z}, \\
\tau_2 \tau_1 \tau_2 (x^a y^b z^c) \ &=\ \mathcal{Q}_{ij}(x,y) \mathcal{Q}_{ij}(x,z) \mathcal{P}_i(y,z) \frac{x^cy^bz^a - x^cy^az^b }{y-z}.
\end{align*}
Hence we have $ (\tau_2 \tau_1 \tau_2 - \tau_1 \tau_2 \tau_1) (x^a y^b z^c) = 0 $, which completes the proof.
\end{proof}

Note that $R(\alpha) =\bigoplus_{\mathbf{i},\mathbf{j} \in
\seq(\alpha)} {_\mathbf{j}}R(\alpha)_{\mathbf{i}}$, where
$_{\mathbf{j}}R(\alpha)_{\mathbf{i}} := 1_\mathbf{j} R(\alpha)
1_\mathbf{i}$.  Given each $w \in \sg_d$, fix a minimal
representative $\underline{w}$ of $w$. For $\mathbf{i},
\mathbf{j}\in \seq(\alpha)$, let
$$ _{\mathbf{j}}\sg_{\mathbf{i}} = \{ \underline{w} \mid w \in \sg_d,\ w(\mathbf{i})=\mathbf{j}  \}. $$
It follows from the defining relations that
$$  _{\mathbf{j}}B(\alpha)_{\mathbf{i}}:= \{ \tau_{\underline{w}} x^{\mathbf{t}} 1_\mathbf{i} \mid \mathbf{t} \in \Z_{\ge0}^d,\  \underline{w} \in
{_\mathbf{j} \sg_\mathbf{i}}  \} $$ is a spanning set of
$_\mathbf{j} R(\alpha)_\mathbf{i}$. Moreover, we have the following
proposition.

\begin{Prop} \label{Prop:basis of R(alpha)} \
\begin{enumerate}
\item The set $ _{\mathbf{j}}B(\alpha)_{\mathbf{i}}$ is a homogeneous basis of $_{\mathbf{j}} R(\alpha) _{\mathbf{i}}$.
\item $\mathfrak{Pol}(\alpha)$ is a faithful representation of $R(\alpha)$.
\end{enumerate}
\end{Prop}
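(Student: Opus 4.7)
The plan is to prove parts (1) and (2) simultaneously by showing that the spanning set $_\mathbf{j}B(\alpha)_\mathbf{i}$ has $\F$-linearly independent image under the polynomial representation $\iota : R(\alpha) \to \End_\F(\mathfrak{Pol}(\alpha))$ constructed in Lemma \ref{Lem:faithful repn}. Once this is done, the two assertions follow at once: the preimage of a linearly independent set is itself linearly independent, so $_\mathbf{j}B(\alpha)_\mathbf{i}$ is a basis (proving (1)); and any nonzero element of $R(\alpha)$, expanded in this basis, has nonzero image (proving (2)). It is harmless to fix $\mathbf{i}$ and $\mathbf{j}$ in $\seq(\alpha)$ throughout, since $\iota(1_\mathbf{j}\cdot 1_\mathbf{i})$ carries $\mathfrak{Pol}_\mathbf{i}:=\F[x_1(\mathbf{i}),\dots,x_d(\mathbf{i})]$ into $\mathfrak{Pol}_\mathbf{j}$ and annihilates all other summands.

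First I would rewrite each generator's action as a ``shift plus polynomial factor.'' Concretely, for every $t$ the formulas in $\eqref{Eq:def of faithful rep}$ show that $\iota(\tau_t)$ acts on $\mathfrak{Pol}_\mathbf{i}$ as $A_t(\mathbf{i})\cdot r_t + B_t(\mathbf{i})$ (viewing $r_t$ as the component-changing permutation), where $A_t(\mathbf{i})$ is a nonzero rational function if $i_t=i_{t+1}$ and is $\mathcal{Q}_{i_{t+1},i_t}$ or $1$ if $i_t\ne i_{t+1}$, while $B_t(\mathbf{i})$ is $0$ unless $i_t=i_{t+1}$; in the latter case the singular denominator $x_t-x_{t+1}$ cancels automatically against $r_tf-f$. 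Expanding the composition along a reduced expression $\underline{w}=r_{t_1}\cdots r_{t_l}\in {_\mathbf{j}}\sg_\mathbf{i}$ gives
\begin{equation*}
\iota(\tau_{\underline{w}}\,1_\mathbf{i})
\;=\; C_w\cdot w \;+\; \sum_{v<w} C_v\cdot v,
\end{equation*}
where the sum ranges over permutations $v$ strictly shorter than $w$ in the Bruhat order, the leading coefficient $C_w$ is the explicit nonzero rational factor obtained by collecting the top contribution from every $\tau_{t_k}$, and each $C_v$ is a rational function on $\mathfrak{Pol}_\mathbf{i}$. Right-multiplication by $x^{\mathbf{t}}$ simply precomposes with multiplication by the monomial $x_1(\mathbf{i})^{t_1}\cdots x_d(\mathbf{i})^{t_d}$.

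Now suppose $\sum_{\underline{w}\in{_\mathbf{j}}\sg_\mathbf{i},\ \mathbf{t}\in\Z_{\ge 0}^d} c_{\underline{w},\mathbf{t}}\,\iota(\tau_{\underline{w}}x^{\mathbf{t}}1_\mathbf{i})=0$ with only finitely many $c_{\underline{w},\mathbf{t}}\ne 0$. Pick $w^*$ of maximal length among those $\underline{w}$ appearing with a nonzero coefficient and apply the identity to a generic polynomial $f\in\mathfrak{Pol}_\mathbf{i}$. By the leading-term expansion, the component transforming by the permutation $w^*$ equals $C_{w^*}\cdot w^*\!\big(\sum_{\mathbf{t}}c_{w^*,\mathbf{t}}\,x^{\mathbf{t}}f\big)$, and no other $\underline{w}$ can contribute to this component (since shorter $v$'s give permutations $v<w$ with $v\ne w^*$, and permutations act $\F$-linearly-independently on $\mathfrak{Pol}_\mathbf{i}$). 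Since $C_{w^*}\ne 0$ and the monomials $\{x^{\mathbf{t}}\}$ are linearly independent, this forces $c_{w^*,\mathbf{t}}=0$ for all $\mathbf{t}$, contradicting the choice of $w^*$. Hence all $c_{\underline{w},\mathbf{t}}$ vanish, establishing linear independence.

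The main obstacle is verifying that the leading coefficient $C_w$ attached to a given reduced word is genuinely nonzero, even in the delicate case where $\mathcal{P}_i$ may be nonconstant for imaginary $i$ with $a_{ii}\le 0$, and even after the cancellations introduced by braid and mixed relations. One has to check that the scalars $h_{i;p,q}$ and $t_{i,j;p,q}$ appearing in $\mathcal{P}_i$ and $\mathcal{Q}_{i,j}$ combine into a nonzero rational function; this uses the standing assumption that $h_{i;1-a_{ii}/2,0}, h_{i;0,1-a_{ii}/2}\in \F^{\times}$ and $t_{i,j;-a_{ij},0}\in \F^{\times}$. Once this is in place, independence of $C_w$ of the chosen reduced expression is not needed, since we fixed the minimal representative $\underline{w}$ in advance. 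The argument is essentially the same Demazure-operator style calculation used in \cite{KL09}, but the twisting polynomials $\mathcal{P}_i,\overline{\mathcal{P}}_i',\overline{\mathcal{P}}_i''$ have to be threaded carefully through the reduction.
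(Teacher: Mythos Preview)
Your argument is correct and takes a genuinely different route from the paper's proof. The paper proceeds by induction on the lexicographic order of $\mathbf{j}$: for the maximal (color-sorted) $\mathbf{j}$ it factors each $\underline{w}\in{_\mathbf{j}}\sg_\mathbf{i}$ as $w_s\cdot{_\mathbf{j}}w_\mathbf{i}$ with $w_s$ in the Young subgroup, observes that $\Upsilon(\tau_{{_\mathbf{j}}w_\mathbf{i}}1_\mathbf{i})$ is simply the permutation operator (only different-color crossings occur, each with $i_t\prec i_{t+1}$), and thereby reduces to a tensor product of single-color algebras, which were already handled by the leading-term argument in Lemma~\ref{Lem:R(m_i) for i in Iim}. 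The inductive step for arbitrary $\mathbf{j}$ is then delegated to the strand-pulling argument of \cite[Theorem~2.5]{KL09}. By contrast, you run a single Bruhat-triangularity argument for all $\mathbf{i},\mathbf{j}$ simultaneously, expanding each $\iota(\tau_{\underline{w}}1_\mathbf{i})$ inside the skew group ring $\F(x_1,\dots,x_d)\rtimes\sg_d$ and isolating the coefficient of the maximal-length $w^*$. Your approach is more uniform and avoids the two-stage reduction; the paper's approach has the advantage of cleanly separating the genuinely new imaginary-root content (Lemma~\ref{Lem:R(m_i) for i in Iim}) from the combinatorics already established in \cite{KL09}.

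Two small points to tighten. First, the phrase ``permutations act $\F$-linearly-independently'' understates what you need: since the coefficients $C_v$ are rational functions, you are really using that $\{v:v\in\sg_d\}$ is an $\F(x_1,\dots,x_d)$-basis of the skew group ring $\F(x_1,\dots,x_d)\rtimes\sg_d$ (equivalently, Artin's linear independence of the field automorphisms). Second, your discussion of $C_w\ne 0$ is slightly over-cautious: $C_w$ is a product of factors each of which is either $1$, or $\mathcal{Q}_{i,j}(x_t,x_{t+1})$, or $\mathcal{P}_i(x_t,x_{t+1})/(x_t-x_{t+1})$ (suitably permuted), and these are all nonzero rational functions simply because $\mathcal{P}_i$ and $\mathcal{Q}_{i,j}$ are nonzero polynomials; no delicate cancellation analysis is required.
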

\begin{proof}
 Let $<$ be the lexicographic order of $\seq(\alpha)$ arising from the order $\prec$ of $I$, and
let ${_{\mathbf{j}_2}}w_{\mathbf{j}_1}  $ be the minimal element in $ _{{\mathbf{j}_2}}\sg_{\mathbf{j}_1}$ for $\mathbf{j}_1$, $\mathbf{j}_2 \in \seq(\alpha)$.
Let
$$ \Upsilon : R(\alpha) \longrightarrow \End(\mathfrak{Pol}(\alpha))$$
be the algebra homomorphism given in $\eqref{Eq:def of faithful
rep}$. We will show that $ \Upsilon
({_{\mathbf{j}}B(\alpha)_{\mathbf{i}}})$ is linearly independent,
which would imply the set ${_{\mathbf{j}}B(\alpha)_{\mathbf{i}}}$ is
linearly independent. The injectivity of $\Upsilon$ would also
follow immediately. We prove our claim using induction on the
lexicographic order $<$ on $\seq(\alpha)$.

Let $\mathbf{i}\in \seq(\alpha)$, and let
$$\mathbf{j} = ( \underbrace{j_1 \ldots j_1 }_{d_1} \underbrace{j_2 \ldots j_2 }_{d_2} \cdots \underbrace{j_r \ldots j_r }_{d_r} ) \in \seq(\alpha)$$
such that $j_1 \succ j_2 \succ \cdots \succ j_r $. Note that
$\mathbf{j}$ is a maximal element in $\seq(\alpha)$.

Let $m$ be a linear combination of ${_{\mathbf{j}}B(\alpha)_{\mathbf{i}}}$ such that $ \Upsilon(m)=0 $.
Note that $m$ can be expressed as
$$ m = \sum_s \tau_{w_s}  \tau_{ {_\mathbf{j}}w_\mathbf{i}} x^{\mathbf{k}_s}1_{\mathbf{i}}  $$
for some $ \mathbf{k}_s \in \Z_{\ge0}^d$ and some $w_s \in \sg_{d_1} \times \cdots \times \sg_{d_r}$.
It follows from $\eqref{Eq:def of faithful rep}$ that $\Upsilon ( \tau_{ {_\mathbf{j}}w_\mathbf{i}} 1_{\mathbf{i}}  ) $ can be viewed as a linear map
from $\F[x_1(\mathbf{i}), \ldots, x_d(\mathbf{i})] $ to $ \F[x_1(\mathbf{j}), \ldots, x_d(\mathbf{j})] $
sending $1_{\mathbf{i}}$ to $1_{\mathbf{j}}$. Hence,
$$  \Upsilon(m) = 0 \quad \text{ if and only if }\quad  \Upsilon( \sum_s \tau_{w_s} x^{ {_\mathbf{j}}w_\mathbf{i} (\mathbf{k}_s) } 1_{\mathbf{j}} ) =0 .$$
Since $\Upsilon( \sum_s \tau_{w_s} x^{ {_\mathbf{j}}w_\mathbf{i} (\mathbf{k}_s) } 1_{\mathbf{j}}  )$ can be regarded as a linear map
in $\bigoplus_{k=1}^r \End( \F[x_1, \ldots, x_{d_k} ])$, by Lemma \ref{Lem:R(m_i) for i in Iim},
we have
$$ \Upsilon( \sum_s \tau_{w_s} x^{ {_\mathbf{j}}w_\mathbf{i} (\mathbf{k}_s) } 1_{\mathbf{j}}  ) =0  \quad \text{ if and only if }\quad
\sum_s \tau_{w_s} x^{ {_\mathbf{j}}w_\mathbf{i} (\mathbf{k}_s)   } 1_{\mathbf{j}} = 0, $$
which implies $ m = 0$. Therefore, $\Upsilon( {_{\mathbf{j}}B(\alpha)_{\mathbf{i}}} )$ is linearly independent.

We now consider the case when  $\mathbf{j} $ is an arbitrary
sequence in $ \seq(\alpha)$. This step can be proved by a similar
induction argument as in \cite[Theorem 2.5]{KL09}, which completes
the proof.
\end{proof}



For any $\alpha, \beta \in Q^+$, let
\begin{align*}
1_\alpha &= \sum_{\mathbf{i} \in \seq(\alpha)} 1_\mathbf{i}, \\
1_{\alpha, \beta} &= \sum_{\mathbf{i} \in \seq(\alpha),\ \mathbf{j} \in \seq(\beta)} 1_{\mathbf{i} * \mathbf{j}} .
\end{align*}
Then $1_{\alpha, \beta} R(\alpha+\beta)$ has a natural graded left
$R(\alpha) \otimes R(\beta)$-module structure.
\begin{Cor} \label{Cor:R(alpha+beta) is free}
$1_{\alpha, \beta} R(\alpha+\beta)$ is a free graded left $R(\alpha) \otimes R(\beta)$-module.
\end{Cor}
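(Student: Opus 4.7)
The plan is to exhibit an explicit homogeneous free basis of $1_{\alpha,\beta} R(\alpha+\beta)$ over $R(\alpha) \otimes R(\beta)$, using Proposition \ref{Prop:basis of R(alpha)} as the input. Let $a=|\alpha|$, $b=|\beta|$, $d=a+b$, and let $D(\alpha,\beta) \subset \sg_d$ be the set of minimal-length right coset representatives of $\sg_a \times \sg_b$ in $\sg_d$ (the $(a,b)$-shuffles). Every $w \in \sg_d$ factors uniquely as $w = uv$ with $u \in \sg_a \times \sg_b$, $v \in D(\alpha,\beta)$, and $\ell(w) = \ell(u) + \ell(v)$. For each $\mathbf{m} \in \seq(\alpha+\beta)$, set $D(\alpha,\beta;\mathbf{m}) = \{v \in D(\alpha,\beta) : v(\mathbf{m}) \in \seq(\alpha)*\seq(\beta)\}$, fix reduced words $\underline{v}$ for each $v$, and put
$$\mathcal{B} := \{\tau_{\underline{v}}\,1_{\mathbf{m}} : \mathbf{m} \in \seq(\alpha+\beta),\ v \in D(\alpha,\beta;\mathbf{m})\}.$$
I claim $\mathcal{B}$ is a homogeneous free basis of $1_{\alpha,\beta}R(\alpha+\beta)$ over $R(\alpha)\otimes R(\beta)$; the homogeneity follows at once from \eqref{Eq:degree}.

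For the spanning step, fix $\mathbf{m}$ and induct on $\ell(w)$ to show that every basis vector $\tau_{\underline{w}} x^{\mathbf{t}} 1_{\mathbf{m}}$ from Proposition \ref{Prop:basis of R(alpha)} (satisfying $w(\mathbf{m}) \in \seq(\alpha)*\seq(\beta)$) lies in $(R(\alpha)\otimes R(\beta))\cdot\mathcal{B}$. Write $w = uv$ and choose reduced expressions compatibly so that $\tau_{\underline{w}} = \tau_{\underline{u}}\tau_{\underline{v}}$ (the ambiguity in reduced-expression choice only costs strictly shorter $\tau$-terms by \eqref{Eq:def rel 2}, which are absorbed by induction). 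The commutation relation $(\tau_s x_k - x_{r_s(k)}\tau_s)1_{\mathbf{j}} \in \F[x_1,\ldots,x_d]\cdot 1_{\mathbf{j}}$ from \eqref{Eq:def rel 1} now allows an inductive straightening
$$\tau_{\underline{v}} x^{\mathbf{t}} 1_{\mathbf{m}} = g(\mathbf{x})\,\tau_{\underline{v}} 1_{\mathbf{m}} + (\text{sums of }\tau_{\underline{w'}} x^{\mathbf{t}'} 1_{\mathbf{m}}\text{ with }\ell(w')<\ell(v))$$
for some polynomial $g$. Since $\tau_{\underline{u}}$ and any polynomial in $x_1,\ldots,x_d$ lie in the subalgebra $R(\alpha)\otimes R(\beta)$, the leading piece $\tau_{\underline{u}} g(\mathbf{x})\cdot\tau_{\underline{v}} 1_{\mathbf{m}}$ is in $(R(\alpha)\otimes R(\beta))\cdot\mathcal{B}$, and the shorter-length remainder is controlled by the inductive hypothesis.

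For linear independence, suppose $\sum_{\mathbf{m},v} r_{\mathbf{m},v}\,\tau_{\underline{v}} 1_{\mathbf{m}} = 0$ with $r_{\mathbf{m},v} \in R(\alpha)\otimes R(\beta)$, and expand each $r_{\mathbf{m},v}$ in the basis of $R(\alpha)\otimes R(\beta)$ inherited from Proposition \ref{Prop:basis of R(alpha)}. Only summands whose rightmost idempotent equals $1_{v(\mathbf{m})}$ survive, giving an expression $\sum c_{\mathbf{m},v,u,\mathbf{s}}\,\tau_{\underline{u}} x^{\mathbf{s}}\tau_{\underline{v}} 1_{\mathbf{m}}$ with $u \in \sg_a \times \sg_b$. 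The same straightening shows that the leading $\tau$-length term of $\tau_{\underline{u}} x^{\mathbf{s}}\tau_{\underline{v}}1_\mathbf{m}$ is $\tau_{\underline{u}\,\underline{v}}\,x^{v^{-1}(\mathbf{s})} 1_{\mathbf{m}}$; since $(u,v)\mapsto uv$ bijects $(\sg_a \times \sg_b) \times D(\alpha,\beta;\mathbf{m})$ onto $\{w \in \sg_d : w(\mathbf{m}) \in \seq(\alpha)*\seq(\beta)\}$ with $\ell(uv) = \ell(u)+\ell(v)$, these leading terms are distinct basis vectors of Proposition \ref{Prop:basis of R(alpha)}. A descending length induction then forces every $c_{\mathbf{m},v,u,\mathbf{s}}$ to vanish. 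The main technical point that needs care is the straightening identity above: each elementary commutation of a $\tau_s$ past an $x_k$ introduces a polynomial error coming from $\mathcal{P}_{i_t}$ precisely when adjacent intermediate strands share a color, and one must verify from \eqref{Eq:def rel 1} that these errors strictly drop the $\tau$-length of the remaining summands so that the induction terminates; granted this, both arguments go through.
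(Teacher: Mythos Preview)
Your proof is correct and takes essentially the same approach as the paper: both exhibit a free basis indexed by the minimal-length right coset representatives $D(\alpha,\beta)$ of $\sg_a\times\sg_b$ in $\sg_d$, with the paper packaging them as $\hat{\tau}_w=\sum_{\mathbf{i},\mathbf{j}}1_{\mathbf{i}*\mathbf{j}}\tau_w 1_{w^{-1}(\mathbf{i}*\mathbf{j})}$ while you keep the idempotent pieces $\tau_{\underline v}1_{\mathbf{m}}$ separate. The paper's proof simply asserts that this is a basis ``by Proposition~\ref{Prop:basis of R(alpha)}'', whereas you have spelled out the straightening and leading-term arguments that justify that assertion.
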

\begin{proof}
Let $d := | \alpha| $, $d' := |\beta|$, and $ \sg_d \times \sg_{d'} \backslash \sg_{d+d'} $ be the set of minimal right $\sg_{d} \times \sg_{d'}$-coset representatives of
$\sg_{d+d'}$. For $w \in \sg_d \times \sg_{d'} \backslash \sg_{d+d'} $,  set
$$ \hat{\tau}_{w} = \sum_{ \mathbf{i} \in \seq(\alpha),\ \mathbf{j} \in \seq(\beta)} 1_{\mathbf{i}*\mathbf{j}}\ \tau_{w}\  1_{w^{-1}( \mathbf{i}*\mathbf{j})}. $$
Then, it follows from Proposition \ref{Prop:basis of R(alpha)} that
$$ \{ \hat{\tau}_{w} \mid w\in \sg_d \times \sg_{d'} \backslash \sg_{d+d'}  \} $$
is a basis of $1_{\alpha, \beta} R(\alpha+\beta)$ as a left $R(\alpha) \otimes R(\beta)$-module.
\end{proof}

For a graded $R(\alpha)$-module $M=\bigoplus_{i\in \Z}M_i$, let
$M\langle k \rangle$ denote the graded $R(\alpha)$-module obtained
from $M$ by shifting the grading by $k$; i.e.,
$M \langle k \rangle :=
\bigoplus_{i\in \Z} M_{i+k}$.
Given $\alpha, \alpha', \beta, \beta' \in Q^+$ with $\alpha+\beta = \alpha' + \beta'$, let
$$ _{\alpha, \beta}R_{\alpha', \beta'} := 1_{\alpha, \beta} R(\alpha+\beta) 1_{\alpha', \beta'} . $$
We write $_{\alpha}R_{\alpha', \beta'}$ (resp.\ $_{\alpha,
\beta}R_{\alpha'}$) for $_{\alpha, \beta}R_{\alpha', \beta'}$ if
$\beta = 0$ (resp.\ $\beta' = 0$). Note that $_{\alpha,
\beta}R_{\alpha', \beta'}$ is a graded $(R(\alpha)\otimes R(\beta),
R(\alpha')\otimes R(\beta'))$-bimodule. Now we obtain the Mackey's
Theorem for Khovanov-Lauda-Rouquier algebras.

\begin{Prop} \label{Prop:Mackey}
The graded $(R(\alpha)\otimes R(\beta), R(\alpha')\otimes R(\beta'))$-bimodule $_{\alpha,\beta}R_{\alpha', \beta'}$ has
a graded filtration with graded subquotients isomorphic to
$$ {_{\alpha}R_{\alpha-\gamma, \gamma}} \otimes {_{\beta}R_{\beta+\gamma-\beta', \beta'-\gamma}} \otimes_{R'}
{_{\alpha-\gamma, \alpha'+\gamma-\alpha}R_{\alpha'}} \otimes
{_{\gamma, \beta-\gamma}R_{\beta}}  \langle (\gamma | \beta + \gamma - \beta') \rangle , $$
where $R' = R(\alpha-\gamma)\otimes
R(\gamma)\otimes R(\beta + \gamma - \beta') \otimes R(\beta' -
\gamma)$ for all $\gamma \in Q^+$ such that every term above lies in
$Q^{+}$.
\end{Prop}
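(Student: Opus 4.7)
The proof will follow the Mackey filtration strategy of Khovanov--Lauda, adapted to account for the new twisting polynomials $\mathcal{P}_i$, $\overline{\mathcal{P}}_i'$, $\overline{\mathcal{P}}_i''$ and $\overline{\mathcal{Q}}_{i,j}$ that appear at imaginary simple roots. The plan is to build a filtration on $_{\alpha,\beta}R_{\alpha',\beta'}$ indexed by minimal length double-coset representatives of $(\sg_a\times\sg_b)\backslash \sg_{a+b}/(\sg_{a'}\times\sg_{b'})$, where $a=|\alpha|$, $b=|\beta|$, $a'=|\alpha'|$, $b'=|\beta'|$, and then identify the subquotients combinatorially using the basis result of Proposition~\ref{Prop:basis of R(alpha)}.

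First I would set up the combinatorics. Every minimal length double coset representative $w$ can be encoded by the ``crossing pattern'' of the strands: a decomposition $\alpha = (\alpha-\gamma) + \gamma$ on the source-top and $\beta = (\beta+\gamma-\beta') + (\beta'-\gamma)$ on the source-bottom such that the first blocks recombine to $\alpha'$ and the second blocks to $\beta'$. For each admissible $\gamma \in Q^+$ as in the statement, let $F_\gamma$ be the $(R(\alpha)\otimes R(\beta), R(\alpha')\otimes R(\beta'))$-subbimodule of $_{\alpha,\beta}R_{\alpha',\beta'}$ spanned by basis elements $\tau_{\underline{w}} x^{\mathbf{t}} 1_{\mathbf{k}}$ with $w$ ranging over double-coset representatives whose associated parameter is $\le \gamma$ in an appropriate partial order (refining Bruhat order on double cosets). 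The resulting ascending filtration has $F_\gamma/F_{<\gamma}$ spanned by those basis elements whose permutation part is exactly the $\gamma$-crossing. Using Proposition~\ref{Prop:basis of R(alpha)} applied to the four rectangular regions delimited by the crossing, one identifies this subquotient with the claimed tensor product, where the middle $\otimes_{R'}$ comes from horizontal concatenation of dots within each of the four blocks.

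The main obstacle is to verify that multiplication from both sides genuinely preserves the filtration, i.e.\ that moving a generator $x_k$ or $\tau_t$ across the crossing pattern of $w$ produces only terms lying in $F_{\le \gamma}$. For the dot generators $x_k$ this is immediate from $\eqref{Eq:def rel 1}$. For $\tau_t$ the argument reduces, via an induction on the number of strands crossed, to the three strand relations $\eqref{Eq:def rel 2}$. In the Kac-Moody case these relations produce only lower order terms with respect to Bruhat order; the point to check in our setting is that the extra polynomial factors $\overline{\mathcal{P}}_i'(x_t,x_{t+1},x_{t+2})$, $\overline{\mathcal{P}}_i''(x_t,x_{t+1},x_{t+2})$, $\mathcal{P}_i(x_t,x_{t+2})\overline{\mathcal{Q}}_{i,j}(x_t,x_{t+1},x_{t+2})$ are honest polynomials (as noted after Definition~\ref{def:KLR}), so after expanding as $\sum_k \tau_{w'} x^{\mathbf{t}'} 1_{\mathbf{k}'}$ in the preferred basis, every summand has strictly shorter underlying permutation and therefore belongs to $F_{<\gamma}$. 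A similar short calculation handles the commutation of $x_k$ past $\tau_t$ using the corresponding case of $\eqref{Eq:def rel 1}$, where again the correction $\pm \mathcal{P}_{i_t}(x_t,x_{t+1})$ only decreases permutation length.

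Finally I would read off the grading shift. Each strand crossing between a strand of color $i \in \Ire_{\gamma\text{-block}}$ and a strand of color $j$ in the complementary block contributes degree $-(\alpha_i|\alpha_j)$ by $\eqref{Eq:degree}$; summing over all crossings forced by the $\gamma$-coset gives precisely $(\gamma\mid \beta+\gamma-\beta')$, producing the shift $\langle(\gamma|\beta+\gamma-\beta')\rangle$. Running $\gamma$ over all admissible elements of $Q^+$ exhausts the double cosets and hence exhausts $_{\alpha,\beta}R_{\alpha',\beta'}$, completing the filtration. I expect the bookkeeping of the partial order on cosets and the careful check that the new twisting polynomials leave the filtration stable to be the most delicate part, while the identification of each subquotient is essentially a restatement of the basis theorem.
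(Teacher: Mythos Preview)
Your proposal is correct and follows essentially the same approach as the paper, which simply states that the proof is almost identical to that of \cite[Proposition 2.18]{KL09}. You have in fact supplied more detail than the paper does, correctly isolating the one new point to check: that the extra correction terms $\mathcal{P}_i$, $\overline{\mathcal{P}}_i'$, $\overline{\mathcal{P}}_i''$ arising from imaginary simple roots are polynomials and hence only contribute Bruhat-lower terms, so the filtration argument of Khovanov--Lauda goes through unchanged.
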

\begin{proof}
The proof is almost identical to that of \cite[Proposition
2.18]{KL09}.
\end{proof}

For $\alpha = \sum_{i\in I} k_i \alpha_i \in Q^+$ with
$|\alpha|=d$, we define
$$ \pol(\alpha) = \prod_{\mathbf{i} \in \seq(\alpha)} \F[ x_{1,\mathbf{i}}, \ldots, x_{d,\mathbf{i}} ]. $$
Then the symmetric group $\sg_d$ acts on $\pol(\alpha)$ by $w \cdot x_{k,\mathbf{i}} := x_{w(k),w(\mathbf{i})}$ for $w\in \sg_d$.
Let
$$ \sym(\alpha) = \pol(\alpha)^{\sg_d}. $$
Note that $ \sym(\alpha) \simeq \bigotimes_{i\in I}
\F[x_1,\ldots,x_{k_i}]^{\sg_{k_i}} $. Considering $\sym(\alpha)$ as
a subalgebra of $R(\alpha)$ via the natural inclusion $\pol(\alpha)
\hookrightarrow R(\alpha)$ sending $x_{k,\mathbf{i}}$ to $x_{k}
1_{\mathbf{i}}$, we have the following lemma.
\begin{Lem}\ \label{Lem:center of R(alpha)}
\begin{enumerate}
\item $\sym(\alpha)$ is the center of $R(\alpha)$.
\item $R(\alpha)$ is a free module of rank $(d!)^{2}$ over its center $\sym(\alpha)$.
\end{enumerate}
\end{Lem}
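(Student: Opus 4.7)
The plan is to prove (1) and (2) in succession: first establishing each inclusion of the center by a different method, and then deducing (2) from the basis in Proposition \ref{Prop:basis of R(alpha)} combined with classical invariant theory. For $\sym(\alpha)\subseteq Z(R(\alpha))$, I would take a homogeneous $f=\sum_{\mathbf{i}} f_\mathbf{i} 1_\mathbf{i} \in \sym(\alpha) \hookrightarrow R(\alpha)$ and check commutation with each generator. The cases $1_\mathbf{j}$ and $x_k$ are immediate from the commutativity of $\pol(\alpha)$. For $\tau_t$, a direct computation using $\tau_t 1_\mathbf{i} = 1_{r_t\mathbf{i}}\tau_t$ together with the $\tau_t$--$x_k$ relation in \eqref{Eq:def rel 1} yields
\[
\tau_t f - f\tau_t \;=\; \sum_{\mathbf{i}} (r_t f_\mathbf{i} - f_{r_t\mathbf{i}}) \tau_t 1_\mathbf{i} \;-\; \sum_{\mathbf{i}:\, i_t=i_{t+1}} \mathcal{P}_{i_t}(x_t, x_{t+1}) \partial_t(f_\mathbf{i}) 1_\mathbf{i},
\]
which vanishes because $f \in \pol(\alpha)^{\sg_d}$ forces $r_t f_\mathbf{i} = f_{r_t\mathbf{i}}$, and this in turn forces $\partial_t f_\mathbf{i} = 0$ whenever $i_t = i_{t+1}$ (since then $r_t \mathbf{i} = \mathbf{i}$, so $f_\mathbf{i}$ is $r_t$-symmetric in $x_t, x_{t+1}$).

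For the reverse inclusion $Z(R(\alpha)) \subseteq \sym(\alpha)$, I would transport the question to the faithful polynomial representation $\Upsilon : R(\alpha) \hookrightarrow \End(\mathfrak{Pol}(\alpha))$ of Proposition \ref{Prop:basis of R(alpha)}(2). Given $z \in Z(R(\alpha))$, the conditions $1_\mathbf{j} z = z 1_\mathbf{j}$ force $z = \sum_\mathbf{i} 1_\mathbf{i} z 1_\mathbf{i}$, and $[z, x_k] = 0$ then forces $\Upsilon(z)$ to restrict on each summand $P_\mathbf{i} := \F[x_{1,\mathbf{i}}, \ldots, x_{d,\mathbf{i}}]$ to a $P_\mathbf{i}$-linear endomorphism, hence to multiplication by some $f_\mathbf{i} \in P_\mathbf{i}$. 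Reading off $[z, \tau_t] = 0$ from the formula \eqref{Eq:def of faithful rep} then yields $r_t f_\mathbf{i} = f_{r_t\mathbf{i}}$ for every $\mathbf{i}, t$, which is exactly the $\sg_d$-invariance condition placing $f := (f_\mathbf{i})$ in $\sym(\alpha)$.

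For (2), Proposition \ref{Prop:basis of R(alpha)}(1) realizes $_\mathbf{j}R(\alpha)_\mathbf{i}$ as a free right $P_\mathbf{i}$-module of rank $|{_\mathbf{j}\sg_\mathbf{i}}|$; summing over $\mathbf{j}$ makes $R(\alpha) 1_\mathbf{i}$ free over $P_\mathbf{i}$ of rank $\sum_\mathbf{j}|{_\mathbf{j}\sg_\mathbf{i}}| = |\sg_d| = d!$, and summing over $\mathbf{i}$ shows $R(\alpha)$ is free of rank $d!$ over $\pol(\alpha)$. Since $\sg_d$ acts transitively on $\seq(\alpha)$ with stabilizer $\prod_i \sg_{k_i}$ (where $\alpha = \sum_i k_i \alpha_i$), evaluation at a fixed $\mathbf{i}_0$ identifies $\sym(\alpha)$ with $P_{\mathbf{i}_0}^{\prod_i \sg_{k_i}}$; by the classical theorem on invariants of products of symmetric groups, each $P_\mathbf{i}$ is free of rank $\prod_i k_i!$ over $\sym(\alpha)$, and summing over the $|\seq(\alpha)| = d!/\prod_i k_i!$ sequences gives $\pol(\alpha)$ free of rank $d!$ over $\sym(\alpha)$. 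Composing the two freeness statements yields rank $(d!)^2$. The main obstacle is the $\tau_t$-commutation in (1), where the two simultaneous $\sg_d$-actions (on variables and on sequence labels) must be tracked carefully in the presence of the nontrivial twisting polynomials $\mathcal{P}_i$ arising from the imaginary simple roots.
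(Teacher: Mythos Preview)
Your argument is correct and complete, but it takes a genuinely different route from the paper's.

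For part (1), the paper first reduces to the single-colour case $\alpha=m\alpha_i$: for $i\in\Ire$ it invokes the nil Hecke algebra isomorphism $R(m\alpha_i)\simeq NH_m$, and for $i\in\Iim$ it writes a putative central element in the basis of Proposition~\ref{Prop:basis of R(alpha)} as $y=\sum_k\tau_{w_k}f_k$ and uses a leading-term argument (commuting with $x_j$ kills the $\tau$'s, then commuting with $\tau_t$ forces symmetry). The passage to general $\alpha$ is then deferred to \cite[Theorem~2.9, Corollary~2.10]{KL09}. You instead treat arbitrary $\alpha$ in one stroke: your forward inclusion is the same kind of generator check, but for the reverse inclusion you use the faithful representation $\Upsilon$ module-theoretically---the observation that an $\F$-linear endomorphism of $\F[x_1,\ldots,x_d]$ commuting with all multiplication operators must itself be multiplication by $T(1)$---rather than a basis/leading-term analysis. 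This is cleaner and avoids the two-step reduction; what the paper's approach buys is that it isolates precisely the new content (the imaginary single-colour case) and cites the literature for the rest.

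For part (2) the paper simply cites \cite[Corollary~2.10]{KL09}, whereas you supply the full rank count via $R(\alpha)$ free of rank $d!$ over $\pol(\alpha)$ (from Proposition~\ref{Prop:basis of R(alpha)}) and $\pol(\alpha)$ free of rank $d!$ over $\sym(\alpha)$ (from the orbit--stabilizer description of $\sym(\alpha)$ together with the classical freeness of a polynomial ring over its symmetric-group invariants). Your argument is self-contained; the paper's is shorter by reference.

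One minor remark: in your displayed commutator formula for $\tau_t f-f\tau_t$, the relation $\tau_t g\,1_{\mathbf{i}}=(r_tg)\tau_t 1_{\mathbf{i}}+\mathcal{P}_{i_t}(x_t,x_{t+1})\partial_t(g)1_{\mathbf{i}}$ (for $i_t=i_{t+1}$) gives a $+$ rather than a $-$ on the second sum, but this does not affect the conclusion since both terms vanish under the $\sg_d$-invariance hypothesis.
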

\begin{proof}
We first consider the case when $\alpha = m\alpha_i$ for $i \in I$.
If $i \in \Ire$, it follows from $R(m\alpha_i) \simeq NH_m$ that
$\sym(\alpha)$ is the center of $R(m\alpha_i)$. Suppose that $i\in
\Iim$. By Lemma \ref{Lem:R(m_i) for i in Iim}, $R(\alpha)$ can be
considered as a subalgebra of $\End(\F[x_1, \ldots, x_d])$. Let
$\mathbf{x}_k$ be the endomorphism of $\F[x_1, \ldots, x_m]$ defined
by multiplication by $x_k$. It is obvious that $\sym(\alpha)$ is
contained in the center of $R(\alpha)$ and $ \F[ \mathbf{x}_1,
\ldots, \mathbf{x}_m ] \subset R(\alpha)$.

For $ f\in \F[ \mathbf{x}_1, \ldots, \mathbf{x}_m ]$, from the defining relations,
we have
$$ f \tau_{i_1} \cdots \tau_{i_k} = \tau_{i_1} \cdots \tau_{i_k}( r_{i_k} \cdots r_{i_1} f) + \cdots \text{ lower terms } \cdots $$
with respect to the Bruhat order.
Let $y = \sum_{i} \tau_{w_i} f_i $ be an element in the center of $R(\alpha)$. We assume $\ell(w_1) \ge \ell(w_{k})$ for all $k$.
Take $j$ such that $w_1(j) \ne j$. Then
$$y \mathbf{x}_j - \mathbf{x}_j y  = y ( \mathbf{x}_j - \mathbf{x}_{w_1(j)} ) + \cdots \text{ lower terms }\cdots, $$
which implies $\tau_{w_i} = 1$ for all $i$. Since $y$ commutes with all $\tau_i$, $y$ should be a symmetric polynomial.
Therefore, the center of $R(\alpha)$ is $\sym(\alpha)$.

We now deal with the general case when $\alpha \in Q^+$. In this
case, using the fact that $\sym(m\alpha_i)$ is the center of
$R(m\alpha_i)$ for $i\in I$, our assertion can be proved in the same
manner as in \cite[Thoerem 2.9, Corollary 2.10]{KL09}.
\end{proof}

\vskip 1em


\subsection{Quantum Serre relations} \

Let $R(\alpha)$-mod (resp.\ $R(\alpha)$-pmod, $R(\alpha)$-fmod) be the category of arbitrary
(resp.\ finitely generated projective, finite-dimensional) graded left $R(\alpha)$-modules.
The morphisms in these categories are homogeneous homomorphisms. 
Let
\begin{align*}
K_0(R) = \bigoplus_{\alpha \in Q^+} K_0(R(\alpha)\text{-pmod})\  \text{ and }\  G_0(R) = \bigoplus_{\alpha \in Q^+} G_0(R(\alpha)\text{-fmod}),
\end{align*}
where $K_0(R(\alpha)$-pmod) (resp.\ $G_0(R(\alpha)$-fmod)) is the
Grothendieck group of $R(\alpha)$-pmod (resp.\ $R(\alpha)$-fmod).
Then $K_0(R)$ and $G_0(R)$ have the
$\A$-module structure given by $q[M] = [M\langle -1 \rangle]$, where
$[M]$ is the isomorphism classes of an $R(\alpha)$-module $M$. For
$M,N \in R(\alpha)$-mod, let $\Hom(M,N)$ be the $\F$-vector space of
homogeneous homomorphisms of degree $0$, and let $\Hom(M \langle k
\rangle,N) = \Hom(M,N \langle -k \rangle)$ be the $\F$-vector space
of homogeneous homomorphisms of degree $k$. Define
$$ \HOM(M,N)= \bigoplus_{k\in \Z} \Hom(M,N \langle k \rangle). $$

Let $\sym^+(\alpha)$ be the maximal ideal of $\sym(\alpha)$. Since
$\sym^+(\alpha)$ acts on any irreducible graded $R(\alpha)$-module
trivially, the isomorphism classes of irreducible graded modules
over $R(\alpha)$ are in 1-1 correspondence with the isomorphism
classes of irreducible graded modules over the quotient
$R(\alpha)/\sym^+(\alpha) R(\alpha)$. It follows from Lemma
\ref{Lem:center of R(alpha)} that there are only finitely many
irreducible $R(\alpha)$-modules, and all irreducible
$R(\alpha)$-modules are finite-dimensional. Note that
$R(\alpha)$ has the Krull-Schmidt unique direct sum
decomposition property for finitely generated modules
 since each graded part of $R(\alpha)$ is
finite-dimensional. Hence irreducible $R(\alpha)$-modules form a
basis of $G_0(R(\alpha)\text{-fmod})$ as an $\A$-module, which implies
that the projective covers of irreducible $R(\alpha)$-modules form a
basis of $K_0(R(\alpha)\text{-pmod})$ as an $\A$-module.

Let us consider the $\A$-bilinear pairing $(\ ,\ ) : K_0(R(\alpha))
\times G_0(R(\alpha)) \longrightarrow \A $ defined by
\begin{equation}\label{eq:paring between K and G}
([P],[M]) =  \qdim(P^{\star} \otimes_{R(\alpha)} M),
\end{equation}
where  $\qdim(N) :=  \sum_{i\in \Z} (\dim_{\bR} N_i)q^i $ for a $\Z$-graded
module $N = \bigoplus_{i\in \Z} N_i$. 
Then, the paring $(\ ,\ )$
is perfect. Thus $K_0(R(\alpha))$ and $G_0(R(\alpha))$ are dual to
each other with respect to the pairing $(\ ,\ )$.
By Lemma
\ref{Lem:center of R(alpha)}, the pairing \eqref{eq:paring between K
and G} can be extended to an $\A$-bilinear form $(\ ,\ ) :
K_0(R(\alpha)) \times K_0(R(\alpha)) \longrightarrow \Q(q)$ given by
\begin{equation} \label{eq:paring of K}
([P], [Q]) =  \qdim(P^{\star} \otimes_{R(\alpha)} Q).
\end{equation} 
Since the pairing $\eqref{eq:paring between K and G}$ is perfect and
$ P^{\star} \otimes_{R(\alpha)} Q \simeq Q^{\star}
\otimes_{R(\alpha)} P $, we conclude that the pairing
\eqref{eq:paring of K} is a nondegenerate symmetric bilinear form on
$K_0(R(\alpha))$.

For a finite-dimensional $R(\alpha)$-module $M$, we define the {\em
character} $\ch_q(M)$ of $M$ to be
$$ \qch(M) =  \sum_{\mathbf{i}\in \seq(\alpha)} (\dim_q (1_\mathbf{i} M))\mathbf{i}.$$
For $\mathbf{i}=(i_1^{(d_1)} \ldots i_r^{(d_r)}) \in \seqd(\alpha)$, let
$$ \ie_{\mathbf{i}} :=  \ie_{i_1, d_1} \otimes  \cdots \otimes \ie_{i_r, d_r}, $$
where
$$ \ie_{i, d} := \left\{
                 \begin{array}{ll}
                   \tau_{w_0}x_1^{d-1}\cdots x_{d-1} 1_{(i\ldots i)} & \hbox{ if } i \in \Ire, \\
                   1_{(i\ldots i)} & \hbox{ if } i\in \Iim,
                 \end{array}
               \right.
$$
and $w_0 = r_1r_2r_1\cdots r_{d-1}\cdots r_{1} $
is the longest
element in $\sg_d$. Since each $\ie_{i_k, d_k}$ is an idempotent in
$R(d_k \alpha_{i_k})$ $(k=1, \ldots, r)$, $\ie_{\mathbf{i}}$ is an
idempotent.
Define an $R(\alpha)$-module
$P_{\mathbf{i}}$ corresponding to $\mathbf{i}=(i_1^{(d_1)} \ldots
i_r^{(d_r)}) \in \seqd(\alpha) $ by
\begin{align} \label{Eq:def of Pi}
P_{\mathbf{i}} := R(\alpha) \ie_\mathbf{i}\ \left\langle  \sum_{\substack{k=1,\ldots,r, \\ i_k \in \Ire}} \frac{d_k(d_k-1)(\alpha_{i_k}|\alpha_{i_k})}{4}  \right \rangle .
\end{align}
Note that $P_{\mathbf{i}}$ is a projective graded $R(\alpha)$-module. By construction, if $i \in \Iim$, then
$$ P_{(i^{(d)})} = P_{(\underbrace{i\ldots i}_{d})}. $$
For a finitely generated graded projective $R(\alpha)$-module $P$,
define
\begin{align} \label{Eq:bar involution}
\overline{P}= \HOM(P, R(\alpha))^\star.
\end{align}
Note that $\overline{P}$ is a graded projective left
$R(\alpha)$-module and that $\overline{P_\mathbf{i}\langle a
\rangle} \simeq P_\mathbf{i}\langle -a \rangle $ for $\mathbf{i} \in
\seqd(\alpha)$. Hence we get a $\Z$-linear involution $-: K_0(R)
\rightarrow K_0(R)$.

We now prove the quantum Serre relations on $K_0(R)$. Suppose that
$i\in \Ire, j\in I$ and $a_{ij} \ne 0$. Let $N = 1-a_{ij}$ and take
nonnegative integers $a,b \ge 0 $ with $a+b = N$. Define the
homogeneous elements
$$ \alpha_{a,b}^+ := \alphaABpl, \quad  \alpha_{a,b}^- := \alphaABmi. $$
Choose a pair of sequences $\mathbf{i}_1$ and $\mathbf{i}_2$ such
that $\mathbf{i}_1 * (i^{(a)}ji^{(b)}) * \mathbf{i}_2 \in
\seqd(\alpha)$, and write $P_{ (\cdots\ i^{(a)}ji^{(b)}\cdots )}$ for
$P_{  \mathbf{i}_1 * (i^{(a)}ji^{(b)}) * \mathbf{i}_2 }$.
Then these
elements give rise to homomorphisms of graded projective modules
\begin{equation}
\begin{aligned}
d_{a,b}^+ &: P_{ (\cdots i^{(a)}ji^{(b)} \cdots)} \longrightarrow P_{ ( \cdots i^{(a+1)}ji^{(b-1)} \cdots)  },\\
 & \qquad\qquad m  \quad \quad \longmapsto \quad m \cdot 1_{\mathbf{i}_1} \otimes \alpha_{a,b}^+ \otimes 1_{\mathbf{i}_2} , \\
d_{a,b}^- &: P_{( \cdots i^{(a)}ji^{(b)}\cdots)  } \longrightarrow P_{(\cdots i^{(a-1)}ji^{(b+1)}\cdots) },\\
 & \qquad\qquad m  \quad \quad \longmapsto \quad m \cdot 1_{\mathbf{i}_1} \otimes \alpha_{a,b}^- \otimes 1_{\mathbf{i}_2}.
\end{aligned}
\end{equation}
Set $d_{N,0}^+ = 0 $ and $d_{0,N}^- = 0 $. Then we have
$$
\xymatrix{    0\ \  \ar@<0.4ex>[r]^{} & \ar@<0.4ex>[l]^{} P_{ (\cdots i^{(0)}ji^{(N)} \cdots)}
\ar@<0.4ex>[r]^{\qquad d_{0,N}^+   } & \ar@<0.4ex>[l]^{\qquad  d_{1,N-1}^- }\ \ \cdots \ \ \ar@<0.4ex>[r]^{d_{a-1,b+1 }^+ \qquad } & \ar@<0.4ex>[l]^{ d_{a,b}^- \qquad} P_{ (\cdots i^{(a)}ji^{(b)} \cdots)}
\ar@<0.4ex>[r]^{\qquad d_{a,b }^+  } & \ar@<0.4ex>[l]^{\qquad d_{a+1,b-1 }^-}\ \ \cdots \ \ \ar@<0.4ex>[r]^{ d_{N-1,1 }^+ \qquad} & \ar@<0.4ex>[l]^{ d_{N,0 }^- \qquad}
P_{ (\cdots i^{(N)}ji^{(0)} \cdots)}  \ar@<0.4ex>[r]^{} & \ar@<0.4ex>[l]^{} \ \ 0
 }.
$$

\begin{Lem} \ \label{Lem:Serre}
\begin{enumerate}
\item $ d_{a, b}^+ \circ d_{a-1, b+1}^+ = 0,\ \   d_{a, b}^- \circ d_{a+1, b-1}^- = 0  $ for $a,b > 0$.
\item $ d_{N-1, 1}^+ \circ d_{N, 0}^- =  t_{i,j;-a_{ij},0} \id, \ \  d_{1,N-1}^- \circ d_{0,N}^+ =
(-1)^{N-1} t_{i,j;-a_{ij},0} \id $.
\item For $ 1 < a, b<N$, we have
$$  d_{a+1,b-1}^- \circ d_{a,b}^+ - d_{a-1,b+1}^+ \circ d_{a,b}^- = (-1)^{b-1} t_{i,j;-a_{ij},0} \id. $$
\end{enumerate}
\end{Lem}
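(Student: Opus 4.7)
The plan is to treat all three identities as local diagrammatic computations in $R(\alpha)$, reducing each to the defining relations together with a short computation in the nil Hecke subalgebra $R(m\alpha_i) \cong NH_m$ for $i \in \Ire$. The main ingredients are: (a) the primitive idempotent $\ie_{i,a} = \tau_{w_0}x_1^{a-1}\cdots x_{a-1} 1_{(i\ldots i)}$ in $NH_a$ satisfies $\tau_k \ie_{i,a} = 0 = \ie_{i,a}\tau_k$ for every simple transposition $r_k \in \sg_a$; (b) the $i \neq j$ crossing relation $\tau_t^2 1_\mathbf{i} = \mathcal{Q}_{i_t,i_{t+1}}(x_t,x_{t+1})1_\mathbf{i}$ from \eqref{Eq:def rel 1}; and (c) the braid relation \eqref{Eq:def rel 2} in the case $i_t = i_{t+2} \ne i_{t+1}$, where $\mathcal{P}_i \equiv 1$ because $i \in \Ire$.

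For part (1), stacking $\alpha_{a,b}^+$ on top of $\alpha_{a-1,b+1}^+$ produces a composite diagram which, after sliding dots and crossings using the relations in \eqref{Eq:def rel 1}, contains a sub-word of the form $\ie_{i,c}\,\tau_k\, (\cdots)\, \ie_{i,c}$ acting on the block of $i$-strands shared by the two pieces; this vanishes by the nil Hecke identity (a). The identity $d_{a,b}^-\circ d_{a+1,b-1}^- = 0$ follows from the mirror-symmetric argument, reflecting left and right.

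For part (2), in $d_{N-1,1}^+ \circ d_{N,0}^-$ the $j$-strand sweeps across all $N$ of the $i$-strands and returns, creating $N$ bigons between $j$ and $i$. Collapsing each bigon via (b) produces a product of polynomial factors $\mathcal{Q}_{i,j}(x_s,x_{s+1})$ sandwiched between two copies of $\ie_{i,N}$. Since $\ie_{i,N}$ acts on the adjacent polynomial variables as a divided difference composed with multiplication by the top monomial, it selects exactly the top-degree coefficient of $\mathcal{Q}_{i,j}$, namely $t_{i,j;-a_{ij},0}$, giving $t_{i,j;-a_{ij},0}\,\id$. The computation for $d_{1,N-1}^-\circ d_{0,N}^+$ is identical apart from the $j$-strand running on the opposite side of the $i$-block, and the sign $(-1)^{N-1}$ records the parity of the number of $i$-strands the surviving dot must be transported across.

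For part (3) — the main obstacle — the two compositions in the difference agree outside a local window where one has the sub-diagram $\tau_{s+1}\tau_s\tau_{s+1}$ and the other has $\tau_s\tau_{s+1}\tau_s$, with color sequence $(i,j,i)$. By (c), their difference equals
\[
\overline{\mathcal{Q}}_{i,j}(x_s,x_{s+1},x_{s+2})\, 1_\mathbf{i} \;=\; \sum_{p,q} t_{i,j;p,q}\, x_{s+1}^{q}\,\frac{x_s^{p}-x_{s+2}^{p}}{x_s-x_{s+2}}\, 1_\mathbf{i}.
\]
This polynomial is then surrounded by $\ie_{i,a+1}$ on the $a+1$ leftmost $i$-strands and $\ie_{i,b}$ on the $b$ rightmost $i$-strands. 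Applying the divided-difference action hidden inside these idempotents, every summand is annihilated except $(p,q)=(-a_{ij},0)$, whose coefficient reduces to $(-1)^{b-1}\,t_{i,j;-a_{ij},0}$. The principal difficulty is precisely this sign and the verification that all other terms vanish; once the orientation conventions in Definition \ref{def:KLR} and in $\alpha^\pm_{a,b}$ are fixed, the sign can be read off from the number of crossings ($b-1$) that the single surviving dot must traverse in being absorbed into the nil Hecke idempotent on the right of the $j$-strand.
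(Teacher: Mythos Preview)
Your overall strategy coincides with the paper's: express $\alpha_{a,b}^\pm$ as words in $\tau$'s times the idempotents $e_{a,b}=\ie_{i,a}\otimes 1_{(j)}\otimes\ie_{i,b}$, reduce the compositions to nil Hecke computations for part (1), and isolate the braid relation $\tau_{a+1}\tau_a\tau_{a+1}-\tau_a\tau_{a+1}\tau_a=\overline{\mathcal Q}_{i,j}(x_a,x_{a+1},x_{a+2})$ for parts (2) and (3). That is exactly how the paper proceeds (it cites \cite{KL11,R08} for $j\in\Ire$ and writes out the $j\in\Iim$ case). So the route is right, but two of your stated ingredients are wrong and would derail the argument if used as written.

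First, ingredient (a) is false on one side: while $\tau_k\ie_{i,a}=0$ holds (since $\tau_k\tau_{w_0}=0$), the identity $\ie_{i,a}\tau_k=0$ does not. Already for $a=2$ one has $\ie_{i,2}\tau_1=\tau_1x_1\tau_1=(x_2\tau_1-1)\tau_1=-\tau_1\neq0$. The paper never needs this direction; the vanishing in part (1) comes instead from rewriting $\alpha_{a-1,b+1}^+\alpha_{a,b}^+$ as $e_{a-1,b+1}\,\tau_{d-1}\cdots\tau_a\,\tau_{d-1}\cdots\tau_{a+1}\,e_{a+1,b-1}$ and observing that this word is non-reduced in the $i$-colored strands, so the right idempotent kills it via $\tau_k\ie_{i,a+1}=0$ alone. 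Your sketch ``$\ie_{i,c}\tau_k(\cdots)\ie_{i,c}=0$'' does not make clear which side is doing the work and, as stated, relies on the false half.

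Second, your description of part (2) is incorrect: in $d_{N-1,1}^+\circ d_{N,0}^-$ the $j$-strand does \emph{not} sweep across all $N$ of the $i$-strands. From the explicit formulas $\alpha_{N,0}^-=\tau_1\cdots\tau_N e_{N-1,1}$ and $\alpha_{N-1,1}^+=\tau_N e_{N,0}$ one sees a single $j$--$i$ bigon at positions $N,N+1$, together with the \emph{leftmost} $i$-strand wrapping across the other $i$-strands. The coefficient $t_{i,j;-a_{ij},0}$ is then extracted not from a product of $N$ polynomials $\mathcal Q_{i,j}$ but from a single $\overline{\mathcal Q}_{i,j}$ (or $\mathcal Q_{i,j}$) sandwiched between nil Hecke idempotents, exactly as in part (3); the sign $(-1)^{N-1}$ in the second identity of (2) arises from the analogous computation on the other end. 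Your heuristic ``the idempotent selects the top-degree coefficient'' is morally right, but the mechanism you describe (``$N$ bigons'') is not what actually happens.
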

\begin{proof} If $j \in \Ire$, this lemma was proved in \cite{KL11,
R08}. We will prove our lemma when $j \in \Iim$.

 Let $d = 2-a_{ij}$
and let $e_{a,b} = 1_{i,a} \otimes 1_{(j)} \otimes 1_{i, b}$ for
$a,b \ge 0$. Since $i \in \Ire$ and $\mathcal{P}_i(u,v)=1$, it
follows from \cite{KL11,R08} that
\begin{align*}
\alpha_{a,b}^+ &= \tau_{d-1} \cdots \tau_{a+1} e_{a+1,b-1} = e_{a,b} \tau_{d-1} \cdots \tau_{a+1} e_{a+1,b-1}, \\
\alpha_{a,b}^- &= \tau_{1} \cdots \tau_{a} e_{a-1,b+1} = e_{a,b} \tau_{1} \cdots \tau_{a} e_{a-1,b+1}.
\end{align*}
By a direct computation, we have
\begin{align*}
\alpha_{a-1,b+1}^+ \alpha_{a,b}^+  & =  e_{a-1,b+1} \tau_{d-1} \cdots \tau_{a} e_{a,b} e_{a,b} \tau_{d-1} \cdots \tau_{a+1} e_{a+1,b-1}\\
&=  e_{a-1,b+1} \tau_{d-1} \cdots \tau_{a}  \tau_{d-1} \cdots \tau_{a+1} e_{a+1,b-1} \\
& = 0.
\end{align*}
In the same manner, we get $ \alpha_{a+1,b-1}^- \alpha_{a,b}^- = 0 $.

On the other hand, using the same argument as in \cite{KL11}, for $a,b>0$, we obtain
\begin{align*}
\alpha_{a,b}^+ \alpha_{a+1,b-1}^- &= e_{a,b} \tau_{d-1} \cdots \tau_{a+1} e_{a+1,b-1} e_{a+1,b-1} \tau_{1} \cdots \tau_{a+1} e_{a,b} \\
&= \tau_{1} \cdots \tau_{a-1} \tau_{d-1} \cdots \tau_{a+1} \tau_{a} \tau_{a+1} e_{a,b}, \\
\alpha_{a,b}^- \alpha_{a-1,b+1}^+ &= e_{a,b} \tau_{1} \cdots \tau_{a} e_{a-1,b+1} e_{a-1,b+1} \tau_{d-1} \cdots \tau_{a} e_{a,b} \\
&= \tau_{1} \cdots \tau_{a-1} \tau_{d-1} \cdots \tau_{a} \tau_{a+1} \tau_{a} e_{a,b},
\end{align*}
which implies
$$\alpha_{N,0}^- \alpha_{N-1,1}^+ =  t_{i,j;-a_{ij},0} e_{N,0},\quad \alpha_{0,N}^+ \alpha_{1,N-1}^- = (-1)^{N-1}t_{i,j;-a_{ij},0} e_{0,N},$$
and
\begin{align*}
\alpha_{a,b}^+ \alpha_{a+1,b-1}^- - \alpha_{a,b}^- \alpha_{a-1,b+1}^+
&= \tau_{1} \cdots \tau_{a-1} \tau_{d-1} \cdots \tau_{a+2} (\tau_{a+1} \tau_{a} \tau_{a+1} - \tau_{a} \tau_{a+1} \tau_{a}) e_{a,b}\\
&= \tau_{1} \cdots \tau_{a-1} \tau_{d-1} \cdots \tau_{a+2} (\overline{Q}_{i,j}(x_{a},x_{a+1},x_{a+2})) e_{a,b}\\
&= (-1)^{b-1} t_{i,j;-a_{ij},0} e_{a,b}.
\end{align*}
Therefore, we obtain
\begin{align*}
& \alpha_{a-1,b+1}^+ \alpha_{a,b}^+ = 0, \quad  \alpha_{a+1,b-1}^- \alpha_{a,b}^- = 0,\\
& \alpha_{N,0}^- \alpha_{N-1,1}^+ =  t_{i,j;-a_{ij},0} e_{N,0},\quad \alpha_{0,N}^+ \alpha_{1,N-1}^- = (-1)^{N-1} t_{i,j;-a_{ij},0} e_{0,N}, \\
&  \alpha_{a,b}^+ \alpha_{a+1,b-1}^- - \alpha_{a,b}^- \alpha_{a-1,b+1}^+ = (-1)^{b-1} t_{i,j;-a_{ij},0} e_{a,b},
\end{align*}
as desired.
\end{proof}

\begin{Thm} \label{Thm:Serre} \
\begin{enumerate}
\item If $a_{ij}=0$, then
$[P_{ (\cdots\ ij\cdots )}] = [P_{ (\cdots\ ji\cdots )}].$
\item If $i \in \Ire$ and $j \in I$ with $i \neq j$, then
$$\sum_{k=0}^{1-a_{ij}} (-1)^k [ P_{ (\cdots\ i^{(k)}ji^{(1-a_{ij}-k)}\cdots )}  ] = 0.$$
\end{enumerate}
\end{Thm}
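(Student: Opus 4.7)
The plan is to prove both identities at the level of graded projective $R$-modules and then pass to $K_0(R)$ via Euler-characteristic arguments, with Lemma \ref{Lem:Serre} supplying the key homotopy data for part (2). Part (1) is handled separately because the hypothesis of Lemma \ref{Lem:Serre} requires $a_{ij}\neq 0$.

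For part (1), the hypothesis $a_{ij}=0$ gives $(\alpha_i|\alpha_j)=0$, and the constraint $(\alpha_i|\alpha_j)+s_ip+s_jq=0$ with $p,q\ge 0$ admits only $p=q=0$; hence $\mathcal{Q}_{i,j}(u,v)=t_{i,j;0,0}\in \F^{\times}$. By \eqref{Eq:def rel 1}, the crossing $\tau_t$ placed between the $i$ and $j$ strands satisfies $\tau_t^{2}=t_{i,j;0,0}$, and by \eqref{Eq:degree} it is homogeneous of degree $-(\alpha_i|\alpha_j)=0$. Right multiplication by $\tau_t$ at that position (which commutes with the remaining tensor factors of $\ie_{\mathbf{i}}$) therefore defines a degree-zero morphism of graded projective $R$-modules $P_{(\cdots ij\cdots)}\to P_{(\cdots ji\cdots)}$, with two-sided inverse given by right multiplication by $t_{i,j;0,0}^{-1}\tau_t$. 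This isomorphism yields $[P_{(\cdots ij\cdots)}]=[P_{(\cdots ji\cdots)}]$ in $K_0(R)$.

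For part (2), set $N=1-a_{ij}$ and consider the sequence
$$0\longrightarrow P_{(\cdots i^{(0)}ji^{(N)}\cdots)}\xrightarrow{d^{+}_{0,N}} P_{(\cdots i^{(1)}ji^{(N-1)}\cdots)}\longrightarrow\cdots\xrightarrow{d^{+}_{N-1,1}} P_{(\cdots i^{(N)}ji^{(0)}\cdots)}\longrightarrow 0.$$
Lemma \ref{Lem:Serre}(1) shows $d^{+}\circ d^{+}=0$, so this is a complex. Exactness at the right end follows from $d^{+}_{N-1,1}\circ d^{-}_{N,0}=t_{i,j;-a_{ij},0}\,\mathrm{id}$, which makes $d^{+}_{N-1,1}$ split surjective; exactness at the left end follows from $d^{-}_{1,N-1}\circ d^{+}_{0,N}=(-1)^{N-1}t_{i,j;-a_{ij},0}\,\mathrm{id}$, which makes $d^{+}_{0,N}$ split injective. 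At an interior term $P_{(\cdots i^{(a)}ji^{(b)}\cdots)}$ with $0<a,b<N$ and $v\in\ker d^{+}_{a,b}$, Lemma \ref{Lem:Serre}(3) gives
$$-d^{+}_{a-1,b+1}\bigl(d^{-}_{a,b}(v)\bigr)=(-1)^{b-1}t_{i,j;-a_{ij},0}\,v,$$
placing $v$ in $\mathrm{im}(d^{+}_{a-1,b+1})$ since $t_{i,j;-a_{ij},0}\in \F^{\times}$. Hence the complex is exact, and the alternating-sum relation in $K_0(R)$ delivers the desired identity $\sum_{k=0}^{N}(-1)^{k}[P_{(\cdots i^{(k)}ji^{(N-k)}\cdots)}]=0$.

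The main bookkeeping task is to check that every $d^{\pm}_{a,b}$ is a homogeneous morphism of graded projective modules of degree zero with respect to the intrinsic shifts built into $P_{\mathbf{i}}$ in \eqref{Eq:def of Pi}. This reduces to a direct degree calculation for the diagrams $\alpha^{\pm}_{a,b}$ using the convention \eqref{Eq:degree}, matching them against the total shift $\sum_{k}\tfrac{d_k(d_k-1)(\alpha_{i_k}|\alpha_{i_k})}{4}$ coming from each real divided-power block. Granted this degree verification, the exactness argument lifts directly to $K_0(R)$ and completes the proof of both statements.
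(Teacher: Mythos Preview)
Your proof is correct and follows essentially the same route as the paper: for part~(1) you build the isomorphism via right multiplication by the crossing $\tau$ (using that $\mathcal{Q}_{i,j}$ is a nonzero constant when $a_{ij}=0$), and for part~(2) you use the homotopy relations of Lemma~\ref{Lem:Serre} to show the complex $(P_{(\cdots i^{(a)}ji^{(b)}\cdots)},d^+_{a,b})$ is exact, then invoke the Euler--Poincar\'e principle. The paper phrases the second step as ``split exact with splitting maps $(-1)^{b-1}t_{i,j;-a_{ij},0}\,d^-_{a,b}$'' rather than checking exactness termwise, and it likewise treats the degree-zero check for $d^{\pm}_{a,b}$ as implicit in the setup, so there is no substantive difference.
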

\begin{proof}
If $a_{ij}=0$, let $\tau^{-}$ (resp. $\tau^{+}$) be the element in
$R$ changing $(ij)$ to $(ji)$ (resp. $(ji)$ to $(ij)$) and define
$$d^-: P_{ (\cdots\ ij\cdots )} \to P_{ (\cdots\ ji\cdots )} \text{ (resp.\ } d^+: P_{ (\cdots\ ji\cdots )} \to P_{ (\cdots\ ij\cdots )} \text{)} $$
to be the map given by right multiplication by $t_{i,j;0,0} \tau^{-}$ (resp.
$t_{j,i;0,0} \tau^{+}$). From the defining relation $\eqref{Eq:def rel 1}$, we
see that $d^+$ and $d^-$ are inverses to each other. Hence
$$[P_{ (\cdots\ ij\cdots )}] = [P_{ (\cdots\ ji\cdots )}]. $$

Suppose that $a_{ij} \ne 0$ and $i\in \Ire$. By Lemma
\ref{Lem:Serre}, the complex $\left( P_{ (\cdots i^{(a)}ji^{(b)}
\cdots)}, d_{a,b}^+ \right)$ becomes an exact sequence with the
splitting maps  $(-1)^{b-1} t_{ij;-a_{ij},0} d_{a,b}^-$. Therefore, our assertion
follows from the Euler-Poincar\`{e} principle.
\end{proof}

\vskip 3em

\section{Categorification of $U_q^-(\g)$} \label{Sec:GKM}

In this section, we show that the Khovanov-Lauda-Rouquier algebra
$R$ gives a categorification of $U_\A^-(\g)$.

\subsection{Induction and restriction}\

For $\alpha, \beta \in Q^+$, consider the natural embedding
\begin{align*}
\iota_{\alpha,\beta}: R(\alpha)\otimes R(\beta) \hookrightarrow R(\alpha+\beta),
\end{align*}
which maps $1_\alpha \otimes 1_\beta$ to $1_{\alpha,\beta}$. For $M \in R(\alpha)\otimes R(\beta)$-mod and $N \in R(\alpha+\beta)$-mod, we define
\begin{align*}
\ind_{\alpha, \beta} M &= R(\alpha+\beta) \otimes_{R(\alpha)\otimes R(\beta)}M , \\
\res_{\alpha, \beta} N &= 1_{\alpha, \beta} N  .
\end{align*}
Then it is straightforward to verify that the Frobenius reciprocity
holds:
\begin{align} \label{Eq:reciprocity}
\HOM_{R(\alpha+\beta)}(\ind_{\alpha,\beta}M, N) \simeq \HOM_{R(\alpha)\otimes R(\beta)}(M, \res_{\alpha,\beta}N).
\end{align}
When there is no ambiguity, we will simply write $\ind$ and $\res $
for $\ind_{\alpha, \beta}$ and $\res_{\alpha,\beta} $, respectively.

Given $\mathbf{i} \in \seq(\alpha)$ and $\mathbf{j} \in
\seq(\beta)$, a sequence $\mathbf{k}\in \seq(\alpha+\beta)$ is
called a {\em shuffle} of $\mathbf{i}$ and $\mathbf{j}$ if
$\mathbf{k}$ is a permutation of $\mathbf{i} * \mathbf{j}$ such that
$\mathbf{i}$ and $\mathbf{j}$ are subsequences of $\mathbf{k}$. For
a shuffle $\mathbf{k}$ of $\mathbf{i} \in \seq(\alpha)$ and
$\mathbf{j} \in \seq(\beta)$, let
$$\deg(\mathbf{i},\mathbf{j},\mathbf{k}) = \deg(\tau_{w}1_{\mathbf{i}*\mathbf{j}}),$$
where $w$ is the element in $\sg_{|\alpha| + |\beta|} /
\sg_{|\alpha| } \times \sg_{|\beta|}$ corresponding to $\mathbf{k}$.
Given $X = \sum x_\mathbf{i}\ \mathbf{i}$ and $Y = \sum
y_\mathbf{j}\ \mathbf{j}$, the {\em shuffle product} $X \star Y$ of
$X$ and $Y$ is defined to be
$$ X \star Y = \sum_{\mathbf{k}}
\left(\sum_{\mathbf{i},\mathbf{j}}q^{\deg(\mathbf{i},\mathbf{j},\mathbf{k})}
x_\mathbf{i} y_\mathbf{j}  \right)  \ \mathbf{k}, $$ where
$\mathbf{k}$ runs over all the shuffles of $\mathbf{i}$ and
$\mathbf{j}$. Then, by Proposition \ref{Prop:basis of R(alpha)}, we
have
\begin{align} \label{Eq:shuffle}
\qch(\ind_{\alpha, \beta} M \boxtimes N) = \qch(M) \star \qch(N)
\end{align}
for $M\in R(\alpha)$-fmod and $N \in R(\beta)$-fmod.

By Corollary \ref{Cor:R(alpha+beta) is free},
$\ind_{\alpha,\beta}$ and $\res_{\alpha,\beta}$ take projective
modules to projective modules. Since $1_{\alpha, \beta}$ is an
idempotent, $\ind_{\alpha,\beta}$ and $\res_{\alpha,\beta}$ can be
viewed as exact functors between the categories of projective
modules. Hence we obtain the linear maps
\begin{align*}
& \ind_{\alpha,\beta}: K_0(R(\alpha)) \otimes K_0(R(\beta)) \longrightarrow K_0(R(\alpha+\beta)),\\
& \res_{\alpha,\beta}: K_0(R(\alpha+\beta)) \longrightarrow K_0(R(\alpha)) \otimes K_0(R(\beta)).
\end{align*}
It follows from Proposition \ref{Prop:Mackey} that
\begin{equation} \label{Eq:ind and res of Pi}
\begin{aligned}
&\ind_{\alpha, \beta}(P_\mathbf{i}\boxtimes P_\mathbf{j})  \simeq P_{\mathbf{i}*\mathbf{j}}  &\text{ for }&  \mathbf{i} \in \seq(\alpha),\ \mathbf{j} \in \seq(\beta),\\
&\res_{\alpha, \beta} P_\mathbf{k}  \simeq \bigoplus_{\mathbf{i},\mathbf{j}} P_\mathbf{i} \boxtimes P_\mathbf{j}\langle  -\deg(\mathbf{i},\mathbf{j},\mathbf{k} ) \rangle
 &\text{ for }& \mathbf{k} \in \seq(\alpha+\beta),
\end{aligned}
\end{equation}
where the sum is taken over all $\mathbf{i}\in \seq(\alpha)$,
$\mathbf{j}\in \seq(\beta)$ such that $\mathbf{k}$ can be
expressed as a shuffle of $\mathbf{i}$ and $\mathbf{j}$. We extend
the linear maps $\ind_{\alpha,\beta}$ and $\res_{\alpha,\beta}$ to
the whole space $K_0(R)$ by linearity:
\begin{align*}
\ind:& K_0(R) \otimes K_0(R) \longrightarrow K_0(R)\quad \text{given by }\quad ([M],[N]) \mapsto [\ind_{\alpha, \beta} M \boxtimes N],\\
\res:& K_0(R)  \longrightarrow K_0(R)\otimes K_0(R)\quad \text{given
by }\quad [L] \mapsto \sum_{\alpha',\beta' \in Q^+ }[\res_{\alpha',
\beta'} L].
\end{align*}
We denote by $[M][N]$ the product $\ind([M],[N])$ of $[M]$ and $[N]$ in $K_0(R)$.

\begin{Prop}\
\begin{enumerate}
\item The pair $(K_0(R), \ind)$ becomes an associative unital $\A$-algebra.
\item The pair $(K_0(R), \res)$ becomes a coassociative counital $\A$-coalgebra.
\end{enumerate}
\end{Prop}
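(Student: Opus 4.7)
The plan is to deduce both statements from standard transitivity properties of induction and restriction, once we observe that everything is compatible with the $Q^{+}$-grading on $K_{0}(R)$ and that both functors preserve projectivity (so they descend to well-defined $\A$-linear maps on the Grothendieck groups, a fact already recorded just above the statement).

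First I would set up the unit and counit. The degree $0$ component $R(0)$ is isomorphic to $\F$ (the empty-word algebra), so $K_{0}(R(0)) \cong \A$ is generated by the class $\mathbf{1} := [R(0)]$; this will serve as the unit. The counit $\varepsilon : K_{0}(R) \to \A$ will be the projection onto the $\alpha = 0$ component, i.e.\ $\varepsilon([M]) = [M]$ if $M \in R(0)\text{-pmod}$ and $\varepsilon([M]) = 0$ otherwise. The unit axioms $\ind([R(0)], [M]) = [M] = \ind([M], [R(0)])$ follow because $\iota_{0,\alpha}$ and $\iota_{\alpha,0}$ are the identity embedding $R(\alpha) \hookrightarrow R(\alpha)$, so the induced modules are canonically isomorphic to $M$. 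Dually, the counit identities follow since $\res_{0,\alpha}N = 1_{0,\alpha}N = N$ and similarly for $\res_{\alpha,0}$.

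For associativity of $\ind$, fix $\alpha,\beta,\gamma \in Q^{+}$ and modules $M,N,L$ in the corresponding categories. Both
\[
\ind_{\alpha+\beta,\gamma}\bigl(\ind_{\alpha,\beta}(M\boxtimes N)\boxtimes L\bigr)
\quad\text{and}\quad
\ind_{\alpha,\beta+\gamma}\bigl(M\boxtimes \ind_{\beta,\gamma}(N\boxtimes L)\bigr)
\]
are, by associativity of the tensor product and the compatibility of the embeddings $\iota_{\alpha+\beta,\gamma}\circ(\iota_{\alpha,\beta}\otimes\id) = \iota_{\alpha,\beta,\gamma} = \iota_{\alpha,\beta+\gamma}\circ(\id\otimes\iota_{\beta,\gamma})$, canonically isomorphic to
\[
R(\alpha+\beta+\gamma) \otimes_{R(\alpha)\otimes R(\beta)\otimes R(\gamma)}(M\boxtimes N\boxtimes L).
\]
This isomorphism is graded and functorial, hence gives equality in $K_{0}(R)$. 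Coassociativity of $\res$ is checked in exactly the same way using the identity of idempotents $1_{\alpha,\beta}\cdot 1_{\alpha+\beta,\gamma}\,=\,1_{\alpha,\beta,\gamma}\,=\,1_{\alpha,\beta+\gamma}\cdot 1_{\beta,\gamma}$ in $R(\alpha+\beta+\gamma)$ (with the second factor read inside the appropriate subalgebra). Both iterated restrictions of $L\in R(\alpha+\beta+\gamma)$-pmod therefore equal $1_{\alpha,\beta,\gamma}L$, viewed as a module over $R(\alpha)\otimes R(\beta)\otimes R(\gamma)$.

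The main thing to verify, and the only place where content beyond formal nonsense enters, is that these constructions really do descend to $\A$-linear maps on $K_{0}$. For this one uses Corollary~\ref{Cor:R(alpha+beta) is free} (so $\ind$ sends projectives to projectives and is exact in each variable) together with the fact that $1_{\alpha,\beta}$ is a homogeneous idempotent (so $\res$ is exact and preserves projectivity). Exactness in turn ensures that the formulas $[M][N] := [\ind_{\alpha,\beta}(M\boxtimes N)]$ and $\res([L]) := \sum_{\alpha',\beta'}[\res_{\alpha',\beta'}L]$ are additive on short exact sequences, and the grading shift $q[M]=[M\langle -1\rangle]$ is respected because the embedding $\iota_{\alpha,\beta}$ and the idempotent $1_{\alpha,\beta}$ are both homogeneous of degree $0$. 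This is the only real obstacle, but it has already been established, and the remainder of the proof is the categorical transitivity argument sketched above.
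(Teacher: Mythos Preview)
Your proposal is correct and follows essentially the same approach as the paper: associativity and coassociativity are obtained from transitivity of induction and restriction, and the unit and counit are the obvious ones coming from $K_0(R(0))\cong\A$. The paper's proof is terser---it simply cites transitivity and writes down the unit $\iota$ and counit $\epsilon$ (the latter via $\qdim$, which amounts to your identification $K_0(R(0))\cong\A$)---while you spell out the canonical isomorphisms and the idempotent identities more explicitly.
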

\begin{proof}
Our assertions on associativity and coassociativity follow from the
transitivity of induction and restriction. Define
\begin{align*}
\iota :& \ \A \longrightarrow  K_0(R) \quad \text{ by } \ \iota( \sum_k a_k q^k ) =  \sum_k a_k q^k \mathbf{1},\\
\epsilon:&\ K_0(R) \longrightarrow \A  \quad \text{ by } \
\epsilon(M) = \qdim(M_0),
\end{align*}
where $M_0$ is the image of $M$ under the natural projection $K_0(R) \rightarrow K_0(R(0))$.
Then one can verify that $\iota$ (resp.\ $\epsilon$) is the unit (resp.\ counit) of $K_0(R)$.
\end{proof}

We define the algebra structure on $K_0(R)\otimes K_0(R)$ by
$$ \left( [M_1] \otimes [M_2] \right)  \cdot \left([N_1] \otimes [N_2] \right)
= q^{-(\beta_2|\gamma_1)}[M_1][N_1] \otimes [M_2][N_2] $$
for $M_i \in K_0(R(\beta_i))$, $N_i \in K_0(R(\gamma_i))$ $(i=1,2)$.
Using Proposition \ref{Prop:Mackey}, we prove:
\begin{Prop} $\res:K_0(R)  \longrightarrow K_0(R)\otimes K_0(R)$ is an algebra homomorphism.
\end{Prop}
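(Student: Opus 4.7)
The plan is to verify $\res([M][N]) = \res([M]) \cdot \res([N])$ on a set of $\A$-generators of $K_0(R)\otimes K_0(R)$, namely classes of the form $[M]\otimes [N]$ with $M\in R(\alpha')\text{-pmod}$ and $N \in R(\beta')\text{-pmod}$, and then to extend by $\A$-linearity.

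First I would expand the left-hand side as
\[
\res([M][N]) = \sum_{\alpha+\beta = \alpha'+\beta'} \bigl[\, {}_{\alpha,\beta}R_{\alpha',\beta'}\otimes_{R(\alpha')\otimes R(\beta')} (M\boxtimes N)\,\bigr],
\]
and then apply the Mackey filtration (Proposition \ref{Prop:Mackey}) to the bimodule ${}_{\alpha,\beta}R_{\alpha',\beta'}$. For each $\gamma \in Q^+$ appearing in that filtration, set $\alpha_1=\alpha-\gamma$, $\alpha_2=\gamma$, $\beta_1=\beta+\gamma-\beta'$, $\beta_2=\beta'-\gamma$, so that $\alpha=\alpha_1+\alpha_2$, $\beta=\beta_1+\beta_2$, $\alpha'=\alpha_1+\beta_1$, $\beta'=\alpha_2+\beta_2$. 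Tensoring the subquotient
\[
{}_{\alpha}R_{\alpha_1,\alpha_2}\otimes {}_{\beta}R_{\beta_1,\beta_2}\otimes_{R'} {}_{\alpha_1,\beta_1}R_{\alpha'}\otimes {}_{\alpha_2,\beta_2}R_{\beta'}\langle (\alpha_2 \mid \beta_1)\rangle
\]
with $M\boxtimes N$ over $R(\alpha')\otimes R(\beta')$, and using the identifications ${}_{\alpha_1,\beta_1}R_{\alpha'}\otimes_{R(\alpha')} M = \res_{\alpha_1,\beta_1}M$ and ${}_{\alpha_2,\beta_2}R_{\beta'}\otimes_{R(\beta')} N = \res_{\alpha_2,\beta_2}N$, together with the definition of the induction functor, produces a graded filtration of $\res_{\alpha,\beta}\ind_{\alpha',\beta'}(M\boxtimes N)$ whose graded subquotients are
\[
\ind_{\alpha_1,\alpha_2}\!\bigl(M^{(1)}\boxtimes N^{(1)}\bigr)\,\boxtimes\,\ind_{\beta_1,\beta_2}\!\bigl(M^{(2)}\boxtimes N^{(2)}\bigr)\bigl\langle (\alpha_2\mid\beta_1)\bigr\rangle,
\]
where I write $\res_{\alpha_1,\beta_1}M = \bigoplus M^{(1)}\boxtimes M^{(2)}$ and $\res_{\alpha_2,\beta_2}N = \bigoplus N^{(1)}\boxtimes N^{(2)}$ in Sweedler-type notation.

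Passing to Grothendieck classes (a filtration contributes additively), and using the convention $[M\langle n\rangle] = q^{-n}[M]$, the above yields
\[
\res([M][N]) \;=\; \sum q^{-(\alpha_2\mid\beta_1)}\,[M^{(1)}][N^{(1)}]\otimes [M^{(2)}][N^{(2)}].
\]
On the other hand, by definition of the twisted multiplication on $K_0(R)\otimes K_0(R)$ and the fact that $\wt(M^{(2)})=\beta_1$ and $\wt(N^{(1)})=\alpha_2$,
\[
\res([M])\cdot\res([N]) \;=\; \sum q^{-(\beta_1\mid \alpha_2)}\,[M^{(1)}][N^{(1)}]\otimes [M^{(2)}][N^{(2)}].
\]
Since the bilinear form $(\,\cdot\mid\cdot\,)$ is symmetric, the two expressions coincide, giving the desired identity.

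The only real point where care is needed is bookkeeping: matching the index $\gamma$ of Mackey's theorem with the four weight-data appearing in the twisted product, and checking that the exponent $(\gamma\mid\beta+\gamma-\beta')$ of the grading shift arising from Proposition \ref{Prop:Mackey} is exactly the shift $(\alpha_2\mid\beta_1)$ required by the twist factor $q^{-(\beta_2\mid\gamma_1)}$ in the definition of the algebra structure on $K_0(R)\otimes K_0(R)$. Once this identification is made, the theorem is an immediate consequence of Mackey and the exactness of induction and restriction (guaranteed by Corollary \ref{Cor:R(alpha+beta) is free}).
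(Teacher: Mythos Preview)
Your proof is correct and follows exactly the approach the paper intends: the paper's own proof consists of the single sentence ``Using Proposition~\ref{Prop:Mackey}, we prove,'' and you have simply spelled out the Mackey computation in detail. The bookkeeping you carry out (matching the $\gamma$ of Proposition~\ref{Prop:Mackey} with $\alpha_2$, $\beta_1$ and checking that the shift $(\gamma\mid\beta+\gamma-\beta')=(\alpha_2\mid\beta_1)$ gives precisely the twist factor in the product on $K_0(R)\otimes K_0(R)$) is exactly what is needed, and your use of flatness of the projective module $M\boxtimes N$ to pass the filtration through the tensor product is the right justification.
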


Let us recall the bilinear paring $(\ ,\ ): K_0(R) \otimes K_0(R)
\longrightarrow \Q(q)$ given in $\eqref{eq:paring of K}$ and the
projective modules $P_\mathbf{i}$ for $\mathbf{i}\in \seqd(\alpha)$
defined in $\eqref{Eq:def of Pi}$. We denote by $\mathbf{1}$ the
1-dimensional $R(0)$-module of degree $0$.

\begin{Prop} \label{Prop:paring}
The bilinear pairing $(\ ,\ ): K_0(R) \otimes K_0(R) \rightarrow
\Q(q)$ satisfies the following properties:
\begin{enumerate}
\item $(\mathbf{1},\mathbf{1}) = 1$,
\item $([P_{(i)}], [P_{(j)}]) = \delta_{ij} (1-q_i^{2})^{-1}$ for $i,j\in
I$,
\item $([L], [M][N]) = (\res [L], [M] \otimes [N])$ for $[L],[M],[N] \in
K_0(R)$,
\item $([L][M],[N]) = ([L] \otimes [M], \res [N])$ for $[L],[M],[N] \in K_0(R)$.
\end{enumerate}
\end{Prop}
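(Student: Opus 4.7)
The plan is to verify the four properties in order, using the fact that properties (1) and (2) reduce to straightforward computations of graded dimensions, property (3) is essentially the Frobenius reciprocity \eqref{Eq:reciprocity} reinterpreted at the level of quantum dimensions, and property (4) follows from (3) via the symmetry of the pairing \eqref{eq:paring of K}.

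For (1), the module $\mathbf{1}$ is the $1$-dimensional $R(0)$-module concentrated in degree $0$, so $\mathbf{1}^\star \otimes_{R(0)} \mathbf{1} \cong \F$ in degree $0$, giving $\qdim = 1$. For (2), observe that for $i \ne j$ the modules $P_{(i)}$ and $P_{(j)}$ lie in different $Q^+$-graded components of $R$, so $P_{(i)}^\star \otimes_R P_{(j)} = 0$. For $i=j$, the shift in the definition $\eqref{Eq:def of Pi}$ vanishes (since $d_1=1$), $\ie_{(i)}=1_{(i)}$, and the algebra $R(\alpha_i)$ is generated by $1_{(i)}$ and $x_1$ with $\deg x_1 = 2s_i$; hence $P_{(i)} \simeq R(\alpha_i) \simeq \F[x_1]$ as a graded vector space, and
\[
P_{(i)}^\star \otimes_{R(\alpha_i)} P_{(i)} \simeq R(\alpha_i) \simeq \F[x_1],
\]
whose graded dimension is $\sum_{k\ge 0} q^{2s_i k} = (1-q_i^2)^{-1}$.

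For (3), the key is the standard tensor–hom manipulation for induction. Writing $[L][M] = [\ind_{\alpha,\beta}(M\boxtimes N)]$ where $L$ has weight $\alpha+\beta$, we get
\[
L^\star \otimes_{R(\alpha+\beta)} \bigl( R(\alpha+\beta) \otimes_{R(\alpha)\otimes R(\beta)} (M \boxtimes N)\bigr) \;\simeq\; L^\star \otimes_{R(\alpha)\otimes R(\beta)} (M \boxtimes N).
\]
Now $L^\star$ acts on the right, and since $\psi(1_{\alpha,\beta}) = 1_{\alpha,\beta}$, we have $L^\star \cdot 1_{\alpha,\beta} = (1_{\alpha,\beta} L)^\star = (\res_{\alpha,\beta} L)^\star$. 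Summing over all decompositions $\alpha+\beta$ of the weight of $L$ and taking graded dimensions gives $([L],[M][N]) = (\res[L], [M]\otimes [N])$, where the pairing on tensor products is the factorwise pairing. Property (4) then follows: using the symmetry of \eqref{eq:paring of K} twice,
\[
([L][M],[N]) = ([N],[L][M]) = (\res[N], [L]\otimes [M]) = ([L]\otimes[M], \res[N]).
\]

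The main subtlety lies in (3): one must confirm that the right action on $L^\star$, induced by the anti-involution $\psi$ (which fixes the generators and in particular fixes $1_{\alpha,\beta}$), is compatible with the identification $(1_{\alpha,\beta} L)^\star \simeq L^\star \cdot 1_{\alpha,\beta}$, and that the factorwise pairing convention on $K_0(R)\otimes K_0(R)$ is the one intended. Once this bookkeeping is in place, (3) is just adjunction applied to quantum dimensions, and everything else is mechanical.
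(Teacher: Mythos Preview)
Your proof is correct and follows essentially the same approach as the paper: parts (1) and (2) are read off from the grading \eqref{Eq:degree}, and part (3) is the tensor--hom adjunction for $\ind_{\alpha,\beta}$ rewritten via $\qdim$, exactly as in the paper's computation $\qdim(L^\star \otimes_{R(\alpha+\beta)} \ind M\boxtimes N)=\qdim((\res_{\alpha,\beta}L)^\star \otimes_{R(\alpha)\otimes R(\beta)} M\boxtimes N)$. The only minor deviation is in (4): the paper says it ``can be proved in the same manner'' (i.e.\ a direct computation symmetric to (3)), whereas you deduce it from (3) and the symmetry of the pairing established just before \eqref{eq:paring of K}; both arguments are valid and equally short.
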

\begin{proof}
The assertions (1) and (2) follow from the $\Z$-grading
$\eqref{Eq:degree}$ on $R(\alpha)$. Suppose that $L \in
R(\alpha+\beta)$-pmod, $M \in R(\alpha)$-pmod and $N \in
R(\beta)$-pmod. Then we have
\begin{align*}
([L], [M][N]) &= \qdim(L^\star \otimes_{R(\alpha+\beta)} \ind_{\alpha,\beta} M \boxtimes N) \\
&=   \qdim( (\res_{\alpha, \beta}L)^\star \otimes_{R(\alpha)\otimes R(\beta)} M \boxtimes N) = (\res_{\alpha, \beta}L, M \boxtimes N),
\end{align*}
which yields that $([L], [M][N]) = (\res [L], [M] \otimes [N])$.

The assertion (4) can be proved in the same manner.
\end{proof}


Define a map $\Phi: U_{\A}^{-}(\g) \longrightarrow K_{0}(R)$ by
\begin{equation}
f_{i_1}^{(d_1)} \cdots f_{i_r}^{(d_r)} \longmapsto [P_{( i_1^{(d_1)}
\ldots i_r^{(d_r)}) }].
\end{equation}

\begin{Thm}\  \label{Thm:Phi is injective}
The map $\Phi$ is an injective algebra homomorphism.
\end{Thm}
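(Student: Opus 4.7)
The plan is to verify first that $\Phi$ is a well-defined $\A$-algebra homomorphism by checking the defining relations of $U_\A^-(\g)$, and then to deduce injectivity by transferring the nondegenerate bilinear form $(\cdot,\cdot)_L$ of Lemma \ref{Lem:nondegenerate pairing in GKM} to $K_0(R)$ via $\Phi$.

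First I would establish well-definedness. For $i \in \Ire$, the isomorphism $R(n\alpha_i) \simeq NH_n$ together with the fact noted after Lemma \ref{Lem:R(m_i) for i in Iim} that $NH_n$ has a unique indecomposable projective module up to shift yields a graded decomposition $R(n\alpha_i) \simeq P_{(i^{(n)})}^{\oplus [n]_i!}$ (with the degree shifts absorbed by the normalization in \eqref{Eq:def of Pi}); consequently $[P_{(i)}]^n = [n]_i!\,[P_{(i^{(n)})}]$ in $K_0(R)$, matching $f_i^n = [n]_i!\,f_i^{(n)}$. For $i \in \Iim$ there is nothing to check since $f_i^{(n)} = f_i^n$ and $P_{(i^{(n)})} = P_{(i\cdots i)}$ by definition. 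The commutation relation when $a_{ij}=0$ and the quantum Serre relation for $i \in \Ire$ are precisely the two assertions of Theorem \ref{Thm:Serre}. Hence $\Phi$ descends to a well-defined $\A$-algebra homomorphism.

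For injectivity, I would show $(\Phi(x), \Phi(y)) = (x,y)_L$ for all $x, y \in U_\A^-(\g)$ and then invoke nondegeneracy of $(\cdot,\cdot)_L$. Proposition \ref{Prop:paring} tells us that the pairing on $K_0(R)$ satisfies the same initial values $(\mathbf{1},\mathbf{1}) = 1$ and $([P_{(i)}],[P_{(j)}]) = \delta_{ij}(1-q_i^2)^{-1}$ as $(\cdot,\cdot)_L$, and obeys the recursion $([L],[M][N]) = (\res[L], [M]\otimes[N])$ exactly parallel to $(x,yz)_L = (\Delta_0(x), y\otimes z)_L$. The remaining ingredient is the co-compatibility $\res \circ \Phi = (\Phi \otimes \Phi) \circ \Delta_0$. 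On the generators this is immediate from \eqref{Eq:ind and res of Pi}, which gives $\res[P_{(i)}] = [P_{(i)}] \otimes \mathbf{1} + \mathbf{1} \otimes [P_{(i)}]$, matching \eqref{Eq:def of Delta 0}; the identity extends to all of $U_\A^-(\g)$ because both $\res$ on $K_0(R)$ and $\Delta_0$ on $U_q^-(\g)$ are algebra homomorphisms with respect to the same twisted tensor product carrying the factor $q^{-(\beta_2|\gamma_1)}$, and $\Phi$ is an algebra homomorphism. A weight-induction using these two recursions then yields $(\Phi(x),\Phi(y)) = (x,y)_L$, and the nondegeneracy of $(\cdot,\cdot)_L$ forces $\ker\Phi = 0$.

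The main obstacle will be verifying $\res \circ \Phi = (\Phi \otimes \Phi) \circ \Delta_0$ and making sure the twisting factors line up. The check on generators is trivial, but to extend it one must track how the grading shift $\langle(\gamma\mid\beta+\gamma-\beta')\rangle$ appearing in the Mackey filtration of Proposition \ref{Prop:Mackey} produces precisely the factor $q^{-(\beta_2|\gamma_1)}$ present in both twisted multiplications, so that $\Phi\otimes\Phi$ genuinely intertwines them. Once this bookkeeping is carried out, the induction and the nondegeneracy argument complete the proof without further difficulty.
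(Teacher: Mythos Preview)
Your proposal is correct and follows essentially the same route as the paper: use Theorem~\ref{Thm:Serre} for well-definedness, then show $(\Phi(x),\Phi(y))=(x,y)_L$ by matching the defining recursion of $(\cdot,\cdot)_L$ with Proposition~\ref{Prop:paring} together with the compatibility $\res\circ\Phi=(\Phi\otimes\Phi)\circ\Delta_0$, and conclude via nondegeneracy. Your worry about tracking the Mackey shift is unnecessary here, since the paper has already packaged that computation into the statement that $\res$ is an algebra homomorphism for the twisted product; once that is granted, agreement on generators suffices, exactly as you note.
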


\begin{proof}
By Theorem \ref{Thm:Serre}, $\Phi$ is an algebra homomorphism.
Since both of $\Delta_0$ and $\res$ are algebra homomorphisms and
$$\Delta_0 (f_i) = f_i \otimes \mathbf{1} + \mathbf{1} \otimes f_i,
\ \res (P_{(i)}) = P_{(i)} \otimes \mathbf{1} + \mathbf{1} \otimes
P_{(i)}\ (i\in I),$$ by \eqref{Eq:def of ()K and ()L} and
Proposition \ref{Prop:paring}, we have
$$(x,y)_L = (\Phi(x), \Phi(y)) \ \ \text{for all} \ x, y \in
U_{\A}^{-}(\g).$$ Hence $\text{Ker} \Phi$ is contained in the
radical of the bilinear form $(\ , \ )_{L}$, which is nondegenerate.
The assertion follows immediately.
\end{proof}

Therefore, $\text{Im} \Phi$ gives a categorification of
$U_{\A}^{-}(\g)$. In general, the homomorphism $\Phi$ is not
surjective. However, if $a_{ii} \neq 0$ for all $i \in I$, then
$\Phi$ is an isomorphism as will be shown in the next subsection.

\vskip 1em

\subsection{Surjectivity of $\Phi$}\

In this subsection, we assume that $a_{ii} \ne 0$ for all $i\in I$.
We have seen in Section \ref{Sec:KLR} that the algebra $R(m
\alpha_i)$ has a unique irreducible graded module $L(i^m)$. If $i\in
\Ire$, we have
$$L(i^m) \simeq \ind_{\F[x_1,\ldots,x_m]}^{R(m\alpha_i)} \mathbf{1},$$
where $\mathbf{1}$ is the trivial $\F[x_1,\ldots,x_m]$-module of dimension 1 over $\bR$. Note $\qdim(\mathbf{1})=1$. If
$i\in \Iim$, then $L(i^m)$ is isomorphic to the trivial
graded $R(m\alpha_i)$-module with defining relations
given in $\eqref{Eq:def of L in Iim}$. We know $\qch( L(i^m) )=
(i\ldots i)$.

For $M \in R(\alpha)$-mod and $i\in I$, define
\begin{equation*} 
\begin{aligned}
\Delta_{i^k} M &= 1_{k\alpha_i, \alpha- k\alpha_i} M
 \in R(k\alpha_i)\otimes R(\alpha-k\alpha_i)\text{-mod}, \\
\ep_i(M) &= \max\{ k \ge 0 \mid \Delta_{i^k} M \ne 0 \}, \\
\ke_i(M) &= \soc(\res_{\alpha-\alpha_i}^{\alpha_i, \alpha-\alpha_i} \circ \Delta_{i}(M))
\in R(\alpha-\alpha_i)\text{-mod}  , \\
\kf_i(M) &= \hd \ind_{\alpha_i, \alpha} ( L(i)\boxtimes M ) \in R(\alpha+\alpha_i)\text{-mod}.
\end{aligned}
\end{equation*}
Note that they are defined in the opposite manner to
\cite{KL09,LV09}. By the Frobenius reciprocity, we have
\begin{align} \label{Eq:reciprocity2}
\HOM_{R(\alpha)}(\ind_{m\alpha_i, \alpha-m\alpha_i} L(i^m) \boxtimes N, M) \simeq \HOM_{R(m\alpha_i) \otimes R(\alpha-m\alpha_i)}(L(i^m) \boxtimes N,\Delta_{i^m}M)
\end{align}
for $N \in R(\alpha - m\alpha_i)$-mod and $M \in R(\alpha)$-mod.

\begin{Lem} \label{Lem:Kato for i in Iim}
 For $i\in \Iim$, take $m_1, \ldots, m_k \in \Z_{> 0}$ and set $m = m_1 + \cdots +
 m_k$. Then the following statements hold.
\begin{enumerate}
\item $\res_{m_1 \alpha_i, \ldots, m_k \alpha_i} L(i^m) $ is isomorphic to $ L(i^{m_1}) \boxtimes \cdots \boxtimes  L(i^{m_k})$.
\item $\ind_{m_1 \alpha_i, \ldots, m_k \alpha_i}( L(i^{m_1}) \boxtimes \cdots \boxtimes  L(i^{m_k}))$ has an irreducible head, which is isomorphic to $L(i^m)$.
\end{enumerate}
\end{Lem}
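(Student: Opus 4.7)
The plan is to exploit two fundamental facts about imaginary simple roots: by \eqref{Eq:def of L in Iim}, the irreducible $L(i^m)$ is one-dimensional with every $x_k$ and every $\tau_t$ acting as zero, and $L(i^m)$ is the \emph{unique} irreducible $R(m\alpha_i)$-module. The multi-factor induction and restriction are handled by transitivity of these functors, which reduces everything to the two-factor Frobenius reciprocity \eqref{Eq:reciprocity2}.

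For part (1), observe that $\seq(m\alpha_i)$ is the singleton $\{(i\ldots i)\}$, so the idempotent $1_{m_1\alpha_i,\ldots,m_k\alpha_i}$ coincides with $1_{(i\ldots i)}$ and acts as the identity on $L(i^m)$. Hence the underlying space of $\res_{m_1\alpha_i,\ldots,m_k\alpha_i} L(i^m)$ is again $L(i^m)$ itself, which is one-dimensional. Every generator of the subalgebra $R(m_1\alpha_i)\otimes\cdots\otimes R(m_k\alpha_i)$ embedded in $R(m\alpha_i)$ still acts as zero on this space, so the restriction is isomorphic to the trivial one-dimensional module $L(i^{m_1}) \boxtimes \cdots \boxtimes L(i^{m_k})$ described by \eqref{Eq:def of L in Iim}.

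For part (2), apply the iterated Frobenius reciprocity (which follows from transitivity together with \eqref{Eq:reciprocity2}) to obtain
\[
\Hom_{R(m\alpha_i)}\!\bigl(\ind_{m_1\alpha_i,\ldots,m_k\alpha_i}(L(i^{m_1}) \boxtimes \cdots \boxtimes L(i^{m_k})),\, L(i^m)\bigr) \simeq \Hom\bigl(L(i^{m_1})\boxtimes\cdots\boxtimes L(i^{m_k}),\, \res L(i^m)\bigr).
\]
By (1) and Schur's lemma the right-hand side is one-dimensional. Since $L(i^m)$ is the \emph{only} irreducible $R(m\alpha_i)$-module, the head of the induced module is a direct sum of copies of $L(i^m)$, and the computation above forces exactly one copy. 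Hence the head is irreducible and isomorphic to $L(i^m)$.

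The only real delicacy — hardly an obstacle — is verifying that the head is well-defined and consists of copies of $L(i^m)$. This uses the uniqueness of the irreducible $R(m\alpha_i)$-module and the fact that the induced module is nonzero and finitely generated (so quotienting by its radical yields a nonzero semisimple module); the nonvanishing and finite generation follow from Corollary \ref{Cor:R(alpha+beta) is free} applied iteratively together with the fact that $L(i^{m_1})\boxtimes\cdots\boxtimes L(i^{m_k})$ is one-dimensional.
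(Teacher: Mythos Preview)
Your proof is correct. For part~(1), both you and the paper simply invoke the definition \eqref{Eq:def of L in Iim}. For part~(2), however, you take a genuinely different route.

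The paper argues via characters and the positive grading: since $a_{ii}<0$, the shuffle formula \eqref{Eq:shuffle} gives $\qch(\ind L(i^{m_1})\boxtimes\cdots\boxtimes L(i^{m_k})) = \mathbf{i} + (\text{terms in } q^t,\ t>0)$, so the degree-$0$ part is one-dimensional. Frobenius reciprocity then shows every nonzero quotient $Q$ has $\qch(Q) = \mathbf{i} + (\text{higher terms})$; two distinct simple summands in the head would force the degree-$0$ piece to have dimension at least $2$, a contradiction. Your argument instead bypasses the character computation entirely: combining Frobenius reciprocity with the uniqueness (up to shift) of the simple $R(m\alpha_i)$-module, the equality $\HOM(\ind, L(i^m)) \simeq \HOM(L(i^{m_1})\boxtimes\cdots\boxtimes L(i^{m_k}),\res L(i^m)) \simeq \F$ directly counts the simple summands of the head across all degree shifts. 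This is slicker and does not explicitly invoke the sign of $a_{ii}$ beyond knowing $L(i^m)$ is the unique simple. The paper's approach has the advantage of producing the full graded character of the induced module as a by-product, which is reused in the proof of Lemma~\ref{Lem:hd ind for i in Iim}. One small remark: the adjunction you want is really the general two-factor reciprocity \eqref{Eq:reciprocity} for the parabolic subalgebra $R(m_1\alpha_i)\otimes\cdots\otimes R(m_k\alpha_i)$ (iterated via transitivity), rather than the special form \eqref{Eq:reciprocity2}.
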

\begin{proof}
The assertion (1) follows from the definition $\eqref{Eq:def of L in
Iim}$. To prove (2), for simplicity, we assume $k=2$. Let
$\mathbf{i} = (\underbrace{i \ldots i}_{m})$ and $L = \ind L_1
\boxtimes L_2, $ where $L_j := L(i^{m_j}) \ (j=1,2)$. Set
$$L' = \{ x \in L | \deg(x) > 0 \}. $$
Then, since $ 1 \otimes (L_1 \boxtimes L_2) \nsubseteq L'$, $L'$
is a unique maximal submodule of $L$; i.e., $ L / L'\simeq L(i^m)$
as a graded module. We will show that $\hd L$ is irreducible. By a
direct computation,
\begin{align*}
\qch(L) &= \sum_{w\in \sg_{m_1 + m_2}/ \sg_{m_1}\times \sg_{m_2}} q^{-\ell(w)(\alpha_i|\alpha_i)}\mathbf{i} \\
&=  \mathbf{i} + \text{( ...other terms with $q^{t}$...)} \ \ (t\in \Z_{>0}).
\end{align*}
Note that $\qch(L_1 \boxtimes L_2) = \mathbf{i}$. For any quotient
$Q$ of $L$, by the Frobenius reciprocity $\eqref{Eq:reciprocity}$,
we have an injective homomorphism of degree 0
$$ L_1 \boxtimes L_2 \hookrightarrow \res_{m_1 \alpha_i, m_2\alpha_i} Q ,$$
which yields
$$\qch(Q) = \mathbf{i} + \text{( ...other terms with
$q^{t}$... ) for $t \in \Z_{>0}$}.$$ Therefore, $\hd L$ has only
one summand, and hence it is irreducible.
\end{proof}

\begin{Lem} \label{Lem:epsilon and irr submodule}
Let $M$ be an irreducible $R(\alpha)$-module and let $L(i^m)
\boxtimes N$ be an irreducible submodule of the $R(m\alpha_i)
\otimes R(\alpha-m\alpha_i)$-module $\Delta_{i^m}M$. Then $\ep_i(N)
= \ep_i(M)-m$.
\end{Lem}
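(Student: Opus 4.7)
The plan is to prove the two inequalities $\ep_i(N) \le \ep_i(M) - m$ and $\ep_i(N) \ge \ep_i(M) - m$ separately. Set $p = \ep_i(M)$ and $q = \ep_i(N)$. For the upper bound $q \le p - m$, the key observation is the identity of idempotents $1_{m\alpha_i,\,k\alpha_i,\,\alpha - (m+k)\alpha_i} = 1_{(m+k)\alpha_i,\,\alpha - (m+k)\alpha_i}$, since both are the sum of $1_\mathbf{j}$ over those $\mathbf{j} \in \seq(\alpha)$ whose first $m+k$ entries are all $i$. Applying the further restriction $1_{m\alpha_i} \otimes 1_{k\alpha_i,\,\alpha - (m+k)\alpha_i}$ to the embedding $L(i^m) \boxtimes N \hookrightarrow \Delta_{i^m} M$ produces an embedding
$$L(i^m) \boxtimes \Delta_{i^k} N \hookrightarrow \Delta_{i^{m+k}} M$$
of $R(m\alpha_i) \otimes R(k\alpha_i) \otimes R(\alpha - (m+k)\alpha_i)$-modules. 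Taking $k = q$ shows $\Delta_{i^{m+q}} M \ne 0$, so $p \ge m + q$.

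For the lower bound $q \ge p - m$, I would invoke Frobenius reciprocity \eqref{Eq:reciprocity2}: the inclusion $L(i^m) \boxtimes N \hookrightarrow \Delta_{i^m} M$ corresponds to a nonzero homomorphism $\ind_{m\alpha_i,\,\alpha - m\alpha_i}(L(i^m) \boxtimes N) \to M$, which must be surjective because $M$ is irreducible. Since $\Delta_{i^k}$ is exact, $\ep_i(M) \le \ep_i\bigl(\ind_{m\alpha_i,\,\alpha - m\alpha_i}(L(i^m) \boxtimes N)\bigr)$, and the lower bound reduces to proving
$$\ep_i\bigl(\ind_{m\alpha_i,\,\alpha - m\alpha_i}(L(i^m) \boxtimes N)\bigr) = m + q.$$
This is a Mackey-filtration computation. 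By Proposition~\ref{Prop:Mackey}, $\Delta_{i^k}(\ind(L(i^m) \boxtimes N))$ carries a filtration indexed by $0 \le \ell \le \min(m,k)$, whose $\ell$-th subquotient is, up to a grading shift, induced from $\bigl(\res_{\ell\alpha_i,\,(m-\ell)\alpha_i} L(i^m)\bigr) \boxtimes \Delta_{i^{k-\ell}} N$. Using Lemma~\ref{Lem:Kato for i in Iim}(1) when $i \in \Iim$ and the corresponding restriction formula for the nil Hecke algebra when $i \in \Ire$, we have $\res_{\ell\alpha_i,\,(m-\ell)\alpha_i} L(i^m) \ne 0$ for all $0 \le \ell \le m$. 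Hence the $\ell$-th subquotient is nonzero precisely when $\Delta_{i^{k-\ell}} N \ne 0$, i.e., when $k - \ell \le q$. Maximizing $k$ over admissible $\ell$ yields $\Delta_{i^k}(\ind(L(i^m)\boxtimes N)) \ne 0$ iff $k \le m + q$, so $\ep_i(\ind(L(i^m) \boxtimes N)) = m + q$, giving $p \le m + q$.

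The main obstacle is the Mackey-filtration step: one must carefully unwind the shuffle indexing and grading shifts in Proposition~\ref{Prop:Mackey} to identify the $\ell$-th subquotient with the expected induced module, and verify that the top subquotient at $\ell = m$ genuinely survives (rather than being killed for degree reasons) for both real and imaginary $i$. The imaginary case is exactly what Lemma~\ref{Lem:Kato for i in Iim}(1) provides; the real case reduces to the standard restriction formula for the nil Hecke algebra already used in \cite{KL09, LV09}.
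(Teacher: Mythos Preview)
Your argument is correct, and the upper bound via the idempotent identity is exactly what the paper means by ``by the definition.'' For the lower bound you diverge: the paper does not invoke Mackey. After producing the surjection $\ind L(i^m)\boxtimes N \twoheadrightarrow M$ via Frobenius reciprocity (as you do), the paper simply appeals to the shuffle character formula \eqref{Eq:shuffle}: since every sequence in $\qch(L(i^m))$ consists of $m$ copies of $i$ and every sequence in $\qch(N)$ begins with at most $q$ copies of $i$, every shuffle begins with at most $m+q$ copies of $i$, whence $\ep_i(\ind L(i^m)\boxtimes N)\le m+q$. Exactness of $\Delta_{i^k}$ then gives $p\le m+q$ immediately. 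This bypasses the Mackey-filtration bookkeeping you flag as the main obstacle; the character computation is both shorter and uniform in $i$. Your Mackey route also works and has the virtue of being more structural, but here the extra machinery buys nothing beyond what the shuffle formula already delivers.
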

\begin{proof} If $i\in \Ire$, then the proof is the same as that of \cite[Lemma 3.6]{KL09}. If $i\in
\Iim$, by the definition, we have $\ep_i(N) \le \ep_i(M)-m$. From
the equation $\eqref{Eq:reciprocity2}$, we obtain
$$0 \rightarrow K \rightarrow \ind L(i^m)\boxtimes N \rightarrow M \rightarrow 0$$
for some submodule $K$ of $\ind L(i^m)\boxtimes N$. It follows from
$\eqref{Eq:shuffle}$ and the exactness of  $\Delta_{i^k}$ that
$\ep_i(N) \ge \ep_i(M)-m$, which yields our assertion.
\end{proof}

\begin{Lem} \label{Lem:properties of L(im) bt N}
Let $N$ be an irreducible $R(\alpha)$-module with $\ep_i(N)=0$ and
let $M = \ind L(i^m) \boxtimes N$. Then we have
\begin{enumerate}
\item $\Delta_{i^m}M \simeq L(i^m) \boxtimes  N$,
\item $\hd M$ is an irreducible module with $\ep_i(\hd M) = m$,
\item for all other composition factors $L$ of $M$, we have $\ep_i(L) < m$.
\end{enumerate}
\end{Lem}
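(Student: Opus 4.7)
The plan is to prove the three assertions in order: first establish (1) via Mackey's theorem (Proposition \ref{Prop:Mackey}), then deduce (2) and (3) from (1) using Frobenius reciprocity \eqref{Eq:reciprocity2} together with exactness of the functor $\Delta_{i^k} = 1_{k\alpha_i,\,\cdot\,}$ (which is exact because it is multiplication by an idempotent).

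For (1), I compute $\Delta_{i^m}M = 1_{m\alpha_i,\alpha}\,\ind_{m\alpha_i,\alpha}(L(i^m)\boxtimes N)$ using the Mackey filtration. Since the left weight is $m\alpha_i$, the only values of $\gamma$ giving a nonzero subquotient are $\gamma = k\alpha_i$ with $0 \le k \le m$. The piece $\gamma = 0$ returns $L(i^m)\boxtimes N$ with no grading shift. For each $k \ge 1$, the corresponding subquotient involves the factor $\Delta_{i^k}N = 1_{k\alpha_i,\alpha-k\alpha_i}N$, which vanishes: $\ep_i(N)=0$ means $\Delta_iN = 0$, and since $\Delta_{i^k}N \hookrightarrow \Delta_iN$ (viewing the prefix condition), this forces $\Delta_{i^k}N=0$ for all $k\ge 1$. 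Hence only the trivial piece survives and $\Delta_{i^m}M \simeq L(i^m)\boxtimes N$.

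For (2), note that $L(i^m)\boxtimes N$ is irreducible since $L(i^m)$ is the unique irreducible $R(m\alpha_i)$-module and $N$ is irreducible. Let $L_0$ be any irreducible quotient of $M$. By Frobenius reciprocity \eqref{Eq:reciprocity2},
\begin{equation*}
0 \ne \Hom_{R(\alpha+m\alpha_i)}(M, L_0) \simeq \Hom_{R(m\alpha_i)\otimes R(\alpha)}(L(i^m)\boxtimes N,\, \Delta_{i^m}L_0),
\end{equation*}
so $\Delta_{i^m}L_0 \ne 0$; by exactness, $\Delta_{i^m}L_0$ is a subquotient of $\Delta_{i^m}M \simeq L(i^m)\boxtimes N$, forcing $\Delta_{i^m}L_0 \simeq L(i^m)\boxtimes N$. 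If $L_0$ and $L_0'$ were distinct irreducible quotients, then $\Delta_{i^m}(\hd M)$ would contain two copies of $L(i^m)\boxtimes N$, contradicting that it is a quotient of the irreducible $L(i^m)\boxtimes N$. So $\hd M$ is irreducible. The lower bound $\ep_i(\hd M) \ge m$ is immediate from $\Delta_{i^m}(\hd M) \ne 0$; for the upper bound, compute $\Delta_{i^{m+1}}M = \Delta_i(\Delta_{i^m}M) = L(i^m)\boxtimes \Delta_iN = 0$ and apply exactness of $\Delta_{i^{m+1}}$ to the subquotient $\hd M$.

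For (3), let $L$ be any composition factor of $M$. Exactness of $\Delta_{i^k}$ yields $\ep_i(L) \le \ep_i(M) = m$. If $\ep_i(L) = m$, then $\Delta_{i^m}L$ is a nonzero subquotient of the irreducible $L(i^m)\boxtimes N$, hence equal to it; by Frobenius reciprocity, $L$ is a quotient of $M$, so $L \simeq \hd M$ by (2). A multiplicity count using $[\Delta_{i^m}M:L(i^m)\boxtimes N] = 1$ and the exactness of $\Delta_{i^m}$ then gives $[M:\hd M] = 1$, so every other composition factor has $\ep_i < m$. The main obstacle is the Mackey calculation in (1): one must identify carefully which subquotients of the filtration involve a nontrivial restriction $\Delta_{i^k}N$ and verify that the hypothesis $\ep_i(N) = 0$ kills precisely these terms with the correct grading shifts; the arguments in (2) and (3) are then essentially formal.
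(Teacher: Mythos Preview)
Your proof is correct and follows the standard argument from \cite[Lemma 3.7]{KL09}, to which the paper defers without reproducing details: Mackey (Proposition \ref{Prop:Mackey}) for (1), then Frobenius reciprocity \eqref{Eq:reciprocity2} together with exactness of $\Delta_{i^k}$ for (2) and (3). One cosmetic point: the claim ``$\Delta_{i^k}N \hookrightarrow \Delta_i N$'' is not the right way to phrase it, but you do not need it---the definition of $\ep_i(N)=0$ already says $\Delta_{i^k}N=0$ for all $k\ge 1$, which is exactly what the Mackey computation requires.
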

\begin{proof}
Our assertion can be proved in the same manner as in \cite[Lemma 3.7]{KL09}.
\end{proof}

\begin{Lem} \label{Lem:Delta of M}
Let $M$ be an irreducible $R(\alpha)$-module and let $\ep = \ep_i(M)$. Then
$\Delta_{i^\ep}M$ is isomorphic to $L(i^\ep) \boxtimes N$ for some irreducible $R(\alpha-\varepsilon\alpha_i)$-module $N$ with $\ep_i(N)=0$.
\end{Lem}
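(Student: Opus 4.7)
The strategy is to identify $\Delta_{i^\ep}M$ by first finding an irreducible submodule of the correct form, and then propagating the isomorphism using the two preceding lemmas together with exactness of $\Delta_{i^\ep}$.

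First, I would choose any irreducible $R(\ep\alpha_i)\otimes R(\alpha-\ep\alpha_i)$-submodule of $\Delta_{i^\ep}M$. Every irreducible over a tensor product of two graded algebras is (up to a grading shift on one factor) an outer tensor product $L'\boxtimes N$ of irreducibles over the factors. Under the hypothesis $a_{ii}\ne 0$, the analysis in Section~\ref{Sec:KLR} shows that $L(i^\ep)$ is the unique irreducible $R(\ep\alpha_i)$-module up to isomorphism and degree shift (via $R(\ep\alpha_i)\simeq NH_\ep$ if $i\in \Ire$, and via \eqref{Eq:def of L in Iim} if $a_{ii}<0$). Absorbing the grading shift into the second factor, we may assume such a submodule has the form $L(i^\ep)\boxtimes N$ for some irreducible $R(\alpha-\ep\alpha_i)$-module $N$.

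Next, Lemma~\ref{Lem:epsilon and irr submodule} applied with $m=\ep$ immediately yields $\ep_i(N)=\ep_i(M)-\ep=0$. By the Frobenius reciprocity \eqref{Eq:reciprocity2}, the inclusion $L(i^\ep)\boxtimes N\hookrightarrow \Delta_{i^\ep}M$ corresponds to a nonzero homogeneous homomorphism
\[
\ind_{\ep\alpha_i,\,\alpha-\ep\alpha_i} L(i^\ep)\boxtimes N \longrightarrow M,
\]
which must be surjective since $M$ is irreducible. By Lemma~\ref{Lem:properties of L(im) bt N}(2) the module $\ind L(i^\ep)\boxtimes N$ has an irreducible head, so $M\simeq \hd\,\ind L(i^\ep)\boxtimes N$.

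Finally, Lemma~\ref{Lem:properties of L(im) bt N}(3) shows every other composition factor $L$ of $\ind L(i^\ep)\boxtimes N$ satisfies $\ep_i(L)<\ep$, whence $\Delta_{i^\ep}L=0$. Since $\Delta_{i^\ep}=1_{\ep\alpha_i,\alpha-\ep\alpha_i}\cdot(-)$ is exact, applying it to a composition series of $\ind L(i^\ep)\boxtimes N$ and using Lemma~\ref{Lem:properties of L(im) bt N}(1) gives
\[
\Delta_{i^\ep}M\;\simeq\; \Delta_{i^\ep}\bigl(\ind L(i^\ep)\boxtimes N\bigr)\;\simeq\; L(i^\ep)\boxtimes N,
\]
as required. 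The main subtlety is the grading-shift bookkeeping in the first step; working with graded $\HOM$ rather than $\Hom$, and absorbing the shift into the choice of $N$, is what makes the argument go through cleanly. Everything after that is a direct chaining of Lemmas~\ref{Lem:epsilon and irr submodule} and \ref{Lem:properties of L(im) bt N} via exactness of $\Delta_{i^\ep}$.
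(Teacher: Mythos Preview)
Your proof is correct and follows essentially the same argument as the one the paper cites from \cite[Lemma 5.1.4]{K05} and \cite[Lemma 3.8]{KL09}: pick an irreducible submodule $L(i^\ep)\boxtimes N$ of $\Delta_{i^\ep}M$, deduce $\ep_i(N)=0$ via Lemma~\ref{Lem:epsilon and irr submodule}, surject $\ind L(i^\ep)\boxtimes N\twoheadrightarrow M$ by Frobenius reciprocity, and then use Lemma~\ref{Lem:properties of L(im) bt N} together with exactness of $\Delta_{i^\ep}$ to conclude. One small remark: your opening sentence ``Every irreducible over a tensor product of two graded algebras is an outer tensor product'' is not true over an arbitrary field without further hypotheses; what actually makes this step go through here is that $L(i^\ep)$ is absolutely irreducible (one-dimensional when $i\in\Iim$, and $NH_\ep$ is a matrix ring over its center when $i\in\Ire$), so $\End_{R(\ep\alpha_i)}(L(i^\ep))=\F$ and the outer-tensor decomposition follows.
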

\begin{proof}
Our assertion can be proved in the same manner as in \cite[Lemma 5.1.4]{K05} (cf.\ \cite[Lemma 3.8]{KL09}).
\end{proof}

\begin{Lem} \label{Lem:hd ind for i in Iim}
Suppose that $i \in \Iim$ and $N$ is an irreducible
$R(\alpha)$-module with $\ep_i(N)=0$. Let
$$ M = \ind L(i^{m_1}) \boxtimes \cdots \boxtimes L(i^{m_k}) \boxtimes N  $$
 for some positive integers $m_{1}, \ldots m_{k} \in \Z_{>0}$ and set
 $m= m_1 + \cdots + m_k$. Then
\begin{enumerate}
\item $\hd M$ is irreducible,
\item $\ep_i(\hd M) = m$.
\end{enumerate}
\end{Lem}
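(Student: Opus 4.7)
The plan is to reduce the assertion to Lemma \ref{Lem:properties of L(im) bt N} by passing through the intermediate module $\ind L(i^m) \boxtimes N$. By Lemma \ref{Lem:Kato for i in Iim}(2), there is a surjection $\pi: \ind L(i^{m_1}) \boxtimes \cdots \boxtimes L(i^{m_k}) \twoheadrightarrow L(i^m)$, and inducing against $N$ yields a surjection $\tilde{\pi}: M \twoheadrightarrow \ind L(i^m) \boxtimes N$. Since $\ep_i(N) = 0$, Lemma \ref{Lem:properties of L(im) bt N} tells us the right-hand side has an irreducible head $T$ with $\ep_i(T) = m$, so $T$ is a simple quotient of $M$; the work is to show it is the only one and appears with multiplicity one.

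The first ingredient is a Mackey-theoretic computation of $\Delta_{i^m} M$. By transitivity of induction, $M \simeq \ind_{m\alpha_i, \alpha}(U \boxtimes N)$ with $U = \ind L(i^{m_1}) \boxtimes \cdots \boxtimes L(i^{m_k})$. The Mackey filtration (Proposition \ref{Prop:Mackey}) of $\res_{m\alpha_i, \alpha} \ind_{m\alpha_i, \alpha}(U \boxtimes N)$ is indexed by $\gamma \in Q^+$ with $\gamma \le m\alpha_i$, hence $\gamma = k\alpha_i$ for some $0 \le k \le m$. Every term with $k > 0$ carries a factor that requires $\Delta_{i^k} N \ne 0$, which is ruled out by $\ep_i(N)=0$. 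Only the $k=0$ term survives, giving $\Delta_{i^m} M \simeq U \boxtimes N$ as $R(m\alpha_i) \otimes R(\alpha)$-modules; in particular $\ep_i(M) = m$.

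Next I would show that every irreducible quotient $S$ of $M$ is isomorphic to $T$. A nonzero map $M \twoheadrightarrow S$ corresponds, by the Frobenius reciprocity $\eqref{Eq:reciprocity}$, to a nonzero morphism $L(i^{m_1}) \boxtimes \cdots \boxtimes L(i^{m_k}) \boxtimes N \to \res_{m_1\alpha_i, \ldots, m_k\alpha_i, \alpha} S$; its source is irreducible, so the map is an embedding, and nonvanishing of the target forces $\ep_i(S) \ge m$. Combined with $\ep_i(S) \le \ep_i(M) = m$ this gives $\ep_i(S) = m$. Lemma \ref{Lem:Delta of M} then produces $\Delta_{i^m} S \simeq L(i^m) \boxtimes N_S$ for some irreducible $N_S$ with $\ep_i(N_S) = 0$, and Lemma \ref{Lem:Kato for i in Iim}(1) refines this to $\res_{m_1\alpha_i, \ldots, m_k\alpha_i, \alpha} S \simeq L(i^{m_1}) \boxtimes \cdots \boxtimes L(i^{m_k}) \boxtimes N_S$. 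Comparing the two irreducible $R(m_1\alpha_i)\otimes\cdots\otimes R(m_k\alpha_i)\otimes R(\alpha)$-modules forces $N_S \simeq N$, so $\Delta_{i^m} S \simeq L(i^m) \boxtimes N$. A second application of Frobenius then identifies $S$ as a simple quotient of $\ind L(i^m) \boxtimes N$, and Lemma \ref{Lem:properties of L(im) bt N} gives $S \simeq T$.

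To exclude $\hd M \simeq T^{\oplus r}$ with $r > 1$, I would compute $\dim \HOM(M, T)$: by Frobenius and the restriction formula above this equals $\dim \HOM(L(i^{m_1}) \boxtimes \cdots \boxtimes L(i^{m_k}) \boxtimes N, L(i^{m_1}) \boxtimes \cdots \boxtimes L(i^{m_k}) \boxtimes N)$, which is one-dimensional because each $L(i^{m_j})$ is one-dimensional and concentrated in a single degree while $N$ is irreducible of finite total dimension, both having one-dimensional graded endomorphism rings. The main obstacle is the Mackey-theoretic identification of $\Delta_{i^m} M$; once that bookkeeping is done, the remaining steps are clean applications of Frobenius reciprocity together with Lemmas \ref{Lem:Kato for i in Iim}, \ref{Lem:properties of L(im) bt N}, and \ref{Lem:Delta of M}.
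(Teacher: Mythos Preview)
Your argument is sound in its main outline and reaches the conclusion, but it follows a different route from the paper and carries one small gap at the end.

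The paper's proof is a pure degree count. After computing $\Delta_{i^m} M \simeq U \boxtimes N$ with $U = \ind L(i^{m_1}) \boxtimes \cdots \boxtimes L(i^{m_k})$ (your step~3), the paper writes
\[
[\Delta_{i^m} M] = \sum_{w \in \sg_m / \sg_{m_1} \times \cdots \times \sg_{m_k}} q^{-\ell(w)(\alpha_i|\alpha_i)}\,[L(i^m) \boxtimes N]
\]
and uses $(\alpha_i|\alpha_i) < 0$ to see that the degree-$0$ coefficient of $[L(i^m)\boxtimes N]$ is exactly $1$. For any nonzero quotient $Q$ of $M$, the surjection $\Delta_{i^m} M \twoheadrightarrow \Delta_{i^m} Q$ and Lemma~\ref{Lem:Kato for i in Iim}(2) force $[\Delta_{i^m} Q]$ to carry $[L(i^m)\boxtimes N]$ in degree $0$. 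Applying this to the summands of $\hd M$ and comparing with the single degree-$0$ term in $[\Delta_{i^m} M]$ gives irreducibility in one stroke. This exploits the imaginary hypothesis directly and avoids any appeal to Lemma~\ref{Lem:Delta of M}.

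Your route---pinning down each simple quotient $S$ by combining Frobenius reciprocity with Lemmas~\ref{Lem:Kato for i in Iim}, \ref{Lem:properties of L(im) bt N}, and~\ref{Lem:Delta of M}---is more structural and makes the identification $S \simeq T$ explicit rather than implicit in a character comparison. The gap is in the last line: you assert $\HOM(N,N)$ is one-dimensional, but the paper does not assume $\F$ algebraically closed, so Schur only yields that $\End(N)$ is a division algebra over $\F$. The multiplicity of $T$ in $\hd M$ is $\dim_{\End(T)} \Hom(M,T)$, not $\dim_{\F} \Hom(M,T)$. You can close this either by noting that the same Frobenius computation applied to $\ind L(i^m)\boxtimes N \twoheadrightarrow T$ (together with Lemma~\ref{Lem:properties of L(im) bt N}(1)) gives $\End(T) \simeq \End(N)$, or---more economically---by observing that $\Delta_{i^m}(\hd M) \simeq (L(i^m)\boxtimes N)^{\oplus r}$ is a quotient of $U \boxtimes N$, whose head is simple, forcing $r=1$. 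The second fix is precisely the paper's argument, so the two approaches merge at that point.
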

\begin{proof}
By the definition, we have
$$ \Delta_{i^m} M = ( \ind L(i^{m_1}) \boxtimes \cdots \boxtimes L(i^{m_k}) ) \boxtimes N. $$
In the Grothendieck group $G_0(R(m\alpha_i) \otimes R(\alpha -
m\alpha_i))$ of the category of finite-dimensional graded $R(m\alpha_i) \otimes R(\alpha - m\alpha_i)$-modules,
we have
\begin{align*}
[\Delta_{i^m} M] &= \sum_w q^{-\ell(w) (\alpha_i | \alpha_i)}[L(i^m) \boxtimes N], \\
 &= [L(i^m) \boxtimes N ] + ( \text{ ...other terms with $q^{t}$...
 }),
\end{align*}
where $w$ runs over all the elements in $\sg_m / \sg_{m_1} \times
\cdots \times \sg_{m_k}$. By the Frobenius reciprocity
$\eqref{Eq:reciprocity2}$, for any quotient $Q$ of $M$, there is a
nontrivial homomorphism of degree 0
$$ \Delta_{i^m} M = ( \ind L(i^{m_1}) \boxtimes \cdots \boxtimes L(i^{m_k}) ) \boxtimes N \rightarrow \Delta_{i^m}Q. $$
By Lemma \ref{Lem:Kato for i in Iim} (2), we have
$$ [\Delta_{i^m}Q] = [L(i^m) \boxtimes N ] + ( \text{ ...other terms with $q^{t}$... })$$
in the Grothendieck group $G_0(R(m\alpha_i) \otimes R(\alpha -
m\alpha_i))$. Therefore, by the same argument as in Lemma
\ref{Lem:Kato for i in Iim}, $ \hd M$ is irreducible and
$\ep_i(\hd M) = m$.
\end{proof}

\begin{Lem} \label{Lem:irr hd}
Let $N$ be an irreducible $R(\alpha)$-module and let $M = \ind
L(i^m) \boxtimes N$.
\begin{enumerate}
\item $\hd M$ is an irreducible module with $\ep_i( \hd M) = \ep_i(N) + m$.
\item If $i \in \Ire$, then for all other composition factors $L$ of $M$, we have $\ep_i(L) < \ep_i(N)+m$.
\end{enumerate}
\end{Lem}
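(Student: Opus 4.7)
The plan is to reduce to Lemma \ref{Lem:properties of L(im) bt N} (and, in the imaginary case, to Lemma \ref{Lem:hd ind for i in Iim}) by first peeling the ``$i$-part'' off $N$. Set $\ep:=\ep_i(N)$. Lemma \ref{Lem:Delta of M} gives an isomorphism $\Delta_{i^\ep}N\simeq L(i^\ep)\boxtimes N'$ with $N'$ irreducible and $\ep_i(N')=0$. The Frobenius reciprocity \eqref{Eq:reciprocity2} then produces a nonzero, hence surjective, map $\ind L(i^\ep)\boxtimes N'\twoheadrightarrow N$, and Lemma \ref{Lem:properties of L(im) bt N}(2) applied to $L(i^\ep)\boxtimes N'$ says this head is irreducible, so it must equal $N$. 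Inducing $L(i^m)\boxtimes(\cdot)$ and using the transitivity of induction yields a surjection
\[
\tilde M\ :=\ \ind\bigl(L(i^m)\boxtimes L(i^\ep)\boxtimes N'\bigr)\ \twoheadrightarrow\ M,
\]
which will be the central object of the proof.

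When $i\in\Iim$, Lemma \ref{Lem:hd ind for i in Iim} applied with $(m_1,m_2)=(m,\ep)$ and base $N'$ (which satisfies $\ep_i(N')=0$) asserts that $\hd\tilde M$ is already irreducible with $\ep_i$-value $m+\ep$. Since $\tilde M$ then has a unique maximal submodule and $M$ is a nonzero quotient of $\tilde M$, we get $\hd M\cong\hd\tilde M$, proving (1).

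When $i\in\Ire$, I would first use the nil Hecke structure of $R(m\alpha_i)$ to decompose $\ind(L(i^m)\boxtimes L(i^\ep))$ into a direct sum of grading-shifted copies of $L(i^{m+\ep})$, so that
\[
\tilde M\ \simeq\ \bigoplus_k\ \ind\bigl(L(i^{m+\ep})\boxtimes N'\bigr)\langle d_k\rangle
\]
for appropriate shifts $d_k$. Lemma \ref{Lem:properties of L(im) bt N} applied to the pair $(L(i^{m+\ep}),N')$ furnishes an irreducible head $L:=\hd\ind(L(i^{m+\ep})\boxtimes N')$ with $\ep_i(L)=m+\ep$, while every other composition factor satisfies $\ep_i<m+\ep$. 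Pulling this back through the surjection $\tilde M\twoheadrightarrow M$ shows that every composition factor of $M$ is either a grading shift of $L$ or has $\ep_i<m+\ep$, which yields assertion (2) and the $\ep_i$-value part of (1).

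The hard step will be the irreducibility assertion in (1) for $i\in\Ire$, that is, showing that only a single shifted copy of $L$ occurs in $\hd M$ rather than a graded direct sum of several. I would compute the graded dimension of $\HOM(M,L)$ through Frobenius reciprocity as $\HOM_{R(m\alpha_i)\otimes R(\alpha)}(L(i^m)\boxtimes N,\Delta_{i^m}L)$, and then analyse $\Delta_{i^m}L$ by combining the Mackey filtration of Proposition \ref{Prop:Mackey} with the already-known value $\Delta_{i^{m+\ep}}L\simeq L(i^{m+\ep})\boxtimes N'$ (Lemma \ref{Lem:Delta of M}) and with $\ep_i(N')=0$. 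The main obstacle will be extracting from that Mackey filtration the fact that only the ``diagonal'' shuffle term has a nontrivial Hom pairing against $L(i^m)\boxtimes N$, forcing the Hom space to be one-dimensional and completing the proof.
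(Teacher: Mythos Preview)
Your reduction via $\tilde M=\ind\bigl(L(i^m)\boxtimes L(i^\ep)\boxtimes N'\bigr)\twoheadrightarrow M$ is exactly what the paper does, and for $i\in\Iim$ your argument coincides with the paper's proof word for word (the paper then invokes Lemma~\ref{Lem:hd ind for i in Iim}). For $i\in\Ire$ the paper simply cites \cite[Lemma~5.1.5]{K05}, so your sketch is an attempt to reconstruct that argument.

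There is, however, a misconception in your real-case sketch that creates an artificial ``hard step''. You write $\tilde M\simeq\bigoplus_k\ind(L(i^{m+\ep})\boxtimes N')\langle d_k\rangle$, but in fact for $i\in\Ire$ one has a single copy:
\[
\ind_{m\alpha_i,\ep\alpha_i}^{(m+\ep)\alpha_i}\bigl(L(i^m)\boxtimes L(i^\ep)\bigr)\ \simeq\ L(i^{m+\ep}).
\]
Indeed, $L(i^n)=NH_n\otimes_{\F[x_1,\dots,x_n]}\F$, and since $\F[x_1,\dots,x_m]\otimes\F[x_{m+1},\dots,x_{m+\ep}]=\F[x_1,\dots,x_{m+\ep}]$ inside $NH_{m+\ep}$, transitivity of induction gives the isomorphism directly. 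Consequently $\tilde M\simeq\ind L(i^{m+\ep})\boxtimes N'$ (one summand), and Lemma~\ref{Lem:properties of L(im) bt N} applied to this module immediately yields both (1) and (2): $\tilde M$ has irreducible head, so its nonzero quotient $M$ has the same irreducible head, and all remaining composition factors of $M$ are composition factors of $\tilde M$ with $\ep_i<m+\ep$. No multiplicity count is needed.

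Your proposed resolution of the ``hard step'' is also problematic on its own terms: you want to analyse $\Delta_{i^m}L$ via the Mackey filtration, but Proposition~\ref{Prop:Mackey} gives a filtration of the restriction of an \emph{induced} module, and $L$ is only a quotient of one. Passing to the quotient goes the wrong way for bounding $\HOM(-,\Delta_{i^m}L)$ from above. Fortunately, as noted, the step evaporates once you recognise that $\tilde M$ is already a single copy of $\ind L(i^{m+\ep})\boxtimes N'$.
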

\begin{proof}
If $i \in \Ire$, then the proof is identical with that of \cite[Lemma 5.1.5]{K05} (cf.\ \cite[Lemma 3.9]{KL09}).
Suppose that $i\in \Iim.$ Let $\ep = \ep_i(N)$. By Lemma \ref{Lem:Delta of M}, we have
$$ \Delta_{i^\ep} N = L(i^\ep) \boxtimes K $$
for some irreducible $R(\alpha-m\alpha_i)$-module $K$ with
$\ep_i(K)=0$. By the Frobenius reciprocity
$\eqref{Eq:reciprocity2}$, there is a surjective homomorphism
$$ \ind L(i^\ep) \boxtimes K  \twoheadrightarrow  N ,$$
which yields
$$ \ind L(i^m) \boxtimes L(i^\ep) \boxtimes K  \twoheadrightarrow \ind L(i^m) \boxtimes N . $$
Therefore, our assertion follows from Lemma \ref{Lem:hd ind for i in Iim}.
\end{proof}

\begin{Lem} \label{Lem:irr soc}
Let $M$ be an irreducible $R(\alpha)$-module. Then, for $0 \le m \le \ep_i(M) $,
the submodule $\soc \Delta_{i^m}M$ of $M$ is an irreducible module of the form $L(i^m) \boxtimes L$ with $\ep_i(L) = \ep_i(M)-m $
for some irreducible $R(\alpha-m\alpha_i)$-module $L$.
\end{Lem}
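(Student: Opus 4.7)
The plan is to reduce to Lemma \ref{Lem:Delta of M} (which handles the extremal case $m = \ep_i(M)$) via a further restriction argument, combined with Frobenius reciprocity and the irreducible-head property of Lemma \ref{Lem:irr hd}. Set $\ep = \ep_i(M)$ and fix, via Lemma \ref{Lem:Delta of M}, an isomorphism $\Delta_{i^\ep} M \simeq L(i^\ep) \boxtimes N$ with $N$ irreducible and $\ep_i(N) = 0$.

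First I would analyze the irreducible summands of $\soc \Delta_{i^m} M$. Since $a_{ii} \ne 0$, the module $L(i^m)$ is the unique (up to degree shift) irreducible $R(m\alpha_i)$-module, so every irreducible summand of the socle has the form $L(i^m) \boxtimes L$ for some irreducible $R(\alpha - m\alpha_i)$-module $L$, and Lemma \ref{Lem:epsilon and irr submodule} forces $\ep_i(L) = \ep - m$.

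Next I would pin down the isomorphism class of $L$ by one further restriction step. Applying Lemma \ref{Lem:Delta of M} to $L$ gives $\Delta_{i^{\ep-m}} L \simeq L(i^{\ep-m}) \boxtimes N_L$ for some irreducible $N_L$ with $\ep_i(N_L) = 0$. Restricting the inclusion $L(i^m) \boxtimes L \hookrightarrow \Delta_{i^m} M$ from $R(m\alpha_i) \otimes R(\alpha - m\alpha_i)$ down to $R(m\alpha_i) \otimes R((\ep-m)\alpha_i) \otimes R(\alpha - \ep\alpha_i)$ produces an embedding of $L(i^m) \boxtimes L(i^{\ep-m}) \boxtimes N_L$ into $(\res L(i^\ep)) \boxtimes N$, the latter being the analogous refinement of $\Delta_{i^\ep} M$. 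Because $\res L(i^\ep)$ is a (possibly shifted) direct sum of copies of $L(i^m) \boxtimes L(i^{\ep-m})$---immediate from Lemma \ref{Lem:Kato for i in Iim}(1) for imaginary $i$, and from the polynomial representation of the nil Hecke algebra for real $i$---comparing the outer $N$-factors forces $N_L \simeq N$. Frobenius reciprocity together with Lemma \ref{Lem:irr hd} then shows that any irreducible module whose $\Delta_{i^{\ep-m}}$-value is isomorphic to $L(i^{\ep-m}) \boxtimes N$ must be (a shift of) $\hd(\ind L(i^{\ep-m}) \boxtimes N)$, which is irreducible, so the iso class of $L$ is determined by $M$.

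Finally, Frobenius reciprocity identifies $\HOM(L(i^m) \boxtimes L, \Delta_{i^m} M)$ with $\HOM(\ind L(i^m) \boxtimes L, M)$ as graded vector spaces. Any socle inclusion yields (up to shift) a surjection $\ind L(i^m) \boxtimes L \twoheadrightarrow M$, and Lemma \ref{Lem:irr hd} guarantees that the head of the induced module is irreducible, so this graded $\HOM$ is one-dimensional and concentrated in a single degree; this shows that $\soc \Delta_{i^m} M$ contains exactly one summand (in the correct shift) and nothing else, giving irreducibility of the form claimed. The main delicate point I anticipate is tracking the degree shifts in the real case, where $\res L(i^\ep)$ decomposes as a Gaussian-binomial number of shifted copies of $L(i^m) \boxtimes L(i^{\ep-m})$ rather than a single summand; however, since every such copy contributes the same outer factor $N$, the multiplicity does not interfere with the identification of $L$.
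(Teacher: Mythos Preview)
Your argument is correct. The overall strategy --- pass from $\Delta_{i^m}M$ to the further restriction $\res_{m\alpha_i,(\ep-m)\alpha_i,\alpha-\ep\alpha_i}M$ and compare with $\Delta_{i^\ep}M\simeq L(i^\ep)\boxtimes N$ --- is the same as the paper's. The difference is in how the endgame is organized. For $i\in\Iim$ the paper exploits the stronger fact that $\res_{m\alpha_i,(\ep-m)\alpha_i}L(i^\ep)$ is a \emph{single} irreducible $L(i^m)\boxtimes L(i^{\ep-m})$ (Lemma \ref{Lem:Kato for i in Iim}(1)), so the further restriction of $\Delta_{i^m}M$ is itself irreducible; a direct sum of nonzero submodules (one from each would-be socle summand) embedding into an irreducible module instantly forces the socle to be irreducible, with no need to identify $L$ or to count $\HOM$'s. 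For $i\in\Ire$ the paper simply cites \cite[Lemma~5.1.6]{K05}. Your route instead first pins down $L$ uniquely (as $\hd\,\ind L(i^{\ep-m})\boxtimes N$, using Lemma \ref{Lem:irr hd}) and then uses Frobenius reciprocity plus the irreducible-head property again to see that $\HOM(L(i^m)\boxtimes L,\Delta_{i^m}M)$ is one-dimensional. This is slightly longer but has the advantage of treating the real and imaginary cases uniformly, since it does not need the target of the further restriction to be irreducible; your observation that in the real case the Gaussian-binomial multiplicities do not disturb the identification of the outer factor $N$ is exactly what makes this work.
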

\begin{proof}
If $i \in \Ire$, then the proof is the same as that of \cite[Lemma
5.1.6]{K05} (cf.\ \cite[Lemma 3.10]{KL09}). If $i \in \Iim$, let
$\ep = \ep_i(M)$. Note that every summand of $\soc \Delta_{i^m}M$
has the form $L(i^m)\boxtimes L$ for some irreducible $R(\alpha -
m \alpha_i)$-module $L$. It follows from Lemma \ref{Lem:epsilon
and irr submodule} that
$$ \ep_i(L) = \ep - m,  $$
so $L(i^m) \boxtimes \Delta_{i^{\ep-m}}(L) \neq 0$. It is
clear that $\res^{ \ep \alpha_i,  \alpha - \ep \alpha_i}_{ m
\alpha_i, (\ep-m) \alpha_i,   \alpha - \ep \alpha_i  }
\Delta_{i^\ep}M$ has $ L(i^m) \boxtimes \Delta_{i^{\ep-m}}(L)$ as
a submodule. On the other hand, by Lemma \ref{Lem:Kato for i in
Iim} and Lemma \ref{Lem:Delta of M}, there exists an irreducible
$R(\alpha-\varepsilon \alpha_i)$-module $N$ such that
$$ \res^{ \ep \alpha_i,  \alpha - \ep \alpha_i}_{ m \alpha_i, (\ep-m) \alpha_i,   \alpha - \ep \alpha_i  } \Delta_{i^\ep}M
\simeq  L(i^m) \boxtimes L(i^{\ep - m}) \boxtimes N,$$ which is
irreducible. Hence $\soc \Delta_{i^m}M$ is irreducible and
isomorphic to $L(i^{m}) \boxtimes L$.
\end{proof}

By Lemma \ref{Lem:irr hd} and Lemma \ref{Lem:irr soc}, the operators $\ke_i$ and $\kf_i$ take irreducible modules to irreducible modules or $0$, and
$$ \ep_i(M) = \max \{ k \ge 0 \mid \ke_i^k M \ne 0 \}, \quad \ep_i(\kf_i M) = \ep_i(M)+1. $$

\begin{Lem} \label{Lem:soc and hd}
Let $M$ be an irreducible $R(\alpha)$-module. Then we have
\begin{enumerate}
\item $\soc \Delta_{i^m}M \simeq  L(i^m) \boxtimes (\ke_i^m M)$,
\item $\hd \ind (L(i^m) \boxtimes M) \simeq \kf_i^m M$.
\end{enumerate}
\end{Lem}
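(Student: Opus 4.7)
The plan is to prove both statements by induction on $m$, treating (2) first and deducing (1) via Frobenius reciprocity combined with the auxiliary identity $\ke_i \kf_i N \simeq N$ for every irreducible $N$. This auxiliary identity is established as follows: by definition $\kf_i N = \hd \ind(L(i) \boxtimes N)$, so the canonical surjection $\ind(L(i) \boxtimes N) \twoheadrightarrow \kf_i N$ corresponds under the Frobenius reciprocity \eqref{Eq:reciprocity2} to a nonzero homomorphism $L(i) \boxtimes N \to \Delta_i \kf_i N$; this map is necessarily injective since $L(i) \boxtimes N$ is irreducible, placing $L(i) \boxtimes N$ inside $\soc \Delta_i \kf_i N$, which by Lemma \ref{Lem:irr soc} equals the irreducible $L(i) \boxtimes \ke_i \kf_i N$. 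Hence $\ke_i \kf_i N \simeq N$.

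For part (2), the base case $m=1$ is the definition of $\kf_i$. In the inductive step, consider $A := \ind(L(i) \boxtimes L(i^{m-1}) \boxtimes M)$. The induction hypothesis $\kf_i^{m-1} M \simeq \hd \ind(L(i^{m-1}) \boxtimes M)$ together with the surjection $\ind(L(i) \boxtimes L(i^{m-1})) \twoheadrightarrow L(i^m)$---which holds as an isomorphism for $i \in \Ire$ by transitivity of induction and the construction of $L(i^m)$, and as a surjection for $i \in \Iim$ by Lemma \ref{Lem:Kato for i in Iim}(2)---produces two surjections from $A$ onto $\ind(L(i) \boxtimes \kf_i^{m-1} M)$ and onto $\ind(L(i^m) \boxtimes M)$, whose respective heads $\kf_i^m M$ and $\hd \ind(L(i^m) \boxtimes M)$ are irreducible by Lemma \ref{Lem:irr hd}. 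It then suffices to verify that $\hd A$ is itself irreducible, because both irreducibles must then coincide with $\hd A$. To establish this, write $M \simeq \hd \ind(L(i^{\ep_i(M)}) \boxtimes K)$ with $\ep_i(K)=0$ using Lemma \ref{Lem:Delta of M}; inducing yields a surjection $\ind(L(i) \boxtimes L(i^{m-1}) \boxtimes L(i^{\ep_i(M)}) \boxtimes K) \twoheadrightarrow A$, and the source has irreducible head by Lemma \ref{Lem:hd ind for i in Iim} when $i \in \Iim$ and by the analogous iterated application of Lemma \ref{Lem:properties of L(im) bt N} when $i \in \Ire$.

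For part (1), proceed by induction on $m$; the base case $m=1$ is immediate from the definition of $\ke_i$ combined with Lemma \ref{Lem:irr soc}. For the step from $m$ to $m+1$, write $\soc \Delta_{i^m} M \simeq L(i^m) \boxtimes L_m$ and $\soc \Delta_{i^{m+1}} M \simeq L(i^{m+1}) \boxtimes L_{m+1}$ as in Lemma \ref{Lem:irr soc}. Applying Frobenius reciprocity to each inclusion yields surjections $\ind(L(i^m) \boxtimes L_m) \twoheadrightarrow M$ and $\ind(L(i^{m+1}) \boxtimes L_{m+1}) \twoheadrightarrow M$, so by part (2) we obtain $\kf_i^m L_m \simeq M \simeq \kf_i^{m+1} L_{m+1}$. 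Applying $\ke_i^m$ to this identity and iterating $\ke_i \kf_i \simeq \id$ on irreducibles yields $L_m \simeq \kf_i L_{m+1}$, and one further application of $\ke_i \kf_i \simeq \id$ gives $\ke_i L_m \simeq L_{m+1}$. Combined with the inductive hypothesis $L_m \simeq \ke_i^m M$, this concludes $\ke_i^{m+1} M \simeq \ke_i L_m \simeq L_{m+1}$, as required.

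The main technical obstacle is verifying the irreducibility of $\hd A$ when $\ep_i(M) > 0$; the reduction via Lemma \ref{Lem:Delta of M} to the cuspidal case $\ep_i(K)=0$, followed by an appeal to Lemma \ref{Lem:hd ind for i in Iim}, is the crucial step. Once that is in place, the remaining bookkeeping---chasing surjections between induced modules and their heads and matching socles via Frobenius reciprocity---is routine.
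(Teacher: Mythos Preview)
Your argument is correct. For part~(2) it coincides with the paper's: both exploit the two surjections from $\ind\bigl(L(i)^{\boxtimes m}\boxtimes M\bigr)$ onto $\kf_i^m M$ and onto $\ind\bigl(L(i^m)\boxtimes M\bigr)$, together with the irreducibility of the head of the source (obtained after reducing via Lemma~\ref{Lem:Delta of M} to the cuspidal case $\ep_i=0$). Your phrasing as an induction on $m$ is cosmetic; note also that for $i\in\Ire$ one has $\ind\bigl(L(i^{a_1})\boxtimes\cdots\boxtimes L(i^{a_k})\bigr)\simeq L(i^{a_1+\cdots+a_k})$ by transitivity of induction, which makes your appeal to Lemma~\ref{Lem:properties of L(im) bt N} go through directly.

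For part~(1) you take a genuinely different route. The paper argues directly (for $i\in\Iim$): iterating the inclusion $L(i)\boxtimes\ke_i M\hookrightarrow\Delta_i M$ yields a nonzero map $\ind\bigl(L(i)^{\boxtimes m}\bigr)\boxtimes\ke_i^m M\to\Delta_{i^m}M$, and since every quotient of $\ind\bigl(L(i)^{\boxtimes m}\bigr)$ contains a one-dimensional submodule (by positivity of the grading when $a_{ii}<0$), one finds $L(i^m)\boxtimes\ke_i^m M$ as a submodule of $\Delta_{i^m}M$; Lemma~\ref{Lem:irr soc} then identifies the socle. You instead deduce~(1) from~(2) together with the identity $\ke_i\kf_i\simeq\id$, which you establish independently using only Lemma~\ref{Lem:irr soc} and Frobenius reciprocity. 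Your route is more uniform across real and imaginary $i$ and avoids the ad~hoc grading observation, at the cost of having to prove~(2) first; the paper's route for~(1) is self-contained and slightly shorter.
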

\begin{proof}
If $i \in \Ire$, then the proof is the same as in \cite[Lemma 5.2.1]{K05}.
Suppose that $i \in \Iim$. We first focus on the assertion (1). Since the case $m > \ep_i(M)$ is trivial,
we may assume that $m \le \ep_i(M)$. Since $L(i) \boxtimes \ke_iM \hookrightarrow \Delta_i M$, we have
$$ \underbrace{L(i)\boxtimes \cdots \boxtimes L(i)}_{m}  \boxtimes\ \ke_i^m M
\hookrightarrow \res_{\alpha_i, \ldots \alpha_i, \alpha-m \alpha_i
}^{m\alpha_i, \alpha - m\alpha_i} \Delta_{i^m}M,$$ which implies
there is a nontrivial homomorphism
$$ \ind (\underbrace{L(i)\boxtimes \cdots \boxtimes L(i)}_{m} ) \boxtimes\ \ke_i^m M
\longrightarrow \Delta_{i^m} M. $$ Since any quotient of $\ind (
L(i)\boxtimes \cdots \boxtimes L(i))$ has a 1-dimensional submodule,
$\Delta_{i^m} M$ has a submodule which is isomorphic to
$L(i^m)\boxtimes \ke_i^m M$. Hence the assertion (1) follows from
Lemma \ref{Lem:irr soc}.

For the assertion (2), by the definition of $\kf_i$, there is a
nontrivial homomorphism
$$ \ind ( \ind( \underbrace{L(i)\boxtimes \cdots \boxtimes L(i)}_{m} ) \boxtimes M) \twoheadrightarrow \kf_i^m M. $$
Using the same argument in the proof of Lemma \ref{Lem:irr hd},
we have
$$ \hd \ind ( \ind( L(i)\boxtimes \cdots \boxtimes L(i) ) \boxtimes M) \simeq \kf_i^m M. $$
On the other hand, the nontrivial homomorphism
$$ \ind( L(i)\boxtimes \cdots \boxtimes L(i) )   \longrightarrow L(i^m) $$
induces a nontrivial homomorphism
$$ \ind ( \ind( L(i)\boxtimes \cdots \boxtimes L(i) ) \boxtimes M)  \longrightarrow \ind L(i^m)\boxtimes M .$$
Therefore, we conclude $\hd \ind (L(i^m) \boxtimes M) \simeq \kf_i^m M$.
\end{proof}

\begin{Lem} \label{Lem:adjoint ke and kf}
Let $M $ be an irreducible $ R(\alpha)$-module and let $N$ be an irreducible $R(\alpha+\alpha_i)$-module.
Then we have
$$ \kf_i M \simeq N  \ \text{ if and only if } \  M \simeq \ke_i N.  $$
\end{Lem}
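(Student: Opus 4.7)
The plan is to deduce both implications from the Frobenius reciprocity \eqref{Eq:reciprocity2} (specialized to $m=1$), combined with the socle/head descriptions in Lemma \ref{Lem:soc and hd} and the irreducibility statements in Lemmas \ref{Lem:irr hd} and \ref{Lem:irr soc}. The key observation is that $L(i) \boxtimes M$ is an irreducible $R(\alpha_i) \otimes R(\alpha)$-module whenever $M$ is irreducible, and similarly $L(i) \boxtimes \kf_i$-type expressions are controlled by the previous lemmas.

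For the forward direction, assume $\kf_i M \simeq N$. Since $\kf_i M = \hd \ind L(i) \boxtimes M$, there exists a surjection $\ind L(i) \boxtimes M \twoheadrightarrow N$. By Frobenius reciprocity, this corresponds to a nonzero homomorphism
$$ \varphi : L(i) \boxtimes M \longrightarrow \Delta_i N = 1_{\alpha_i, \alpha} N. $$
Because $L(i) \boxtimes M$ is irreducible, $\varphi$ is injective, so its image is contained in $\soc \Delta_i N$. By Lemma \ref{Lem:soc and hd}(1) we have $\soc \Delta_i N \simeq L(i) \boxtimes \ke_i N$, and by Lemma \ref{Lem:irr soc} this socle is itself irreducible. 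Hence $L(i) \boxtimes M \simeq L(i) \boxtimes \ke_i N$, which forces $M \simeq \ke_i N$.

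For the converse, assume $M \simeq \ke_i N$. Then Lemma \ref{Lem:soc and hd}(1) yields an embedding
$$ L(i) \boxtimes M \simeq \soc \Delta_i N \hookrightarrow \Delta_i N. $$
Applying Frobenius reciprocity in the opposite direction, we obtain a nonzero homomorphism $\ind L(i) \boxtimes M \to N$, which must be surjective since $N$ is irreducible. Thus $N$ is an irreducible quotient of $\ind L(i) \boxtimes M$. By Lemma \ref{Lem:irr hd}(1), this induced module has a unique irreducible head, namely $\kf_i M$, so $N \simeq \kf_i M$.

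The main issue to watch out for is the degree shift bookkeeping, since all of the above functors and modules are graded. The isomorphism $L(i) \boxtimes M \simeq L(i) \boxtimes \ke_i N$ happens in the appropriate graded sense because the Frobenius reciprocity \eqref{Eq:reciprocity2} is degree-preserving; once one fixes the grading on $N$ (or equivalently on $\kf_i M$) the shifts on both sides match automatically. A secondary point is to ensure the outer tensor product $L(i) \boxtimes M$ is irreducible as an $R(\alpha_i) \otimes R(\alpha)$-module, but this follows from the irreducibility of each factor together with the fact that the operators $\ke_i, \kf_i$ were already shown (via Lemmas \ref{Lem:irr hd} and \ref{Lem:irr soc}) to produce irreducible modules when applied to irreducibles.
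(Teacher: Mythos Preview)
Your proof is correct and is essentially the argument the paper has in mind: the paper simply says ``Using Lemma \ref{Lem:soc and hd}, our assertion can be proved in the same manner as in \cite[Lemma 5.2.3]{K05},'' and what you wrote is precisely that standard argument---Frobenius reciprocity at $m=1$ together with the irreducibility of the socle (Lemma \ref{Lem:irr soc}/\ref{Lem:soc and hd}(1)) and of the head (Lemma \ref{Lem:irr hd}(1)). Your caveats about grading shifts and the irreducibility of $L(i)\boxtimes M$ are harmless side remarks; neither causes any trouble here since $L(i)$ is one-dimensional and the reciprocity is degree-preserving.
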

\begin{proof} Using Lemma \ref{Lem:soc and hd}, our assertion can be proved in the same manner as in \cite[Lemma 5.2.3]{K05}
\end{proof}

Let $\A  \seq(\alpha)$ (resp.\ $\Q(q)\seq(\alpha)$) be the free $\A$-module (resp.\ $\Q(q)$-module) generated by $\seq(\alpha)$.
For an irreducible $R(\alpha)$-module $M$, the character $\qch(M)$ can be viewed as an element in $\A\seq(\alpha)$.
Using the above lemmas, one can prove the following proposition in the same manner as in \cite[Theorem 5.3.1]{K05}.

\begin{Prop} \label{Prop:character map is injective}
The character map
$$\qch: G_0(R(\alpha)) \longrightarrow \A\seq(\alpha)$$
is injective.
\end{Prop}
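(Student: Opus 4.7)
The plan is induction on $d := |\alpha|$. The base case $d = 0$ is immediate, since $R(0) \simeq \F$ has a unique irreducible (the trivial module), $\seq(0)$ is a single element $\emptyset$, and $\qch$ sends this irreducible to $\emptyset$.

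For the inductive step, I assume the character map is injective for every $R(\alpha')$ with $|\alpha'| < |\alpha|$, and suppose
\[
\sum_{[M]} a_{[M]}\, \qch(M) \;=\; 0 \qquad \text{in } \A\seq(\alpha)
\]
with the sum over iso classes of irreducible $R(\alpha)$-modules and $a_{[M]} \in \A$. Set $S := \{[M] : a_{[M]} \neq 0\}$; I aim to derive $S = \emptyset$. If $S \neq \emptyset$, pick any $[M_0] \in S$ and any $\mathbf{j} = (j_1 \ldots j_d) \in \seq(\alpha)$ with $1_\mathbf{j} M_0 \neq 0$; then $\Delta_{j_1} M_0 \neq 0$ forces $\ep_{j_1}(M_0) \geq 1$, so I may fix $i := j_1$ and set $m := \max_{[M] \in S} \ep_i(M) \geq 1$.

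The crucial step is to project $\qch$ onto the words beginning with the prefix $(i^m)$. Since $\seq(m\alpha_i) = \{(i^m)\}$, concatenation $\mathbf{a} \otimes \mathbf{b} \mapsto \mathbf{a} * \mathbf{b}$ gives an $\A$-linear retraction $P_m : \A\seq(\alpha) \to \A\seq(m\alpha_i) \otimes \A\seq(\alpha - m\alpha_i)$ whose image is precisely the $\A$-submodule spanned by words beginning with $(i^m)$. By the definition $\Delta_{i^m} M = 1_{m\alpha_i, \alpha - m\alpha_i} M$, one has $P_m(\qch(M)) = \qch(\Delta_{i^m} M)$ for every $M$. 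For $[M] \in S$ with $\ep_i(M) < m$ this vanishes, while for $[M] \in S$ with $\ep_i(M) = m$, Lemma \ref{Lem:Delta of M} gives $\Delta_{i^m} M \simeq L(i^m) \boxtimes \ke_i^m M$, with $\ke_i^m M$ an irreducible $R(\alpha - m\alpha_i)$-module satisfying $\ep_i(\ke_i^m M) = 0$. Since $\qch(L(i^m)) = c_{i,m}\,(i^m)$ for a nonzero $c_{i,m} \in \A$ (namely $[m]_i!$ for $i \in \Ire$ and $1$ for $i \in \Iim$), applying $P_m$ to the relation yields
\[
c_{i,m}\,(i^m) \otimes \Bigl( \sum_{[M] \in S,\ \ep_i(M) = m} a_{[M]}\, \qch(\ke_i^m M) \Bigr) \;=\; 0,
\]
and since $\A$ is a domain and $(i^m)$ is a basis vector of $\A\seq(m\alpha_i)$, the inner sum must vanish in $\A\seq(\alpha - m\alpha_i)$.

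Finally, by Lemma \ref{Lem:adjoint ke and kf}, $\ke_i$ and $\kf_i$ are mutually inverse on iso classes of irreducibles where they are nonzero, hence $[M] \mapsto [\ke_i^m M]$ is injective on $\{[M] : \ep_i(M) = m\}$. The modules $\ke_i^m M$ appearing are therefore pairwise non-isomorphic irreducibles over $R(\alpha - m\alpha_i)$, and the inductive hypothesis (valid since $|\alpha - m\alpha_i| < |\alpha|$) forces $a_{[M]} = 0$ for every such $M$, contradicting the choice of $m$ as a maximum over $S$. The only genuinely delicate point is the bookkeeping involved in identifying $P_m(\qch(M))$ with $\qch(\Delta_{i^m} M)$ and then cancelling the common factor $c_{i,m}(i^m)$; both are essentially definitional, so the proof reduces to iterated use of Lemmas \ref{Lem:Delta of M} and \ref{Lem:adjoint ke and kf}.
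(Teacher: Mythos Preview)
Your proof is correct and is exactly the induction argument of \cite[Theorem~5.3.1]{K05} that the paper invokes. Two cosmetic points: the identification $\Delta_{i^m}M\simeq L(i^m)\boxtimes\ke_i^{\,m}M$ also uses Lemma~\ref{Lem:soc and hd} (Lemma~\ref{Lem:Delta of M} alone only produces \emph{some} irreducible $N$, and the socle computation pins it down as $\ke_i^{\,m}M$), and for $i\in\Ire$ one has $c_{i,m}=\qdim L(i^m)=q_i^{-m(m-1)/2}[m]_i!$ rather than $[m]_i!$---but only $c_{i,m}\neq 0$ is needed, so the argument is unaffected.
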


Let $\mathcal{F}$ be the free associative algebra over $\Q(q)$
generated by $f_i$ $(i\in I)$ and consider the natural projection
$\pi: \mathcal{F} \to U_q^-(\g)$ given by $f_i \mapsto f_i$ $(i
\in I)$. Then the  vector space $\Q(q) \seq(\alpha)$ can be
regarded as the dual space of $\mathcal{F}_\alpha := \pi^{-1}( U_q^-(\g)_\alpha
) $ for $\alpha \in Q^+$. Set
\begin{equation*}
\begin{aligned}
& K_{0}(R)_{\Q(q)} = \Q(q) \otimes_{\A} K_{0}(R), \quad
K_{0}(R(\alpha))_{\Q(q)} = \Q(q) \otimes_{\A} K_{0}(R(\alpha)), \\
& G_0(R)_{\Q(q)} = \Q(q) \otimes_\A G_0(R), \quad
G_{0}(R(\alpha))_{\Q(q)} = \Q(q) \otimes_{\A} G_{0}(R(\alpha)),
\end{aligned}
\end{equation*}
and denote by $\Phi_{\Q(q)}: U_q^{-}(\g) \longrightarrow
K_{0}(R)_{\Q(q)}$ the algebra homomorphism induced by $\Phi:
U_{\A}^{-}(\g) \longrightarrow K_{0}(R)$. Then $\qch$ is the dual map of $ \Phi_{\Q(q)} \circ \pi $,
which yields the following diagram:
$$
\xymatrix{
\mathcal{F}_\alpha  \ar[rr]^{\pi} \ar[d]^{\text{dual}} & & U_q^-(\g)_\alpha \ar[rr]^{\Phi_{\Q(q)}} & & \ar[d]^{\text{dual w.r.t. } (\ ,\ )} K_0(R(\alpha))_{\Q(q)} \\
\ar[u] \Q(q) \seq(\alpha)& & & & \ar[llll]_{\qch} \ar[u] G_0(R(\alpha))_{\Q(q)}
}
$$
Combining Theorem \ref{Thm:Phi is injective} with Proposition
\ref{Prop:character map is injective}, we conclude
$$ \Phi_{\Q(q)}: U_q^-(\g) \longrightarrow  K_0(R)_{\Q(q)} $$
is an isomorphism.

\begin{Thm} \label{Thm:iso of K0 and Uq}
The map $\Phi: U_{\A}^-(\g) \longrightarrow K_0(R) $ is an isomorphism if $a_{ii} \ne 0$ for all $i\in I$.
\end{Thm}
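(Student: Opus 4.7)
By Theorem~\ref{Thm:Phi is injective}, the map $\Phi$ is injective, and the paragraphs preceding the theorem statement combine this with Proposition~\ref{Prop:character map is injective} through a duality argument to show that $\Phi_{\Q(q)} : U_q^-(\g) \to K_0(R)_{\Q(q)}$ is a $\Q(q)$-linear isomorphism. What remains, therefore, is to promote this $\Q(q)$-level bijectivity to an $\A$-level surjectivity.

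By Krull-Schmidt, $K_0(R(\alpha))$ is a free $\A$-module whose basis is $\{[P(L)]\}$ as $L$ ranges over isomorphism classes of irreducible graded $R(\alpha)$-modules, so it suffices to show that $[P(L)] \in \text{Im}\,\Phi$ for every such $L$. I would argue this by induction on the height $|\alpha|$. The base case $|\alpha|=0$ is immediate since $K_0(R(0)) = \A\cdot[\mathbf{1}] = \Phi(\A)$. For $|\alpha| > 0$, pick $i \in I$ with $\ep := \ep_i(L) > 0$ (such an $i$ exists because $L$ has nonzero character) and set $L' := \ke_i^{\ep}L$. By Lemma~\ref{Lem:Delta of M}, $L'$ is an irreducible $R(\alpha - \ep\alpha_i)$-module with $\ep_i(L') = 0$, so the outer induction hypothesis delivers $[P(L')] \in \text{Im}\,\Phi$. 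Since $\text{Im}\,\Phi$ is an $\A$-subalgebra of $K_0(R)$ containing $[P_{(i^{(\ep)})}] = \Phi(f_i^{(\ep)})$, the class
\[
[P_{(i^{(\ep)})}] \cdot [P(L')] \;=\; [\ind P_{(i^{(\ep)})} \boxtimes P(L')]
\]
also lies in $\text{Im}\,\Phi$. Using Lemmas~\ref{Lem:irr hd} and~\ref{Lem:soc and hd}, one obtains an expansion
\[
[\ind P_{(i^{(\ep)})} \boxtimes P(L')] \;=\; [P(L)] \;+\; \sum_{L'' \neq L} m_{L''}(q)\,[P(L'')],
\]
where each $m_{L''}(q) \in \A$ is supported on simples $L''$ (at weight $\alpha$) with $\ep_i(L'') < \ep$. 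A secondary induction on $\ep_i$, at fixed weight $\alpha$, then extracts $[P(L)] \in \text{Im}\,\Phi$ and closes the induction.

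The principal technical obstacle is the unitriangularity of the last display: the coefficient of $[P(L)]$ must be exactly $1$ in degree zero (not merely some unit of $\A$), and every other simple $L''$ that appears must strictly decrease the invariant $\ep_i$. Both claims are graded refinements of Lemmas~\ref{Lem:irr hd} and~\ref{Lem:soc and hd}, and they reduce to a graded Frobenius-reciprocity computation anchored in the socle identification $\soc\Delta_{i^{\ep}}L \simeq L(i^{\ep}) \boxtimes L'$, together with the fact that $P_{(i^{(\ep)})}$ and $P(L')$ are the graded projective covers of $L(i^{\ep})$ and $L'$ respectively. Once this triangularity is in hand, the double induction proceeds without further obstruction.
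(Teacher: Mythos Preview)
Your overall strategy—triangularity of an induction product against indecomposable projectives—is the same as the paper's, but the displayed expansion has the inequality reversed, and this is a genuine gap. Write $Q=\ind P_{(i^{(\ep)})}\boxtimes P(L')$. The coefficient of $[P(S)]$ in $[Q]$ is $\dim_q\HOM(Q,S)$, and by Frobenius reciprocity this equals $\dim_q\HOM\bigl(P_{(i^{(\ep)})}\boxtimes P(L'),\,\Delta_{i^{\ep}}S\bigr)$. This vanishes whenever $\ep_i(S)<\ep$, since then $\Delta_{i^{\ep}}S=0$; for $\ep_i(S)=\ep$ one has $\Delta_{i^{\ep}}S\simeq L(i^{\ep})\boxtimes \ke_i^{\ep}S$ by Lemma~\ref{Lem:Delta of M}, so the coefficient is a unit times $\delta_{S,L}$. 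Hence the correct expansion is
\[
[Q]\;=\;(\text{unit})\cdot[P(L)]\;+\;\sum_{\ep_i(S)>\ep} c_S\,[P(S)],
\]
the opposite of what you wrote. The lemmas you cite (Lemma~\ref{Lem:irr hd} and Lemma~\ref{Lem:soc and hd}) concern composition factors of $\ind L(i^{\ep})\boxtimes L'$ in $G_0$, not summands of the projective $Q$ in $K_0$; the two triangularities point in opposite directions.

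With the inequality corrected, your secondary induction must run \emph{downward} in $\ep_i$, and then its base case is the simples with $\ep_i(L)=0$, for which you must switch to a different index $j$. But those simples may again sit at the bottom of the $\ep_j$-order for other simples, so the two inductions no longer nest cleanly. The paper resolves exactly this by fixing once and for all a sequence $(i_k)_{k\ge 0}$ hitting every index infinitely often, assigning to each simple $S_b$ the string $p_b=(\ep_{i_0}(S_b),\,\ep_{i_1}(\ke_{i_0}^{p_0}S_b),\dots)$, and showing that the pairing matrix between the explicit projectives $P_{\mathbf{i}_b}\in\text{Im}\,\Phi$ and the simples $S_c$ is lexicographically triangular with units on the diagonal. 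That single total order replaces your tangled double induction; once you impose it, your argument becomes the paper's.
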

\begin{proof}
It suffices to show that $\Phi_{\Q(q)}(U_{\A}^-(\g)) = K_0(R)$.
Choose a sequence $(i_k)_{k \ge 0}$ of $I$ such that, for each $i\in
I$, $i$ appears infinitely many times in $(i_k)_{k \ge 0}$. Let
$B_\alpha$ be the set of all isomorphism classes of irreducible
$R(\alpha)$-modules. We fix a representative $S_b$ for each $b \in
B_\alpha$. To each $b \in B_\alpha$, we assign the sequence $p_b :=
p_0 p_1 \cdots$ given as follows: if $M_0:= S_b$, define
$$p_k = \ep_{i_{k}} (M_k)\ \text{ and } \ M_{k+1} = \ke_{i_{k}}^{p_k}(M_k)\quad (k \ge 0) $$
inductively. For $b \in B_\alpha$, let
$$ P_b = P_{\mathbf{i}_b}, $$
where $\mathbf{i}_b := ( i_0 ^{(p_0)} i_1 ^{(p_1)} \ldots)$. Note
that $P_{\mathbf{i}_b}$ is well-defined since $\mathbf{i}_b$ has
only finitely many nonnegative integers. \ Define a total order
$\prec$ on $B_\alpha$ by
$$ b \prec c \text{ if and only if } p_{b} <_{\rm lex} p_{c} , $$
where $<_{\rm lex}$ is the lexicographic order. Then it follows from the definition of the pairing $\eqref{eq:paring between K and G}$
that
$$ (P_{b}, S_{c}) = 0 \text{ if } b \succ c \quad \text{ and }\quad (P_{b}, S_{b}) = q^t $$
for some $t \in \Z$. Hence, any projective module $[P]$ in
$K_0(R(\alpha))$ can be written as an $\A$-linear combination of $\{
P_b \mid b \in B_\alpha \}$, which implies
$\Phi_{\Q(q)}(U_{\A}^-(\g)) = K_0(R)$.
\end{proof}

\vskip 3em

\section{Crystals and Perfect bases} \label{Sec:Crystals}

In this section, we develop the theory of perfect bases for
$U_q^-(\g)$ as a $B_q(\g)$-modules. We prove that the negative part
$U_q^-(\g)$ has a perfect basis by constructing the upper global
basis. We also show that the crystals arising from perfect bases of
$U_q^{-}(\g)$ are all isomorphic to the crystal $B(\infty)$.

\subsection{Crystals}\

We review the basic theory of abstract crystals for quantum
generalized Kac-Moody algebras introduced in \cite{JKKS07}.

\begin{Def}\label{Def: abstract crystal} An {\em abstract crystal} is a set $B$ together with the maps
$\  {\rm wt} : B \to P,\ \varphi_{i},\varepsilon_{i} : B \to \Z \sqcup \{-\infty \} \mbox{ and } \
\tilde{e}_i,\tilde{f}_i : B \to B \sqcup \{0\} \ (i \in I) $ satisfying the following conditions:
\begin{enumerate}
\item $\varphi_{i}(b) = \varepsilon_{i}(b) + \langle h_i,{\rm wt}(b) \rangle,$
\item ${\rm wt}(\tilde{e}_i b)={\rm wt}(b)+\alpha_i, {\rm wt}(\tilde{f}_i b)={\rm wt}(b)-\alpha_i \mbox{ if } \tilde{e}_i b, \tilde{f}_i b \in B,$
\item for $ b,b^{\prime} \in B $ and $ i \in I,$ $ b'=\tilde{e}_i b$ if and only if $b = \tilde{f}_i b',$
\item for $ b \in B$, if $ \varphi_{i}(b) = -\infty$, then $ \tilde{e}_i b = \tilde{f}_i b=0,$
\item if $ b \in B $ and $\tilde{e}_i b \in B$, then
         $$\varepsilon_{i}(\tilde{e}_i b)=\begin{cases} \varepsilon_{i}(b)-1 & \mbox{if} \ i \in \Ire, \\ \varepsilon_{i}(b) & \mbox{if} \ i \in \Iim, \end{cases}
       \ \  \varphi_{i}(\tilde{e}_i b)=\begin{cases} \varphi_{i}(b)+1 & \mbox{if} \ i \in \Ire, \\ \varphi_{i}(b)+a_{ii} & \mbox{if} \ i \in \Iim, \end{cases}$$
\item if $ b \in B$ and $\tilde{f}_i b \in B$, then
         $$\varepsilon_{i}(\tilde{f}_i b)=\begin{cases} \varepsilon_{i}(b)+1 & \mbox{if} \ i \in \Ire, \\ \varepsilon_{i}(b) & \mbox{if} \ i \in \Iim, \end{cases}
       \ \  \varphi_{i}(\tilde{f}_i b)=\begin{cases} \varphi_{i}(b)-1 & \mbox{if} \ i \in \Ire, \\ \varphi_{i}(b)-a_{ii} & \mbox{if} \ i \in \Iim. \end{cases}$$
\end{enumerate}
\end{Def}


\begin{Exa} \label{Exa: natural abstract crystals} \
\begin{enumerate}
\item For $b \in B(\infty)$, define
$ \wt, \varepsilon_i$, and $\varphi_i$ as follows:
\begin{align*}
\ \ & \wt(b)  =  -(\alpha_{i_1}+ \cdots + \alpha_{i_r}) \text{ for } b = \tilde{f}_{i_1} \cdots \tilde{f}_{i_r} {\bf 1}+ q L(\infty), \\
\ \ & \varepsilon_i(b)  =  \begin{cases} \max \{ k \ge 0 \ | \ \tilde{e}_i^k b\neq 0 \} & \text{ for } i \in \Ire, \\
                                       \qquad   0 & \text{ for } i \in \Iim, \end{cases} \\
\ \ & \varphi_i(b)  =   \varepsilon_i(b)+ \langle h_i, {\rm wt}(b)\rangle.
\end{align*}
Then $(B(\infty), \wt, \tilde{e}_i, \tilde{f}_i, \varepsilon_i,
\varphi_i)$ becomes an abstract crystal.
\item
For $b \in B(\lambda)$, define $\wt, \varepsilon_i$, and $\varphi_i$ as follows:
\begin{align*}
\ \ & \wt(b)  =  \lambda-(\alpha_{i_1}+ \cdots + \alpha_{i_r}) \text{ for } b = \tilde{f}_{i_1} \cdots \tilde{f}_{i_r}v_{\lambda}+ q L(\lambda), \\
\ \ & \varepsilon_i(b) =  \begin{cases} \max \{ k \ge 0 \ | \ \tilde{e}_i^k b\neq 0 \} & \text{ for } i \in \Ire,  \\
                                       \qquad   0  & \text{ for } i \in \Iim, \end{cases} \\
\ \ &  \varphi_i(b)  =  \begin{cases} \max \{ k \ge 0 \ | \ \tilde{f}_i^k b\neq 0 \} & \text{ for } i \in \Ire, \\
                                     \quad  \langle h_i, {\rm wt}(b)\rangle & \text{ for } i \in \Iim. \end{cases}
\end{align*}
Then $(B(\lambda),{\rm wt},\tilde{e}_i,\tilde{f}_i, \varepsilon_i,
\varphi_i)$ becomes an abstract crystal.

\item For $\lambda \in P$, let $T_{\lambda}=\{ t_{\lambda} \}$ and define
\begin{align*}
\ \ & {\rm wt}(t_{\lambda}) = \lambda, \quad \tilde{e}_i t_{\lambda} =\tilde{f}_i t_{\lambda} =0 \quad
 \varepsilon_i(t_{\lambda})=\varphi_i(t_{\lambda}) = - \infty \text{ for all } i \in I.
\end{align*}
Then $(T_{\lambda},{\rm wt},\tilde{e}_i,\tilde{f}_i, \varepsilon_i, \varphi_i)$ is an abstract crystal.
\item  Let $C= \{ c \}$ and define
\begin{align*}
\ \ &{\rm wt}(c) = 0, \quad \tilde{e}_i c =\tilde{f}_i c =0 \quad \varepsilon_i(c)=\varphi_i(c) = 0 \text{ for all } i \in I.
\end{align*}
Then $(C,{\rm wt},\tilde{e}_i,\tilde{f}_i, \varepsilon_i, \varphi_i)$ is an abstract crystal.
\end{enumerate}
\end{Exa}

\begin{Def}\label{Def: crystal morphism} \
 \begin{enumerate}
 \item A {\it crystal morphism} $\phi$ between abstract crystals $B_1$ and $B_2$ is a map
 from $ B_1 $ to $ B_2 \sqcup \{0\}$ satisfying the following conditions:
\begin{enumerate}
\item if $b \in B_1$ and $\phi(b) \in B_2$, then $\wt(\phi(b))= \wt(b),\ \varepsilon_{i}(\phi(b))=\varepsilon_{i}(b)$ and $\varphi_{i}(\phi(b))=\varphi_{i}(b)$,
\item if $b \in B_1$ and $ i\in I$ with $\tilde{f}_i b \in B_1$, then we have $\tilde{f}_i \phi(b) = \phi(\tilde{f}_i b).$
\end{enumerate}
\item A crystal morphism $\phi: B_1 \to B_2$ is called {\it strict} if
$$ \phi(\tilde{e}_i b) = \tilde{e}_i \phi(b)\quad \text{ and }\quad \phi(\tilde{f}_i b) = \tilde{f}_i \phi(b) $$
for all $i\in I$ and $b \in B_1$.
\end{enumerate}
\end{Def}

The tensor product of two crystals is defined as follows: for given two crystals $B_1$ and $B_2$, their tensor product
 $B_1 \otimes B_2$ is the set $\{ b_1 \otimes b_2 \mid b_1 \in B_1,  b_2 \in B_2 \}$ with the maps ${\rm wt}, \varepsilon_i
 ,\varphi_i,\tilde{e}_i$ and $\tilde{f}_i$ given by
\begin{equation} \label{Eq:def of tensor product}
\begin{aligned}
& {\rm wt}(b_1 \otimes b_2) = {\rm wt}(b_1) \otimes {\rm wt}(b_2), \\
&\varepsilon_i(b_1 \otimes b_2)= \max \{ \varepsilon_i(b_1), \varepsilon_i(b_2)-\langle h_i, {\rm wt}(b_1) \rangle \}, \\
&\varphi_i(b_1 \otimes b_2)= \max \{ \varphi_i(b_1)+\langle h_i, {\rm wt}(b_2) \rangle, \varphi_i(b_2) \}, \\
& \tilde{f}_i(b_1 \otimes b_2)= \begin{cases} \tilde{f}_i(b_1) \otimes b_2  & \text{ if } \varphi_i(b_1)> \varepsilon_i(b_2), \\
                                            b_1 \otimes \tilde{f}_i(b_2) & \text{ if } \varphi_i(b_1)\le \varepsilon_i(b_2), \end{cases} \\
& \text{ for } i \in \Ire,\ \ \tilde{e}_i(b_1 \otimes b_2) =\begin{cases} \tilde{e}_i(b_1) \otimes b_2  & \text{ if } \varphi_i(b_1) \ge \varepsilon_i(b_2), \\
                                            b_1 \otimes \tilde{e}_i(b_2) & \text{ if } \varphi_i(b_1) < \varepsilon_i(b_2), \end{cases} \\
& \text{ for } i \in \Iim,\ \
      \tilde{e}_i(b_1 \otimes b_2) =\begin{cases} \tilde{e}_i(b_1) \otimes b_2  & \text{ if } \varphi_i(b_1) > \varepsilon_i(b_2)-a_{ii}, \\
                                                  0 & \text{ if } \varepsilon_i(b_2) < \varphi_i(b_1) \le \varepsilon_i(b_2)-a_{ii},\\
                                                  b_1 \otimes \tilde{e}_i(b_2) & \text{ if } \varphi_i(b_1) \le \varepsilon_i(b_2). \end{cases}
\end{aligned}
\end{equation}

It was proved in \cite[Lemma 3.10]{JKKS07} that $B_1 \otimes B_2$
becomes an abstract crystal. Moreover, they proved the {\it
recognition theorem} of $B(\lambda)$ $(\lambda \in P^+)$ using the
abstract crystal structure of $B(\infty)$.

\begin{Prop} \cite[Theorem 5.2]{JKKS07} \label{Prop: recognition theorem of B-lambda}
For $\lambda \in P^+$, the crystal $B(\lambda)$ is isomorphic to the connected component of
$B(\infty) \otimes T_{\lambda} \otimes C$ containing ${\bf 1} \otimes t_{\lambda} \otimes c$.
\end{Prop}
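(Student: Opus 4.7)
The plan is to establish a strict crystal morphism between $B(\lambda)$ and the designated connected component, and then verify it is a bijection. The natural candidate is the map $\Psi : B(\infty) \otimes T_\lambda \otimes C \to B(\lambda) \sqcup \{0\}$ induced by the surjection $\pi_\lambda : U_q^-(\g) \twoheadrightarrow V(\lambda)$ given by $u \mapsto u \cdot v_\lambda$. For $b = \tilde{f}_{i_1} \cdots \tilde{f}_{i_r} \mathbf{1} + qL(\infty) \in B(\infty)$, set
$$ \Psi(b \otimes t_\lambda \otimes c) := \tilde{f}_{i_1} \cdots \tilde{f}_{i_r} v_\lambda + qL(\lambda), $$
interpreting the right-hand side as $0$ whenever $\tilde{f}_{i_1} \cdots \tilde{f}_{i_r} v_\lambda \in qL(\lambda)$.

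The first step is to verify that $\Psi$ is well-defined and preserves weight, $\varepsilon_i$, and $\varphi_i$. For real $i$ this is essentially the fact that $\pi_\lambda$ carries $L(\infty)$ onto $L(\lambda)$ and induces the standard map $B(\infty) \to B(\lambda) \sqcup \{0\}$ commuting with $\tilde{e}_i$ and $\tilde{f}_i$, which follows from Proposition \ref{Prop: crystal bases of integrable module } and the construction of $L(\lambda)$ as the $\A_0$-span of the $\tilde{f}_I$-orbit of $v_\lambda$. The statistics are matched to those prescribed by the tensor-product rule \eqref{Eq:def of tensor product} on $B(\infty) \otimes T_\lambda \otimes C$ using the identities $\wt(t_\lambda) = \lambda$ and $\varepsilon_i(t_\lambda) = \varphi_i(t_\lambda) = -\infty$, $\varepsilon_i(c) = \varphi_i(c) = 0$.

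The second step, and the heart of the matter, is to handle imaginary simple roots. When $i \in \Iim$ with $\langle h_i, \lambda \rangle = 0$, one has $f_i v_\lambda = 0$, and more generally the relations \eqref{eq:hw module} cause $\tilde{f}_i^k$ to annihilate many elements. The factor $T_\lambda$ absorbs the weight shift so that $\varphi_i(b \otimes t_\lambda) = \varphi_i(b) + \langle h_i, \lambda \rangle$ tracks precisely the $i$-string length in $V(\lambda)$, and the $C$ factor provides, through the middle clause of \eqref{Eq:def of tensor product} for $i \in \Iim$, the mandated vanishing of $\tilde{e}_i$ on exactly those elements on which $\tilde{f}_i$ in $V(\lambda)$ acts by a relation. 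A case analysis on the three branches of the tensor product formula, combined with the $i$-string decomposition \eqref{eq: lowerpart i-string decomposition} of $u_{\lambda}$-preimages, shows that $\Psi(\tilde{e}_i(b \otimes t_\lambda \otimes c)) = \tilde{e}_i \Psi(b \otimes t_\lambda \otimes c)$ and the same for $\tilde{f}_i$.

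The third step is the bijection. Since $\Psi(\mathbf{1} \otimes t_\lambda \otimes c) = v_\lambda + qL(\lambda)$ is the highest-weight element of $B(\lambda)$, and $B(\lambda)$ is the $\tilde{f}_I$-orbit of this element by Proposition \ref{Prop: crystal bases of integrable module }, the intertwining property of Step 2 forces the image of the connected component of $\mathbf{1} \otimes t_\lambda \otimes c$ to be exactly $B(\lambda)$. For injectivity, induct on the distance to $\mathbf{1} \otimes t_\lambda \otimes c$: if two distinct elements $b_1 \otimes t_\lambda \otimes c$, $b_2 \otimes t_\lambda \otimes c$ in the component had equal image, applying $\tilde{e}_i$'s would reduce to a contradiction via Definition \ref{Def: abstract crystal}(3) and the corresponding property in $B(\lambda)$. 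The principal obstacle is Step 2: one must verify that the mysterious middle clause of the imaginary tensor product rule matches precisely the vanishing dictated by the defining relations of $V(\lambda)$, and this bookkeeping is exactly what forces the presence of the auxiliary factor $C$ rather than working solely with $B(\infty) \otimes T_\lambda$.
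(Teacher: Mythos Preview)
The paper does not supply its own proof of this proposition: it is stated with a citation to \cite[Theorem~5.2]{JKKS07} and no argument is given in the present text. So there is nothing internal to compare your attempt against.

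That said, your outline is broadly the right shape for how \cite{JKKS07} proceeds, but as written it is only a sketch with a real gap at the point you yourself flag. In Step~1 you assert that $\pi_\lambda$ induces a well-defined map $B(\infty)\to B(\lambda)\sqcup\{0\}$ commuting with the Kashiwara operators; this is true, but it is a nontrivial consequence of the compatibility $\pi_\lambda(L(\infty))=L(\lambda)$ proved in \cite{JKK05}, and you should cite it rather than derive it from Proposition~\ref{Prop: crystal bases of integrable module } alone. The definition of $\Psi(b\otimes t_\lambda\otimes c)$ via a chosen word $\tilde f_{i_1}\cdots\tilde f_{i_r}$ is only well-defined once that compatibility is in hand. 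More seriously, Step~2 is where the actual content lives: for $i\in\Iim$ one must check that the three-branch rule \eqref{Eq:def of tensor product} for $\tilde e_i$ on $(b\otimes t_\lambda)\otimes c$ matches exactly when $\tilde e_i$ vanishes in $B(\lambda)$, and your paragraph acknowledges this without carrying it out. In \cite{JKKS07} this verification is done by a careful case analysis using the explicit values $\varepsilon_i(c)=\varphi_i(c)=0$ and the formula $\varphi_i(b\otimes t_\lambda)=\varphi_i(b)+\langle h_i,\lambda\rangle$; the middle clause corresponds precisely to the range $0<\langle h_i,\wt^\lambda\rangle\le -a_{ii}$, which in $V(\lambda)$ is the regime where $\tilde e_i$ kills the element because applying $e_i$ would leave the weight support. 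Your Step~3 is fine once Steps~1 and~2 are established.
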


\vskip 1em

\subsection{Perfect bases}\ \label{Sec:perfect bases}

We revisit the algebra $U^{-}_q(\g)$. We analyze $U^{-}_q(\g)$ as a
$B_q(\g)$-module and develop the perfect basis theory for
$U_q^{-}(\g)$. The crystal structure is revealed when $e_i'$ acts on
a perfect basis.

Let
\begin{equation*}
 \begin{aligned}
 \ \ \bse_i'^{(n)} =
 \begin{cases}
 (\bse_i')^{n}  & \text{if } i \in \Ire, \\
 \dfrac{(\bse_i')^{n}}{\{n\}_{i}!} & \text{if } i \in \Iim.
 \end{cases}
 \end{aligned}
\end{equation*}
Then we obtain the following commutation relations:
\begin{equation} \label{eq:commutation relation}
 \begin{aligned}
 \bse_i'^{(n)}\bsf_j^{(m)} =
  \begin{cases}
  \displaystyle \sum_{k=0}^{n} q_i^{-2nm+(n+m)k-k(k-1)/2} \left[\begin{matrix} n \\ k\\ \end{matrix} \right]_i
                 \bsf_i^{(m-k)} \bse_i'^{(n-k)} & \text{if } i=j \text{ and } i \in \Ire,\\
  \displaystyle \sum_{k=0}^{m} q_i^{-c_i(-2nm+(n+m)k-k(k-1)/2)} \left \{ \begin{matrix} m \\ k\\ \end{matrix} \right \}_i
                 \bsf_i^{(m-k)} \bse_i'^{(n-k)} & \text{if } i=j \text{ and } i \in \Iim,\\
  q_i^{-nm a_{ij}} \bsf_j^{(m)}\bse_i'^{(n)} & \text{if } i \neq j.
  \end{cases}
 \end{aligned}
\end{equation}

For $i \in I$ and $v \in U^{-}_q(\g)$, let
$$\ell_i(v) = \min \{n \in \Z_{\ge 0} \ \mid \ \bse_i'^{n+1}v=0 \}.$$
Note that $\ell_i$ is well-defined since $\bse_i'$ is locally nilpotent (see $\eqref{eq: special commute}$).
Then, for $i \in I$ and $k \in \Z_{\ge 0}$,
$$U^{-}_q(\g)_{i}^{< k} :=\{ v \in U^{-}_q(\g) \mid \ell_i(v) < k \}$$
becomes a $\Q(q)$-vector space.

\begin{Def} \label{Def:perfect bases}
A basis $B$ of $U^{-}_q(\g)$ is said to be {\it perfect} if
\begin{enumerate}
\item $B=\displaystyle\bigsqcup_{\mu\in Q^-} B_{\mu}$, where $B_{\mu}:= B \cap U^{-}_q(\g)_{\mu}$,
\item for any $b \in B$ and $i \in I$ with $\bse_i'(b)\neq 0$, there exists a unique $\mathsf{e}_i(b) \in B$ such that
\begin{equation} \label{eq: perfect basis}
\bse_i'(b) \in c \ \mathsf{e}_i(b) + U^{-}_q(\g)_{i}^{< \ell_i(b)-1} \text{ for some } c \in \Q(q)^{\times},
\end{equation}
\item if $\mathsf{e}_i(b)=\mathsf{e}_i(b')$ for $b,b' \in B$, then $b = b'\ ( i \in I)$.
\end{enumerate}
\end{Def}

Now, we define the {\it upper Kashiwara operators} for the $B_q(\g)$-module $U^{-}_q(\g)$.
Let $u \in U^{-}_q(\g)$ such that $\bse_i' u=0$.
Then, for $n \in \Z_{\ge0}$, we define the upper Kashiwara operators
$ \tilde{E}_i, \tilde{F}_i $ by
\begin{equation*}
 \begin{aligned}
 \ \ &\tilde{E}_i(f_i^{(n)}u)=
 \begin{cases}
 \displaystyle\frac{q_i^{-(n-1)}}{[n]_i} f_i^{(n-1)}u  & \text{ if } i \in \Ire, \\
 \{n\}_i q_i^{c_i(n-1)} f_i^{(n-1)}u & \text{ if } i \in \Iim ,
 \end{cases} \\
 \ \ &\tilde{F}_i(f_i^{(n)}u)=
 \begin{cases}
 \displaystyle q_i^{n}[n+1]_i f_i^{(n+1)}u & \text{ if } i \in \Ire , \\
 \displaystyle\frac{1}{\{n+1\}_i q_i^{c_i n}} f_i^{(n+1)}u & \text{ if } i \in \Iim.
 \end{cases}
 \end{aligned}
 \end{equation*}
From the $i$-string decomposition $\eqref{eq: lowerpart i-string decomposition}$,
the upper Kashiwara operators $ \tilde{E}_i$ and $\tilde{F}_i $
can be extended to the whole space $U^{-}_q(\g)$ by linearity.

\begin{Def} \label{Def:upper crystal}
An {\it upper crystal basis} of $U^{-}_q(\g)$ is a pair
$(L^{\vee},B^{\vee})$ satisfying the following conditions:
\begin{enumerate}
\item $L^{\vee}$ is a free $\A_0$-module of $U^{-}_q(\g)$ such that $U^{-}_q(\g)=\Q(q) \otimes_{\A_0}L^{\vee}$ and
      $L^{\vee} = \bigoplus_{\alpha \in Q^+} L^{\vee}_{-\alpha}$, where $L^{\vee}_{-\alpha} := L^{\vee} \cap
      U_q^{-}(\g)_{-\alpha}$,
\item $B^{\vee}$ is a $\Q$-basis of $L^{\vee}/ q L^{\vee}$ such that $B^{\vee} = \bigsqcup_{\alpha \in Q^+} B^{\vee}_{-\alpha}$,
            where $B^{\vee}_{-\alpha} := B^{\vee} \cap (L^{\vee}_{-\alpha}/ q
            L^{\vee}_{-\alpha})$,
\item $\tilde{E}_i B^{\vee} \subset B^{\vee} \sqcup \{0\}$, \ \
             $\tilde{F}_i B^{\vee} \subset B^{\vee} $ for all $i \in I$,
\item For $ b,b'\in B^{\vee}$ and $  i \in I$, $b'^{\vee} = \tilde{F}_i b^{\vee}$
            if and only if $b^{\vee} = \tilde{E}_i b'^{\vee}$.
\end{enumerate}
\end{Def}

We have the following lemma
which is the $U^{-}_q(\g)$-version of \cite[Lemma 4.3]{KOP09}.
\begin{Lem} \label{Lem: properties of the bilnear form}
For any  $u, v \in U^{-}_q(\g)$, we have
$$(\tilde{f}_i u,  v)_K=(u,\tilde{E}_i v)_K, \quad
( \tilde{e}_i u, v)_K =(u, \tilde{F}_i v)_K.$$
\end{Lem}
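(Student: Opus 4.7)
The plan is to reduce both identities to a single computation on the $i$-string decompositions of $u$ and $v$. First, observe that $(\,,\,)_K$ is weight-homogeneous (from its defining properties, since $\bse_i'$ and $\bsf_i$ shift weight by $\pm\alpha_i$), so I may assume $u$ and $v$ lie in a common weight space; otherwise both sides vanish. Write
$$ u = \sum_{l\ge 0} f_i^{(l)} u_l, \qquad v = \sum_{m\ge 0} f_i^{(m)} v_m, $$
with $\bse_i' u_l = \bse_i' v_m = 0$. Then $\tilde f_i u$, $\tilde e_i u$, $\tilde E_i v$, and $\tilde F_i v$ all decompose termwise from the explicit formulas in Section~\ref{Sec:perfect bases}, so each side of each identity becomes a sum of pairings $(f_i^{(k)} u_k, f_i^{(m)} v_m)_K$ weighted by scalars.

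The key computational step is the orthogonality-and-normalization formula
$$ (f_i^{(k)} x, f_i^{(m)} y)_K \ =\ \delta_{k,m}\, \gamma_k^{(i)}\, (x, y)_K \qquad (\bse_i' x = \bse_i' y = 0), $$
for an explicit scalar $\gamma_k^{(i)} \in \Q(q)^\times$. To obtain it, I apply the adjunction $(\bsf_i w, z)_K = (w, \bse_i' z)_K$ repeatedly to transfer all $f_i$'s to the second slot, reducing to $(x, (\bse_i')^k f_i^{(m)} y)_K$. Using the quantum boson relation $\bse_i' \bsf_i = 1 + q_i^{-a_{ii}} \bsf_i \bse_i'$, a straightforward induction expresses $(\bse_i')^k f_i^{(m)} y$ as a scalar times $f_i^{(m-k)} y$; the hypothesis $\bse_i' x = 0$ then forces the pairing to vanish unless $k = m$, and the remaining coefficient defines $\gamma_k^{(i)}$.

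Plugging this formula into the sums, the two identities reduce to elementary recursions relating $\gamma_{k+1}^{(i)}$ and $\gamma_k^{(i)}$ that precisely match the scalars defining $\tilde E_i$ and $\tilde F_i$; reading them off, the equality of coefficients of $(u_k, v_{k+1})_K$ (respectively $(u_{k+1}, v_k)_K$) on both sides follows by direct comparison. The main technical obstacle is the need to carry the argument through in parallel for the subcases $i \in \Ire$ (using $[n]_i$ and $q_i^{-2}$), $i \in \Iim$ with $a_{ii} < 0$ (using $\{n\}_i$ and $q_i^{-a_{ii}}$), and the degenerate $a_{ii} = 0$ case. In the imaginary case the bridging identity $\{n\}_i\, q_i^{c_i(n-1)} = [n]_{q_i^{-a_{ii}}}$ is what reconciles the two conventions and ensures that the normalizations chosen for $\tilde E_i, \tilde F_i$ exactly cancel the $q$-powers produced by the adjunction computation.
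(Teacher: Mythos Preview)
Your proof is correct. The paper does not actually prove this lemma: it simply states that the result is the $U_q^-(\g)$-version of \cite[Lemma 4.3]{KOP09} and moves on. Your direct computation via $i$-string decompositions, the orthogonality $(f_i^{(k)}x,f_i^{(m)}y)_K=\delta_{k,m}\gamma_k^{(i)}(x,y)_K$ for $\bse_i'x=\bse_i'y=0$, and the recursion $\gamma_{k+1}^{(i)}/\gamma_k^{(i)}$ matching the normalizing scalars in $\tilde E_i,\tilde F_i$ is exactly the standard argument one would supply (and is in the spirit of the cited reference). One minor remark: the notation $[n]_{q_i^{-a_{ii}}}$ in your bridging identity is not defined in the paper, so if you were to include the proof here you should write the identity out explicitly in terms of $\{n\}_i$ and powers of $q_i^{c_i}$.
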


\begin{Lem} \label{Lem: relatoin between eiip and EEi}
Let $u \in U^{-}_q(\g)$, and $n$ be the smallest integer such that $\bse_i'^{n+1}u=0$.
Then we have
\begin{equation*}
\bse_i'^{n}u =
 \begin{cases} [n]_i! \tilde{E}_i^{n} u  & \text{ if } i \in \Ire, \\
               \tilde{E}_i^{n}u & \text{ if } i \in \Iim. \end{cases}
\end{equation*}
\end{Lem}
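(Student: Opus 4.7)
The plan is to prove the identity by a direct computation based on the $i$-string decomposition of $u$ together with the commutation relation \eqref{eq:commutation relation}. First I would write $u=\sum_{l\ge 0} f_i^{(l)} u_l$ as in \eqref{eq: lowerpart i-string decomposition} with $\bse_i' u_l=0$. The hypothesis that $n$ is minimal with $\bse_i'^{n+1}u=0$, combined with linear independence of the $f_i^{(l)}u_l$ (which follows from the definition of the string decomposition), will force $u_l=0$ for all $l>n$ and $u_n\ne 0$.

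The key computational step is to extract the action of $\bse_i'$ on a single string summand $f_i^{(l)}u_l$. Applying \eqref{eq:commutation relation} with $(n,m)=(1,l)$ and using $\bse_i' u_l=0$ to kill all terms except the one in which $\bse_i'$ has been moved past every $\bsf_i$, I expect to obtain
\begin{equation*}
\bse_i'\bigl(f_i^{(l)}u_l\bigr)=[l]_i\,\tilde{E}_i\bigl(f_i^{(l)}u_l\bigr)\quad(i\in\Ire),\qquad \bse_i'\bigl(f_i^{l}u_l\bigr)=\tilde{E}_i\bigl(f_i^{l}u_l\bigr)\quad(i\in\Iim),
\end{equation*}
after checking that the $q_i$-exponents in \eqref{eq:commutation relation} exactly match those in the definition of $\tilde{E}_i$ preceding Definition \ref{Def:upper crystal}. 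Since $f_i^{(l-1)}u_l$ is again of the form (power of $f_i$)$\cdot$(kernel element of $\bse_i'$), the same formula can be applied iteratively.

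Iterating $n$ times starting from the top string $f_i^{(n)}u_n$ then yields $\bse_i'^{n}\bigl(f_i^{(n)}u_n\bigr)=[n]_i!\,\tilde{E}_i^{n}\bigl(f_i^{(n)}u_n\bigr)$ in the real case, and the analogous identity without the $[n]_i!$ factor in the imaginary case. Finally I would verify that the lower summands vanish: iterating the one-step formula $n$ times on $f_i^{(l)}u_l$ with $l<n$ forces a factor $\bsf_i^{(l-n)}=0$, so $\bse_i'^{n}(f_i^{(l)}u_l)=0$ for $l<n$. Combined with $u_l=0$ for $l>n$, this reduces $\bse_i'^n u$ to $\bse_i'^n(f_i^{(n)}u_n)$, giving the stated identity.

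The main obstacle I anticipate is purely bookkeeping: in the imaginary case, the commutation relation involves the exponent $-c_i(-2nm+(n+m)k-k(k-1)/2)$ and the quantum integers $\{n\}_i$ instead of $[n]_i$, and one must carefully verify that the surviving $k=n$ term reproduces the scalar $\{n\}_i!\,q_i^{c_i n(n-1)/2}$ that emerges from iterating the definition of $\tilde{E}_i$. No new idea is required; the substance is the correct matching of these two scalar constants.
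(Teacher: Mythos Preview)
Your proposal is correct and follows essentially the same approach as the paper: both arguments take the $i$-string decomposition $u=\sum_{l=0}^{n} f_i^{(l)}u_l$, use the commutation relation \eqref{eq:commutation relation} together with the definition of $\tilde{E}_i$, and observe that only the top summand $f_i^{(n)}u_n$ survives on both sides. The only cosmetic difference is that the paper applies the commutation relation once with exponent $n$ to compute $\bse_i'^{n}u$ and $\tilde{E}_i^{n}u$ directly as scalar multiples of $u_n$, whereas you establish the one-step identity $\bse_i'(f_i^{(l)}u_l)=[l]_i\,\tilde{E}_i(f_i^{(l)}u_l)$ (respectively $\bse_i'(f_i^{l}u_l)=\tilde{E}_i(f_i^{l}u_l)$) and iterate; the bookkeeping is the same either way.
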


\begin{proof}
For $u \in U_q^{-}(\g)$ and $i \in I$, consider the $i$-string
decomposition: $u = \sum_{l=0}^{n}f_i^{(l)}u_l$, where $\bse_i'
u_l=0$. If $i \in \Ire$, then by $\eqref{eq:commutation relation}$
and the definition of $\tilde{E}_i$, we have
$$\bse_i'^{n}u=q_i^{- n(n-1)/2}u_n, \ \  \tilde{E}_i^{n}u=\dfrac{q_i^{- n(n-1)/2}}{[n]_i!}u_n.$$
Similarly, if $i \in \Iim$, we obtain
$$\bse_i'^{(n)}u=q_i^{c_i n(n-1)/2}u_n, \ \  \tilde{E}_i^{n}u=\{n\}_i! q_i^{c_i n(n-1)/2}u_n,$$
which proves our assertion.
\end{proof}

Let $(L(\infty), B(\infty))$ be the lower crystal basis of $U^{-}_q(\g)$. Set
$$L(\infty)^{\vee} = \{ u \in U^{-}_q(\g) \mid (u, L(\infty))_K \subset \A_{0} \}.$$
We also denote by $( \ , \ )_K : L(\infty)^{\vee} / q L(\infty)^{\vee} \times L(\infty) /
qL(\infty) \rightarrow \Q$ the nondegenerate bilinear form induced by the
bilinear form $( \ , \ )_K$ on $U^{-}_q(\g)$. Let
 $$B(\infty)^{\vee} = \{b^{\vee} \mid b \in B(\infty) \}$$
be the $\Q$-basis of $L(\infty)^{\vee} / q L(\infty)^{\vee}$ which is dual
to $B(\infty)$ with respect to $( \ , \ )_K$.

\begin{Prop} \label{prop:upper}
The pair $(L(\infty)^{\vee}, B(\infty)^{\vee})$ is an upper crystal basis of $U^{-}_q(\g)$.
\end{Prop}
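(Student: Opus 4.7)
My plan is to verify the four defining conditions of an upper crystal basis (Definition 4.4) for the pair $(L(\infty)^\vee, B(\infty)^\vee)$, using the nondegenerate symmetric bilinear form $(\ ,\ )_K$ and Lemma~\ref{Lem: properties of the bilnear form} as the main bridge between the lower data $(L(\infty),B(\infty))$ and its dual.

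\emph{Condition (1).} First, since $(\ ,\ )_K$ is nondegenerate (Lemma~\ref{Lem:nondegenerate pairing in GKM}(1)) and $L(\infty)$ is a free $\A_0$-module of $U_q^-(\g)$, standard duality over the PID $\A_0$ shows that $L(\infty)^\vee$ is also a free $\A_0$-module with $U_q^-(\g)=\Q(q)\otimes_{\A_0}L(\infty)^\vee$. Because the form is homogeneous (it pairs $U_q^-(\g)_{-\alpha}$ with itself), one gets the weight-space decomposition $L(\infty)^\vee=\bigoplus_{\alpha\in Q^+}L(\infty)^\vee_{-\alpha}$ for free. The set $B(\infty)^\vee$ is a $\Q$-basis of $L(\infty)^\vee/qL(\infty)^\vee$ by construction, and inherits the weight decomposition from $B(\infty)$.

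\emph{Stability under $\tilde E_i,\tilde F_i$.} The key step is to show that $\tilde E_i$ and $\tilde F_i$ preserve $L(\infty)^\vee$. For $v\in L(\infty)^\vee$ and any $u\in L(\infty)$, symmetry of the form plus Lemma~\ref{Lem: properties of the bilnear form} give
\[
(u,\tilde E_i v)_K=(\tilde f_i u, v)_K,\qquad (u,\tilde F_i v)_K=(\tilde e_i u, v)_K.
\]
Since the pair $(L(\infty),B(\infty))$ is a lower crystal basis, $\tilde f_i u, \tilde e_i u\in L(\infty)$, hence both right-hand sides lie in $\A_0$. Thus $\tilde E_i v,\tilde F_i v\in L(\infty)^\vee$, and they descend to well-defined $\Q$-linear endomorphisms of $L(\infty)^\vee/qL(\infty)^\vee$.

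\emph{Conditions (3) and (4).} For $b'\in B(\infty)$, write $\tilde E_i b'^\vee=\sum_{b\in B(\infty)}c_b\, b^\vee$ modulo $q$. Pairing with $b\in B(\infty)$ gives
\[
c_b\equiv(b,\tilde E_i b'^\vee)_K\equiv(\tilde f_i b,b'^\vee)_K\equiv\delta_{\tilde f_i b,\,b'}\pmod q,
\]
by the definition of the dual basis. Using the characteristic property of lower crystal bases ($b'=\tilde f_i b\iff b=\tilde e_i b'$), this shows
\[
\tilde E_i b'^\vee=\begin{cases}(\tilde e_i b')^\vee & \text{if } \tilde e_i b'\neq 0,\\ 0 & \text{otherwise},\end{cases}
\]
which yields $\tilde E_i B(\infty)^\vee\subset B(\infty)^\vee\sqcup\{0\}$. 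The identical computation with the second formula of Lemma~\ref{Lem: properties of the bilnear form} gives $\tilde F_i b'^\vee=(\tilde f_i b')^\vee$, hence $\tilde F_i B(\infty)^\vee\subset B(\infty)^\vee$ (using that $\tilde f_i b'\neq 0$ always holds in $B(\infty)$). Finally, the equivalence $b^\vee=\tilde E_i b'^\vee\iff b'^\vee=\tilde F_i b^\vee$ translates directly to $b=\tilde e_i b'\iff b'=\tilde f_i b$, which is axiom (4) for the lower crystal basis.

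The only real subtlety is the stability argument for $L(\infty)^\vee$ under $\tilde E_i,\tilde F_i$: it relies crucially on the fact that the adjoints in Lemma~\ref{Lem: properties of the bilnear form} are the lower Kashiwara operators (not some rescaled variants), which is why the invariance of $L(\infty)$ under $\tilde e_i,\tilde f_i$ translates cleanly into invariance of $L(\infty)^\vee$ under $\tilde E_i,\tilde F_i$. The specific scalar factors in the definition of $\tilde E_i,\tilde F_i$ (especially the distinction between real and imaginary $i$) are designed precisely to make Lemma~\ref{Lem: properties of the bilnear form} hold, so once that lemma is in hand the remainder of the proof is essentially formal.
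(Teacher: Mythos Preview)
Your proof is correct and follows exactly the approach the paper intends: the paper's own proof consists of the single sentence ``The proof is almost the same as in \cite{Kash93b},'' and what you have written is precisely the standard Kashiwara duality argument from that reference, using Lemma~\ref{Lem: properties of the bilnear form} as the pivot. There is nothing to add.
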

\begin{proof}
The proof is almost the same as in \cite{Kash93b}.
\end{proof}

Let $\A_{\infty}$ be the subring of $\Q(q)$ consisting of regular
functions at $\infty$. Let $U_{\A}$ (resp.\ $L$ and $L^{-}$) be an
$\A$-subalgebra (resp.\ $\A_0$-subalgebra and
$\A_{\infty}$-subalgebra) of $U^{-}_q(\g)$.

\begin{Def}
A triple $(U_{\A}, L, L^{-})$ is a {\it balanced triple} if
\begin{enumerate}
\item $U^{-}_q(\g) \cong \Q(q) \otimes_{\A} U_{\A} \cong \Q(q) \otimes_{\A_0}
L \cong \Q(q) \otimes_{\A_{\infty}} L^{-}$ as $\Q(q)$-vector
spaces,
\item the natural $\Q$-linear map $E \to L/qL$ is an
isomorphism, where $ E := U_{\A} \cap L \cap L^{-}$.
\end{enumerate}
\end{Def}
It was shown in \cite{Kash91} that the condition (2) is equivalent
to saying that there are natural isomorphisms $U_\A \cong \A
\otimes_{\Q} E$, \ $L \cong \A_0 \otimes_{\Q} E$, \ $L^{-} \cong
\A_{\infty} \otimes_{\Q} E$.

Let $U^{0}_{\A}(\g)$ be the $\A$-subalgebra of $U_q(\g)$ generated
by $q^{h}$, $\prod^{m}_{k=1}  \dfrac{1-q^k q^h}{1-q^k}$ for all $m \in
\Z_{\ge 0}$, $h \in P^{\vee}$
and let $U_{\A}(\g)$ be the $\A$-algebra generated by
$U^{0}_{\A}(\g)$, $U^{+}_{\A}(\g)$ and $U^{-}_{\A}(\g)$.

\begin{Prop} [\cite{JKK05}]
$(U_{\A}^{-}(\g),L(\infty),L(\infty)^{-})$ is a balanced triple for $U^{-}_q(\g)$.
\end{Prop}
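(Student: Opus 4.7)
The plan is to construct the \emph{lower global basis} $\{G^{\rm low}(b) : b \in B(\infty)\}$ of $U_q^{-}(\g)$ and show that it simultaneously spans all three lattices, thereby exhibiting the balanced triple property. For condition (1) of the definition, the only new ingredient is $L(\infty)^{-}$. Define $L(\infty)^{-} := \overline{L(\infty)}$, the image of $L(\infty)$ under the bar involution $\overline{\phantom{x}}$ of \eqref{Eq:bar involution}. Since the bar involution is a $\Q$-algebra automorphism sending $q \mapsto q^{-1}$, it carries $\A_0$ isomorphically onto $\A_{\infty}$ and carries each $f_i^{(n)}$ to itself (using $\overline{[n]_i}=[n]_i$ for $i \in \Ire$ and $f_i^{(n)} = f_i^{n}$ for $i \in \Iim$). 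Hence $L(\infty)^{-}$ is a free $\A_{\infty}$-module with $\Q(q)\otimes_{\A_{\infty}}L(\infty)^{-} = U_q^{-}(\g)$, and the analogous statements for $L(\infty)$ and $U_{\A}^{-}(\g)$ are Proposition~\ref{Prop: crystal bases of lowerpart } and the definition of $U_{\A}^{-}(\g)$ respectively.

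For condition (2), I would show by induction on $|\alpha|$ that for each $b \in B(\infty)_{-\alpha}$ there is a unique bar-invariant element $G^{\rm low}(b) \in E := U_{\A}^{-}(\g)\cap L(\infty)\cap L(\infty)^{-}$ with $G^{\rm low}(b) \equiv b \pmod{qL(\infty)}$. The base case $\alpha = 0$ is $G^{\rm low}(\mathbf{1}) = \mathbf{1}$. For the inductive step, pick $i \in I$ with $\ke_i b \ne 0$, set $b' := \ke_i^{\,\ep_i(b)} b$, and form the preliminary lift $\widetilde{b} := f_i^{(\ep_i(b))}\, G^{\rm low}(b')$, which by construction lies in $U_{\A}^{-}(\g)\cap L(\infty)$ and satisfies $\widetilde{b} \equiv b \pmod{qL(\infty)}$ (using the $i$-string decomposition \eqref{eq: lowerpart i-string decomposition}). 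The element $\widetilde{b}$ need not be bar-invariant, but the discrepancy $\widetilde{b}-\overline{\widetilde{b}}$ lies in $qL(\infty)+q^{-1}L(\infty)^{-}$, and one corrects it by an $\A$-linear combination of previously constructed $G^{\rm low}(c)$ with $c$ strictly below $b$ in the crystal preorder.

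The main obstacle is Kashiwara's triangularity lemma, adapted to the GKM setting: whenever $x \in L(\infty)\cap L(\infty)^{-}$ satisfies $\overline{x}\equiv x \pmod{qL(\infty)}$, there is a unique bar-invariant $y \in L(\infty)\cap L(\infty)^{-}$ with $y \equiv x \pmod{qL(\infty)}$. The proof rests on the decomposition $\Q(q) = \A_{0} \oplus q^{-1}\A_{\infty}$ applied coefficient-wise, combined with the fact (Proposition~\ref{prop:upper}) that $(L(\infty)^{\vee},B(\infty)^{\vee})$ is an upper crystal basis dual to $(L(\infty),B(\infty))$, which pins down the leading $\A_{0}/q\A_{0}$-coefficients uniquely. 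The GKM-specific subtlety, treated in \cite{JKK05}, is that for $i \in \Iim$ the factorials $\{n\}_i!$ rather than $[n]_i!$ appear in the commutation relations \eqref{eq:commutation relation} and divided powers collapse to ordinary powers; once Lemma~\ref{Lem: relatoin between eiip and EEi} is in place, the triangularity argument goes through as in the symmetrizable Kac--Moody case. Once the global basis $\{G^{\rm low}(b)\}_{b \in B(\infty)}$ is produced, it is by construction an $\A$-basis of $U_{\A}^{-}(\g)$, an $\A_{0}$-basis of $L(\infty)$, and an $\A_{\infty}$-basis of $L(\infty)^{-}$, mapping bijectively to $B(\infty) \subset L(\infty)/qL(\infty)$; hence the natural map $E \to L(\infty)/qL(\infty)$ is an isomorphism and the balanced triple property is established.
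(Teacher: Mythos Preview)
The paper does not give its own proof of this proposition; it merely records the result and cites \cite{JKK05}. Your outline follows the standard Kashiwara construction of the lower global basis and is the expected route, consistent with how \cite{JKK05} establishes the balanced triple property in the generalized Kac--Moody setting.
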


Recall the $\Q(q)$-algebra automorphism $\bar {}  : U^{-}_q(\g) \to U^{-}_q(\g)$ given in $\eqref{Eq:bar involution}$.
Define
\begin{equation*}
\begin{aligned}
U^{-}_{\A}(\g)^{\vee} & = \{ u \in U^{-}_q(\g) \mid (u,U^{-}_{\A}(\g))_K \subset \A \}, \\
L(\infty)^{\vee} & = \{ u \in U^{-}_q(\g)  \mid (u,L(\infty))_K \subset \A_{0} \}, \\
\overline{L(\infty)}^{\vee} & = \{ u \in U^{-}_q(\g)  \mid (u,L(\infty)^{-})_K \subset \A_{\infty} \}.
\end{aligned}
\end{equation*}
By the same argument as in \cite{Kash93b}, one can verify that
$(U^{-}_{\A}(\g)^{\vee},L(\infty)^{\vee},\overline{L(\infty)}^{\vee})$ is a balanced triple for
$U^{-}_q(\g)$. Hence there is a natural isomorphism
$$E^{\vee}:=U^{-}_{\A}(\g)^{\vee} \cap L(\infty)^{\vee} \cap
\overline{L(\infty)}^{\vee} \overset{\sim} \longrightarrow L(\infty)^{\vee} / q L(\infty)^{\vee}.$$
Let $G^{\vee}$ denote the inverse of this isomorphism and set
$${\mathbb B}(\infty) =\{ G^{\vee}(b^{\vee}) \mid b^{\vee} \in B(\infty)^{\vee} \}.$$

\begin{Lem} \label{lem: Key formula}
Let $b^{\vee} \in L(\infty)^{\vee} / q L(\infty)^{\vee}$ and $n \in \Z_{\ge 0}$.
\begin{enumerate}

\item If ${\tilde{E}_i}^{n+1}b^{\vee}=0$, then
${\bse_i'}^{n}G(b^{\vee})=
\begin{cases}
[n]_i!G^{\vee}({\tilde{E}_i}^{n}b^{\vee}) & \text{ if } i \in \Ire,\\
 G^{\vee}({\tilde{E}_i}^{n}b^{\vee}) & \text{ if }i \in \Iim. \end{cases}$

\item ${\bse_i'}^{n+1}G^{\vee}(b^{\vee})=0$ if and only if ${\tilde{E}_i}^{n+1} b^{\vee}=0$.

\end{enumerate}
\end{Lem}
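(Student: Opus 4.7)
The plan is to prove (2) first by a careful analysis of the $i$-string decomposition of $G^{\vee}(b^{\vee})$, and then to deduce (1) almost immediately from Lemma \ref{Lem: relatoin between eiip and EEi}.

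First I would write the $i$-string decomposition $G^{\vee}(b^{\vee}) = \sum_{l \ge 0} f_i^{(l)} u_l$ in $U_q^{-}(\g)$, where $\bse_i' u_l = 0$ and $u_l = 0$ for $l \gg 0$. Using the commutation formulas \eqref{eq:commutation relation}, a direct calculation shows that $\bse_i'^{n+1}(f_i^{(l)} u_l)$ is a nonzero scalar multiple of $f_i^{(l-n-1)} u_l$ when $l \ge n+1$ and vanishes otherwise; the same pattern holds for $\tilde{E}_i^{n+1}$ straight from its definition. Hence for any $u \in U_q^{-}(\g)$, each of the conditions $\bse_i'^{n+1} u = 0$ and $\tilde{E}_i^{n+1} u = 0$ is equivalent to $u_l = 0$ for all $l \ge n+1$, and consequently the two conditions are equivalent to one another. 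Applied to $u = G^{\vee}(b^{\vee})$, this reduces (2) to the assertion that $\tilde{E}_i^{n+1} G^{\vee}(b^{\vee}) = 0$ in $U_q^{-}(\g)$ if and only if $\tilde{E}_i^{n+1} b^{\vee} = 0$ in $L(\infty)^{\vee}/qL(\infty)^{\vee}$.

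The forward direction is automatic by projecting along $L(\infty)^{\vee} \to L(\infty)^{\vee}/qL(\infty)^{\vee}$. For the reverse direction I would invoke the compatibility of the upper global basis with the upper Kashiwara operators: Proposition \ref{prop:upper} ensures that $\tilde{E}_i$ preserves $L(\infty)^{\vee}$ and sends elements of $B(\infty)^{\vee}$ to elements of $B(\infty)^{\vee}$ or to zero, and together with the characterization of $G^{\vee}$ via the balanced triple $(U_{\A}^{-}(\g)^{\vee}, L(\infty)^{\vee}, \overline{L(\infty)}^{\vee})$, a weight induction following \cite{Kash93b} shows that $\tilde{E}_i G^{\vee}(b^{\vee}) = G^{\vee}(\tilde{E}_i b^{\vee})$ whenever $\tilde{E}_i b^{\vee}$ is nonzero in $B(\infty)^{\vee}$. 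Iterating produces the reverse direction of (2) along with the identity $\tilde{E}_i^n G^{\vee}(b^{\vee}) = G^{\vee}(\tilde{E}_i^n b^{\vee})$ that is needed for (1).

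With (2) in place, (1) follows quickly. The hypothesis $\tilde{E}_i^{n+1} b^{\vee} = 0$ yields $\bse_i'^{n+1} G^{\vee}(b^{\vee}) = 0$ by (2); if the minimal exponent $m$ with this property satisfies $m < n$, then both sides of the desired identity vanish and (1) is trivial, while if $m = n$ Lemma \ref{Lem: relatoin between eiip and EEi} delivers $\bse_i'^n G^{\vee}(b^{\vee}) = [n]_i!\, \tilde{E}_i^n G^{\vee}(b^{\vee})$ for $i \in \Ire$ and $\bse_i'^n G^{\vee}(b^{\vee}) = \tilde{E}_i^n G^{\vee}(b^{\vee})$ for $i \in \Iim$, and substituting $\tilde{E}_i^n G^{\vee}(b^{\vee}) = G^{\vee}(\tilde{E}_i^n b^{\vee})$ from the previous paragraph finishes the proof. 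The principal obstacle in this plan is the compatibility statement $\tilde{E}_i G^{\vee}(b^{\vee}) = G^{\vee}(\tilde{E}_i b^{\vee})$ as an equation in $U_q^{-}(\g)$, rather than as a mere congruence modulo $qL(\infty)^{\vee}$; bridging this gap requires the triangularity and bar-invariance built into the balanced triple and essentially follows the pattern established for Kac-Moody algebras by Kashiwara.
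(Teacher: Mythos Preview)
Your reduction of (2) to the equivalence between $\tilde{E}_i^{n+1} G^{\vee}(b^{\vee})=0$ in $U_q^-(\g)$ and $\tilde{E}_i^{n+1} b^{\vee}=0$ in $L(\infty)^{\vee}/qL(\infty)^{\vee}$ is correct, but the step-by-step identity $\tilde{E}_i\, G^{\vee}(b^{\vee}) = G^{\vee}(\tilde{E}_i b^{\vee})$ that you then invoke is \emph{false} in general. By Lemma~\ref{Lem: properties of the bilnear form} it is equivalent, under the pairing $(\,,\,)_K$, to $\tilde{f}_i\, G(c) = G(\tilde{f}_i c)$ for every $c\in B(\infty)$; already for $\g=\mathfrak{sl}_3$, taking $c=\tilde{f}_1\tilde{f}_2\cdot\mathbf{1}$ so that $G(c)=f_1f_2$, one computes $\tilde{f}_2(f_1 f_2) = f_1 f_2^{(2)} + (q-q^2)\, f_2^{(2)} f_1$, whereas $G(\tilde{f}_2 c) = f_1 f_2^{(2)}$. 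So no ``weight induction following \cite{Kash93b}'' will produce that compatibility: the upper Kashiwara operator does not commute with $G^{\vee}$ at intermediate rungs of an $i$-string, and the obstacle you flag at the end of your proposal is not a technicality but a genuine failure.

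The paper runs the argument in the opposite order and sidesteps this. It proves (1) first, using that at the top of the $i$-string Lemma~\ref{Lem: relatoin between eiip and EEi} gives $\dfrac{1}{[n]_i!}\bse_i'^{\,n} G^{\vee}(b^{\vee}) = \tilde{E}_i^{\,n}G^{\vee}(b^{\vee})$ (resp.\ $\bse_i'^{\,n}G^{\vee}(b^{\vee}) = \tilde{E}_i^{\,n}G^{\vee}(b^{\vee})$ for $i\in\Iim$). The left-hand description puts the element in $U_{\A}^{-}(\g)^{\vee}$ since $\varphi\bigl(\tfrac{1}{[n]_i!}\bse_i'^{\,n}\bigr)=f_i^{(n)}$ is a generator of $U_{\A}^-(\g)$; the right-hand description puts it in $L(\infty)^{\vee}$ by Proposition~\ref{prop:upper}; together with the analogous statement for $\overline{L(\infty)}^{\vee}$ the element lands in $E^{\vee}$ and hence equals $G^{\vee}$ of its reduction $\tilde{E}_i^{\,n}b^{\vee}$. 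Part (2) then follows by the short minimal-$m$ contradiction. The point you are missing is that one must jump to the top of the $i$-string in one stroke with an operator whose adjoint is a divided power of $f_i$, rather than climb one rung at a time with $\tilde{E}_i$.
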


\begin{proof}

We first prove the assertion (1). Let $i \in \Ire$. Since
$\varphi(\dfrac{1}{[n]_i!} {\bse_i'}^{n})=\bsf_i^{(n)}$, by Lemma
\ref{Lem: relatoin between eiip and EEi}, we obtain
$$\dfrac{1}{[n]_i!} {\bse_i'}^{n}G^{\vee}(b^{\vee})={\tilde{E}_i}^{n}G^{\vee}(b^{\vee})\in U^{-}_{\A}(\g)^{\vee} \cap L(\infty)^{\vee} \cap
\overline{L(\infty)}^{\vee},$$ which yields $\dfrac{1}{[n]_i!}
{\bse_i'}^{n}G^{\vee}(b^{\vee})=G^{\vee}({\tilde{E}_i}^{n}b^{\vee})$.

Similarly, for $i \in \Iim$, it follows from $\varphi({\bse_i'}^{n})=\bsf_i^{(n)}$ that
$${\bse_i'}^{n}G^{\vee}(b^{\vee})={\tilde{E}_i}^{n}G^{\vee}(b^{\vee})\in U^{-}_{\A}(\g)^{\vee} \cap L(\infty)^{\vee} \cap
\overline{L(\infty)}^{\vee}.$$ Thus we have
${\bse_i'}^{n}G^{\vee}(b^{\vee})=G^{\vee}({\tilde{E}_i}^{n}b^{\vee})$.

For the assertion (2), it is obvious that
${\bse_i'}^{n+1}G^{\vee}(b^{\vee})=0$ implies
${\tilde{E}_i}^{n+1}b^{\vee}=0$. To prove the converse, suppose
${\bse_i'}^{n+1}G^{\vee}(b^{\vee})\neq 0$ and take the smallest $m
> n$ such that ${\bse_i'}^{m+1}G^{\vee}(b^{\vee})=0$. By (1), we
have
\begin{equation*}
\begin{aligned}
{\bse_i'}^{m}G^{\vee}(b^{\vee})=
\begin{cases}
[m]_i!G^{\vee}({\tilde{E}_i}^{m}b^{\vee})=0,& \ \text{ if } i \in \Ire, \\
G^{\vee}({\tilde{E}_i}^{m}b^{\vee})=0, & \ \text{ if } i \in \Iim,
\end{cases}
\end{aligned}
\end{equation*}
which is a contradiction to the choice of $m$. Hence we conclude
${\bse_i'}^{n+1}G^{\vee}(b^{\vee})=0$.
\end{proof}

For $b^{\vee}\in B(\infty)^{\vee}$, we define
\begin{align*}
\varepsilon_i^{\rm or}(b^{\vee}) &= \min \{ n \in \Z_{\ge 0}  \mid  {\tilde{E}_i}^{n+1}b^{\vee}=0 \}, \\
 \varphi_i^{\rm or}(b^{\vee}) &= \min \{ n \in \Z_{\ge 0}  \mid  {\tilde{F}_i}^{n+1}b^{\vee}=0 \}.
\end{align*}

\begin{Prop} \label{Prop:the e_i action global basis}
For $b^{\vee} \in B(\infty)^{\vee}$, we have
\begin{equation*}
\begin{aligned}
{\bse_i'} G^{\vee}(b^{\vee}) & =
\begin{cases}  \displaystyle [\varepsilon_i^{{\rm or}}(b^{\vee})]_i G^{\vee} ({\tilde{E}_i} b^{\vee})
 +\sum_{\varepsilon_i^{{\rm or}}(b'^{\vee}) < \varepsilon_i^{{\rm or}}(b^{\vee})-1} E_{b^{\vee},b'^{\vee}}^{i} G^{\vee} (b'^{\vee}) & \
                \text{if} \ i \in \Ire,   \\
               \displaystyle G^{\vee}({\tilde{E}_i} b^{\vee})
               +\sum_{\varepsilon_i^{\rm or}(b'^{\vee}) < \varepsilon_i^{\rm or}(b^{\vee})-1} E_{b^{\vee}, b'^{\vee}}^{i} G^{\vee} (b'^{\vee}) & \ \text{if} \ i \in \Iim,
\end{cases} \\
{\bsf_i} G^{\vee}(b^{\vee}) & =
\begin{cases}
              \displaystyle q_i^{-\varepsilon_i^{{\rm or}}(b^{\vee})} G^{\vee}({\tilde{F}_i} b^{\vee})+ \sum_{\varepsilon_i^{{\rm or}}(b'^{\vee}) \le \varepsilon_i^{{\rm or}}(b^{\vee})} F^{i}_{b^{\vee},b'^{\vee}}
              G^{\vee}(b'^{\vee}) & \ \text{if} \ i \in \Ire, \\
              \displaystyle \{\varepsilon_i^{\rm or}(b^{\vee})+1\}_i q_i^{c_i(\varepsilon_i^{\rm or}(b^{\vee})+1)}G^{\vee}({\tilde{F}_i} b^{\vee})
                + \sum_{\varepsilon_i^{\rm or}(b'^{\vee}) \le \varepsilon_i^{\rm or}(b^{\vee})} F^{i}_{b^{\vee},b'^{\vee}} G^{\vee} (b'^{\vee}) & \ \text{if} \ i \in \Iim. \end{cases}
\end{aligned}
\end{equation*}
for some $E_{b^{\vee}, b'^{\vee}}^{i},  F^{i}_{b^{\vee},b'^{\vee}} \in \Q(q)$.
\end{Prop}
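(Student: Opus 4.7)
The plan is to expand $\bse_i' G^\vee(b^\vee)$ and $\bsf_i G^\vee(b^\vee)$ in the upper global basis $\{G^\vee(b'^\vee)\}_{b'^\vee \in B(\infty)^\vee}$ and then pin down both the support and the leading coefficient by applying suitable powers of $\bse_i'$ and invoking Lemma~\ref{lem: Key formula} together with the injectivity of $\tilde{E}_i$ on $B(\infty)^\vee$. Set $\varepsilon := \varepsilon_i^{\rm or}(b^\vee)$. Lemma~\ref{lem: Key formula} furnishes the annihilation ${\bse_i'}^{\varepsilon+1} G^\vee(b^\vee)=0$ and the top-of-string identity ${\bse_i'}^{\varepsilon} G^\vee(b^\vee) = [\varepsilon]_i!\, G^\vee(\tilde{E}_i^\varepsilon b^\vee)$ for $i\in\Ire$ (without the $[\varepsilon]_i!$ factor for $i\in\Iim$), and the analogous statements for any $b'^\vee$ once $\varepsilon_i^{\rm or}(b'^\vee)$ is known.

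For $\bse_i' G^\vee(b^\vee)$, write $\bse_i' G^\vee(b^\vee) = \sum_{b'^\vee} c_{b'^\vee}\, G^\vee(b'^\vee)$ with finite support. First I would show $c_{b'^\vee} \neq 0 \Rightarrow \varepsilon_i^{\rm or}(b'^\vee) \leq \varepsilon-1$. Assume for contradiction that this fails and let $M \geq \varepsilon$ be the largest level $\varepsilon_i^{\rm or}(b'^\vee)$ among surviving indices. Apply ${\bse_i'}^{M}$ to both sides: by Lemma~\ref{lem: Key formula}(2) the left side vanishes and all right-hand terms with $\varepsilon_i^{\rm or}(b'^\vee) < M$ vanish, while Lemma~\ref{lem: Key formula}(1) turns each level-$M$ term into $[M]_i!\, G^\vee(\tilde{E}_i^M b'^\vee)$. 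Since $\tilde{E}_i^M$ is injective on the set of $b'^\vee$ with $\varepsilon_i^{\rm or}(b'^\vee) = M$ (one recovers $b'^\vee$ by applying $\tilde{F}_i^M$ to the top), these $G^\vee$'s are linearly independent, forcing all such $c_{b'^\vee}$ to vanish, contradicting maximality. Next, apply ${\bse_i'}^{\varepsilon-1}$: the left side becomes $[\varepsilon]_i!\, G^\vee(\tilde{E}_i^\varepsilon b^\vee)$ by Lemma~\ref{lem: Key formula}(1), while only indices $b'^\vee$ with $\varepsilon_i^{\rm or}(b'^\vee) = \varepsilon-1$ contribute on the right, each producing $c_{b'^\vee}[\varepsilon-1]_i!\, G^\vee(\tilde{E}_i^{\varepsilon-1} b'^\vee)$. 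The unique level-$(\varepsilon-1)$ element whose top coincides with $\tilde{E}_i^\varepsilon b^\vee$ is $\tilde{E}_i b^\vee$, so comparing coefficients yields $c_{\tilde{E}_i b^\vee} = [\varepsilon]_i$ (respectively $1$ for $i\in\Iim$).

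The formula for $\bsf_i G^\vee(b^\vee) = \sum_{b'^\vee} d_{b'^\vee}\, G^\vee(b'^\vee)$ follows by the analogous argument with $\varepsilon$ replaced by $\varepsilon+1$, this time combined with the commutation identity~\eqref{eq:commutation relation} for moving ${\bse_i'}^{n}$ past $\bsf_i$. In the real case one has ${\bse_i'}^{n} \bsf_i = q_i^{-2n} \bsf_i {\bse_i'}^{n} + q_i^{-n+1}[n]_i\, {\bse_i'}^{n-1}$ with an analogous imaginary version involving $\{n\}_i$ and $q_i^{c_i(n-1)}$; in particular ${\bse_i'}^{\varepsilon+2} \bsf_i G^\vee(b^\vee) = 0$ because both ${\bse_i'}^{\varepsilon+1} G^\vee(b^\vee)$ and ${\bse_i'}^{\varepsilon+2} G^\vee(b^\vee)$ vanish, which gives the support restriction $\varepsilon_i^{\rm or}(b'^\vee) \leq \varepsilon+1$ as above. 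Applying ${\bse_i'}^{\varepsilon+1}$ and using the commutation identity reduces the left side to an explicit $q_i$-scalar multiple of ${\bse_i'}^{\varepsilon} G^\vee(b^\vee) = [\varepsilon]_i!\, G^\vee(\tilde{E}_i^\varepsilon b^\vee)$; the unique surviving level-$(\varepsilon+1)$ element on the right is $\tilde{F}_i b^\vee$, and matching coefficients yields the stated leading term, with all remaining contributions confined to levels $\leq \varepsilon$.

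The main obstacle is the bookkeeping required to track the precise $q_i$-powers produced by~\eqref{eq:commutation relation}, especially in the imaginary case where the $\{n\}_i$-brackets and $q_i^{c_i(\cdot)}$ factors interact delicately; once these are handled, the entire structural content of the proposition reduces to the two parts of Lemma~\ref{lem: Key formula} combined with the injectivity of $\tilde{E}_i$ on each level set of $\varepsilon_i^{\rm or}$.
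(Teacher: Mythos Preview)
Your proposal is correct and follows essentially the same route as the paper. Both hinge on Lemma~\ref{lem: Key formula} together with the commutation relation~\eqref{eq:commutation relation}; the only difference is organizational. The paper computes the difference $\bse_i' G^\vee(b^\vee) - G^\vee(\tilde{E}_i b^\vee)$ directly (by writing ${\bse_i'}^{n} G^\vee(b^\vee) = G^\vee(\tilde{E}_i^{n-1}\tilde{E}_i b^\vee) = {\bse_i'}^{n-1} G^\vee(\tilde{E}_i b^\vee)$) and observes it lies in $\ker({\bse_i'}^{n-1})$, leaving implicit the passage from this kernel condition to the support condition $\varepsilon_i^{\rm or}(b'^\vee) < n-1$; your maximality/injectivity argument is exactly what supplies that passage, so in this sense your write-up makes explicit a step the paper takes for granted (or defers to \cite{Kash93b} in the real case).
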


\begin{proof}
If $i \in \Ire$, our assertions were proved in \cite{Kash93b}. We
will prove the case when $i \in \Iim$. Set $n=\varepsilon_i^{{\rm
or}}(b^{\vee})$. By Lemma \ref{lem: Key formula} and Definition
\ref{Def:upper crystal} (4), we have
$$ {\bse_i'}^{n} G^{\vee}(b^{\vee}) = G^{\vee}({\tilde{E}_i}^{n}b^{\vee})
= G^{\vee}({\tilde{E}_i}^{n-1}{\tilde{E}_i}
b^{\vee})={\bse_i'}^{n-1}G^{\vee}({\tilde{E}_i} b^{\vee}),$$ which
implies
$${\bse_i'} G^{\vee}(b^{\vee})-G^{\vee}({\tilde{E}_i} b^{\vee}) \in {\rm Ker}({\bse_i'}^{n-1}).$$

Using the equation $\eqref{eq:commutation relation}$, we get
$${\bse_i'}^{(n+1)}\bsf_i G^{\vee}(b^{\vee})=(q_i^{2c_i(n+1)}\bsf_i{\bse_i'}^{(n+1)}+q_i^{c_i(n+1)}{\bse_i'}^{(n)})G^{\vee}(b^{\vee}).$$
Hence Lemma \ref{lem: Key formula} yields
$${\bse_i'}^{(n+1)}\bsf_i G^{\vee}(b^{\vee})=\dfrac{1}{\{n\}_i!}q_i^{c_i(n+1)}{\bse_i'}^{n}G^{\vee}(b^{\vee})=\dfrac{1}{\{n\}_i!}q_i^{c_i(n+1)}G^{\vee}({\tilde{E}_i}^{n} b^{\vee}).$$
Using Lemma \ref{lem: Key formula} again, we obtain
$$\dfrac{1}{\{n\}_i!}q_i^{c_i(n+1)}G^{\vee}({\tilde{E}_i}^{n+1}{\tilde{F}_i} b^{\vee})=\dfrac{1}{\{n\}_i!}q_i^{c_i(n+1)}{\bse_i'}^{n+1}G^{\vee}({\tilde{F}_i} b^{\vee})
=\{n+1\}_i q_i^{c_i(n+1)}{\bse_i'}^{(n+1)}G^{\vee}({\tilde{F}_i}
b^{\vee}).$$ Thus we have
$$\bsf_iG^{\vee}(b^{\vee})-\{n+1\}_i q_i^{c_i(n+1)}G^{\vee}({\tilde{F}_i} b^{\vee}) \in {\rm Ker}({\bse_i'}^{n+1})$$
as desired.
\end{proof}

Combining Proposition \ref{prop:upper} and Proposition
\ref{Prop:the e_i action global basis}, we obtain the existence of
perfect basis for $U_q^{-}(\g)$.

\begin{Prop} \label{Cor perfect basis B(infty)}
${\mathbb B}(\infty)$ is a perfect basis of the $B_q(\g)$-module
$U^{-}_q(\g) $.
\end{Prop}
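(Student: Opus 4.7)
\medskip

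\noindent\textbf{Proof plan.} The strategy is to verify the three defining axioms of Definition \ref{Def:perfect bases} directly, using $G^\vee$ as the bijection between the upper crystal basis $B(\infty)^\vee$ at $q=0$ and the basis $\mathbb{B}(\infty)$ of $U_q^-(\g)$. All three conditions will follow from Lemma \ref{lem: Key formula} and Proposition \ref{Prop:the e_i action global basis}, so there is almost no new work beyond reading off the leading terms.

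\smallskip

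\noindent First, I would check the weight decomposition (1). Since $B(\infty)^\vee = \bigsqcup_{\alpha\in Q^+} B(\infty)^\vee_{-\alpha}$ and the balanced triple $(U_\A^-(\g)^\vee, L(\infty)^\vee, \overline{L(\infty)}^\vee)$ is compatible with the $Q^-$-grading on $U_q^-(\g)$, the inverse $G^\vee$ preserves weights, so $\mathbb{B}(\infty) = \bigsqcup_{\mu\in Q^-}\mathbb{B}(\infty)_\mu$.

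\smallskip

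\noindent Next, I would verify the key axiom (2). Fix $b^\vee \in B(\infty)^\vee$ and $i \in I$ with $\bse_i' G^\vee(b^\vee) \ne 0$. By Lemma \ref{lem: Key formula}(2), the integer $\ell_i(G^\vee(b^\vee))$ coincides with $\varepsilon_i^{\rm or}(b^\vee)$, and the same identification holds for every element of $\mathbb{B}(\infty)$. Now I apply Proposition \ref{Prop:the e_i action global basis}, which in both the real and imaginary cases has the form
\begin{equation*}
\bse_i' G^\vee(b^\vee) \;=\; c\,G^\vee(\tilde E_i b^\vee) \;+\; \sum_{\varepsilon_i^{\rm or}(b'^\vee) < \varepsilon_i^{\rm or}(b^\vee)-1} E^{i}_{b^\vee,b'^\vee}\,G^\vee(b'^\vee),
\end{equation*}
with $c = [\varepsilon_i^{\rm or}(b^\vee)]_i$ (real case) or $c=1$ (imaginary case), both nonzero in $\Q(q)^\times$ under the hypothesis $\bse_i' G^\vee(b^\vee) \ne 0$. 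Translating $\varepsilon_i^{\rm or}$ into $\ell_i$ via the identification above, every $G^\vee(b'^\vee)$ in the tail lies in $U_q^-(\g)_i^{<\ell_i(G^\vee(b^\vee))-1}$. Hence the element
\[
\mathsf{e}_i(G^\vee(b^\vee)) \;:=\; G^\vee(\tilde E_i b^\vee) \;\in\; \mathbb{B}(\infty)
\]
satisfies \eqref{eq: perfect basis}, and it is unique because it is the unique element of $\mathbb{B}(\infty)$ whose $\ell_i$-value equals $\ell_i(G^\vee(b^\vee))-1$ among those appearing in the expansion of $\bse_i'G^\vee(b^\vee)$.

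\smallskip

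\noindent Finally, for axiom (3), suppose $\mathsf{e}_i(G^\vee(b^\vee)) = \mathsf{e}_i(G^\vee(b'^\vee))$ for some $b^\vee, b'^\vee \in B(\infty)^\vee$. Then $G^\vee(\tilde E_i b^\vee) = G^\vee(\tilde E_i b'^\vee)$, and since $G^\vee$ is bijective we deduce $\tilde E_i b^\vee = \tilde E_i b'^\vee$, both sides being nonzero. Applying $\tilde F_i$ and invoking condition (4) of Definition \ref{Def:upper crystal}, namely $\tilde F_i \tilde E_i = \mathrm{id}$ on the image of $\tilde E_i$, yields $b^\vee = b'^\vee$, hence $G^\vee(b^\vee)=G^\vee(b'^\vee)$. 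The proof is then complete.

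\smallskip

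\noindent The conceptually hardest step has already been done in Proposition \ref{Prop:the e_i action global basis}; the remaining work is purely bookkeeping that transfers information from $B(\infty)^\vee$ back to the global basis $\mathbb{B}(\infty)$. The only subtle point is to correctly match $\varepsilon_i^{\rm or}(b^\vee)$ on the crystal side with $\ell_i(G^\vee(b^\vee))$ on the algebra side, and Lemma \ref{lem: Key formula}(2) gives exactly this dictionary for both real and imaginary $i$.
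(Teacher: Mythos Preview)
Your proposal is correct and follows essentially the same approach as the paper: the paper simply states that the result is obtained by combining Proposition \ref{prop:upper} (the upper crystal basis property of $(L(\infty)^\vee,B(\infty)^\vee)$) with Proposition \ref{Prop:the e_i action global basis}, and you have spelled out in detail how those two ingredients verify the three axioms of Definition \ref{Def:perfect bases}. Your use of Lemma \ref{lem: Key formula}(2) to identify $\ell_i(G^\vee(b^\vee))$ with $\varepsilon_i^{\rm or}(b^\vee)$ is exactly the bookkeeping the paper leaves implicit.
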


Let $B$ be a perfect basis of $U^{-}_q(\g)$. For $b \in B$,
define ${\rm wt}(b)= \mu$ if $b \in B_\mu$  and
\begin{equation*}
\begin{aligned}
& \mathsf{f}_i (b) = \begin{cases}  b' \ \ & \text{if} \
\mathsf{e}_i (b') = b, \\
0 \ \ & \text{otherwise}, \end{cases} \quad
\varepsilon_i(b) =
\begin{cases}
\ell_i(b) \ \ & \text{if} \ i\in \Ire, \\
0 \ \ & \text{if} \ i \in \Iim,
\end{cases}   \\
& \varphi_i(b) = \varepsilon_i(b) + \langle h_i, \wt(b) \rangle.
\end{aligned}
\end{equation*}
Then it is straightforward to verify that $(B, \wt,\mathsf{e}_i,
\mathsf{f}_i, \varepsilon_i, \varphi_i)$ is an abstract crystal.
The graph obtained from the crystal $(B, \wt,\mathsf{e}_i,
\mathsf{f}_i, \varepsilon_i, \varphi_i)$ is called a {\it perfect
graph} of $U^{-}_q(\g)$. The following proposition asserts that
the perfect basis ${\mathbb B}(\infty)$ yields the crystal
$B(\infty)$.

\begin{Prop} \label{prop:B-infinities}
There exist crystal isomorphisms
$${\mathbb B}(\infty) \cong B(\infty)^{\vee} \cong B(\infty).$$
\end{Prop}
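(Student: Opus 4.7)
The plan is to establish the two crystal isomorphisms separately, exploiting the perfect basis structure of $\mathbb{B}(\infty)$ together with the duality between the lower and upper crystal bases.

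First, I will construct the isomorphism $\mathbb{B}(\infty) \simeq B(\infty)^{\vee}$ via the global basis map $b^{\vee} \mapsto G^{\vee}(b^{\vee})$. This is a bijection by construction, and weight preservation is built into the definition of $G^{\vee}$. For the statistic $\varepsilon_i$, Lemma \ref{lem: Key formula}(2) identifies $\ell_i(G^{\vee}(b^{\vee})) = \varepsilon_i^{\rm or}(b^{\vee})$, matching $\varepsilon_i$ on both sides (noting that when $i \in \Iim$ both values are $0$ by the respective definitions). For the crystal operators, Proposition \ref{Prop:the e_i action global basis} shows that $\bse_i' G^{\vee}(b^{\vee})$ has leading term a nonzero scalar multiple of $G^{\vee}(\tilde{E}_i b^{\vee})$ plus a combination of elements $G^{\vee}(b'^{\vee})$ with $\varepsilon_i^{\rm or}(b'^{\vee}) < \varepsilon_i^{\rm or}(b^{\vee}) - 1$; by the defining condition \eqref{eq: perfect basis} of a perfect basis, this forces $\mathsf{e}_i(G^{\vee}(b^{\vee})) = G^{\vee}(\tilde{E}_i b^{\vee})$, so that $G^{\vee}$ intertwines $\tilde{E}_i$ with the perfect crystal operator $\mathsf{e}_i$.

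Second, I will establish $B(\infty)^{\vee} \simeq B(\infty)$ via the dual basis bijection $b \leftrightarrow b^{\vee}$. Weight preservation is immediate since $(\cdot, \cdot)_K$ respects the $Q$-grading. The match of crystal operators comes from Lemma \ref{Lem: properties of the bilnear form}: the identities $(\tilde{f}_i u, v)_K = (u, \tilde{E}_i v)_K$ and $(\tilde{e}_i u, v)_K = (u, \tilde{F}_i v)_K$, passed to the quotient $L(\infty)^{\vee}/qL(\infty)^{\vee}$ and evaluated against the dual basis, yield $\tilde{F}_i b^{\vee} = (\tilde{f}_i b)^{\vee}$ and analogously for $\tilde{E}_i, \tilde{e}_i$. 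The matching of $\varepsilon_i$ and $\varphi_i$ then follows from Definition \ref{Def: abstract crystal}, since they are determined by the operators and the weight.

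As an alternative more conceptual route, one can appeal to Theorem \ref{Thm: uniqueness of perfect graphs}: since $\mathbb{B}(\infty)$ is a perfect basis of $U_q^{-}(\g)$ by Proposition \ref{Cor perfect basis B(infty)}, the uniqueness of crystals arising from perfect bases gives $\mathbb{B}(\infty) \simeq B(\infty)$ directly, and combined with Step 1 this yields $B(\infty)^{\vee} \simeq B(\infty)$ as well.

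The main obstacle will be bookkeeping the different normalization factors between the real and imaginary cases: for $i \in \Iim$, the scalar $\{\varepsilon_i^{\rm or}(b^{\vee})+1\}_i q_i^{c_i(\varepsilon_i^{\rm or}(b^{\vee})+1)}$ in Proposition \ref{Prop:the e_i action global basis} differs from the real case, and the convention $\varepsilon_i \equiv 0$ on $B(\infty)$ for imaginary indices must be reconciled with the perfect basis framework. Checking that in each case the leading term identification is preserved modulo $q$ is the essential verification; once that is in place, all three crystal structures are seen to be the same abstract crystal $B(\infty)$.
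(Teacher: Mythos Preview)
Your main argument is correct and essentially identical to the paper's proof: the paper also uses the duality pairing and Lemma~\ref{Lem: properties of the bilnear form} to show $B(\infty)^{\vee}\cong B(\infty)$ via $b\mapsto b^{\vee}$, and then uses Proposition~\ref{Prop:the e_i action global basis} to show that $G^{\vee}$ intertwines $\tilde{E}_i$ with $\mathsf{e}_i$, giving $\mathbb{B}(\infty)\cong B(\infty)^{\vee}$. The only difference is the order in which you present the two steps and some extra bookkeeping on $\varepsilon_i$ that the paper leaves implicit.

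One caution: your proposed ``alternative route'' via Theorem~\ref{Thm: uniqueness of perfect graphs} is circular as stated. That theorem says any two perfect bases of $U_q^-(\g)$ yield isomorphic perfect graphs, but it does not by itself identify that common crystal with $B(\infty)$; to make that identification you need at least one perfect basis whose crystal you already know is $B(\infty)$, and that is precisely the content of Proposition~\ref{prop:B-infinities}. So Theorem~\ref{Thm: uniqueness of perfect graphs} consumes this proposition rather than replacing it.
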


\begin{proof}
Let $\vee : B(\infty) \to B(\infty)^{\vee}$ defined by $b \mapsto b^{\vee}$.
Then
$$ \tilde{f}_i b =b' \iff (\tilde{f}_i b,b'^{\vee})_K =1 \iff (b,\tilde{E}_i b'^{\vee})_K=1
\iff b^{\vee} = \tilde{E}_ib'^{\vee} \iff \tilde{F}_i b^{\vee} =b'^{\vee}.$$
Hence we have $B(\infty)^{\vee} \cong B(\infty)$ from Lemma \ref{Lem:nondegenerate pairing in GKM} and Lemma \ref{Lem: properties of the bilnear form}.

By Proposition \ref{Prop:the e_i action global basis}, we have
$$ \tilde{E}_i b^{\vee} =b'^{\vee} \iff
\mathsf{e}_i G^{\vee}(b^{\vee}) = G^{\vee}(\tilde{E}_i b^{\vee}) =
G^{\vee}(b'^{\vee}).$$ Hence the map $G^{\vee}$ gives a crystal
isomorphism between ${\mathbb B}(\infty)$ and $B(\infty)^{\vee}$.
\end{proof}

In the rest of this section, we will show that the perfect graph
arising from any perfect basis of $U_q^{-}(\g)$ is isomorphic to
the crystal $B(\infty)$. Our argument follows the outline given in
\cite[Section 6]{KOP09}.

Let $B$ be a perfect basis of $U^{-}_q(\g)$.
For each sequence ${\bf i}=(i_1,\dots,i_m) \in I^{m}$ $(m \ge 1)$, we
define a binary relation $\preceq_{{\bf i}}$ on $U^{-}_q(\g) \setminus \{0\} $ as
follows:
 \begin{equation*}
 \begin{aligned}
   \ \ & \mbox{ if } {\bf i}=(i), \mbox{ then } \quad  v \preceq_{{\bf i}} v' \Leftrightarrow \ell_{i}(v) \le \ell_{i}(v^{\prime}),\\
   \ \ & \mbox{ if } {\bf i}=(i;{\bf i}'), \mbox{ then }\quad v \preceq_{{\bf i}} v' \Leftrightarrow  \begin{cases} \ell_{i}(v) < \ell_{i}(v^{\prime}) \mbox{ or} \\
             \ell_{i}(v)=\ell_{i}(v^{\prime}), \bse_i'^{\ell_{i}(v)}(v) \preceq_{{\bf i}'} \bse_i'^{\ell_{i}(v^{\prime})}(v^{\prime}). \end{cases}
 \end{aligned}
  \end{equation*}
We write $v \equiv_{{\bf i}} v'$ if $v \preceq_{{\bf i}} v'$ and $v' \preceq_{{\bf i}} v$.
 For a given ${\bf i}=(i_1,\dots,i_m) \in I^{m}$, define the maps $\bse'^{ top}_{\bf i}:U^{-}_q(\g) \to U^{-}_q(\g)$ and
 ${{\mathsf{e}_{{\bf i}}}^{top}}:B \to B\sqcup \{ 0 \} $ as follows:
 \begin{eqnarray*}
 \bse'^{top}_i(v)  = \bse_i'^{\ell_{i}(v)}(v) \mbox{ for } m=1\quad & \mbox{ and } & \quad
 \bse'^{top}_{{\bf i}} = \bse'^{top}_{i_m}\circ \cdots \circ \bse'^{top}_{i_1} \mbox { for } m >1, \\
 \mathsf{e}_i^{top}(b)  = \mathsf{e}_i^{\ell_{i}(b)}(b) \mbox{ for } m=1 \quad & \mbox{ and } &
 \quad {{\mathsf{e}_{{\bf i}}}^{top}} = \mathsf{e}^{top}_{i_m}\circ \cdots \circ
 \mathsf{e}^{top}_{i_1}\mbox { for } m >1.
 \end{eqnarray*}
By Proposition \ref{Prop:Highest vector 1}, we identify $\Q(q) $
with $ \{ v \in U^{-}_q(\g) \mid \bse_i'(v)=0 \mbox{ for all } i
\in I\}$. Note that $ \Q(q) \cap B =\{ {\bf 1} \}$. For each $v
\in U^{-}_q(\g)$, there exists a sequence ${\bf i}$ such that
$\bse'^{top}_{\bf i}(v) \in \Q(q)$. From $\eqref{eq: perfect
basis}$, one can check that the following statements hold.

\begin{Lem} \label{Lem: realtions between top operators}
For any sequence ${\bf i}=(i_1,\dots,i_m) \in I^{m}\ ( m\ge 1)$, we have
\begin{itemize}
  \item[(1)] $\bse'^{top}_{{\bf i}}(b) \in \Q(q)^{\times} {{\mathsf{e}_{{\bf i}}}^{top}}(b)$ for any $b\in B$,
  \item[(2)] if $\bse'^{top}_{{\bf i}}(b) \in \Q(q)^{\times}$ for some $b\in B$, then ${{\mathsf{e}_{{\bf i}}}^{top}}(b)\in \Q(q)^{\times}$,
  \item[(3)] if $b \equiv_{\bf i} b'$ and ${{\mathsf{e}_{{\bf i}}}^{top}}(b)={{\mathsf{e}_{{\bf i}}}^{top}}(b')$, then $b=b'$ for all $ b,b' \in B$.
\end{itemize}
\end{Lem}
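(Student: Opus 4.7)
The plan is to first establish (1) (and deduce (2)), then prove (3) by induction on the length of $\mathbf{i}$. For (1), I would first prove the refined claim that for every $k$ with $1 \le k \le \ell_i(b)$, there exists $c_k \in \Q(q)^\times$ such that
$$ \bse_i'^{k}(b) \in c_k\, \mathsf{e}_i^{k}(b) + U^{-}_q(\g)^{<\ell_i(b)-k}_i. $$
The base case $k=1$ is exactly condition~(2) of Definition~\ref{Def:perfect bases}. For the inductive step, apply $\bse_i'$: using the perfect-basis axiom on the leading basis element $\mathsf{e}_i^k(b)$, whose $\ell_i$-value is $\ell_i(b)-k$, yields a new $\Q(q)^\times$-multiple of $\mathsf{e}_i^{k+1}(b)$ plus a term in $U^{-}_q(\g)^{<\ell_i(b)-k-1}_i$; the error term is pushed into $U^{-}_q(\g)^{<\ell_i(b)-k-1}_i$ by the filtration identity $\bse_i'(U^{-}_q(\g)^{<n}_i) \subseteq U^{-}_q(\g)^{<n-1}_i$, which is immediate from the definition of $\ell_i$. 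Taking $k = \ell_i(b)$ gives (1) for $\mathbf{i}=(i)$, since the remainder lies in $U^{-}_q(\g)^{<0}_i = \{0\}$. For general $\mathbf{i}=(i_1,\dots,i_m)$, I induct on $m$: from $\bse'^{top}_{(i_1,\dots,i_{m-1})}(b) = c\,\mathsf{e}^{top}_{(i_1,\dots,i_{m-1})}(b)$, apply $\bse'^{top}_{i_m}$ and use linearity together with the $m=1$ case. Statement (2) is then immediate: (1) gives $\bse'^{top}_{\mathbf{i}}(b) = c\,\mathsf{e}^{top}_{\mathbf{i}}(b)$ with $c \in \Q(q)^\times$, so if the left side is in $\Q(q)^\times$, then $\mathsf{e}^{top}_{\mathbf{i}}(b) \in \Q(q)$; since $\mathsf{e}^{top}_{\mathbf{i}}(b) \in B \sqcup \{0\}$ and $\Q(q) \cap B = \{\mathbf{1}\}$, we conclude $\mathsf{e}^{top}_{\mathbf{i}}(b) = \mathbf{1} \in \Q(q)^\times$.

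For (3), I argue by induction on $m$. In the base case $m=1$, the hypothesis $b \equiv_i b'$ forces $\ell_i(b) = \ell_i(b') =: \ell$, so $\mathsf{e}_i^{top}(b) = \mathsf{e}_i^{\ell}(b) = \mathsf{e}_i^{\ell}(b') = \mathsf{e}_i^{top}(b')$, and $\ell$ successive applications of the injectivity clause (condition~(3) of Definition~\ref{Def:perfect bases}) give $b=b'$. For the inductive step, write $\mathbf{i}=(i_1;\mathbf{i}')$. The key preliminary observation is that the relation $\preceq_{\mathbf{i}'}$ is invariant under multiplication by a nonzero scalar, which I would verify by a direct induction on the length of $\mathbf{i}'$, using that each $\ell_j$ is scale-invariant and each $\bse_j'^{n}$ is linear. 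Combining this with (1)---which gives $\bse'^{top}_{i_1}(b) = c\,\mathsf{e}^{top}_{i_1}(b)$ and $\bse'^{top}_{i_1}(b') = c'\,\mathsf{e}^{top}_{i_1}(b')$ for some $c, c' \in \Q(q)^\times$---the hypothesis $\bse_{i_1}'^{\ell_{i_1}(b)}(b) \equiv_{\mathbf{i}'} \bse_{i_1}'^{\ell_{i_1}(b')}(b')$ upgrades to $\mathsf{e}^{top}_{i_1}(b) \equiv_{\mathbf{i}'} \mathsf{e}^{top}_{i_1}(b')$. Together with the hypothesis $\mathsf{e}^{top}_{\mathbf{i}'}(\mathsf{e}^{top}_{i_1}(b)) = \mathsf{e}^{top}_{\mathbf{i}'}(\mathsf{e}^{top}_{i_1}(b'))$, the inductive hypothesis yields $\mathsf{e}^{top}_{i_1}(b) = \mathsf{e}^{top}_{i_1}(b')$, and the $m=1$ case (applied to $i=i_1$ with equal $\ell_{i_1}$-values) completes the proof.

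The main obstacle is the careful bookkeeping of the filtration in the proof of (1): one must verify at each inductive step that the leading scalar $c_k$ stays in $\Q(q)^\times$ and that the remainder genuinely drops one level in the filtration $U^{-}_q(\g)^{<n}_i$. The scale-invariance of $\preceq_{\mathbf{i}'}$ needed in (3) is a secondary but essential auxiliary fact, since otherwise one cannot replace $\bse'^{top}_{i_1}(b)$ by $\mathsf{e}^{top}_{i_1}(b)$ inside the relation $\equiv_{\mathbf{i}'}$ when invoking the inductive hypothesis.
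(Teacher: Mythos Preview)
Your proposal is correct and is precisely the natural unwinding of the perfect-basis axiom \eqref{eq: perfect basis}; the paper itself does not give a proof but simply asserts that the three statements can be checked from \eqref{eq: perfect basis}, so your argument supplies exactly the details the authors leave to the reader. The only point worth tightening is to make explicit (as you implicitly use) that $\ell_i(\mathsf{e}_i^k(b)) = \ell_i(b)-k$, which follows from your refined claim together with $\ell_i(\bse_i'^k(b)) = \ell_i(b)-k$; once this is stated, every application of the injectivity axiom in your proof of (3) is clearly to elements with $\bse_i'(\cdot)\neq 0$, so $\mathsf{e}_i$ is defined at each step.
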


\begin{Def}
Let $B,B'$ be perfect bases of $U^{-}_q(\g)$. A {\em perfect
morphism} $[\phi,\tilde{\phi},c]:(U^{-}_q(\g),B) \to
(U^{-}_q(\g),B')$ is a triple $(\phi,\tilde{\phi},c)$, where
\begin{itemize}
\item[(1)] $\phi:U^{-}_q(\g) \to U^{-}_q(\g)$ is a $B(\g)$-module endomorphism such that $0 \notin \phi(B)$,
\item[(2)] $\tilde{\phi}: B \to B^{\prime}$ is a map satisfying  $\tilde{\phi}({\bf 1})=\phi({\bf 1})$,
\item[(3)] $c: B\setminus \{ {\bf 1} \} \to \Q(q)^{\times}$ is a map satisfying
\begin{align*}
  \phi(b)-c(b)\tilde{\phi}(b) \prec_{{\bf i}} \phi(b)
\end{align*}
for $b\in B \setminus \{ {\bf 1} \}$ and ${\bf i}=(i_1,\dots,i_m)$ such that $\bse'^{top}_{\bf i}(b)\in \Q(q)$.
\end{itemize}
\end{Def}

\begin{Lem} Let $\phi$ be a $B_q(\g)$-endomorphism of $U_q^{-}(\g)$.
\begin{enumerate}
\item If a perfect morphism $[\phi,\tilde{\phi},c]$ exists, then $\tilde{\phi}$ and $c$ are uniquely determined.
\item For a given perfect morphism $[\phi,\tilde{\phi},c]: (U_q^{-}(\g),B) \to (U_q^{-}(\g),B')$, the map $\tilde{\phi}$ is a crystal morphism.
\end{enumerate}
\end{Lem}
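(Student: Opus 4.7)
The plan exploits two structural facts about $\phi$: first, $\phi(\mathbf{1})$ is annihilated by every $\bse_i'$, so Proposition \ref{Prop:Highest vector 1} gives $\phi(\mathbf{1}) = c\,\mathbf{1}$ for some $c \in \Q(q)$, and since $\phi$ also commutes with every $\bsf_i$ (which acts by left multiplication), this forces $\phi = c \cdot \id$; second, $\phi$ commutes with every $\bse_i'$, so it preserves all data encoded in the orders $\prec_{\mathbf{i}}$ and the iterated-top operators ${\bse'}^{top}_{\mathbf{i}}$.

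For (1), assume $[\phi, \tilde{\phi}_1, c_1]$ and $[\phi, \tilde{\phi}_2, c_2]$ are two perfect morphisms with the same $\phi$. Fix $b \in B \setminus \{\mathbf{1}\}$, choose $\mathbf{i} = (i_1, \ldots, i_m)$ with ${\bse'}^{top}_{\mathbf{i}}(b) \in \Q(q)$, and record the exponents $n_k := \ell_{i_k}(v_{k-1})$ from the iterated-top chain $v_0 := b$, $v_k := {\bse_{i_k}'}^{n_k}(v_{k-1})$. A short induction on $m$ using the recursive definition of $\prec_{\mathbf{i}}$ shows that the fixed-exponent operator $E := {\bse_{i_m}'}^{n_m} \cdots {\bse_{i_1}'}^{n_1}$ annihilates every $v \prec_{\mathbf{i}} \phi(b)$; meanwhile commutation of $E$ with $\phi$ together with $\phi(\mathbf{1}) \neq 0$ forces $E(\phi(b)) = \phi({\bse'}^{top}_{\mathbf{i}}(b))$ to be a nonzero scalar multiple of $\mathbf{1}$. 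Applying $E$ to each equation $\phi(b) = c_j(b)\tilde{\phi}_j(b) + r_j$ ($j = 1,2$) then yields $E(\tilde{\phi}_j(b)) = E(\phi(b))/c_j(b) \in \Q(q)^{\times}\mathbf{1}$ for both $j$; hence $\tilde{\phi}_1(b) \equiv_{\mathbf{i}} \tilde{\phi}_2(b)$ with common top $\mathsf{e}^{top}_{\mathbf{i}}(\tilde{\phi}_j(b)) = \mathbf{1}$, so Lemma \ref{Lem: realtions between top operators}(3) gives $\tilde{\phi}_1(b) = \tilde{\phi}_2(b)$, and $c_1(b) = c_2(b)$ follows at once.

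For (2), weight preservation is automatic because $\phi = c \cdot \id$ is weight-homogeneous, so both sides of $\phi(b) = c(b)\tilde{\phi}(b) + r$ decompose inside $U_q^{-}(\g)_{\wt(b)}$, forcing $\tilde{\phi}(b)$ to sit in the $\wt(b)$-component. For $i \in \Iim$ one has $\varepsilon_i \equiv 0$ on any perfect basis, while for $i \in \Ire$ the argument of (1) applied to an $\mathbf{i}$ beginning with $i$ yields $\ell_i(\tilde{\phi}(b)) = \ell_i(b)$, hence $\varepsilon_i(\tilde{\phi}(b)) = \varepsilon_i(b)$; the identity $\varphi_i = \varepsilon_i + \langle h_i, \wt \rangle$ then handles $\varphi_i$.

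The remaining relation $\tilde{\phi}(\mathsf{e}_i b) = \mathsf{e}_i \tilde{\phi}(b)$, whenever $\mathsf{e}_i b \in B$, is the main obstacle. Applying $\bse_i'$ to $\phi(b) = c(b)\tilde{\phi}(b) + r$ and using $\phi\,\bse_i' = \bse_i'\,\phi$, I expand $\bse_i' b = \gamma(b)\mathsf{e}_i(b) + s$ and $\bse_i'(\tilde{\phi}(b)) = \gamma'(\tilde{\phi}(b))\mathsf{e}_i(\tilde{\phi}(b)) + s'$ via the perfect-basis condition, and insert $\phi(\mathsf{e}_i b) = c(\mathsf{e}_i b)\tilde{\phi}(\mathsf{e}_i b) + r'$ via the perfect morphism at $\mathsf{e}_i b$. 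The resulting identity carries the top-$\ell_i$ contribution
\[ \gamma(b)\, c(\mathsf{e}_i b)\, \tilde{\phi}(\mathsf{e}_i b) \;-\; c(b)\, \gamma'(\tilde{\phi}(b))\, \mathsf{e}_i(\tilde{\phi}(b)), \]
with the remaining error lying in $U_q^{-}(\g)_i^{\leq \ell_i(b)-1}$. The difficulty is that $\bse_i'(r)$ can reach the boundary $\ell_i(b)-1$ rather than sit strictly below, so the error is not immediately of lower $\ell_i$-order; this is overcome by the refinement strategy from (1)---choosing an auxiliary sequence $\mathbf{i}'$ that starts at $i$ and distinguishes $\tilde{\phi}(\mathsf{e}_i b)$ from $\mathsf{e}_i(\tilde{\phi}(b))$, applying the corresponding fixed-exponent operator, and invoking linear independence of distinct basis elements of $B'$---to force the desired equality.
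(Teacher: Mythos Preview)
Your plan is correct and follows the same route the paper points to: the paper's own proof is essentially a citation to \cite[Lemma 6.3, Lemma 6.4]{KOP09} together with the remark that Proposition~\ref{Prop:Highest vector 1} is the key new ingredient, and you have located precisely that ingredient---using it to force $\phi(\mathbf{1})\in\Q(q)^{\times}\mathbf{1}$ and hence (via commutation with the $\bsf_i$) $\phi=c\cdot\id$. Your argument for (1), applying the fixed-exponent operator $E$ and invoking Lemma~\ref{Lem: realtions between top operators}(3), is exactly the mechanism one extracts from \cite{KOP09}.

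One small remark on the final step of (2): rather than phrasing it as ``distinguishing $\tilde{\phi}(\mathsf{e}_i b)$ from $\mathsf{e}_i(\tilde{\phi}(b))$'', it is cleaner to show directly that $\mathsf{e}_i\tilde{\phi}(b)$ satisfies the same defining relation as $\tilde{\phi}(\mathsf{e}_i b)$ for a sequence $\mathbf{j}=(i;\mathbf{i}')$ with $\bse'^{top}_{\mathbf{j}}(\mathsf{e}_i b)\in\Q(q)$, and then invoke the uniqueness you already proved in (1). Concretely, the problematic term $\bse_i'(r)$ has $\ell_i\le\ell_i(b)-1$, and when equality holds one checks $(\bse_i')^{\ell_i(b)-1}(\bse_i'(r))=\bse'^{top}_i(r)\prec_{\mathbf{i}'}\bse'^{top}_i(\phi(b))\propto\bse'^{top}_i(\phi(\mathsf{e}_i b))$, so $\bse_i'(r)\prec_{\mathbf{j}}\phi(\mathsf{e}_i b)$ as needed. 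This is what your ``refinement strategy'' amounts to, but stating it as a second application of uniqueness avoids any ambiguity.
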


\begin{proof}
This lemma is essentially the same as \cite[Lemma 6.3, Lemma
6.4]{KOP09}. However, since our algebra $U_q^{-}(\g)$ is
considered as a $B_q(\g)$-module, Proposition \ref{Prop:Highest
vector 1} plays a key role in proving this lemma. Then our
assertions follow by a similar argument in \cite{KOP09}.
\end{proof}

Now we state and prove the main result of this section.

\begin{Thm} \label{Thm: uniqueness of perfect graphs}
 Let $B$ and $B^{\prime}$ be two perfect bases of $U^{-}_q(\g)$.
 Then the identity map ${\rm id} : U^{-}_q(\g) \to U^{-}_q(\g) $ induces a perfect
 isomorphism from $(U^{-}_q(\g) ,B)$ to $(U^{-}_q(\g),B')$. That is,
there exists a unique crystal isomorphism $\tilde{\phi}: B \to B^{\prime}$
and a unique map $c: B\setminus \{ {\bf 1} \} \to \Q(q)^{\times}$ satisfying
$\tilde{\phi}( {\bf 1} )= {\bf 1} $ and $$ b-c(b)\tilde{\phi}(b)\prec_{{\bf i}} b $$
for each $b \in B\setminus \{ {\bf 1} \}$ and any sequence ${\bf i}=(i_1,\dots,i_m)$ with
${{\mathsf{e}_{{\bf i}}}^{top}}(b)= {\bf 1} $.
\end{Thm}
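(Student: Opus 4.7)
The plan is to construct $\tilde\phi$ and $c$ simultaneously by induction on the height of $-\wt(b)$, following the template of \cite[Section 6]{KOP09}. The base case $b=\mathbf{1}$ forces $\tilde\phi(\mathbf{1})=\mathbf{1}$. For the inductive step, given $b\in B\setminus\{\mathbf{1}\}$, Proposition \ref{Prop:Highest vector 1} provides some $i\in I$ with $\bse_i'(b)\ne 0$, so $\mathsf{e}_i(b)\in B$ is defined at strictly smaller weight. By induction $\tilde\phi(\mathsf{e}_i(b))\in B'$ is already constructed, and I would define $\tilde\phi(b)$ to be the unique $b'\in B'$ satisfying $\mathsf{e}_i(b')=\tilde\phi(\mathsf{e}_i(b))$, using Definition \ref{Def:perfect bases}(3) for $B'$; the scalar $c(b)\in\Q(q)^\times$ is then fixed by matching the leading coefficients produced by $\bse_i'$ on the two sides via \eqref{eq: perfect basis}.

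Once $\tilde\phi(b)$ is defined, the estimate $b-c(b)\tilde\phi(b)\prec_{\mathbf{i}} b$ for every sequence $\mathbf{i}$ with ${\mathsf{e}_{\mathbf{i}}^{top}}(b)=\mathbf{1}$ should be verified as follows. The perfect-basis relation \eqref{eq: perfect basis} says that applying $\bse_j'^{\ell_j(b)}$ to $b$ yields a nonzero scalar multiple of $\mathsf{e}_j^{top}(b)$ plus terms strictly smaller in $\ell_j$. Iterating this along $\mathbf{i}$ and invoking Lemma \ref{Lem: realtions between top operators}(1), both $b$ and $c(b)\tilde\phi(b)$ produce the same leading scalar multiple of $\mathbf{1}$, so their difference lies strictly below $b$ in the $\prec_{\mathbf{i}}$-order.

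The main obstacle is well-definedness, i.e.\ independence of $\tilde\phi(b)$ from the choice of initial index $i$. If $i\ne j$ both satisfy $\bse_i'(b),\bse_j'(b)\ne 0$, then the two candidates $\mathsf{f}_i\tilde\phi(\mathsf{e}_i(b))$ and $\mathsf{f}_j\tilde\phi(\mathsf{e}_j(b))$ must be compared; this rests on the commutation $\bse_i'\bse_j'=q_i^{-a_{ij}}\bse_j'\bse_i'$ for $i\ne j$ from the quantum boson algebra, together with the injectivity in Definition \ref{Def:perfect bases}(3), which pins down the common element $\mathsf{e}_i\mathsf{e}_j(b)=\mathsf{e}_j\mathsf{e}_i(b)$ on the $B'$ side. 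Uniqueness of the pair $(\tilde\phi,c)$ falls out of the same inductive scheme, since any two candidates must agree at $\mathbf{1}$ and be compatible with $\mathsf{e}_i$.

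Finally, crystal compatibility of $\tilde\phi$, that is, preservation of $\wt,\varepsilon_i,\varphi_i$ and intertwining with $\mathsf{e}_i,\mathsf{f}_i$, is already encoded in the construction: weight preservation is automatic, preservation of $\varepsilon_i$ follows by applying the uniform estimate to sequences beginning with $i$, and the intertwining with $\mathsf{e}_i,\mathsf{f}_i$ is precisely the defining recursion. Running the symmetric construction starting from $B'$ supplies a two-sided inverse, so $\tilde\phi$ is a crystal isomorphism.
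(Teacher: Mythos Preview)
Your proposal has two genuine gaps.

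First, and most seriously, the commutation relation you invoke, $\bse_i'\bse_j'=q_i^{-a_{ij}}\bse_j'\bse_i'$ for $i\ne j$, is not a relation in the quantum boson algebra $B_q(\g)$. The boson relations give $\bse_i'\bsf_j=q_i^{-a_{ij}}\bsf_j\bse_i'+\delta_{ij}$ (mixing $\bse'$ and $\bsf$), together with the Serre relations and $\bse_i'\bse_j'=\bse_j'\bse_i'$ only when $a_{ij}=0$. For general $i\ne j$ there is no such simple $q$-commutation between $\bse_i'$ and $\bse_j'$, and correspondingly the crystal operators $\mathsf{e}_i,\mathsf{e}_j$ need not commute on a perfect basis. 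So your well-definedness argument for $\tilde\phi(b)$ collapses.

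Second, your inductive definition assumes the existence of a $b'\in B'$ with $\mathsf{e}_i(b')=\tilde\phi(\mathsf{e}_i(b))$. Definition \ref{Def:perfect bases}(3) gives injectivity of $\mathsf{e}_i$, not surjectivity; equivalently, $\mathsf{f}_i$ on the abstract crystal attached to $B'$ may a priori take the value $0$. You have not explained why $\tilde\phi(\mathsf{e}_i(b))$ lies in the image of $\mathsf{e}_i$ on $B'$.

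The paper's approach avoids both issues by working in the opposite direction: rather than building $\tilde\phi(b)$ recursively through a chosen $i$, one expands $b$ in the basis $B'$ and identifies $\tilde\phi(b)$ as the unique $\prec_{\mathbf{i}}$-maximal term $b'$ in that expansion (following \cite[Lemma 6.5, Theorem 6.6]{KOP09}). Existence is then automatic, and independence of the sequence $\mathbf{i}$ is handled by Lemma \ref{Lem: realtions between top operators}(3), which characterises $b'$ via $b\equiv_{\mathbf{i}}b'$ and $\mathsf{e}_{\mathbf{i}}^{top}(b)=\mathsf{e}_{\mathbf{i}}^{top}(b')$ without any appeal to commutation of the $\bse_i'$.
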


\begin{proof}
Since the proof is almost the same as \cite[Theorem 6.6]{KOP09},
we only give a sketch of proof. By a similar argument in
\cite[Lemma 6.5]{KOP09}, for a given $b \in B \setminus \{ {\bf 1}
\}$, one can show that there exist unique $b' \in B'$, $v \in
U_q^{-}(\g)$ and $k \in \Q(q)^{\times}$ satisfying
\begin{align*}
\ \ & (1) \ b \equiv_{{\bf i}} b', \quad (2) \ b = v + kb', \quad
(3) \ v =0 \text{ or } v \prec_{{\bf i}} b, \ v \prec_{{\bf i}} b'
\end{align*}
for any sequence ${\bf i}$ with $\bse'^{top}_{{\bf i}}(b) \in
\Q(q)^{\times}$. Then the maps ${\rm id}: U_q^{-}(\g) \to
U_q^{-}(\g)$,  $\tilde{\phi}:B \to B'$ and $c: B \setminus \{ {\bf
1} \}\to \Q(q)^{\times}$ defined by $b \mapsto b'$ and $b \mapsto
k$ give rise to a perfect isomorphism.
\end{proof}

\vskip 3em

\section{Construction of crystals $\Bklr{\infty}$ and $\Bklr{\lambda}$}\

In this section, we investigate the crystal structures on the sets
of isomorphism classes of irreducible graded modules over $R$ and
its cyclotomic quotient $R^\lambda$. We assume that $a_{ii} \ne 0$
for all $i\in I$.

\subsection{The crystal $\Bklr{\infty}$}\

Let $\Bklr{\infty}$ be the set of isomorphism classes of
irreducible graded $R$-modules. In this subsection, we define a
crystal structure on $\Bklr{\infty}$ and show that it is
isomorphic to the crystal $B(\infty)$ using the perfect basis
theory given in Section \ref{Sec:perfect bases}.

Let $\alpha \in Q^+$. For any $P \in R(\alpha)$-pmod and $M \in R(\alpha)$-fmod, we define
\begin{equation}\label{Eq:def of f,e',F,E'}
\begin{aligned}
f_i(P) &= \ind_{\alpha_i, \alpha} (P_{(i)} \boxtimes P),  & e'_i(P)& = P^\star \otimes_{R'(\alpha_i)}L(i), \\
F_i(M) &= \ind_{\alpha_i, \alpha} (L(i) \boxtimes M),  & E'_i(M)& = \res_{\alpha-\alpha_i}^{\alpha_i, \alpha-\alpha_i} \circ \Delta_i M,
\end{aligned}
\end{equation}
where $R'(\alpha_i):= R(\alpha_i) \otimes 1_{\alpha-\alpha_i} \hookrightarrow R(\alpha_i) \otimes R(\alpha-\alpha_i) \subset R(\alpha)$.
Here, the $(R(\alpha_i), R(\alpha - \alpha_i))$-bimodule structure of $P^\star$ is given as follows:
for $ v \in P^\star, \ r \in R(\alpha - \alpha_i) $ and $ s \in R(\alpha_i)$,
\begin{align*}
r \cdot v  := ( 1_{\alpha_i}\otimes r) \ v , \quad  v \cdot s  := \psi(s  \otimes 1_{\alpha - \alpha_i} ) \ v   .
\end{align*}
Since $f_i$ and $e'_i$ (resp.\ $F_i$ and $E'_i$) take projective
modules to projective modules (resp.\ finite-dimensional modules
to finite-dimensional modules), they induce the linear maps
\begin{align*}
f_i&:K_0(R) \longrightarrow K_0(R), \qquad e'_i:K_0(R) \longrightarrow K_0(R), \\
F_i&:G_0(R) \longrightarrow G_0(R), \qquad  E'_i:G_0(R) \longrightarrow G_0(R).
\end{align*}
Then we have the following lemma, which is the Khovanov-Lauda-Rouquier algebra version of the equation
$\eqref{eq: special commute}$.
\begin{Lem}\ \label{Lem:relation of boson}
\begin{enumerate}
\item $e'_if_j = \delta_{ij} + q_i^{-a_{ij}} f_j e'_i $ on $K_0(R)$.
\item $E'_iF_j = \delta_{ij} + q_i^{-a_{ij}} F_j E'_i $ on $G_0(R)$.
\end{enumerate}
\end{Lem}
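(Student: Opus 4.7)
The plan is to prove both parts simultaneously by a single application of Mackey's theorem (Proposition \ref{Prop:Mackey}) to the bimodule ${}_{\alpha_i, \alpha+\alpha_j-\alpha_i}R_{\alpha_j, \alpha}$; the two statements are the same commutation identity packaged on dual Grothendieck groups. For part (2) the relevant expression is
\[ E'_i F_j[M] = \bigl[\res^{\alpha_i,\alpha+\alpha_j-\alpha_i}_{\alpha+\alpha_j-\alpha_i}\bigl(1_{\alpha_i, \alpha+\alpha_j-\alpha_i}\cdot \ind_{\alpha_j, \alpha} (L(j)\boxtimes M)\bigr)\bigr], \]
and for part (1) one carries out the parallel computation with $P_{(j)} \boxtimes P$ in place of $L(j) \boxtimes M$, followed by the functor $(\cdot)^\star \otimes_{R'(\alpha_i)} L(i)$.

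I would then apply Proposition \ref{Prop:Mackey} with $(\alpha,\beta,\alpha',\beta')=(\alpha_i,\alpha+\alpha_j-\alpha_i,\alpha_j,\alpha)$. The graded subquotients are indexed by $\gamma \in Q^+$ with $\gamma \le \alpha_i$ and $\gamma \le \alpha_j$ in $Q^+$, and since both are simple roots only $\gamma = 0$ and (when $i=j$) $\gamma = \alpha_i = \alpha_j$ survive. The $\gamma = 0$ piece corresponds to commuting the extracted $\alpha_i$-strand past the $\alpha_j$-strand of $L(j)$ (resp.\ $P_{(j)}$); by $\eqref{Eq:degree}$ the degree shift is $-(\alpha_i|\alpha_j)$, which translates to the factor $q_i^{-a_{ij}}$ on Grothendieck groups. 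After performing the remaining tensor products this piece contributes $q_i^{-a_{ij}}[F_j E'_i M]$ in (2) and $q_i^{-a_{ij}}[f_j e'_i P]$ in (1). The $\gamma = \alpha_i$ piece, which only arises when $i=j$, has zero degree shift and identifies the $L(i)$ (resp.\ $P_{(i)}$) coming from $F_j$ (resp.\ $f_j$) with the extracted $L(i)$ on the $E'_i$ (resp.\ $e'_i$) side via the canonical isomorphism $P_{(i)}^\star \otimes_{R(\alpha_i)} L(i) \cong \F$, which follows from $P_{(i)} \cong R(\alpha_i)$ together with the presentation of $L(i)$ in $\eqref{Eq:def of L in Iim}$. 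This contributes $[M]$ to (2) and $[P]$ to (1).

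Summing the graded subquotients in the appropriate Grothendieck group yields
\[ [E'_i F_j M] = \delta_{ij}[M] + q_i^{-a_{ij}}[F_j E'_i M], \qquad [e'_i f_j P] = \delta_{ij}[P] + q_i^{-a_{ij}}[f_j e'_i P], \]
which are precisely the commutation relations in parts (1) and (2).

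The main obstacle is verifying that Proposition \ref{Prop:Mackey} truly applies verbatim in the generalized Kac-Moody setting, i.e.\ that the twisting polynomials $\mathcal{P}_i, \overline{\mathcal{P}}'_i, \overline{\mathcal{P}}''_i, \overline{\mathcal{Q}}_{i,j}$ introduced in Definition \ref{def:KLR} do not perturb the extreme subquotients ($\gamma = 0$ and $\gamma = \alpha_i$) or their stated graded shifts. These twists can at most affect the interior of the Mackey filtration; one has to confirm strand-by-strand near imaginary colors that the two extreme pieces retain their clean form, which is the analogue of the double-coset argument used in the symmetrizable Kac-Moody case.
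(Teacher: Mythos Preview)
Your approach is correct and essentially the same as the paper's: part (2) is proved there by the identical Mackey computation, while part (1) is carried out on the generators $P_{\mathbf{i}}$ via the shuffle decomposition \eqref{Eq:ind and res of Pi} of $\Delta_i P_{\mathbf{i}'}$, which is just Proposition~\ref{Prop:Mackey} specialized to projectives (where the subquotients are projective, so the filtration splits and the non-exact functor $(\cdot)^{\star}\otimes_{R'(\alpha_i)}L(i)$ may be applied termwise). Your closing worry is unnecessary: Proposition~\ref{Prop:Mackey} is already proved in this paper in the generalized Kac--Moody setting, so the twisting polynomials $\mathcal{P}_i$, $\overline{\mathcal{P}}'_i$, $\overline{\mathcal{P}}''_i$ do not interfere with the two-term filtration you use.
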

\begin{proof}

(1)
 Fix $\mathbf{i} \in \seq(\alpha)$ and let $ \mathbf{i}' = (j) * \mathbf{i} \in \seq(\alpha+\alpha_j)$. By the equation $\eqref{Eq:ind and res of Pi}$,
\begin{align*}
\Delta_i P_{\mathbf{i}'}  &\simeq \sum_{\mathbf{i}': \text{shuffles of $(i)$ and $\mathbf{j}$}} P_{(i)} \boxtimes P_{\mathbf{j}}
\langle \deg( (i), \mathbf{j}, \mathbf{i}' ) \rangle \\
& \simeq \delta_{ij}P_{(i)}\boxtimes P_{\mathbf{i}} +  \sum_{\mathbf{i}: \text{shuffles of $(i)$ and $\mathbf{k}$}} P_{(i)} \boxtimes P_{(j)*\mathbf{k}}
\langle  -\deg( (i), \mathbf{k}, \mathbf{i} ) + (\alpha_i| \alpha_j) \rangle ,
\end{align*}
which yields
\begin{align*}
e_i' f_j [P_\mathbf{i}] &= e_i'[P_{\mathbf{i}'}] \\
& = [ P_{\mathbf{i}'}^\star \otimes_{R'(\alpha_j)} L(i) ]\\
&= [ (\Delta_i P_{\mathbf{i}'})^\star \otimes_{R'(\alpha_i)} L(i)]\\
&= \delta_{ij} [P_{\mathbf{i}}] + q^{-(\alpha_i|\alpha_j)} f_j[(\res_{\alpha-\alpha_i}^{\alpha_i, \alpha-\alpha_i}( P_{\mathbf{i}}^\star \otimes_{R'(\alpha_i)} L(i) ))] \\
&= \delta_{ij} [P_{\mathbf{i}}] + q^{-(\alpha_i|\alpha_j)} f_j e_i'[P_\mathbf{i}].
\end{align*}

(2) For an irreducible $R(\alpha)$-module $M$, it follows from Proposition \ref{Prop:Mackey} that
\begin{align*}
E_i' F_j [M] &= E_i'([\ind_{\alpha_j, \alpha}L(j) \boxtimes M])\\
&= [ E_i'L(j)] [M] + [\ind_{\alpha_j, \alpha-\alpha_i}L(j) \boxtimes E_i'(M) \langle  (\alpha_j| \alpha_i)  \rangle]\\
&= \delta_{ij}[M] + q^{-(\alpha_i|\alpha_j)} F_j E_i'[M].
\end{align*}
\end{proof}

We also have analogues of the equation $\eqref{Eq:def of ()K and ()L}$ and
Lemma \ref{Lem:nondegenerate pairing in GKM} (3).

\begin{Lem} \ \label{Lem:duality e,f and E,F}
\begin{enumerate}
\item For $[P], [Q] \in K_0(R)$, we have
$$ (e_i'[P], [Q]) = (1-q_i^2)([P], f_i[Q]).  $$
\item For $[P] \in K_0(R)$ and $[M] \in G_0(R)$, we have
$$ (f_i[P], [M]) = ([P], E'_i [M]), \quad (e'_i[P], [M]) = ([P], F_i [M]). $$
\end{enumerate}
\end{Lem}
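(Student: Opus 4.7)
The plan is to prove both identities by unfolding the definitions of $e_i', f_i, E_i', F_i$ from \eqref{Eq:def of f,e',F,E'} and applying the tensor--hom form of Frobenius reciprocity. The key adjunctions I would use are, for $N \in R(\alpha)\otimes R(\beta)\text{-mod}$ and $M, P \in R(\alpha+\beta)\text{-mod}$,
\begin{align*}
(\ind_{\alpha,\beta} N)^\star \otimes_{R(\alpha+\beta)} M &\cong N^\star \otimes_{R(\alpha)\otimes R(\beta)} \res_{\alpha,\beta} M, \\
P^\star \otimes_{R(\alpha+\beta)} \ind_{\alpha,\beta} N &\cong \res_{\alpha,\beta}(P^\star) \otimes_{R(\alpha)\otimes R(\beta)} N,
\end{align*}
as graded vector spaces; these are the star-dual counterparts of the $\HOM$-adjunction \eqref{Eq:reciprocity}.

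I would prove part (2) first. For $(f_i[P],[M])=([P],E_i'[M])$, unfolding $f_i[P]$ and applying the first adjunction above yields
$$ (f_i[P],[M]) = \qdim\bigl((P_{(i)}\boxtimes P)^\star \otimes_{R(\alpha_i)\otimes R(\alpha)} \Delta_i M\bigr). $$
Because $P_{(i)}\cong R(\alpha_i)$ as graded left $R(\alpha_i)$-modules (the shift in \eqref{Eq:def of Pi} vanishes when $d=1$), the functor $P_{(i)}^\star \otimes_{R(\alpha_i)}(-)$ is the identity on graded vector spaces, so the right-hand side collapses to $\qdim(P^\star \otimes_{R(\alpha)} E_i'M)=([P], E_i'[M])$. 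The companion identity $(e_i'[P],[M])=([P],F_i[M])$ is proved dually: unfold $F_i[M]$, apply the second adjunction, and identify
$$ ([P],F_i[M]) = \qdim\bigl((P^\star \otimes_{R(\alpha_i)} L(i))\otimes_{R(\alpha-\alpha_i)} M\bigr) = \qdim\bigl((e_i'P)^\star \otimes_{R(\alpha-\alpha_i)} M\bigr). $$
The second equality is the only subtle point: one must check that the right $R(\alpha-\alpha_i)$-action on $P^\star \otimes_{R(\alpha_i)} L(i)$ inherited from $P^\star$ coincides with the $\star$-dual of the left action on $e_i'P$ described in \eqref{Eq:def of f,e',F,E'}; this is a direct unpacking of the bimodule conventions.

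For part (1), the same adjunctions reduce the problem to comparing the graded dimensions of $P^\star \otimes_{R(\alpha_i)} L(i) \otimes_{R(\alpha-\alpha_i)} Q$ and $P^\star \otimes_{R(\alpha_i)} P_{(i)} \otimes_{R(\alpha-\alpha_i)} Q$. The new ingredient is the short exact sequence of graded left $R(\alpha_i)$-modules
$$ 0 \longrightarrow P_{(i)}\langle -2s_i\rangle \xrightarrow{\,\cdot\, x_1} P_{(i)} \longrightarrow L(i) \longrightarrow 0, $$
which follows from $R(\alpha_i)\cong\F[x_1]$ (in both the $i\in\Ire$ and $i\in\Iim$ cases) and $\deg x_1 = 2s_i$. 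Projectivity of $P$, combined with the freeness of $R(\alpha)\,1_{\alpha_i,\alpha-\alpha_i}$ as a right $R(\alpha_i)\otimes R(\alpha-\alpha_i)$-module from Corollary \ref{Cor:R(alpha+beta) is free}, shows that $P^\star$ is flat as a right $R(\alpha_i)$-module; together with projectivity of $Q$, tensoring the above sequence preserves exactness. Taking $\qdim$ and noting that the shift $\langle -2s_i\rangle$ contributes the factor $q_i^2$ produces the desired factor $1-q_i^2$, which part (2)'s adjunction identifications then convert into $(e_i'[P],[Q])=(1-q_i^2)([P],f_i[Q])$.

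The main obstacle I foresee is the bookkeeping in part (2)'s second identity: verifying that the two \emph{a priori} distinct right $R(\alpha-\alpha_i)$-module structures on $P^\star \otimes_{R(\alpha_i)} L(i)$ actually agree. Once the bimodule conventions of \eqref{Eq:def of f,e',F,E'} are unpacked, this is routine, and the remaining steps reduce to standard applications of Frobenius reciprocity, flatness of $P^\star$ and $Q$, and the identification $P_{(i)}\cong R(\alpha_i)$.
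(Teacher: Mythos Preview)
Your proposal is correct and follows essentially the same route as the paper. Both arguments unwind the definitions in \eqref{Eq:def of f,e',F,E'}, apply the tensor form of Frobenius reciprocity, and use the identification $P_{(i)}\cong R(\alpha_i)$ to collapse the $R(\alpha_i)$-factor; the only cosmetic difference is that for part (1) you make the comparison between $P_{(i)}$ and $L(i)$ explicit via the Koszul sequence $0\to P_{(i)}\langle -2s_i\rangle \xrightarrow{x_1} P_{(i)}\to L(i)\to 0$ and a flatness check, whereas the paper passes directly from $(\Delta_i P)^\star\otimes_{R(\alpha_i)} P_{(i)}$ to $(\Delta_i P)^\star\otimes_{R(\alpha_i)} L(i)$ and records the resulting factor $(1-q_i^2)^{-1}$ in one line.
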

\begin{proof}
(1) Let $P, Q \in R(\alpha)$-mod. Then 
\begin{align*}
([P], f_i[Q]) & = \qdim(   P^\star \otimes_{R(\alpha+\alpha_i)} (\ind P_{(i)} \boxtimes Q) )\\
&=  \qdim( (\Delta_i P)^\star \otimes_{R(\alpha_i)\otimes R(\alpha)}  ( P_{(i)} \boxtimes Q) ) \\
&= (1-q_i^2)^{-1} \qdim(  (e_i' P)^\star \otimes_{R(\alpha)} Q  )\\
&= (1-q_i^2)^{-1}(e_i'[P], [Q]).
\end{align*}

(2) Let $P \in R(\alpha)$-pmod and $M \in R(\alpha+\alpha_i)$-fmod.
By definition, we have the first assertion:
\begin{align*}
(f_i[P], [M]) &= \qdim( (\ind P_{(i)} \boxtimes P)^\star \otimes_{R(\alpha + \alpha_i)} M ) \\
&= \qdim( ( P_{(i)} \boxtimes P)^\star \otimes_{R(\alpha_i)\otimes R(\alpha)} \Delta_i M ) \\
&= \qdim( P^\star \otimes_{R(\alpha)} \res_{\alpha}^{\alpha_i, \alpha} \Delta_i M ) \\
&= ( [P],  E_i'[M] ).
\end{align*}

In a similar manner, we have
\begin{align*}
(e_i'[P], [M]) &= \qdim \left( ( P^\star \otimes_{R'(\alpha_i)}L(i) ) \otimes_{R(\alpha)} M  \right) \\
&= \qdim \left(  (\Delta_i P)^\star \otimes_{R(\alpha_i) \otimes R(\alpha)}  L(i) \boxtimes M  \right) \\
&= \qdim \left(  P^\star \otimes_{R(\alpha+\alpha_i)}  \ind L(i) \boxtimes M  \right) \\
&= ( [P], F_i [M] ).
\end{align*}
\end{proof}

We now define a $B_q(\g)$-module structure on $K_0(R)_{\Q(q)}$ and
$G_0(R)_{\Q(q)}$ as follows:
\begin{align*}
\bse'_i \cdot [P] &:= e_i'[P], \ \  \bsf_i \cdot [P] := f_i[P] \quad \text{ for } [P] \in K_0(R)_{\Q(q)}, \\
\bse'_i \cdot [M] &:= E_i'[M], \ \ \bsf_i \cdot [M] := F_i[M]
\quad \text{ for } [M] \in G_0(R)_{\Q(q)}.
\end{align*}
By the same argument as in the proof of \cite[Lemma 3.4.2]{Kash91}, it follows from Lemma \ref{Lem:relation of boson}, Lemma \ref{Lem:duality e,f and E,F} and Theorem \ref{Thm:Serre}
that $K_0(R)_{\Q(q)}$ and $G_0(R)_{\Q(q)}$ are well-defined $B_q(\g)$-modules.
Consider the $B_q(\g)$-module homomorphism
$$\Phi^{\vee}_{\Q(q)}: U_q^-(\g) \longrightarrow G_0(R)_{\Q(q)} $$
given by $$\Phi^{\vee}_{\Q(q)}(f_i) = L(i) \ \ \text{for} \  i\in
I.$$ Then, by Theorem \ref{Thm:iso of K0 and Uq}, we obtain the
following diagram.
$$
\xymatrix{
\Phi_{\Q(q)}\ : \ U_q^-(\g)  \ar[rrr]^{\sim} \ar[d]^{\text{dual w.r.t. } (\ ,\ )_K}  &  & & \ar[d]^{\text{dual w.r.t. } (\ ,\ )} K_0(R)_{\Q(q)} \\
\Phi_{\Q(q)}^\vee\ : \ U_q^-(\g) \ar[u]  \ar[rrr]^{\sim} & & &   \ar[u] G_0(R)_{\Q(q)}
}
$$
Therefore, $K_0(R)_{\Q(q)}$ and $G_0(R)_{\Q(q)}$ are well-defined
$B_q(\g)$-modules, which are isomorphic to $U_q^-(\g)$.


The following lemma is the Khovanov-Lauda-Rouquier algebra version of Proposition \ref{Prop:the e_i action global basis}.
\begin{Lem} \label{Lem:perfect condition of U-}
Let $M$ be an irreducible $R(\alpha)$-module and $\ep = \ep_i(M)$. Then we have

\begin{align*}
E_i'[M] = \left\{
            \begin{array}{ll}
              q^{-\ep + 1}_i [\ep]_i[\ke_i M] + \sum_{k}c_k [N_k]  & \hbox{ if } i\in \Ire, \\
              {[\ke_i M]} + \sum_{k}c_k' [N_k'] & \hbox{ if } i \in
              \Iim,
            \end{array}
          \right.
\end{align*}
where $c_k, c_k' \in \Q(q)$ and $\ep_i(N_k),\ \ep_i(N_k') < \ep -
1$.
\end{Lem}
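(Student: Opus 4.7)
The plan is to analyze the composition series of $\Delta_i M$ as an $R(\alpha_i)\otimes R(\alpha-\alpha_i)$-module and then read off $[E_i'M]$ by restricting the $R(\alpha_i)$-factor.  Since $L(i)$ is the unique irreducible $R(\alpha_i)$-module and $\dim_q L(i)=1$ in both the real and imaginary cases, every composition factor of $\Delta_iM$ has the form $L(i)\boxtimes L_k$ for some irreducible $R(\alpha-\alpha_i)$-module $L_k$, and it contributes exactly $m_k[L_k]$ to $[E_i'M]$, where $m_k$ is its graded Jordan--H\"older multiplicity in $[\Delta_iM]$.

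To show that every composition factor of $\Delta_iM$ other than $L(i)\boxtimes \ke_iM$ arises from an $L$ with $\ep_i(L)<\ep-1$, I will compute $\Delta_{i^\ep}M$ in two ways and compare.  On one hand, Lemma \ref{Lem:Delta of M} gives $\Delta_{i^\ep}M \simeq L(i^\ep)\boxtimes \ke_i^\ep M$; restricting $L(i^\ep)$ via Lemma \ref{Lem:Kato for i in Iim}(1) for $i\in\Iim$ (or the nil-Hecke restriction for $i\in\Ire$) shows that every $R(\alpha_i)\otimes R((\ep-1)\alpha_i)\otimes R(\alpha-\ep\alpha_i)$-composition factor of $\Delta_{i^\ep}M$ has the form $L(i)\boxtimes L(i^{\ep-1})\boxtimes \ke_i^\ep M$.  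On the other hand, a composition factor $L(i)\boxtimes L_k$ of $\Delta_iM$ with $\ep_i(L_k)\ge \ep-1$ contributes, by exactness of $\Delta$ and Lemma \ref{Lem:Delta of M} applied to $L_k$, the factor $L(i)\boxtimes L(i^{\ep_i(L_k)})\boxtimes \ke_i^{\ep_i(L_k)}L_k$ to $\Delta_{i^\ep}M$.  Matching the two descriptions forces $\ep_i(L_k)=\ep-1$ and $\ke_i^{\ep-1}L_k = \ke_i^\ep M$; applying $\kf_i^{\ep-1}$ and invoking Lemma \ref{Lem:adjoint ke and kf} iteratively then yields $L_k=\ke_iM$.

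The same matching determines the graded multiplicity $m_{\ke_iM}$ via the identity $m_{\ke_iM}\cdot[L(i)\boxtimes L(i^{\ep-1})] = [\res_{\alpha_i,(\ep-1)\alpha_i}L(i^\ep)]$ in the Grothendieck group of $R(\alpha_i)\otimes R((\ep-1)\alpha_i)$-modules.  For $i\in\Iim$, Lemma \ref{Lem:Kato for i in Iim}(1) gives the right-hand side equal to $[L(i)\boxtimes L(i^{\ep-1})]$, so $m_{\ke_iM}=1$ and the asserted formula $[E_i'M]=[\ke_iM]+\sum_k c_k'[N_k']$ follows.  For $i\in\Ire$, the standard restriction of the irreducible $NH_\ep$-module $L(i^\ep)$ yields $[\res L(i^\ep)] = q_i^{-\ep+1}[\ep]_i\,[L(i)\boxtimes L(i^{\ep-1})]$ as computed in \cite{KL09}, giving the coefficient $q_i^{-\ep+1}[\ep]_i$.

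The main technical point is the tracking of grading shifts in the real case when restricting $L(i^\ep)$ to produce the quantum integer $q_i^{-\ep+1}[\ep]_i$; this is a standard nil-Hecke computation which already appears in \cite{KL09, LV09}.  The imaginary case is cleaner because $L(i^\ep)$ is one-dimensional and concentrated in degree zero, so its restriction is an honest isomorphism of irreducibles and the coefficient reduces to~$1$.
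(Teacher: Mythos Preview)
Your proof is correct and takes a genuinely different route from the paper's. For $i\in\Iim$ (the real case is simply cited from \cite{KP10}), the paper works with the covering $\ind L(i^\ep)\boxtimes N \twoheadrightarrow M$ furnished by Lemma~\ref{Lem:Delta of M} and Frobenius reciprocity, producing an exact sequence $0\to K\to \ind L(i^\ep)\boxtimes N\to M\to 0$ with $\ep_i(K)<\ep$; it then applies $\Delta_i$ and controls the composition factors of $\Delta_i(\ind L(i^\ep)\boxtimes N)$ via Lemma~\ref{Lem:properties of L(im) bt N}. You instead remain inside $M$ throughout: you compute $\res_{\alpha_i,(\ep-1)\alpha_i,\alpha-\ep\alpha_i}\Delta_{i^\ep}M$ in two ways---once from $\Delta_{i^\ep}M\simeq L(i^\ep)\boxtimes \ke_i^\ep M$, once as $(\id\otimes\Delta_{i^{\ep-1}})\Delta_iM$---and match in the Grothendieck group. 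Your argument is more uniform across the real and imaginary cases (the only case split is the well-known restriction $[\res_{\alpha_i,(\ep-1)\alpha_i}L(i^\ep)]$) and avoids the auxiliary kernel $K$; the paper's approach, in exchange, exhibits more explicitly where the lower-order terms originate. One minor point you leave implicit is the a~priori bound $\ep_i(L_k)\le\ep-1$ for every composition factor $L(i)\boxtimes L_k$ of $\Delta_iM$: this follows at once by applying $\id\otimes\Delta_{i^\ep}$ and using $\Delta_{i^{\ep+1}}M=0$, but it is cleaner to state this before the matching step rather than absorb it into ``forces $\ep_i(L_k)=\ep-1$''.
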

\begin{proof}
If $i= \Ire$, then the assertion can be proved in the same manner as \cite[Lemma 3.9]{KP10}.
Suppose that $i\in \Iim$. By Lemma \ref{Lem:Delta of M},
$$ \Delta_{i^\ep}M \simeq L(i^\ep) \boxtimes N $$
for some irreducible module $N$ with $\ep_i(N)=0$. Then, from
$\eqref{Eq:reciprocity2}$, we have an exact sequence
\begin{align} \label{Eq:perfect for KLR eq1}
0 \rightarrow K \rightarrow \ind_{\ep \alpha_i, \alpha-\ep \alpha_i} L(i^\ep)\boxtimes N
\rightarrow M \rightarrow 0
\end{align}
for some $R(\alpha)$-module $K$. Note that $\ep_i(K) < \ep$.

On the other hand, it follows from $\ep_i(N)=0$ and Lemma \ref{Lem:Kato for i in Iim} that
$$ [\Delta_i \ind_{\ep \alpha_i, \alpha-\ep \alpha_i} L(i^\ep)\boxtimes N ]
 = [\ind_{\alpha_i, (\ep-1) \alpha_i, \alpha-\ep \alpha_i}^{\alpha_i, \alpha- \alpha_i} L(i)\boxtimes L(i^{\ep-1})\boxtimes N]. $$
By Lemma \ref{Lem:properties of L(im) bt N}, Lemma \ref{Lem:soc and hd} and Lemma \ref{Lem:adjoint ke and kf}, we have
$$\hd (\ind_{\alpha_i, (\ep-1) \alpha_i, \alpha-\ep \alpha_i}^{\alpha_i, \alpha- \alpha_i} L(i)\boxtimes L(i^{\ep-1})\boxtimes N) \simeq
L(i)\boxtimes(\kf_i^{\ep -1}N) \simeq L(i)\boxtimes \ke_iM  $$
and all the other composition factors of $\ind_{\alpha_i, (\ep-1) \alpha_i, \alpha-\ep \alpha_i}^{\alpha_i, \alpha- \alpha_i} L(i)\boxtimes L(i^{\ep-1})\boxtimes N$ are
of the form $L(i)\boxtimes L$ with $\ep_i(L) < \ep-1$. Moreover, since $\ep_i(K)<\ep$, all composition factors of $\Delta_i(K)$ are of the form $L(i)\boxtimes L'$ with
$\ep_i(L')$ with $\ep_i(L') < \ep - 1$. Therefore, applying the exact functor $\Delta_i$ to $\eqref{Eq:perfect for KLR eq1}$, we have
$$  E_i'[M] = {[\ke_i M]} + \sum_{k}c_k' [N_k'] $$
for some $R(\alpha)$-modules $N_k'$ with $ \ep_i(N_k') < \ep-1 $.
\end{proof}

For an element $[M] \in \Bklr{\infty}$, we define
\begin{align*}
\wt([M]) &= -\alpha \ \ \text{ if } M \in R(\alpha)\text{-fmod}, \\
\varepsilon_i([M]) &= \left\{
                         \begin{array}{ll}
                            \max\{ k \ge 0 \mid \ke_i^k M \ne 0 \}  & \hbox{ if } i\in \Ire, \\
                           0 & \hbox{ if } i \in \Iim,
                         \end{array}
                       \right.\\
\varphi_i([M]) &= \varepsilon_i(b) + \langle h_i, \wt( [M]) \rangle.
\end{align*}
Then we have the following theorem.

\begin{Thm} \label{Thm: B(infty)}
The sextuple $(\mathfrak{B}(\infty),\wt, \ke_i, \kf_i, \varepsilon_i,
\varphi_i)$ becomes an abstract crystal, which is isomorphic to the crystal
$B(\infty)$ of $U_q^{-}(\g)$.
\end{Thm}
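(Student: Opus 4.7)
The plan is to reduce the theorem to Theorem \ref{Thm: uniqueness of perfect graphs} by transporting $\Bklr{\infty}$ across the isomorphism $\Phi^{\vee}_{\Q(q)}: U_q^{-}(\g) \overset{\sim}{\to} G_0(R)_{\Q(q)}$ of $B_q(\g)$-modules and showing that it becomes a perfect basis of $U_q^{-}(\g)$.

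First I would establish that $\Bklr{\infty}$ is a $\Q(q)$-basis of $G_0(R)_{\Q(q)}$ with the required weight decomposition. This follows from the Krull--Schmidt property of the finitely generated $R(\alpha)$-modules together with the block decomposition $R = \bigoplus_{\alpha \in Q^+} R(\alpha)$, which places each $[M] \in \Bklr{\infty}$ in a unique weight component $\Bklr{\infty}_{-\alpha}$.

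The heart of the argument is to verify the perfect basis condition using Lemma \ref{Lem:perfect condition of U-}. I would first prove by induction on $\varepsilon_i(M)$ that $\ell_i([M]) = \varepsilon_i(M)$ for every irreducible $R(\alpha)$-module $M$. For the base case, $\varepsilon_i(M)=0$ forces $\ke_i M=0$, and Lemma \ref{Lem:perfect condition of U-} then forces $E'_i[M] = 0$, since the error sum is vacuous when $\varepsilon_i(N_k) < -1$ is required. For the inductive step, the leading term of $E'_i[M]$ is a nonzero scalar multiple of $[\ke_i M]$, and by Lemma \ref{Lem:epsilon and irr submodule} we have $\varepsilon_i(\ke_i M)=\varepsilon_i(M)-1$, while each error term $[N_k]$ satisfies $\varepsilon_i(N_k) < \varepsilon_i(M)-1$; by the induction hypothesis these error terms lie in $G_0(R)_{\Q(q),i}^{<\varepsilon_i(M)-1}$. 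Iterating gives $(E'_i)^{\varepsilon_i(M)}[M]\ne 0$ and $(E'_i)^{\varepsilon_i(M)+1}[M]=0$, which yields $\ell_i([M])=\varepsilon_i(M)$. With this identification in hand, Lemma \ref{Lem:perfect condition of U-} exhibits the perfect basis relation $E'_i[M] \in c\,[\ke_i M] + G_0(R)_{\Q(q),i}^{<\ell_i([M])-1}$ with the unique choice $\mathsf{e}_i([M]) := [\ke_i M]$, and injectivity of $\mathsf{e}_i$ on $\Bklr{\infty}$ is immediate from Lemma \ref{Lem:adjoint ke and kf}: if $[\ke_i M]\simeq [\ke_i M']\ne 0$, applying $\kf_i$ gives $M \simeq M'$.

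Finally, I would check that the abstract crystal structure on $\Bklr{\infty}$ as defined in the statement agrees with the canonical crystal structure induced by the perfect basis: the weights $\wt([M])=-\alpha$ match by construction; for $i\in\Ire$ we have $\varepsilon_i([M])=\varepsilon_i(M)=\ell_i([M])$, and for $i\in\Iim$ both definitions give $0$; the operator $\kf_i$ coincides with the perfect-basis $\mathsf{f}_i$ by Lemma \ref{Lem:adjoint ke and kf}. Applying Theorem \ref{Thm: uniqueness of perfect graphs} to the two perfect bases $\Bklr{\infty}$ and $\mathbb{B}(\infty)$ of $U_q^{-}(\g)$, and invoking Proposition \ref{prop:B-infinities} for the crystal isomorphism $\mathbb{B}(\infty)\simeq B(\infty)$, yields $\Bklr{\infty}\simeq B(\infty)$. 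The main obstacle is the inductive identification $\ell_i([M])=\varepsilon_i(M)$ in the imaginary case, since there the leading coefficient of $[\ke_i M]$ in $E'_i[M]$ is merely $1$ rather than an obviously nonvanishing $q$-integer, so one must carefully propagate the strict inequality $\varepsilon_i(N_k)<\varepsilon_i(M)-1$ through successive applications of $E'_i$ to rule out cancellation and guarantee that the leading term persists.
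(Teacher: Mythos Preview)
Your proposal is correct and follows essentially the same approach as the paper: verify via Lemma~\ref{Lem:perfect condition of U-} and Lemma~\ref{Lem:adjoint ke and kf} that $\Bklr{\infty}$ is a perfect basis of the $B_q(\g)$-module $G_0(R)_{\Q(q)}\simeq U_q^-(\g)$, then invoke Theorem~\ref{Thm: uniqueness of perfect graphs} together with Proposition~\ref{prop:B-infinities}. The paper's own proof is a two-sentence citation of exactly these ingredients; you have simply unpacked the implicit steps (the inductive identification $\ell_i([M])=\ep_i(M)$, the matching of the two crystal structures, and the explicit appeal to Proposition~\ref{prop:B-infinities}) that the paper leaves to the reader.
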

\begin{proof}
It follows from Lemma \ref{Lem:adjoint ke and kf} and Lemma
\ref{Lem:perfect condition of U-} that the pair $(\Bklr{\infty},
\{\ke_i\}_{i\in I})$ is a perfect basis for the $B_q(\g)$-module
$G_{0}(R)_{\Q(q)}$. Hence by Theorem \ref{Thm: uniqueness of
perfect graphs}, $\mathfrak{B}(\infty)$ is isomorphic to
$B(\infty)$.
\end{proof}

\vskip 1em

\subsection{Cyclotomic quotients $R^{\lambda}$ and their crystals $\Bklr{\lambda}$ }\

In this subsection, we define the cyclotomic quotient
$R^{\lambda}$ of $R$ for $\lambda \in P^+$, and investigate the
crystal structure on the set of isomorphism classes of irreducible
$R^{\lambda}$-modules.

For $\alpha \in Q^+$ with $|\alpha|=d$ and $\lambda \in P^+ $, let
$I^{\lambda}(\alpha)$ denote the two-side ideal of $R(\alpha)$
generated by
\begin{equation} \label{Eq:def of cyclotomic ideal}
\begin{aligned}
& \{x_d^{\langle h_{i_d}, \lambda \rangle} 1_\mathbf{i} \mid \mathbf{i}=(i_1, \ldots, i_d) \in \seq(\alpha)  \}. 
\end{aligned}
\end{equation}
Note that it is defined in the opposite manner to \cite[Section
3.4]{KL09}. We define
$$ R^\lambda(\alpha) = R(\alpha) / I^\lambda(\alpha). $$
The algebra $R^{\lambda}:= \bigoplus_{\alpha
\in Q^{+}} R^{\lambda}(\alpha)$ is called the {\it cyclotomic
Khovanov-Lauda-Rouquier algebra} of weight $\lambda$.  For an
irreducible $R(\alpha)$-module $M$, let
$$ \ep^{\vee}_i (M) = \max \{ k \ge 0 \mid 1_{\alpha - k \alpha_i, k \alpha_i } M  \ne 0 \}. $$
This definition is also the opposite to \cite[(5.6)]{LV09}. Combining Lemma \ref{Lem:properties of L(im) bt N} and Lemma \ref{Lem:soc and hd}
with $\eqref{Eq:def of L in Iim}$ and the fact that $x_m^k L(i^m)=0$ for $k \ge m,\ i \in \Ire$, we obtain
\begin{align} \label{Eq:eq condition of being in B(lamda)}
   I^\lambda(\alpha) \cdot M = 0   \  \text{ if and only if }\
 \left\{
   \begin{array}{ll}
     \ep^{\vee}_i (M) \le \langle h_i, \lambda \rangle & \hbox{ for } i \in \Ire, \\
     \ep^{\vee}_i (M) = 0 & \hbox{ for } i \in \Iim \text{ with } \langle h_i, \lambda \rangle=0,
   \end{array}
 \right.
\end{align}
where $M$ is an irreducible $R(\alpha)$-module.
\begin{Lem}
Let $M$ be an irreducible $R(\alpha)$-module.
\begin{enumerate}
\item For $i \in I$, either $\ep_i^{\vee}(\kf_i M) = \ep_i^{\vee}(M)$ or $\ep_i^{\vee}(M)+1$.
\item For $i,j \in I$ with $i \ne j$, we have $\ep_i^{\vee}(\kf_j M) = \ep_i^{\vee}(M)$.
\end{enumerate}
\end{Lem}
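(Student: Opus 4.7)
The plan is to prove both parts by establishing matching upper and lower bounds on $\ep_i^\vee(\kf_j M)$. Write $k := \ep_i^\vee(M)$ throughout.

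For the upper bound I would use that $\kf_j M$ is an irreducible quotient of $\ind_{\alpha_j,\alpha}(L(j)\boxtimes M)$. Since multiplication by each idempotent $1_{\alpha+\alpha_j-m\alpha_i,\,m\alpha_i}$ is exact, $\ep_i^\vee$ can only decrease upon passage to a quotient, and it suffices to bound $\ep_i^\vee$ of the induction itself. By the shuffle-character identity \eqref{Eq:shuffle}, $\qch(\ind L(j)\boxtimes M) = (j)\star\qch(M)$, so every sequence appearing there is a shuffle of $(j)$ with a sequence of $\qch(M)$. Inspecting where the single entry $j$ sits in such a shuffle: when $j\ne i$ it cannot lie among the trailing $i$'s, so at most $k$ of them appear; when $j=i$ the entry contributes at most one trailing $i$, so at most $k+1$ of them appear. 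This yields $\ep_i^\vee(\kf_j M)\le k$ in case (2) and $\ep_i^\vee(\kf_i M)\le k+1$ in case (1).

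For the lower bound, which I would prove uniformly in $j$, I would first invoke Lemma \ref{Lem:adjoint ke and kf} to identify $\ke_j(\kf_j M)\simeq M$, and then apply Lemma \ref{Lem:soc and hd}(1) with $m=1$ to $\kf_j M$ to obtain a canonical embedding
$$
L(j)\boxtimes M \;\hookrightarrow\; \Delta_j(\kf_j M) \;=\; 1_{\alpha_j,\alpha}(\kf_j M)
$$
of $R(\alpha_j)\otimes R(\alpha)$-modules. Multiplying by the further idempotent $1_{\alpha_j}\otimes 1_{\alpha-k\alpha_i,\,k\alpha_i}$ is an exact operation, so it induces an embedding
$$
L(j)\boxtimes \bigl(1_{\alpha-k\alpha_i,\,k\alpha_i}\,M\bigr) \;\hookrightarrow\; 1_{\alpha_j,\,\alpha-k\alpha_i,\,k\alpha_i}(\kf_j M).
$$
The left-hand side is nonzero by the definition of $k=\ep_i^\vee(M)$, hence so is the right-hand side. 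Since $1_{\alpha_j,\,\alpha-k\alpha_i,\,k\alpha_i}$ is a sub-idempotent of $1_{\alpha+\alpha_j-k\alpha_i,\,k\alpha_i}$ (the former sums over sequences starting with $j$ and ending in $k$ letters $i$, a subset of those merely ending in $k$ letters $i$), it follows that $1_{\alpha+\alpha_j-k\alpha_i,\,k\alpha_i}(\kf_j M)\ne 0$, giving $\ep_i^\vee(\kf_j M)\ge k$.

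Combining the bounds yields $\ep_i^\vee(\kf_i M)\in\{k,k+1\}$ for (1) and $\ep_i^\vee(\kf_j M)=k$ for (2). The argument is essentially a pairing of the shuffle analysis of the induction with the already-established socle description of $\Delta_j$, so I do not foresee a substantive obstacle; the only step requiring any care is verifying the subsum relationship among the various idempotents used in the lower-bound step, which follows immediately from their definitions as sums over subsets of $\seq(\alpha+\alpha_j)$.
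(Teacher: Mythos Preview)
Your argument is correct. The paper itself gives no proof here, deferring entirely to \cite[Proposition~6.2]{LV09}; your shuffle-character upper bound together with the embedding $L(j)\boxtimes M\hookrightarrow\Delta_j(\kf_j M)$ for the lower bound is exactly the strategy used there (modulo the left/right conventions). One minor simplification: for the lower bound you do not actually need Lemma~\ref{Lem:soc and hd}---Frobenius reciprocity applied to the surjection $\ind L(j)\boxtimes M\twoheadrightarrow \kf_j M$ already gives a nonzero (hence injective, since $L(j)\boxtimes M$ is irreducible) map $L(j)\boxtimes M\to\Delta_j(\kf_j M)$, and the rest of your idempotent-cutting argument goes through unchanged.
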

\begin{proof}
The proof is the same as that of \cite[Proposition 6.2]{LV09}.
\end{proof}

For $M \in R^\lambda(\alpha)$-$\fMod$ and $N \in R(\alpha)$-$\fMod$,
let $\infl^\lambda M$ be the inflation of $M$,
and $\pr^\lambda N $ be the quotient of $ N $ by $ I^\lambda(\alpha) N$.
Let $\Bklr{\lambda}$ denote the set of isomorphism classes of irreducible graded $R^\lambda$-modules.
For $M \in R^\lambda(\alpha)$-$\fMod$, define
\begin{equation} \label{Eq:def of B(lambda)}
\begin{aligned}
\wt^\lambda(M) &= \lambda - \alpha, \\
\ke_i^\lambda M &=  \pr^\lambda \circ \ke_i \circ \infl^\lambda M, \\
\kf_i^\lambda M &= \pr^\lambda \circ \kf_i \circ \infl^\lambda M, \\
\varepsilon_i^\lambda(M) &= \left\{
                               \begin{array}{ll}
                                 \max\{ k \ge 0 \mid (\ke_i^\lambda)^k M \ne 0 \} & \hbox{ for } i\in \Ire, \\
                                 0 & \hbox{ for } i\in \Iim,
                               \end{array}
                             \right. \\
\varphi_i^\lambda(M) &= \left\{
                           \begin{array}{ll}
                             \max\{ k \ge 0 \mid (\kf_i^\lambda)^k M \ne 0 \} & \hbox{ for } i\in \Ire, \\
                            \langle h_i , \wt^\lambda(M) \rangle & \hbox{ for } i\in \Iim.
                           \end{array}
                         \right.
\end{aligned}
\end{equation}

We will show that $(\mathfrak{B}(\lambda),\wt^\lambda,
\ke_i^\lambda, \kf_i^\lambda, \varepsilon_i^\lambda,
\varphi_i^\lambda)$ is an abstract crystal. For this purpose, we
need several lemmas.

\begin{Lem} \label{Lem:difference of phi}
Let $i \in \Ire$ and $\lambda, \mu \in P^+$.
For $[M], [N] \in \Bklr{\infty}$ with $ \pr^\lambda M \ne \emptyset,\ \pr^{\lambda} N \ne \emptyset,\ \pr^\mu M \ne \emptyset,\ \pr^{\mu} N \ne \emptyset $,
we have
$$ \ph_i^\lambda (M) - \ph_i^\lambda (N) = \ph_i^\mu (M) - \ph_i^\mu (N)  .$$
\end{Lem}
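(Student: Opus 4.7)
The plan is as follows. By \eqref{Eq:eq condition of being in B(lamda)} and the definition $\kf_i^\lambda = \pr^\lambda \circ \kf_i \circ \infl^\lambda$, together with the preceding lemma (which gives $\ep_j^\vee(\kf_i^k M) = \ep_j^\vee(M)$ for $j \ne i$), one rewrites
$$\varphi_i^\lambda(M) = \max\{k \ge 0 \mid \ep_i^\vee(\kf_i^k M) \le \langle h_i, \lambda\rangle\}.$$
In particular, for a fixed module $M$, the quantity $\varphi_i^\lambda(M)$ depends on $\lambda$ only through $\langle h_i, \lambda\rangle$. It therefore suffices to prove the \emph{incremental claim}: whenever $\mu = \lambda + \Lambda_i$ and $\pr^\lambda M, \pr^\mu M \ne 0$, one has $\varphi_i^\mu(M) - \varphi_i^\lambda(M) = 1$, independent of $M$. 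Chaining such increments between any two admissible weights $\lambda, \mu$ through a common upper bound $\nu \ge \lambda, \mu$ (for instance $\nu = \lambda + \mu$, which again satisfies $\pr^\nu M, \pr^\nu N \ne 0$) then gives $\varphi_i^\lambda(M) - \varphi_i^\mu(M) = \langle h_i, \lambda - \mu\rangle$, which is the lemma.

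To prove the incremental claim, set $K := \varphi_i^\lambda(M)$ and $a_k := \ep_i^\vee(\kf_i^k M)$. The preceding lemma gives $a_{k+1} - a_k \in \{0, 1\}$, and the maximality of $K$ forces $a_K \le \langle h_i, \lambda\rangle$ together with $a_{K+1} = \langle h_i, \lambda\rangle + 1 = \langle h_i, \mu\rangle$. Hence $\kf_i^{K+1} M$ survives in $R^\mu$, giving $\varphi_i^\mu(M) \ge K + 1$. The reverse inequality amounts to the \emph{propagation property}: once the sequence $(a_k)$ makes a strict jump at step $K$, it continues to increment strictly at step $K+1$, so that $a_{K+2} = \langle h_i, \mu\rangle + 1$ and $\kf_i^{K+2} M$ does not survive in $R^\mu$. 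This propagation is the main obstacle. The plan is to prove it by applying the Mackey filtration (Proposition \ref{Prop:Mackey}) to $1_{\alpha - c\alpha_i,(c+2)\alpha_i}(\kf_i^2 M)$, with $c = \ep_i^\vee(M)$, and exhibiting an $L(i^{c+2})$-isotypic subquotient on the right-hand factor under the inductive hypothesis that $1_{\alpha - c\alpha_i,(c+1)\alpha_i}(\kf_i M)$ contains an $L(i^{c+1})$-isotypic piece. Since only real $i$ is involved, the analysis runs formally parallel to the Kac-Moody case of \cite[Proposition 5.11]{LV09}, with left and right induction exchanged; no modification stemming from the generalized Kac-Moody setting is needed.
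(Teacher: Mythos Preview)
Your proposal is correct and follows essentially the same route as the paper, which simply defers to \cite[Proposition 6.6, Remark 6.7]{LV09}. You have unpacked that citation: the reduction to the incremental claim $\varphi_i^{\lambda+\Lambda_i}(M)=\varphi_i^\lambda(M)+1$, and the identification of the propagation property (once $\ep_i^\vee(\kf_i^{k}M)$ jumps it keeps jumping) as the crux, is exactly the content of the Lauda--Vazirani argument, with left and right exchanged as you note. One small point: your citation \cite[Proposition 5.11]{LV09} does not match the paper's pointer to Proposition 6.6 and Remark 6.7; the latter are the statements in \cite{LV09} directly analogous to this lemma, while the jump/propagation mechanism you invoke sits nearby (their Lemma 6.5). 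You may want to align the reference.
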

\begin{proof}
The assertion can be proved in the same manner as in \cite[Proposition 6.6, Remark 6.7]{LV09}.
\end{proof}

\begin{Lem} \label{Lem:structure lem for R(mi+j)}
 Let $i \in \Ire$ and $ j \in I$ with $a_{ij}<0$.
\begin{enumerate}
\item If $ m \le -a_{ij}$, then for each $0 \le k \le m $, there exists a unique irreducible
$R(m \alpha_i + \alpha_j)$-module $L(i^{k} j i^{m-k})$ with
$$ \ep_i(L(i^k j i^{m-k})) = k  \ \text{ and }\  \ep_i^{\vee}(L(i^k j i^{m-k})) = m-k .$$
\item If $0 \le k \le -a_{ij}$, then the module
$$ \ind L(i^s) \boxtimes L( i^k j i^{-a_{ij} - k}) \simeq \ind L( i^k j i^{-a_{ij} - k}  ) \boxtimes L(i^s) $$
is irreducible for all $s \ge 0$.
\item If $0 \le k \le -a_{ij} \le c$ and $N$ is an irreducible $R(c\alpha_i + \alpha_j)$-module with $\ep_i(N)=k$, then
we have $c+a_{ij} \le k \le c$ and
$$ N \simeq \ind  L(i^{c+a_{ij}}) \boxtimes L( i^{k-c-a_{ij}} j i^{c-k}) .  $$
\end{enumerate}
\end{Lem}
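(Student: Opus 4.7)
The plan is to combine the socle/head analysis from Subsection 3.2 (in particular Lemmas \ref{Lem:irr hd}, \ref{Lem:irr soc}, \ref{Lem:Delta of M}, \ref{Lem:soc and hd}) with the Mackey filtration (Proposition \ref{Prop:Mackey}), exploiting that $i \in \Ire$ reduces the $i$-part to the nil Hecke algebra and that the combinatorial bound $m \le -a_{ij}$ controls the $i$--$j$ interaction terms coming from \eqref{Eq:local rel 1}. Throughout, the intuition is that a single $j$-strand threading an $i$-string behaves like an $\mathfrak{sl}_2$-string representation of length $-a_{ij}+1$, and the three parts are the categorified version of that picture.

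For part (1), I will induct on $m$, with base case $m=0$ given by $L(j)$. In the inductive step, for $0 \le k \le m$ I define
$$L(i^k j i^{m-k}) := \hd \ind_{k\alpha_i,\,(m-k)\alpha_i+\alpha_j}\bigl( L(i^k) \boxtimes L(j i^{m-k}) \bigr),$$
where $L(j i^{m-k}) = L(i^0 j i^{m-k})$ is supplied by the induction hypothesis (since $m-k \le -a_{ij}$). Lemma \ref{Lem:irr hd} makes this head irreducible with $\ep_i = k$, and a symmetric induction on the right yields $\ep_i^\vee = m-k$. For uniqueness, let $N$ be an irreducible module with $\ep_i(N)=k$ and $\ep_i^\vee(N)=m-k$. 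Lemma \ref{Lem:irr soc} gives $\soc\Delta_{i^k}N \simeq L(i^k)\boxtimes L$ with $L$ irreducible over $R((m-k)\alpha_i+\alpha_j)$ and $\ep_i(L)=0$; a support argument on sequences (using $\ep_i^\vee(N)=m-k$ to rule out sequences of the form $(i^{k-1}ji^{m-k+1})$ etc.) forces $\ep_i^\vee(L)=m-k$, and the inductive hypothesis identifies $L \simeq L(j i^{m-k})$. Frobenius reciprocity \eqref{Eq:reciprocity2} then identifies $N$ with the constructed head.

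For part (2), I will apply Proposition \ref{Prop:Mackey} to analyze the restriction of the induction, noting that the hypothesis $k \le -a_{ij}$ forces every Mackey subquotient that would raise $\ep_i$ above $s + k$ to vanish; hence the induction already equals its own head and is irreducible. Commutativity of the two orderings follows by matching characters via \eqref{Eq:shuffle} and invoking uniqueness from part (1). For part (3), Lemma \ref{Lem:Delta of M} gives $\Delta_{i^k}N \simeq L(i^k)\boxtimes N'$ for an irreducible $N'$ over $R((c-k)\alpha_i+\alpha_j)$ with $\ep_i(N')=0$; since every sequence in $\qch(N')$ must start with $j$, the weight $(c-k)\alpha_i+\alpha_j$ is supportable only when $c-k \le -a_{ij}$, giving the lower bound $c+a_{ij}\le k$. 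Then part (1) yields $N' \simeq L(j i^{c-k})$, and Frobenius reciprocity realizes $N$ as the irreducible head of $\ind L(i^k)\boxtimes L(j i^{c-k})$. To match this head with $\ind L(i^{c+a_{ij}})\boxtimes L(i^{k-c-a_{ij}}j i^{c-k})$ (irreducible by part (2) with $s=c+a_{ij}$ and $k'=k-c-a_{ij}$), I will use that $L(i^k)$ is the head of $\ind L(i^{c+a_{ij}})\boxtimes L(i^{k-c-a_{ij}})$ together with transitivity of induction to exhibit $N$ as a quotient of $\ind L(i^{c+a_{ij}})\boxtimes L(i^{k-c-a_{ij}})\boxtimes L(j i^{c-k})$, then conclude by irreducibility of the target.

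The principal obstacle I anticipate lies in part (2): ruling out extra composition factors that the Mackey filtration could produce when $i$-strands cross the $j$-strand. Mackey alone provides only a filtration, and one must argue that the subquotients indexed by $\gamma \ne 0$ in Proposition \ref{Prop:Mackey} vanish at the threshold $k \le -a_{ij}$; the key input is the quadratic relation \eqref{Eq:local rel 1}, whose degree in $x$ is exactly $-a_{ij}$, so the combinatorics fail to produce nontrivial elements once the number of crossing $i$-strands is restricted. Once this threshold vanishing is in hand, parts (1) and (3) follow by the largely formal bookkeeping sketched above.
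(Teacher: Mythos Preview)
Your approach to part (1) diverges from the paper's. The paper constructs each $L(i^kji^{m-k})$ as the head of the triple induction $\ind L(i^k)\boxtimes L(j)\boxtimes L(i^{m-k})$, exhibiting the maximal submodule explicitly as the span of $\tau_w\otimes(-)$ with $\ell(w)>0$ (following \cite[Proposition 6.11]{LV09}), and then invokes the crystal isomorphism $\Bklr{\infty}\simeq B(\infty)$ (Theorem~\ref{Thm: B(infty)}) to see that the $m+1$ elements $\kf_i^k\kf_j\kf_i^{m-k}\mathbf{1}$ form a complete list of irreducibles at this weight, whence uniqueness by counting. Your inductive scheme avoids the crystal theorem and can be made to work for existence and for uniqueness given the pair $(\ep_i,\ep_i^\vee)$, but you lose the global enumeration of irreducibles that the paper leans on later.

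There is a genuine gap in part (2). First, Mackey subquotients do not ``vanish'': Proposition~\ref{Prop:Mackey} produces a filtration whose pieces are nonzero whenever the index $\gamma$ is admissible. Second, and more seriously, your logic is inverted. For $i\in\Ire$, Lemma~\ref{Lem:irr hd}(2) already guarantees that every composition factor of $\ind L(i^s)\boxtimes\mathfrak{L}(k)$ other than the head has $\ep_i<s+k$; ruling out $\ep_i>s+k$ is not the obstacle. What you must show is that there are \emph{no} composition factors with $\ep_i<s+k$, and an upper bound on $\ep_i$ cannot deliver that. The argument the paper defers to in \cite[Theorem 6.10]{LV09} uses instead the explicit description of $\mathfrak{L}(k)$ coming from the concrete maximal submodule in (1): knowing $\qch(\mathfrak{L}(k))$ exactly, one computes the shuffle $\qch(L(i^s))\star\qch(\mathfrak{L}(k))$ and matches it term by term against the character of the irreducible head. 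No Mackey-threshold reasoning of the kind you sketch substitutes for this computation.

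Part (3) inherits the defect. Your ``support'' justification of $c-k\le -a_{ij}$ is circular: the existence of an irreducible $N'$ with $\ep_i(N')=0$ at weight $(c-k)\alpha_i+\alpha_j$ is not a priori forbidden for large $c-k$, and part (1) only applies once that bound is already known. In the paper's route the bound falls out after (2) is established: the $-a_{ij}+1$ irreducibles $\ind L(i^{c+a_{ij}})\boxtimes\mathfrak{L}(k')$ produced by (2) are pairwise nonisomorphic (distinct $\ep_i$) and, by the crystal count from Theorem~\ref{Thm: B(infty)}, exhaust all irreducibles at weight $c\alpha_i+\alpha_j$; their $\ep_i$-values fill exactly $\{c+a_{ij},\ldots,c\}$, forcing $k\ge c+a_{ij}$.
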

\begin{proof}
To prove (1), we consider the induced module $\ind L(i^k) \boxtimes L(j) \boxtimes L(i^{m-k})$ for $0 \le k \le m$. Let
$$ K = \Span_\F \{ \tau_w \otimes (t \otimes u \otimes v) \mid  w \in \sg_{m+1}, \ell(w)>0, t \in L(i^k), u \in L(j), v \in L(i^{m-k})  \} . $$
By the same argument as in \cite[Proposition 6.11]{LV09}, we deduce
that $K$ is a proper maximal submodule of $\ind L(i^k) \boxtimes
L(j) \boxtimes L(i^{m-k})$, and that $\hd \ind L(i^k) \boxtimes L(j)
\boxtimes L(i^{m-k})$ is the quotient module  $\ind L(i^k) \boxtimes
L(j) \boxtimes L(i^{m-k}) /K $ which is irreducible. We denote it by
$L(i^{k} j i^{m-k})$. By the Frobenius reciprocity
$\eqref{Eq:reciprocity}$, we have
\begin{align} \label{Eq:epsilon of irr repn}
 \ep_i(L(i^k j i^{m-k})) = k  \ \text{ and }\  \ep_i^{\vee}(L(i^k j i^{m-k})) = m-k .
\end{align}

On the other hand, there is a surjective homomorphism of degree 0
$$ \ind L(i^k) \boxtimes L(j) \boxtimes L(i^{m-k}) \twoheadrightarrow \ke_i^k \ke_j \ke_i^{m-k} \mathbf{1}, $$
which implies that $ L(i^k j i^{m-k}) \simeq \ke_i^k \ke_j \ke_i^{m-k} \mathbf{1}$. By Theorem \ref{Thm: B(infty)},
$$\{ \ke_i^k \ke_j \ke_i^{m-k} \mathbf{1} \mid 0 \le k \le m  \}  $$
is a complete set of irreducible $R(m\alpha_i + \alpha_j)$-module.
Therefore, $L(i^k j i^{m-k})$ is a unique irreducible $R(m\alpha_i +
\alpha_j)$-module satisfying $\eqref{Eq:epsilon of irr repn}$.

The assertion (2), (3) can be proved by the same argument as in \cite[Theorem 6.10]{LV09}.
\end{proof}

Fix $i \in \Ire$ and $ j \in I$ with $i \ne j, a_{ij} \ne 0$ and
let
$$\mathfrak{L}(k) = L(i^{k} j i^{-a_{ij}-k}) \ \ \text{for} \ 0 \le k \le -a_{ij}.$$

\begin{Lem} \label{Lem:surjection for R(ci+dj)}
Let $c,d \in \Z_{\ge 0}$ with $c+d \le -a_{ij}$.
\begin{enumerate}
\item We have
\begin{align*}
\hd \ind L(i^m) \boxtimes L(i^c j i^{d} ) & \simeq \kf_i^m L(i^cji^{d}) \simeq \kf_i^{m+c}L(ji^{d})\\
& \simeq \left\{
           \begin{array}{ll}
             \ind L(i^{m+a_{ij} + c + d}) \boxtimes \mathfrak{L}(-a_{ij}-d) & \hbox{ if } m \ge -a_{ij} - c -d, \\
             \mathfrak{L}(i^{m+c} j i^{d}) & \hbox{ if } m < -a_{ij} - c -d.
           \end{array}
         \right.
\end{align*}
\item Suppose that there is a nonzero homomorphism
$$ \ind L(i^m) \boxtimes \mathfrak{L}(c_1) \boxtimes \cdots  \boxtimes \mathfrak{L}(c_r) \longrightarrow Q $$
where $Q$ is irreducible. Then
$$ \ep_i(Q) = m+ \sum_{t=1}^r c_t\ \text{ and }\ \ep_i^{\vee}(Q) = m + \sum_{t=1}^r (-a_{ij} - c_t) .$$
\item Let $M$ and $Q$ be irreducible. Suppose that there is a nonzero homomorphism $\ind \mathfrak{L}(k) \boxtimes M \rightarrow Q$.
Then $\ep_i(Q) = \ep_i(M)+k$.
\end{enumerate}
\end{Lem}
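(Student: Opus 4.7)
The plan is to follow the inductive strategy of \cite[Theorem 6.10]{LV09} for the Kac-Moody setting, using the crystal isomorphism $\Bklr{\infty} \simeq B(\infty)$ (Theorem \ref{Thm: B(infty)}) as the identification tool and Lemma \ref{Lem:Kato for i in Iim} to handle the new features when $j \in \Iim$. Since $i \in \Ire$ throughout, the nil Hecke structure around the $i$-strings is unchanged from the Kac-Moody case, so the adaptation amounts to checking that arguments involving induction and restriction along the $j$-strand still go through.

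For part (1), I would start from Lemma \ref{Lem:soc and hd}, which gives $\hd \ind L(i^m) \boxtimes L(i^c j i^d) \simeq \kf_i^m L(i^c j i^d) \simeq \kf_i^{m+c} L(j i^d)$, the last equality coming from the realization $L(i^c j i^d) \simeq \kf_i^c \kf_j \kf_i^d \mathbf{1}$ used in the proof of Lemma \ref{Lem:structure lem for R(mi+j)}(1). To identify this module explicitly, I would compute its $\ep_i$ and $\ep_i^{\vee}$ values and invoke the uniqueness in Lemma \ref{Lem:structure lem for R(mi+j)}. When $m+c+d \le -a_{ij}$, we stay in the range of Lemma \ref{Lem:structure lem for R(mi+j)}(1) and the result is $\mathfrak{L}(i^{m+c} j i^d)$; when $m \ge -a_{ij}-c-d$, Lemma \ref{Lem:structure lem for R(mi+j)}(3) peels off the excess $i$-strings to give $\ind L(i^{m+a_{ij}+c+d}) \boxtimes \mathfrak{L}(-a_{ij}-d)$.

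For part (2), I would induct on $r$, with $r=0$ trivial. The upper bounds $\ep_i(Q) \le m + \sum_t c_t$ and $\ep_i^{\vee}(Q) \le m + \sum_t(-a_{ij}-c_t)$ come from Mackey's theorem (Proposition \ref{Prop:Mackey}) applied to the appropriate restrictions of the induced module, using $\ep_i(\mathfrak{L}(c_t)) = c_t$ and $\ep_i^{\vee}(\mathfrak{L}(c_t)) = -a_{ij}-c_t$. The Frobenius reciprocity \eqref{Eq:reciprocity2}, combined with part (1) applied iteratively from the left, then forces equality in both cases.

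Part (3), which I expect to be the main obstacle, reduces to the preceding parts as follows. Setting $\ep = \ep_i(M)$ and writing $\Delta_{i^\ep} M \simeq L(i^\ep) \boxtimes M_0$ with $\ep_i(M_0) = 0$ via Lemma \ref{Lem:Delta of M}, the surjection $\ind L(i^\ep) \boxtimes M_0 \twoheadrightarrow M$ composes with the given map to yield a nonzero $\phi: \ind \mathfrak{L}(k) \boxtimes L(i^\ep) \boxtimes M_0 \to Q$. By Lemma \ref{Lem:structure lem for R(mi+j)}(2), $\ind \mathfrak{L}(k) \boxtimes L(i^\ep) \simeq \ind L(i^\ep) \boxtimes \mathfrak{L}(k)$ is irreducible, so $\phi$ factors through a surjection $\ind L(i^\ep) \boxtimes (\ind \mathfrak{L}(k) \boxtimes M_0) \twoheadrightarrow Q$. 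The upper bound $\ep_i(Q) \le \ep + k$ then follows from a Mackey analysis of $\Delta_{i^s}(\ind \mathfrak{L}(k) \boxtimes M)$, since $\ep_i(\mathfrak{L}(k)) = k$. The matching lower bound comes from Lemma \ref{Lem:irr hd} together with identifying a composition factor $N$ of $\ind \mathfrak{L}(k) \boxtimes M_0$ with $\ep_i(N) = k$, through which $Q \simeq \hd \ind L(i^\ep) \boxtimes N$ factors, forcing $\ep_i(Q) = \ep + k$. The technical subtlety lies in verifying both the Mackey decomposition and the existence of such an $N$ when $j \in \Iim$ and $M_0$ has a complex internal structure; this is where Lemma \ref{Lem:Kato for i in Iim} is essential, and I would isolate this verification as a separate sublemma if necessary.
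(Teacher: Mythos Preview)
Your proposal is correct and aligns with the paper's approach: the paper's proof is simply the one-line reference ``The proof is identical to that of \cite[Lemma 6.13]{LV09},'' and your sketch is a faithful expansion of that argument adapted to the generalized Kac-Moody setting. Note that the precise citation is \cite[Lemma 6.13]{LV09} rather than \cite[Theorem 6.10]{LV09} (the latter is what underlies Lemma \ref{Lem:structure lem for R(mi+j)}), but the substantive content of your sketch matches.
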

\begin{proof}
The proof is identical to that of \cite[Lemma 6.13]{LV09}.
\end{proof}

\begin{Lem}\ \label{Lem:surjection2 for R(ci+dj)}
\begin{enumerate}
\item If $N$ is an irreducible $R(c\alpha_i + d\alpha_j)$-module with
$\ep_i(N)=0$, then there exist $r \in \Z_{> 0} $ and $b_t \le
-a_{ij}$ for $1 \le t \le r$ such that
$$ \ind L(j i^{b_1}) \boxtimes \cdots \boxtimes L(j i^{b_r}) \twoheadrightarrow N . $$
\item Let $a := -a_{ij}$. Suppose that we have a surjective
homomorphism
$$ \ind L(i^h) \boxtimes L(j i^{b_1}) \boxtimes \cdots \boxtimes L(j i^{b_r}) \twoheadrightarrow Q, $$
where $Q$ is irreducible.
\begin{enumerate}
\item  If $h \ge  \sum_{t=1}^r (a- b_t)$, then we have a surjective homomorphism
$$ \ind L(i^g) \boxtimes \mathfrak{L}(a-b_1) \boxtimes \cdots \boxtimes \mathfrak{L}(a-b_r) \twoheadrightarrow Q,$$
where $g := h- \sum_{t=1}^r (a-b_t) $.
\item Otherwise, we have
$$ \ind \mathfrak{L}(a-b_1) \boxtimes \cdots \boxtimes \ind \mathfrak{L}(a-b_{s-1}) \boxtimes L(i^{g'}ji^{b_s}) \boxtimes L(j i^{s+1}) \boxtimes \cdots \boxtimes L(ji^{b_1})  \twoheadrightarrow Q,$$
where $g' = h - \sum_{t=1}^{s-1} (a-b_t)$ and $s$ is such that
$$  \sum_{t=1}^{s-1} (a-b_t) \le h < \sum_{t=1}^s (a-b_t). $$
\end{enumerate}
\end{enumerate}
\end{Lem}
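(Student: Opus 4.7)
The plan is to prove (1) by induction on $d$ (the coefficient of $\alpha_j$ in the weight of $N$), relying on the structure theorem for $R(m\alpha_i + \alpha_j)$-modules from Lemma \ref{Lem:structure lem for R(mi+j)}, and to prove (2) by iteratively transforming the initial $L(i^h)$ factor past each successive $L(ji^{b_t})$ via the head computation in Lemma \ref{Lem:surjection for R(ci+dj)}(1).

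For (1), the base case $d = 1$ is handled as follows: since $\ep_i(N) = 0$, Lemma \ref{Lem:structure lem for R(mi+j)}(1) and (3) together force $c \le a$ and $N \simeq L(ji^c)$, so $r = 1$ and $b_1 = c$ work. For $d \ge 2$, the assumption $\ep_i(N) = 0$ ensures that every sequence $\mathbf{i}$ with $1_\mathbf{i} N \ne 0$ begins with $j$, so I choose $b_1$ \emph{maximal} such that $\Delta_{\alpha_j + b_1\alpha_i,\, \gamma} N \ne 0$ with $\gamma = (c - b_1)\alpha_i + (d-1)\alpha_j$, and pick an irreducible submodule $M_1 \boxtimes N_1$ of this restriction. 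A direct boundary-letter analysis gives $\ep_i(M_1) = 0$ (inherited from $\ep_i(N) = 0$) and $\ep_i(N_1) = 0$ (from maximality of $b_1$); Lemma \ref{Lem:structure lem for R(mi+j)} then forces $M_1 \simeq L(ji^{b_1})$ with $b_1 \le a$. Frobenius reciprocity produces $\ind L(ji^{b_1}) \boxtimes N_1 \twoheadrightarrow N$, and the inductive hypothesis applied to $N_1$ (which carries exactly $d - 1$ copies of $\alpha_j$) completes the argument.

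For (2), I will process the factors from left to right. By Lemma \ref{Lem:surjection for R(ci+dj)}(1) applied with $c = 0$, $d = b_t$ and $m$ equal to the current residual power of $i$, the head of $\ind L(i^m) \boxtimes L(ji^{b_t})$ equals $\ind L(i^{m - a + b_t}) \boxtimes \mathfrak{L}(a - b_t)$ when $m \ge a - b_t$ and equals $L(i^m j i^{b_t})$ otherwise. In case (a), the hypothesis $h \ge \sum_{t=1}^r (a - b_t)$ guarantees the first alternative at every step, so iterating $r$ times produces the required surjection from $\ind L(i^g) \boxtimes \mathfrak{L}(a - b_1) \boxtimes \cdots \boxtimes \mathfrak{L}(a - b_r)$ onto $Q$. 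In case (b) the reduction proceeds for the first $s - 1$ steps; at step $s$ the residual $g' = h - \sum_{t=1}^{s-1}(a - b_t)$ is too small to cross $L(ji^{b_s})$, so the second alternative applies and absorbs $L(i^{g'}) \boxtimes L(ji^{b_s})$ into the single irreducible $L(i^{g'} j i^{b_s})$, leaving $L(ji^{b_{s+1}}), \ldots, L(ji^{b_r})$ untouched.

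The main obstacle is the standard but delicate verification that each iterative surjection onto $Q$ genuinely factors through the head. Given $X \twoheadrightarrow X_0 = \hd X$ with kernel $K$, inducing with the trailing factors produces an exact sequence $\ind K \boxtimes \cdots \hookrightarrow \ind X \boxtimes \cdots \twoheadrightarrow \ind X_0 \boxtimes \cdots$, and \emph{a priori} the given surjection to $Q$ might factor through $\ind K \boxtimes \cdots$ rather than through the head term. I will rule this out via the numerical invariant $\ep_i(Q)$: iterated use of Lemma \ref{Lem:irr hd} pins down $\ep_i(Q) = h$, while any irreducible quotient of $\ind K \boxtimes \cdots$ (with $K$ strictly inside $X$) must have strictly smaller $\ep_i$, giving the required contradiction. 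The complementary consistency check on $\ep_i^\vee$ via Lemma \ref{Lem:surjection for R(ci+dj)}(2) closes the induction.
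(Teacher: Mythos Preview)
Your outline matches the argument of \cite[Lemma~6.14, Lemma~6.15]{LV09}, which is exactly what the paper cites, and part~(1) is fine as written. In part~(2), however, the iteration has a missing step: after passing to the head $\ind L(i^{h_1})\boxtimes\mathfrak{L}(a-b_1)$ of $\ind L(i^{h})\boxtimes L(ji^{b_1})$, the factor $\mathfrak{L}(a-b_1)$ now separates $L(i^{h_1})$ from $L(ji^{b_2})$, so you cannot immediately repeat the head computation. You must first invoke the commutation isomorphism $\ind L(i^{s})\boxtimes\mathfrak{L}(k)\simeq\ind\mathfrak{L}(k)\boxtimes L(i^{s})$ of Lemma~\ref{Lem:structure lem for R(mi+j)}(2) to swap them, and again at each later stage (and once more at the end to move $L(i^g)$ back to the far left); without this the ``iterate $r$ times'' step does not parse.

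A second imprecision: Lemma~\ref{Lem:irr hd} does not directly yield $\ep_i(Q)=h$, since its hypothesis requires the right-hand factor to be irreducible. The correct justification is that $\ep_i(Q)\ge h$ by Frobenius reciprocity (the surjection gives $L(i^h)\boxtimes M\to\Delta_{i^h}Q$ nonzero), while $\ep_i(Q)\le h$ follows from exactness of $\Delta_{i^{h+1}}$ together with $\ep_i\bigl(L(ji^{b_1})\boxtimes\cdots\boxtimes L(ji^{b_r})\bigr)=0$ via the shuffle formula. The same character bound, now counting at most $\sum_{s<t}(a-b_s)+(h_{t-1}-1)=h-1$ leading $i$'s, handles the kernel term at every stage once the accumulated $\mathfrak{L}$ factors are on the left. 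With these two fixes your factoring-through-the-head argument goes through; the closing remark about $\ep_i^\vee$ plays no role here and can be dropped.
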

\begin{proof}
The assertions can be proved in the same manner as in \cite[Lemma 6.14, Lemma 6.15]{LV09}.
\end{proof}

\begin{Prop} \label{Prop:phi-ep=wt for R(ci+dj)}
Let $i \in \Ire$ and $j \in I$ with $i\ne j$.
Let $M$ be an irreducible $R(c\alpha_i + d\alpha_j)$-module, and $\lambda \in P^+$ such that
$\pr^\lambda(M) \ne 0$ and $\pr^{\lambda}( \kf_j M) \ne 0$. Then we have
$$ \ep_i^\lambda (\kf_jM) = \ep_i^\lambda(M) + a_{ij} + k, \ \ \ph_i^{\lambda}(\kf_j M) = \ph_i^\lambda(M)+k $$
for some $ 0 \le k \le -a_{ij}$.
\end{Prop}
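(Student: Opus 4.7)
The plan is to follow the blueprint of Lauda--Vazirani's argument \cite{LV09} (their Proposition 6.16) and adapt it to the generalized Kac--Moody setting, where $j$ may be imaginary. The case $a_{ij}=0$ is handled separately and immediately, since then the relevant induced modules decompose into a tensor product of an $i$-part and a $j$-part and one obtains the formula with $k=0$. So assume $a_{ij}<0$.

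First, I would reduce the claim for $\ph_i^\lambda$ to the claim for $\ep_i^\lambda$. By Lemma \ref{Lem:difference of phi}, the difference $\ph_i^\lambda(\kf_j M) - \ph_i^\lambda(M)$ is independent of the particular $\lambda$ (provided both projections are nonzero). Combined with the abstract-crystal identity $\ph_i^\lambda - \ep_i^\lambda = \langle h_i,\wt^\lambda(\cdot)\rangle$ valid for $i\in\Ire$ (which will follow from the general construction of $\Bklr{\lambda}$) and the fact that $\wt^\lambda(\kf_j M) = \wt^\lambda(M) - \alpha_j$, the statement reduces to establishing $\ep_i^\lambda(\kf_j M) - \ep_i^\lambda(M) = a_{ij}+k$ for some $0\le k\le -a_{ij}$, and then choosing $\lambda$ sufficiently large so that $\ep_i^\lambda = \ep_i$ on both $M$ and $\kf_jM$ (which holds when $\langle h_i,\lambda\rangle$ exceeds all relevant $\ep_i^\vee$-values, by \eqref{Eq:eq condition of being in B(lamda)}).

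Next, using Lemma \ref{Lem:Delta of M}, write $M$ as the head of $\ind L(i^\ep)\boxtimes N$ with $\ep=\ep_i(M)$ and $\ep_i(N)=0$, and then apply Lemma \ref{Lem:surjection2 for R(ci+dj)}(1) to obtain a surjection $\ind L(ji^{b_1})\boxtimes\cdots\boxtimes L(ji^{b_r})\twoheadrightarrow N$ with each $b_t\le -a_{ij}$. Composing, $M$ is a quotient of $\ind L(i^\ep)\boxtimes L(ji^{b_1})\boxtimes\cdots\boxtimes L(ji^{b_r})$ and hence $\kf_j M$ is the head of $\ind L(j)\boxtimes L(i^\ep)\boxtimes L(ji^{b_1})\boxtimes\cdots\boxtimes L(ji^{b_r})$. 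Now one absorbs the initial $L(j)$ into the adjacent $L(i^\ep)$ block using Lemma \ref{Lem:surjection for R(ci+dj)}(1), which converts the expression into either $\ind L(i^g)\boxtimes \mathfrak{L}(a-b_0)\boxtimes L(ji^{b_1})\boxtimes\cdots\boxtimes L(ji^{b_r})$ (with $a=-a_{ij}$ and $g=\ep-a+b_0$ for a suitable integer $b_0$), or the exceptional form of Lemma \ref{Lem:surjection2 for R(ci+dj)}(2)(b). Either way Lemma \ref{Lem:surjection for R(ci+dj)}(2)--(3) computes $\ep_i$ of the head in terms of $g$ and the $\mathfrak{L}$-indices, and comparing with the analogous expression for $\ep_i(M)$ obtained by the same rearrangement (without the initial $L(j)$) yields the bound $\ep_i(M) - a_{ij} \ge \ep_i(\kf_j M) \ge \ep_i(M)+a_{ij}$, i.e. $\ep_i(\kf_j M) = \ep_i(M)+a_{ij}+k$ with $0\le k\le -a_{ij}$.

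The main obstacle will be the bookkeeping in the rearrangement step: when $j\in\Iim$ the module $L(j)$ is the one-dimensional module defined by \eqref{Eq:def of L in Iim}, so one must verify that the identifications in Lemma \ref{Lem:surjection for R(ci+dj)} and the surjection-chasing in Lemma \ref{Lem:surjection2 for R(ci+dj)} go through in exactly the same form (this is essentially automatic because $i\in\Ire$ drives all the braid manipulations). A secondary, more delicate point is establishing $\ep_i^\lambda(M)=\ep_i(M)$ and $\ep_i^\lambda(\kf_j M)=\ep_i(\kf_j M)$ for the chosen $\lambda$: one must check that $\pr^\lambda$ does not truncate the $\ke_i$-chain, which amounts to bounding $\ep_i^\vee$ of the successive $\ke_i^t$-images in terms of $\langle h_i,\lambda\rangle$, and this bound is what the hypothesis $\pr^\lambda(M)\ne 0$, $\pr^\lambda(\kf_j M)\ne 0$ provides via \eqref{Eq:eq condition of being in B(lamda)}.
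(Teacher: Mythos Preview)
Your overall strategy—adapting \cite{LV09} via Lemmas \ref{Lem:difference of phi}–\ref{Lem:surjection2 for R(ci+dj)}—matches the paper exactly (it simply defers to \cite[Theorem 6.19]{LV09} together with those four lemmas). But your reduction step has a genuine circularity: you invoke the identity $\ph_i^\lambda - \ep_i^\lambda = \langle h_i,\wt^\lambda(\cdot)\rangle$, yet this is Proposition~\ref{Prop:ph - ep = wt}(2), which in the paper is proved \emph{after} the present proposition and \emph{uses} it (through part (1) of that proposition). Without that identity your scheme of computing $\ep_i$ for large $\lambda$ and then transferring both claims back to the given $\lambda$ does not close: Lemma~\ref{Lem:difference of phi} moves the $\ph$-difference across choices of $\lambda$, but nothing available at this point moves the $\ep^\lambda$-difference.

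The argument in \cite[Theorem 6.19]{LV09} avoids this by computing $\ph_i^\lambda$ \emph{directly} in the two-colour algebra, not via $\ep_i^\lambda$. Lemma~\ref{Lem:surjection for R(ci+dj)}(2) gives both $\ep_i(Q)$ and $\ep_i^\vee(Q)$ explicitly for each irreducible $Q$ arising from the surjections in your second paragraph; since applying $\kf_i$ raises the leading exponent $m$ by one, the cyclotomic criterion \eqref{Eq:eq condition of being in B(lamda)} then determines $\ph_i^\lambda(Q)$ from $\ep_i^\vee(Q)$ and $\langle h_i,\lambda\rangle$. Comparing these explicit values for $M$ and for $\kf_j M$ (built from the surjections you describe, together with Lemma~\ref{Lem:surjection2 for R(ci+dj)}) yields both displayed equalities with the \emph{same} $k$, and Lemma~\ref{Lem:difference of phi} handles the passage to arbitrary admissible $\lambda$ for the $\ph$-statement. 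A small slip as well: the upper bound in your final inequality should be $\ep_i(M)$, not $\ep_i(M)-a_{ij}$; as written it would only give $0\le k\le -2a_{ij}$, which is weaker than what is claimed.
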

\begin{proof}
Using the argument in \cite[Theorem 6.19]{LV09} with Lemma
\ref{Lem:difference of phi}, Lemma \ref{Lem:structure lem for
R(mi+j)}, Lemma \ref{Lem:surjection for R(ci+dj)} and Lemma
\ref{Lem:surjection2 for R(ci+dj)}, our assertion follows.
\end{proof}

\begin{Prop} \label{Prop:ph - ep = wt}
Let $i \in \Ire$, and $M$ be an irreducible $R(\alpha)$-module with $\pr^\lambda(M) \ne 0 $.
\begin{enumerate}
\item For $j\in I$ with $i \ne j$, we have
$$ \ph_i^\lambda(\kf_j M) - \ep_i^\lambda(\kf_j M) = -\langle h_i, \alpha_j \rangle + \ph_i^\lambda(M) - \ep_i^\lambda(M). $$
\item Moreover, we have
$$ \ph_i^\lambda(M) = \ep_i^\lambda(M) + \langle h_i, \wt^\lambda(M)\rangle. $$
\end{enumerate}
\end{Prop}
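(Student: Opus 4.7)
The plan is to prove part (1) first and then deduce part (2) by induction on the height of $\alpha$, paralleling the strategy in \cite[Theorem 6.24]{LV09}.

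\textbf{For part (1):} The strategy is to reduce the computation to the two-generator setting covered by Proposition \ref{Prop:phi-ep=wt for R(ci+dj)}. Given the irreducible $R(\alpha)$-module $M$, I would analyze $\kf_j M = \hd \ind_{\alpha_j,\alpha}(L(j) \boxtimes M)$ through the lens of how $\ep_i^\lambda$ and $\ph_i^\lambda$ depend only on the $i,j$-portion of the module. Using Lemma \ref{Lem:surjection for R(ci+dj)}(3) to track $\ep_i$ under induction by $\mathfrak{L}(k)$-modules, together with Lemma \ref{Lem:surjection2 for R(ci+dj)}, I would locate an irreducible $R(c\alpha_i+d\alpha_j)$-subquotient whose $\ep_i^\lambda$- and $\ph_i^\lambda$-values agree with those of $M$ and $\kf_j M$ respectively. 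Then Proposition \ref{Prop:phi-ep=wt for R(ci+dj)} supplies integers $0 \le k \le -a_{ij}$ with
\begin{equation*}
\ep_i^\lambda(\kf_jM) = \ep_i^\lambda(M) + a_{ij} + k, \qquad \ph_i^\lambda(\kf_j M) = \ph_i^\lambda(M)+k,
\end{equation*}
and subtracting gives $\ph_i^\lambda(\kf_j M)-\ep_i^\lambda(\kf_j M) = -a_{ij}+\ph_i^\lambda(M)-\ep_i^\lambda(M)$, which is precisely (1) since $-\langle h_i, \alpha_j\rangle = -a_{ij}$.

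\textbf{For part (2):} I would induct on $|\alpha|$. In the base case $\alpha = 0$, the only irreducible module is the trivial module $\mathbf{1}$, for which $\ep_i^\lambda(\mathbf{1}) = 0$ and $\wt^\lambda(\mathbf{1}) = \lambda$. Since $\ep_i^\vee(L(i^k)) = k$, the criterion \eqref{Eq:eq condition of being in B(lamda)} shows that $(\kf_i^\lambda)^k\mathbf{1} = \pr^\lambda L(i^k)$ is nonzero exactly when $k \le \langle h_i, \lambda\rangle$, giving $\ph_i^\lambda(\mathbf{1}) = \langle h_i, \lambda\rangle$, as required. For the inductive step, since $\pr^\lambda(M) \ne 0$ I may write $M \simeq \kf_j^\lambda N$ for some $j \in I$ and some irreducible $N$ of strictly smaller height with $\pr^\lambda(N) \ne 0$ (this is where I invoke that $\mathfrak{B}(\lambda)$ sits inside $B(\infty)\otimes T_\lambda \otimes C$ and hence every element arises as $\kf$ of a lower element). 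If $j \ne i$, then (1) combined with the induction hypothesis yields
\begin{equation*}
\ph_i^\lambda(M)-\ep_i^\lambda(M) \ =\ -a_{ij}+\ph_i^\lambda(N)-\ep_i^\lambda(N)\ =\ \langle h_i,\wt^\lambda(N)-\alpha_j\rangle\ =\ \langle h_i,\wt^\lambda(M)\rangle.
\end{equation*}
If $j = i$ (so we are in the $\mathfrak{sl}_2$-situation for a real index), the definitions of $\ep_i^\lambda, \ph_i^\lambda$ in the real case give directly $\ep_i^\lambda(M) = \ep_i^\lambda(N)+1$ and $\ph_i^\lambda(M) = \ph_i^\lambda(N)-1$, so
\begin{equation*}
\ph_i^\lambda(M)-\ep_i^\lambda(M) \ =\ \ph_i^\lambda(N)-\ep_i^\lambda(N)-2\ =\ \langle h_i,\wt^\lambda(N)\rangle - \langle h_i,\alpha_i\rangle\ =\ \langle h_i,\wt^\lambda(M)\rangle.
\end{equation*}

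\textbf{Main obstacle:} The essential difficulty lies in the reduction step of part (1)—namely, justifying that the interaction of $\ep_i^\lambda$ and $\ph_i^\lambda$ with $\kf_j$ really is controlled by an $R(c\alpha_i + d\alpha_j)$-piece of $M$. This requires a careful tracking argument through the Mackey filtration (Proposition \ref{Prop:Mackey}) and the heads/socles analysis of Lemma \ref{Lem:surjection for R(ci+dj)} and Lemma \ref{Lem:surjection2 for R(ci+dj)}, together with the invariance result Lemma \ref{Lem:difference of phi} which guarantees that the difference $\ph_i^\lambda-\ep_i^\lambda$ is independent of the choice of $\lambda$. Once this reduction is in hand, the remaining computation collapses to the content of Proposition \ref{Prop:phi-ep=wt for R(ci+dj)}, and part (2) follows formally from part (1).
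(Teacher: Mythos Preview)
Your overall strategy matches the paper's proof exactly: part (1) is obtained by combining \cite[Proposition 6.20]{LV09} with Proposition \ref{Prop:phi-ep=wt for R(ci+dj)} (the paper simply cites the former rather than spelling out the reduction via Lemmas \ref{Lem:difference of phi}--\ref{Lem:surjection2 for R(ci+dj)} as you do), and part (2) is the induction on $|\alpha|$ starting from $\ph_i^\lambda(\mathbf{1}) = \ep_i^\lambda(\mathbf{1}) + \langle h_i, \lambda \rangle$.

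There is, however, a genuine circularity in your inductive step for (2). You justify writing $M \simeq \kf_j N$ with $\pr^\lambda(N) \ne 0$ by appealing to the embedding $\mathfrak{B}(\lambda) \hookrightarrow B(\infty)\otimes T_\lambda \otimes C$. But that embedding is Theorem \ref{Thm: B(lambda)}, whose proof uses Proposition \ref{Prop:B(lambda) for KLR is crystal}, which in turn depends on the very proposition you are proving. The fix is elementary and does not require any crystal structure: since $|\alpha|>0$ there exists $j$ with $\ep_j(M)>0$; set $N=\ke_j M$, so $M=\kf_j N$ by Lemma \ref{Lem:adjoint ke and kf}. Because $N$ is realized inside $1_{\alpha_j,\alpha-\alpha_j}M$ with $R(\alpha-\alpha_j)$ acting on the last $|\alpha|-1$ strands, the generators of $I^\lambda(\alpha-\alpha_j)$ land in $I^\lambda(\alpha)$ under this embedding and hence annihilate $N$; thus $\pr^\lambda(N)=N\ne 0$. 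With this correction your argument goes through and coincides with the paper's.
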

\begin{proof}
Combining \cite[Proposition 6.20]{LV09} with Proposition
\ref{Prop:phi-ep=wt for R(ci+dj)}, we obtain the assertion (1).
Since $\ph_i^\lambda(\mathbf{1}) = \ep_i^\lambda(\mathbf{1}) +
\langle h_i, \lambda \rangle$, the assertion (2) follows by
induction on $|\alpha|$ combined with the assertion (1).
\end{proof}

Combining Proposition \ref{Prop:ph - ep = wt} with $\eqref{Eq:def of B(lambda)}$, we obtain the following proposition.
\begin{Prop} \label{Prop:B(lambda) for KLR is crystal}
The sextuple $(\Bklr{\lambda},\wt^\lambda, \ke_i^\lambda,
\kf_i^\lambda, \varepsilon_i^\lambda, \varphi_i^\lambda)$ is an abstract crystal.
\end{Prop}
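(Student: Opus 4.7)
The plan is to verify the six axioms of Definition~\ref{Def: abstract crystal} one by one against the definitions in $\eqref{Eq:def of B(lambda)}$, using Proposition~\ref{Prop:ph - ep = wt} as the only nontrivial input. Since $\varepsilon_i^\lambda, \varphi_i^\lambda$ take values in $\Z_{\ge 0}$ by construction, axiom (4) (the $-\infty$ case) is vacuous, so nothing needs to be checked there.

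First I would dispose of the structural axioms (2), (3), (5), (6). Axiom (2), the weight shift $\wt^\lambda(\ke_i^\lambda M) = \wt^\lambda(M)+\alpha_i$ and $\wt^\lambda(\kf_i^\lambda M) = \wt^\lambda(M)-\alpha_i$ (whenever the image is nonzero), is immediate from the fact that $\ke_i$ (resp.\ $\kf_i$) sends irreducible $R(\alpha)$-modules to irreducible $R(\alpha\mp\alpha_i)$-modules together with the commutativity of $\pr^\lambda$ and $\infl^\lambda$ with the weight grading. Axiom (3), that $M' = \ke_i^\lambda M$ iff $M = \kf_i^\lambda M'$, is obtained by applying Lemma~\ref{Lem:adjoint ke and kf} to the inflations and then observing, via \eqref{Eq:eq condition of being in B(lamda)}, that $\pr^\lambda$ on irreducibles is a bijection between those isomorphism classes of irreducible $R$-modules that survive cyclotomic projection and $\Bklr{\lambda}$.

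For axiom (5)--(6) with $i\in\Ire$, the required identities $\varepsilon_i^\lambda(\ke_i^\lambda M) = \varepsilon_i^\lambda(M) - 1$ and $\varepsilon_i^\lambda(\kf_i^\lambda M) = \varepsilon_i^\lambda(M) + 1$ are forced by the very definition $\varepsilon_i^\lambda(M) = \max\{k \ge 0 \mid (\ke_i^\lambda)^k M \ne 0\}$, and the analogous identities for $\varphi_i^\lambda$ then follow from axiom (1). For $i\in\Iim$, $\varepsilon_i^\lambda \equiv 0$ by definition, and the $\varphi_i^\lambda$ identities reduce to the linear computation $\langle h_i, \wt^\lambda(\ke_i^\lambda M)\rangle = \langle h_i, \wt^\lambda(M)\rangle + a_{ii}$, which is axiom (2) applied to the pairing.

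The only substantive axiom is (1), the identity $\varphi_i^\lambda(M) = \varepsilon_i^\lambda(M) + \langle h_i,\wt^\lambda(M)\rangle$. For $i\in\Iim$ this is built into the definition \eqref{Eq:def of B(lambda)}, so the case $i\in\Ire$ is where the real content lies, and this is exactly the statement of Proposition~\ref{Prop:ph - ep = wt}(2). Hence the proof of the present proposition reduces to quoting that result. The step I expect to be the main obstacle in principle was already surmounted upstream, namely the inductive argument of Proposition~\ref{Prop:ph - ep = wt} that controls how $\varphi_i^\lambda - \varepsilon_i^\lambda$ changes under $\kf_j$; here in the proof of the proposition itself no further obstacle remains, and the argument is essentially a bookkeeping exercise.
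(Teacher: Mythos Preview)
Your proposal is correct and matches the paper's approach exactly: the paper's own proof is the single sentence ``Combining Proposition~\ref{Prop:ph - ep = wt} with $\eqref{Eq:def of B(lambda)}$, we obtain the following proposition,'' and what you have written is precisely the unpacking of that sentence, with Proposition~\ref{Prop:ph - ep = wt}(2) supplying axiom~(1) for $i\in\Ire$ and the remaining axioms read off from the definitions and Lemma~\ref{Lem:adjoint ke and kf}.
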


We would like to show that $\Bklr{\lambda}$ is isomorphic to the
crystal $B(\lambda)$.
For this purpose, we first prove the following lemma.
%

\begin{Lem} \label{Lem: Blambda}
Let $i \in \Iim$ and $M$ be an irreducible $R^\lambda(\alpha)$-module. Then
$$ \langle h_i, \wt^\lambda(M)\rangle \le 0 \ \text{ if and only if }\ \kf^\lambda_i M=0. $$
\end{Lem}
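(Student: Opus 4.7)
The plan is to translate the condition $\kf_i^\lambda M = 0$ into a statement about $\ep_i^\vee(\kf_i M)$ and then use a graded-degree argument on the induced module $L := \ind L(i)\boxtimes M$. Since $i \in \Iim$, Lemma \ref{Lem:irr hd} ensures that $\kf_i M$ is a nonzero irreducible $R(\alpha+\alpha_i)$-module, so $\kf_i^\lambda M = \pr^\lambda(\kf_i M)$ is either $0$ or $\kf_i M$; combining \eqref{Eq:eq condition of being in B(lamda)} with the preceding lemma (which gives $\ep_j^\vee(\kf_i M) = \ep_j^\vee(M)$ for $j \ne i$), the only possible obstruction to $\kf_i M$ lying in $R^\lambda(\alpha+\alpha_i)$-fmod is the $j = i$ condition in the case $\langle h_i, \lambda\rangle = 0$, which requires $\ep_i^\vee(\kf_i M) = 0$. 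Since $a_{ii} < 0$ and $a_{ij} \le 0$ give $\langle h_i, \alpha\rangle \le 0$ with equality iff $n_i = 0$ and $a_{ij} = 0$ for every $j$ in the support of $\alpha$, the inequality $\langle h_i, \wt^\lambda(M)\rangle = \langle h_i, \lambda\rangle - \langle h_i, \alpha\rangle \le 0$ forces both $\langle h_i, \lambda\rangle = 0$ and $\langle h_i, \alpha\rangle = 0$. The case $\langle h_i, \lambda\rangle \ge 1$ is then vacuous, so it suffices to show, assuming $\langle h_i, \lambda\rangle = 0$ (hence $\ep_i^\vee(M) = 0$), the equivalence $\ep_i^\vee(\kf_i M) \ge 1 \iff \langle h_i, \alpha\rangle = 0$.

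If $\langle h_i, \alpha\rangle = 0$, then $a_{ij} = 0$ for every $j$ in the support of $\alpha$, so by Definition \ref{def:KLR} each relevant polynomial $\mathcal{Q}_{i,j}(u,v) = t_{i,j;0,0}$ is a nonzero constant; consequently every $\tau_t$-crossing between the $(i)$-strand and any $j$-strand in $L$ has degree $0$ and squares to a nonzero scalar. Given any nonzero $R(\alpha+\alpha_i)$-submodule $K \subseteq L$, repeated application of these invertible crossings shows that $K$ meets the subspace $1_{\alpha_i, \alpha}(L) = 1\otimes(L(i)\boxtimes M)$ nontrivially; since $L(i)\boxtimes M$ is irreducible as an $R(\alpha_i)\otimes R(\alpha)$-module, this forces $K \supseteq 1_{\alpha_i, \alpha}(L)$, and thus $K = L$ because $1_{\alpha_i, \alpha}(L)$ generates $L$ over $R(\alpha+\alpha_i)$. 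Hence $L$ is itself irreducible, $L = \kf_i M$, and its $1_{\alpha, \alpha_i}$-component (obtained by applying the invertible $\tau$'s to move $(i)$ to the last position) is nonzero, yielding $\ep_i^\vee(\kf_i M) \ge 1$.

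If instead $\langle h_i, \alpha\rangle < 0$, normalize $M$ so that its minimum degree is $0$; then $L$ is generated in degree $0$ by $1\otimes v_i\otimes m_0$ for some $m_0 \in M$ of degree $0$. A direct computation using the commutation relations (with $x_1\cdot v_i = 0$ on $L(i)$ by \eqref{Eq:def of L in Iim} and $\tau_t x_k = x_{r_t(k)}\tau_t$ carrying no correction term for $i_t \ne i_{t+1}$) identifies $1_{\alpha, \alpha_i}(L)$ with $M\boxtimes L(i)\langle -(\alpha_i|\alpha)\rangle$ as a graded $R(\alpha)\otimes R(\alpha_i)$-module; since $-(\alpha_i|\alpha) = -s_i\langle h_i, \alpha\rangle > 0$, this subspace sits in strictly positive degrees. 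All generators of $R(\alpha+\alpha_i)$ have non-negative degree by \eqref{Eq:degree}, so the $R(\alpha+\alpha_i)$-submodule $N := R(\alpha+\alpha_i)\cdot 1_{\alpha, \alpha_i}(L)$ also lies entirely in strictly positive degrees and in particular does not contain the degree-$0$ generator $1\otimes v_i\otimes m_0$; hence $N \subsetneq L$. By Lemma \ref{Lem:irr hd}, $L$ has a unique maximal submodule, equal to the kernel of $L\twoheadrightarrow \kf_i M$, so $N$ lies in this kernel and $1_{\alpha, \alpha_i}(\kf_i M) = 0$, giving $\ep_i^\vee(\kf_i M) = 0$. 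The main technical point will be the graded-module identification of $1_{\alpha, \alpha_i}(L)$ above, which requires carefully tracking the Mackey-type shuffle structure and the grading shifts produced by the $(i)$-crossings.
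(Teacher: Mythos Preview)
Your reduction via \eqref{Eq:eq condition of being in B(lamda)} and the preceding lemma is clean and correct, and your treatment of the case $\langle h_i,\alpha\rangle=0$ (showing $L=\ind L(i)\boxtimes M$ is already irreducible because all the relevant $\mathcal Q_{i,j}$ are nonzero scalars) is a tidy variant of the paper's ``move the $i$ to the last slot'' argument.

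The gap is in the case $\langle h_i,\alpha\rangle<0$. The sentence ``All generators of $R(\alpha+\alpha_i)$ have non-negative degree by \eqref{Eq:degree}'' is false: for any $\mathbf k$ with $k_t=k_{t+1}=j\in\Ire$ one has $\deg(\tau_t 1_{\mathbf k})=-(\alpha_j|\alpha_j)=-2s_j<0$. Since $\alpha$ is allowed to contain repeated real colours, you cannot conclude that $N:=R(\alpha+\alpha_i)\cdot 1_{\alpha,\alpha_i}(L)$ sits in strictly positive degrees merely from the degree of $1_{\alpha,\alpha_i}(L)$. (There is also a sign slip in the shift: with the paper's convention $M\langle k\rangle_i=M_{i+k}$, the Mackey identification is $1_{\alpha,\alpha_i}(L)\simeq M\boxtimes L(i)\langle(\alpha_i|\alpha)\rangle$, not $\langle -(\alpha_i|\alpha)\rangle$; your degree conclusion is still right, but the label is off.)

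What the paper does instead is exactly the extra work needed here. Specialised to $\langle h_i,\lambda\rangle=0$, the paper's generating set $\mathsf N$ coincides with your $1_{\alpha,\alpha_i}(L)$, and the paper shows by explicit commutation that for $1\le r\le d$ and $1\le s\le d-1$,
\[
x_r\bigl(\tau_d\cdots\tau_1\otimes(v\otimes m)\bigr)=\tau_d\cdots\tau_1\otimes(v\otimes x_r m),\qquad
\tau_s\bigl(\tau_d\cdots\tau_1\otimes(v\otimes m)\bigr)=\tau_d\cdots\tau_1\otimes(v\otimes \tau_s m),
\]
so the action of $R(\alpha)\subset R(\alpha+\alpha_i)$ is absorbed into the $M$-factor. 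Thus any element of $N$ can be written as a sum $\sum_j \tau_{t_j}\tau_{t_j+1}\cdots\tau_d\,x_{d+1}^{k}\,n_j$ with $n_j\in\mathsf N$; each factor $\tau_{t_j},\ldots,\tau_d$ here crosses the $i$-strand with some $l$-strand and hence has degree $-(\alpha_i|\alpha_l)\ge 0$, and $x_{d+1}$ has positive degree. This is why the degree of every element of $N$ exceeds $\deg(1\otimes v\otimes m_0)$. In short, the positivity you need is not a property of all generators of $R(\alpha+\alpha_i)$, but only of those that move the distinguished $i$-strand; isolating these requires the commutation step.
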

\begin{proof}
Let $\alpha = \sum_{j \in I} k_j \alpha_j $ with $|\alpha|=d$. For
simplicity, we identify $M$ with $\infl^\lambda M$.

We first assume that $\langle h_i, \wt^\lambda(M)\rangle \le 0$. Since $\langle h_i, \lambda \rangle \ge 0$ and $\langle h_i, -\alpha_j\rangle \ge 0$ for all $j \in I$,
we have
$$ \langle h_i, \lambda \rangle = 0 \ \  \text{ and } \ \  k_j=0 \text{ for } j\in I \text{ with } a_{ij}  \ne 0. $$
Take an element $\mathbf{j} = (j_1 \ldots j_d) \in \seq(\alpha)$
such that $ 1_\mathbf{j} M \ne 0$. Note that $a_{i j_k}=0$ for all
$k=1,\ldots, d$. By the Frobenius reciprocity
$\eqref{Eq:reciprocity2}$, we have an embedding
$$ L(i) \boxtimes M \hookrightarrow \Delta_i \kf_i M,  $$
which implies that $1_{(i)*\mathbf{j}} \ (\kf_i M) \ne 0$. Since
$a_{i j_1} =0 $, it follows from the quantum Serre relations that
$$1_{(j_1 i j_2 \ldots j_d) } \ (\kf_i M) \ne 0. $$
Repeating this process, we have
$$ 1_{ \mathbf{j}*(i) } \ (\kf_i M) \ne 0,  $$
which yields that $I^\lambda(\alpha + \alpha_i) \kf_i M \ne 0$ since $1_{ \mathbf{j}*(i) } \in I^\lambda(\alpha + \alpha_i)$.
Therefore, we have the only if part of our assertion.

We now prove the converse. We will actually prove the
contrapositive:
$$\langle h_i, \wt^\lambda(M)\rangle > 0 \ \ \Longrightarrow \ \ \kf^\lambda_i M \ne 0 .$$
Assume that $\langle h_i, \wt^\lambda(M)\rangle > 0$.

First consider the case $\langle h_i, -\alpha \rangle = 0$. In this case,
$\langle h_i, \lambda \rangle > 0$ and $k_j=0$ for $ j\in I$ with $ a_{ij}  \ne 0$.
Take a nonzero element
$v \in L(i)$. By definition, we have
$$ \ind L(i) \boxtimes M = \Span_\F\{ \tau_t \cdots \tau_1 \otimes (v \otimes m ) \mid m \in M,\ 0 \le t \le d  \}. $$
Since $I^\lambda(\alpha) M = 0 $ and $k_j=0$ for $ j\in I$ with $ a_{ij}  \ne 0$, it follows from the definition $\eqref{Eq:def of cyclotomic ideal}$ that
$$ I^\lambda(\alpha+\alpha_i) (\ind L(i) \boxtimes M) = 0. $$
Hence we have $\kf^\lambda_i M \ne 0$.

Now we suppose that $\langle h_i, -\alpha \rangle > 0$.
Take a nonzero element $v$ in $L(i)$, and define $N$ to be the submodule of $\ind L(i) \boxtimes M$ generated by
$$ \mathsf{N} = \{ x_{d+1}^{\langle h_i, \lambda \rangle} \tau_d \cdots \tau_1 1_{(i)*\mathbf{k}} \otimes (v \otimes m) \mid  0 \le t \le d,\ m \in M,\  \mathbf{k} \in \seq(\alpha) \}.$$
As $\langle h_i, -\alpha \rangle > 0$, we have
$$\deg( x_{d+1}^{\langle h_i, \lambda \rangle} \tau_d \cdots \tau_1 1_{(i)*\mathbf{k}} \otimes (v \otimes m)) > \deg(1 \otimes v \otimes m) .$$
Then, as $M$ is $R^\lambda(\alpha)$-module, we have $I^\lambda(\alpha+\alpha_i) (\ind L(i) \boxtimes M)\subset N $.
Hence $(\ind L(i) \boxtimes M )/ N$ is $R^\lambda(\alpha+\alpha^i)$-module.
To prove $\kf^\lambda_i M \ne 0$, it suffices to show that $(\ind L(i) \boxtimes M )/ N$ is nontrivial; i.e., $N$ is proper.

Take $m_0 \in M$ such that $\deg(m_0) \le \deg(m)$ for all $m \in M$.
We claim that $ 1 \otimes (v\otimes m_0)  \notin N$. Suppose that $ 1 \otimes (v\otimes m_0)  \in N$.
Since $I^\lambda(\alpha)M=0$,
it follows from the defining relations
$\eqref{Eq:def rel 1}$ and $\eqref{Eq:def rel 2}$ that
\begin{align*}
x_r ( x_{d+1}^{\langle h_i, \lambda \rangle} \tau_d \cdots \tau_1 1_{(i)*\mathbf{k}} \otimes (v\otimes m)) &= x_{d+1}^{\langle h_i, \lambda \rangle} \tau_d \cdots \tau_1  ( x_{r+1} 1_{(i)*\mathbf{k}} \otimes (v\otimes m) ), \\
\tau_s ( x_{d+1}^{\langle h_i, \lambda \rangle} \tau_d \cdots \tau_1 1_{(i)*\mathbf{k}} \otimes (v\otimes m)) &=  x_{d+1}^{\langle h_i, \lambda \rangle} \tau_d \cdots \tau_1 ( \tau_{s+1} 1_{(i)*\mathbf{k}} \otimes (v\otimes m))
\end{align*}
for $m \in M$, $1 \le r \le d$ and $ 1 \le s \le d-1$. So, the element $1 \otimes (v\otimes m_0)$ can be written as
$$ 1 \otimes (v\otimes m_0) = \sum_j \tau_{t_j} \tau_{t_j+1}  \cdots \tau_d x_{d+1}^k n_j, $$
for some $n_j\in \mathsf{N}$, $ t_j, k \in \Z_{\ge0} $. Since $\langle h_i, -\alpha \rangle > 0$ and $m_0$ is minimal, we have
$$  \deg (1 \otimes (v\otimes m_0)) < \deg(n_j) \le \deg(\tau_{t_j} \tau_{t_j+1}  \cdots \tau_d x_{d+1}^k n_j),$$
which gives a contradiction. Therefore, $1 \otimes (v\otimes m_0)$ is not contained in $N$ and $N$ is proper.
\end{proof}

We are now ready to state and prove the crystal version of
categorification of $V(\lambda)$.  Define a map $\Psi_\lambda: \
\Bklr{\lambda} \ \longrightarrow \ \Bklr{\infty} \otimes
T_{\lambda} \otimes C$ by
$$[M] \longmapsto [\infl^\lambda M] \otimes t_\lambda \otimes c.$$

\begin{Thm}\ \label{Thm: B(lambda)}
\begin{enumerate}
\item $\Psi_\lambda$ is a strict crystal embedding.
\item The crystal $\mathfrak{B}(\lambda)$ is isomorphic to the crystal $B(\lambda)$.
\end{enumerate}
\end{Thm}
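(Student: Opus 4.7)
The plan is to verify directly that $\Psi_\lambda$ is a strict crystal embedding and then invoke the recognition theorem (Proposition \ref{Prop: recognition theorem of B-lambda}) to deduce (2). Well-definedness is routine: for irreducible $M \in R^\lambda(\alpha)$-fmod, the inflation $\infl^\lambda M$ is irreducible as an $R(\alpha)$-module since any $R$-submodule is automatically $R^\lambda$-invariant, so $[\infl^\lambda M] \in \Bklr{\infty}$, and $\Psi_\lambda$ is injective because $\infl^\lambda$ is faithful on objects. Weight preservation and compatibility of $\varepsilon_i, \varphi_i$ on $[\infl^\lambda M] \otimes t_\lambda \otimes c$ can be read off from the tensor product formulas using $\varepsilon_i(t_\lambda) = \varphi_i(t_\lambda) = -\infty$ and $\varepsilon_i(c) = \varphi_i(c) = 0$; the infinities collapse the tensor product maxima into the first factor, and Proposition \ref{Prop:ph - ep = wt} reconciles the answer with \eqref{Eq:def of B(lambda)}.

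The heart of the proof is to show $\Psi_\lambda \circ \kf_i^\lambda = \tilde{f}_i \circ \Psi_\lambda$ and $\Psi_\lambda \circ \ke_i^\lambda = \tilde{e}_i \circ \Psi_\lambda$ (with the convention $\Psi_\lambda(0) = 0$). Identifying $\Bklr{\infty}$ with $B(\infty)$ through Theorem \ref{Thm: B(infty)}, the tensor product rule for $\tilde{f}_i$ collapses so that $\tilde{f}_i([\infl^\lambda M] \otimes t_\lambda \otimes c)$ equals $(\kf_i [\infl^\lambda M]) \otimes t_\lambda \otimes c$ precisely when a single scalar threshold is met, and vanishes otherwise. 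For $i \in \Ire$ this threshold is $\varphi_i^\lambda(M) > 0$, which by Proposition \ref{Prop:ph - ep = wt} matches the condition $\kf_i^\lambda M \neq 0$; when both sides are nonzero, $\pr^\lambda$ does not alter the underlying $R$-module, so they agree in $\Bklr{\infty}$. For $i \in \Iim$ the threshold reduces to $\langle h_i, \wt^\lambda(M)\rangle > 0$, which is exactly the content of Lemma \ref{Lem: Blambda}. The commutation with $\tilde{e}_i$ for $i \in \Ire$ then follows from the crystal adjunction $\tilde{f}_i \tilde{e}_i b = b$ on the image.

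The main obstacle is the $\tilde{e}_i$ commutation for $i \in \Iim$. The tensor product rule in the imaginary case has a genuine null region where $\tilde{e}_i((b \otimes t_\lambda) \otimes c) = 0$ even though $b$ itself is not $\tilde{e}_i$-annihilated, namely when $\varepsilon_i(c) < \varphi_i(b \otimes t_\lambda) \leq \varepsilon_i(c) - a_{ii}$; one must show this null region matches exactly the vanishing of $\pr^\lambda \ke_i \infl^\lambda M$, i.e.\ the cyclotomic ideal $I^\lambda(\alpha - \alpha_i)$ acting nontrivially on the irreducible $R$-module $\ke_i \infl^\lambda M$. I would prove the required $\ke_i$-analogue of Lemma \ref{Lem: Blambda} by applying the characterization \eqref{Eq:eq condition of being in B(lamda)} to $\ke_i \infl^\lambda M$ and tracking how $\varepsilon_i^\vee$ evolves under $\ke_i$, in parallel with (but slightly more delicate than) the argument given for Lemma \ref{Lem: Blambda}.

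Once strictness is established, part (2) is immediate: $\Psi_\lambda([\mathbf{1}]) = \mathbf{1} \otimes t_\lambda \otimes c$, so by strictness the image of $\Psi_\lambda$ is a union of connected components of $\Bklr{\infty} \otimes T_\lambda \otimes C$ containing that element; since every irreducible $R^\lambda$-module is obtained from the trivial one by repeated application of the $\kf_i^\lambda$, the crystal $\Bklr{\lambda}$ is connected, so its image equals exactly the connected component of $\mathbf{1} \otimes t_\lambda \otimes c$, which Proposition \ref{Prop: recognition theorem of B-lambda} identifies with $B(\lambda)$.
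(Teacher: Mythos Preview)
Your overall strategy matches the paper's: verify weight, $\varepsilon_i$, $\varphi_i$ compatibility via the tensor-product rules, check commutation with $\kf_i$ and $\ke_i$, then invoke Proposition~\ref{Prop: recognition theorem of B-lambda}. The $\kf_i$ case and the real $\ke_i$ case are handled as in the paper.

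Where your proposal diverges is in the imaginary $\ke_i$ case, and there are two issues. First, your claimed equivalence ``null region $\iff$ vanishing of $\pr^\lambda \ke_i \infl^\lambda M$'' is false as stated: one can have $\ke_i M_0=0$ (hence $\pr^\lambda \ke_i M_0=0$) while $\langle h_i,\lambda-\alpha+\alpha_i\rangle>0$, for instance already for $M=\mathbf{1}$ with $\langle h_i,\lambda\rangle>-a_{ii}$. What you actually need is the pair of statements (a) in the null region $\ke_i^\lambda M=0$, and (b) outside the null region $\infl^\lambda\ke_i^\lambda M=\ke_i M_0$.

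Second, your proposed method of tracking $\ep_i^\vee$ under $\ke_i$ is heavier than necessary. The paper's route is simpler: statement (b) holds unconditionally, because the embedding $\iota_{\alpha_i,\alpha-\alpha_i}$ sends the generators $x_{d-1}^{\langle h_{j_{d-1}},\lambda\rangle}1_{\mathbf{j}}$ of $I^\lambda(\alpha-\alpha_i)$ to $x_d^{\langle h_{k_d},\lambda\rangle}1_{(i)*\mathbf{j}}\in I^\lambda(\alpha)$; since $\ke_i M_0$ sits inside $\res_{\alpha-\alpha_i}^{\alpha_i,\alpha-\alpha_i}\Delta_i M_0\subset M_0$ and $I^\lambda(\alpha)\cdot M_0=0$, it follows that $I^\lambda(\alpha-\alpha_i)\cdot\ke_i M_0=0$. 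Thus $\ke_i^\lambda M=\ke_i M_0$ always. With this in hand, statement (a) becomes: $\langle h_i,\lambda-\alpha+\alpha_i\rangle\le 0\Rightarrow\ke_i M_0=0$. This is exactly the paper's \eqref{Eq:eq1 in B(lambda) for GKM}, derived in one line from Lemma~\ref{Lem: Blambda} via the crystal adjunction in $\Bklr{\lambda}$ (if $\ke_i^\lambda M\ne0$ then $\kf_i^\lambda(\ke_i^\lambda M)=M\ne0$, so Lemma~\ref{Lem: Blambda} forces $\langle h_i,\wt^\lambda(\ke_i^\lambda M)\rangle>0$). No separate $\ke_i$-analogue of Lemma~\ref{Lem: Blambda} is needed.
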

\begin{proof}
To prove (1), let $M $ be an irreducible
$R^\lambda(\alpha)$-module and let $M_0 = \infl^\lambda M$. Note
that
\begin{align*}
\ep^{\lambda}_i(M) =\ep_i(M_0), \quad
\varphi_i(M_0)+ \langle h_i,\lambda \rangle =\ep_i(M_0) + \langle h_i,\lambda -\alpha \rangle
 = \varphi^{\lambda}_i(M) \ge 0.
\end{align*}
By the tensor product rule $\eqref{Eq:def of tensor product}$ and
Proposition \ref{Prop:B(lambda) for KLR is crystal}, we have
\begin{align*}
\wt(\Psi_\lambda(M)) &=  \wt( M_0 \otimes t_\lambda \otimes c ) = \lambda - \alpha = \wt^\lambda(M),\\
\ep_i(\Psi_\lambda(M)) & =\ep_i( M_0 \otimes t_{\lambda} \otimes c) =\max \{ \ep_i(M_0), -\langle h_i, \lambda-\alpha \rangle \} = \ep^{\lambda}_i(M),\\
\varphi_i(\Psi_\lambda(M)) &=\ph_i( M_0 \otimes t_{\lambda} \otimes c) =\max \{ \varphi_i(M_0)+ \langle h_i, \lambda \rangle, 0 \} = \varphi_i^\lambda(M).
\end{align*}

On the other hand, it follows from Lemma \ref{Lem: Blambda} that
\begin{align} \label{Eq:eq1 in B(lambda) for GKM}
\langle h_i, \lambda - \alpha+\alpha_i \rangle \le 0\ \Longrightarrow \ \ke_i^\lambda M = 0.
\end{align}
By a direct computation, we have
 \begin{equation*}
 \begin{aligned}
 \ \ &\tilde{f_i}(M_0 \otimes t_\lambda \otimes c)  = \begin{cases} (\tilde{f_i} M_0) \otimes t_{\lambda} \otimes c & \text{ if } \varphi^{\lambda}_i(M) > 0,   \\
                                \quad \quad \quad 0 & \text{ if } \varphi^{\lambda}_i(M) \le 0, \end{cases} \\
 \ \ &\tilde{e_i}(M_0 \otimes t_\lambda \otimes c)  =
 \begin{cases} (\tilde{e_i} M_0) \otimes t_{\lambda} \otimes c & \text{ if } i \in \Ire, \ \varphi^{\lambda}_i(M) \ge 0,    \\
  (\tilde{e_i} M_0) \otimes t_{\lambda} \otimes c & \text{ if } i \in \Iim, \langle h_i, \lambda-\alpha+\alpha_i \rangle  >0,  \\
  \quad \quad \quad 0  & \text{ if } i \in \Iim, \langle h_i, \lambda-\alpha+\alpha_i \rangle \le 0. \end{cases}
  \end{aligned}
 \end{equation*}
By $\eqref{Eq:eq1 in B(lambda) for GKM}$ and Lemma \ref{Lem:
Blambda}, we  get
\begin{align*}
\ke_i(\Psi_\lambda(M)) = \Psi_\lambda(\ke_i^\lambda(M))\ \text{
and }\ \kf_i(\Psi_\lambda(M)) = \Psi_\lambda(\kf_i^\lambda(M)),
\end{align*}
which completes the proof of (1).

Since $\Psi_\lambda$ takes $\mathbf{1}$ to $\mathbf{1} \otimes
t_\lambda \otimes c$, the assertion (2) follows from (1) and
Proposition \ref{Prop: recognition theorem of B-lambda}.
\end{proof}

\vskip 3em


\bibliographystyle{amsplain}


\end{document}